\newtheorem{thm}{Theorem}[section]
\newtheorem{prop}[thm]{Proposition}
\newtheorem{lem}[thm]{Lemma}
\newtheorem{defi}[thm]{Definition}
\newtheorem{prob}{Problem}
\newtheorem{cor}[thm]{Corollary}
\newtheorem{claim}[thm]{Claim}
\newtheorem{rem}[thm]{Remark}
\newtheorem{conj}{Conjecture}
\newcommand{\conv}{\ensuremath{{\rm conv}}}
\newcommand{\covectors}{\ensuremath{\mathcal{L}}}
\newcommand{\topegraphs}{\ensuremath{\mathcal{G}_{\rm COM}}}
\newcommand{\topegraphsOM}{\ensuremath{\mathcal{G}_{\rm OM}}}
\newcommand{\topegraphsAOM}{\ensuremath{\mathcal{G}_{\rm AOM}}}
\newcommand{\topegraphsLOP}{\ensuremath{\mathcal{G}_{\rm LOP}}}
\newcommand{\AG}{\ensuremath{\mathrm{AG}}}
\newcommand{\Q}{\ensuremath{\mathcal{Q}^-}}
\newcommand{\topes}{\ensuremath{\mathcal{T}}}
\DeclareRobustCommand{\em}{%
  \@nomath\em \if b\expandafter\@car\f@series\@nil
  \normalfont \else \bfseries\itshape \fi}
\begin{document}
\thanksmarkseries{arabic}
\title{On tope graphs of complexes of oriented matroids}
\author{Kolja Knauer\thanks{Electronic address: \texttt{kolja.knauer@lis-lab.fr}} \and Tilen Marc\thanks{Electronic address: \texttt{tilen.marc@imfm.si}}}

\maketitle

\begin{abstract} 
  We give two graph theoretical characterizations of tope graphs of (complexes of) oriented matroids. The first is in terms of excluded partial cube minors, the second is that all antipodal subgraphs are gated. A direct consequence is a third characterization in terms of zone graphs of tope graphs. 
  
  Further corollaries include a characterization of topes of oriented matroids due to da Silva, another one of Handa, a characterization of lopsided systems due to Lawrence, and an intrinsic characterization of tope graphs of affine oriented matroids. Moreover, we obtain purely graph theoretic polynomial time recognition algorithms for tope graphs of the above and a finite list of excluded partial cube minors for the bounded rank case. 
  
  In particular, our results answer a relatively long-standing open question in oriented matroids and can be seen as identifying the theory of (complexes of) oriented matroids as a part of metric graph theory. Another consequence is that all finite Pasch graphs are tope graphs of complexes of oriented matroids, which confirms a conjecture of Chepoi and the two authors.
\end{abstract}

\tableofcontents

\section{Introduction}

%



A graph $G=(V,E)$ is a \emph{partial cube} if it is (isomorphic to) an isometric subgraph of a hypercube graph $Q_n$,
i.e., $d_G(u,v)=d_{Q_n}(u,v)$ for all $u,v\in V$, where $d$ denotes the distance function of the respective graphs.
Partial cubes were introduced by Graham and Pollak~\cite{Gra-71} in the study of interconnection networks. They form an important graph class in media theory~\cite{Epp-08}, frequently appear in chemical graph theory~\cite{Epp-09}, and quoting~\cite{Kla-12} present one of the central and most studied classes in metric graph theory.

Important subclasses of partial cubes include median graphs, bipartite cellular graphs, hypercellular graphs, Pasch graphs, and netlike partial cubes. 
Partial cubes also capture several important graph classes not directly coming from metric graph theory, such as region graphs of hyperplane arrangements, diagrams of distributive lattices, linear extension graphs of posets, tope graphs of oriented matroids (OMs), tope graphs of affine oriented matroids (AOMs), and lopsided systems (LOPs). A recently introduced unifying generalization of these classes are complexes of oriented matroids (COMs), whose tope graphs are partial cubes as well~\cite{Ban-15}. As it turns out, all of the above mentioned classes or partial cubes are indeed tope graphs of COMs.

Partial cubes admit a natural minor-relation (pc-minors for short) and several of the above classes including tope graphs of COMs are pc-minor closed. Complete (finite) lists of excluded pc-minors are known for median graphs, bipartite cellular graphs, hypercellular graphs and Pasch graphs, see~\cite{Che-86,Che-94,Che-16}. Another well-known construction of a smaller graph from a partial cube is the zone graph~\cite{Kla-12}.

In this paper we focus on COMs and their tope graphs. We present two  characterizations of the tope graphs and thus two graph theoretical characterizations of COMs. The first characterization is in terms of its complete (infinite) list of excluded pc-minors. 
As corollaries we obtain excluded pc-minor characterizations for tope graphs of OMs, AOMs, and LOPs. Moreover, in the case of bounded rank the list of excluded pc-minors is finite. We devise a polynomial time algorithm for checking if a given partial cube has another one as pc-minor, leading to polynomial time recognition algorithms for the classes with a finite list of excluded pc-minors. Another consequence is a characterization of tope graphs of COMs in terms of iterated zone graphs, which generalizes a result of Handa~\cite{ha-90} about tope sets of OMs.

The second characterization of tope graphs of COMs is in terms of the metric behavior of certain subgraphs. More precisely, we prove that a partial cube is the tope graph of a COM if and only if all of its antipodal (also known as symmetric-even~\cite{Ber-88}) subgraphs  are gated. As corollaries, this theorem specializes to tope graphs of OMs, AOMs, and LOPs. In particular, we obtain a new unified proof for characterization theorems of tope sets of LOPs and OMs due to Lawrence~\cite{Law-83} and da Silva~\cite{daS-95}, respectively. Moreover, this characterization allows to prove that Pasch graphs are COMs, confirming a conjecture of Chepoi, Knauer, and Marc~\cite{Che-16}. Finally, our characterization is verifiable in polynomial time, hence gives polynomial time recognition algorithms for tope graphs of COMs, OMs, AOMs, and LOPs, even without bounding the rank. Note that a polynomial time recognition algorithm for tope graphs of OMs was known before, see~\cite{Fuk-91}. However, this algorithm works without a characterization of the graphs, but constructs the set of cocircuits from the topes and there verifies the cocircuit axioms.   

In particular, we answer a long-standing open question on OMs, i.e., the question for a purely graph theoretical characterization of tope graphs, see~\cite[Problem 2]{Han-93} that can furthermore be verified in polynomial time, which was posed in~\cite[Problem 1.2]{Fuk-04}. Since the tope graph determines a COM, OM, AOM, or LOP up to isomorphism, see~\cite{Ban-15}, our results can be seen as identifying the theory of (complexes of) oriented matroids as a part of metric graph theory.

\begin{figure}[htb]
\centering
\includegraphics[width=\textwidth]{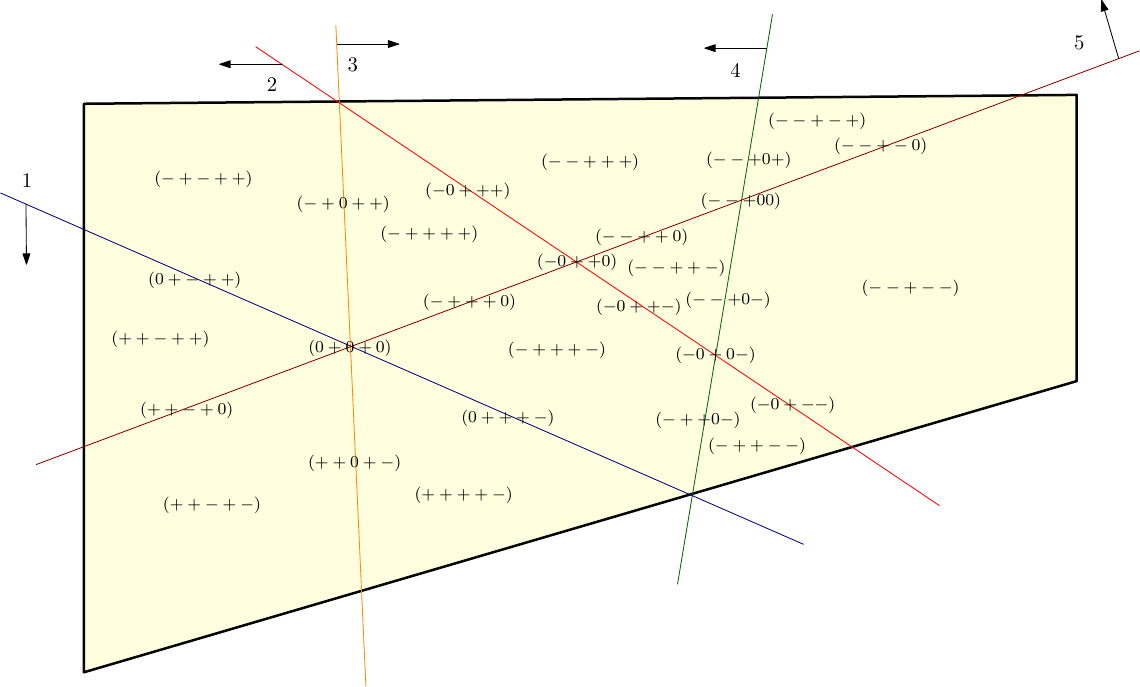}
\caption{A COM realized by a hyperplane arrangement and an open polyhedron in $\mathbb{R}^2$. The arrows on the hyperplanes indicate their positive side.}
\label{fig:COMexample}
\end{figure}

\bigskip

\noindent\textbf{Content of the paper:} 
The main theorem of the paper is Theorem~\ref{thm:main}, saying that a graph $G$ is the tope graph of a COM, i.e., $G\in \topegraphs$, if and only if $G$ is antipodally gated, i.e., $G\in\AG$, if and only if $G$ does not contain a partial cube minor from a specific set $\Q$, i.e., $G \in \mathcal{F}(\Q)$. (Generally, for a set ${X}$ of partial cubes we denote by $\mathcal{F}({X})$ the class of partial cubes that do not have any graph from $X$ as a partial cube minor.) Let us here give an outline of its proof ingredients and its corollaries, before going into the technicalities starting in the next section. The general strategy will be to prove first $\topegraphs\subseteq \AG$, then $\mathcal{F}(\Q)\subseteq \topegraphs$, and finally $\AG\subseteq \mathcal{F}(\Q)$.

Let us start by introducing the idea of a COM, which usually is encoded as a system of sign-vectors $\covectors\in\{+,-,0\}^{\mathcal{E}}$ on a finite ground set $\mathcal{E}$. In the illustrative {realizable} case one can think of $\covectors$ as the relative positions of cells in an oriented hyperplane arrangement intersected with an open polyhedron. See Figure~\ref{fig:COMexample} for an instructive example. This is a unifying generalization of realizable LOPs, OMs, and AOMs. The definition of LOPs, COMs, OMs, and AOMs can all be given in terms of axiomatics of the set of covectors $\covectors$ and also from this point of view it is reflected how COMs generalize the other ones naturally, see Definition~\ref{def:main}.

\begin{figure}[htb]
\centering
\includegraphics[width=.7\textwidth]{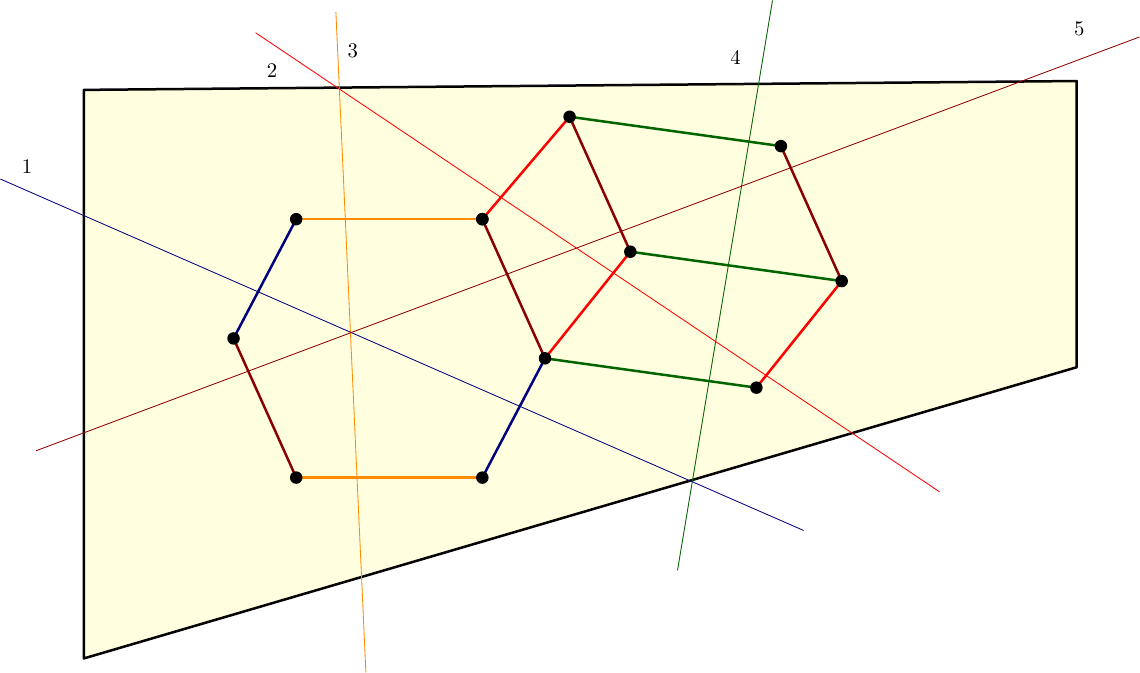}
\caption{The tope graph of the COM from Figure~\ref{fig:COMexample}. The color classes of edges giving the embedding into the cube correspond to the hyperplanes of the arrangement in $\mathbb{R}^2$.}
\label{fig:COMtopegraphexample}
\end{figure}

The \emph{tope graph} of a system of sign vectors is the graph induced by its topes, i.e., covectors without zero entries, in the hypercube $\{+,-\}^{\mathcal{E}}$. In case of COMs the tope graph (without labeling of the vertices) determines the COM up to isomorphism, see~\cite{Ban-15}. Moreover, the tope graph of a COM is a partial cube, see~\cite{Ban-15}. One way of thinking of being a partial cube is that the edges receive colors corresponding to the dimensions of the hypercube, see Figure~\ref{fig:COMtopegraphexample}.

Of particular importance to us are two types of metric subgraphs. An \emph{antipodal} subgraph $H$ of $G$ has the property, that for each vertex $v$ in $H$ there is an antipode $-_{H}v$, such that the $H$ is smallest convex subgraph of $G$ containing $v$ and $-_{H}v$. The antipodal subgraphs of the graph in Figure~\ref{fig:COMtopegraphexample} are exactly the vertices, edges, and bounded faces. The second property of a subgraph $H$ is the one of being \emph{gated}. This means, that for every vertex $v\in G$, there is a gate $v'$ in $H$, such that for every $v''\in H$ there is a shortest path from $v$ through $v'$ to $v''$. In partial cubes this amounts to the fact that there is path from $v$ to $H$ that does not use any color that is present on the edges of $H$. We say that a graph is \emph{antipodally gated} if all of its antipodal subgraphs are gated. The graph in Figure~\ref{fig:COMtopegraphexample} is antipodally gated, but also for instance the subgraph induced by the vertices incident to red or green edges is gated. A non-gated subgraph is given by the path $P$ of length two induced by the three left-most vertices. A vertex that has no gate in $P$ is the degree four vertex $v$, since all paths from $v$ to $P$ use a color present in $P$.

Exploring correspondences between axiomatical behavior of sign-vectors and metric subgraphs of partial cubes, in Theorem~\ref{thm:dictionaryOMC} we show that tope graphs of COMs are antipodally gated. This is the first part of the proof of Theorem~\ref{thm:main}, i.e., $\topegraphs\subseteq \AG$.

Clearly, there are partial cubes that do not satisfy Theorem~\ref{thm:dictionaryOMC}, i.e., they contain a non-gated antipodal subgraph. Figure~\ref{fig:pcandminorsxmpl} shows a partial cube in which the bottom $C_6$ is an antipodal subgraph, that is not gated, because the red vertex does not have gate.

\begin{figure}[htb]
\centering
\includegraphics[width=.7\textwidth]{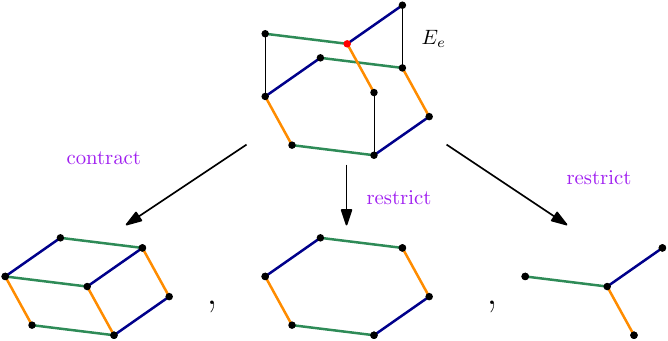}
\caption{A non antipodally gated partial cube and the partial cube minors obtained by contracting or restricting with respect to the vertical edge class $E_e$.}
\label{fig:pcandminorsxmpl}
\end{figure}

%

Instead of studying all the partial cubes, that have a non-gated antipodal subgraph, we use the notion of partial cube minors, in order to classify only minimal such partial cubes with respect to this operation. A \emph{partial cube minor} is either a contraction of a color class or the restriction to one its sides. Hence, it is a specialization of the standard graph minor notion. In Figure~\ref{fig:pcandminorsxmpl} we illustrate the partial cube minor on an example.

The next and second step of our proof is providing a set $\Q$ of partial cubes that are minor-minimal with respect to having a non-gated antipodal subgraph. The graph of Figure~\ref{fig:pcandminorsxmpl} is the smallest element of $\Q$, more of these graphs are depicted in Figure~\ref{fig:obstr}. It is easy to check that the minors in Figure~\ref{fig:pcandminorsxmpl} are antipodally gated, i.e., the graph on top is minimally non antipodally gated. The class of tope graphs of COMs is closed under pc-minors. This is illustrated by our realizable example in Figure~\ref{fig:COMminorexample}. By definition the class $\mathcal{F}(\Q)$ of partial cubes excludes the minors from $\Q$ as well. In Theorem~\ref{thm:topegraphs} we use this to show that, if $G$ is not the tope graph of a COM, then it must have a partial cube minor from $\Q$. This concludes the second part of our proof since it means, that if $G$ excludes $\Q$, then it is the tope graph of a COM, i.e., $\mathcal{F}(\Q)\subseteq \topegraphs$.

\begin{figure}[htb]
\centering
\includegraphics[width=\textwidth]{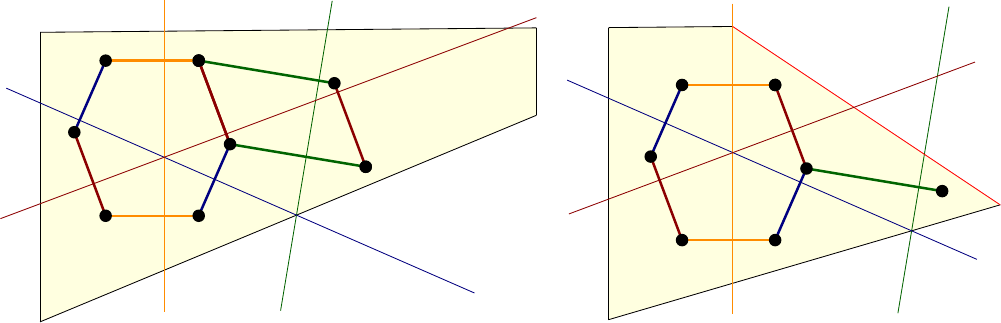}
\caption{Two partial cube minors obtained from the COM of Figure~\ref{fig:COMtopegraphexample} by contracting and restricting with respect to the red color class.}
\label{fig:COMminorexample}
\end{figure}

The last and most technical part of our proof is to show that the class of antipodally gated partial cube does not contain a member of $\Q$ as partial cube minor. The class $\mathcal{F}(\Q)$ is closed under partial cube minors and graphs in $\Q$ are minor-minimal having non-gated antipodal subgraphs. We will thus show that the class of antipodally gated partial cubes is closed under partial cube minors (Theorem~\ref{thm:AGminors}). This implies, that if $G$ has a partial cube minor from $\Q$, then it cannot be antipodally gated, i.e., $\AG\subseteq \mathcal{F}(\Q)$. This concludes the last part of the circular proof of our main theorem, which we will sketch quickly again.

\begin{thm}\label{thm:main}
 For a graph $G$ the following conditions are equivalent:
 \begin{itemize}
  \item[(i)] $G$ is the tope graph of a COM, i.e., $G\in\topegraphs$,
  \item[(ii)] $G$ is an antipodally gated partial cube, i.e., $G\in\AG$,
  \item[(iii)] $G$ is a partial cube with no partial cube minor from $\Q$, i.e., $G\in\mathcal{F}(\Q)$.
 \end{itemize}
\end{thm}
\begin{proof}
 The implication (i)$\Rightarrow$(ii) is Theorem~\ref{thm:dictionaryOMC}. The implication (iii)$\Rightarrow$(i) is Theorem~\ref{thm:topegraphs}. Finally, (ii)$\Rightarrow$(iii) follows from the fact that all the graphs in $\Q$ have a non-gated antipodal subgraph asserted in Lemma~\ref{lem:minfamily} and that the class $\AG$ is pc-minor closed, see Theorem~\ref{thm:AGminors}. 
\end{proof}

In order to get an idea of the further implications of this theorem without going into the technical details we propose to jump directly to Section~\ref{sec:corollaries}. The first corollary will establish a generalization of a theorem of Handa and characterize tope graphs of COMs by the fact that all iterated zone graphs are partial cubes. Afterwards resulting characterizations of tope graphs of OMs, LOPs, and AOMs (of bounded rank) are given. Moreover, the polynomial time recognition is shown and answering a conjecture of~\cite{Che-16} it is shown that Pasch graphs are tope graphs of COMs. 


\bigskip

In the following we survey in more detail the \textbf{Structure of the paper:} 

\bigskip

The next two sections (\ref{sec:pcminors} and~\ref{sec:signvectors}) are preliminaries dedicated on the one hand to partial cubes and on the other hand to systems of sign vectors. They can be skipped and looked back at when necessary. Apart from this purpose a couple of results of independent interest are given in Section~\ref{sec:pcminors}.

In Section~\ref{sec:pcminors} we introduce partial cubes with some more care, as well as metric subgraphs such as convex, gated, antipodal, and affine subgraphs and we discuss their behavior with respect to pc-minors and expansions. This section is quite technical and heavy in definitions since it introduces all the necessary background and auxiliary lemmas, that are needed for the rest of the paper. We discuss zone graphs of partial cubes, which play a role in part of our proof. We devise a polynomial time algorithm for checking if a given partial cube has another one as a pc-minor (Proposition~\ref{prop:polyminor}). We give an expansion procedure of how to construct all antipodal partial cubes from a single vertex (Lemma~\ref{lem:antipodalexpansion}) and provide an intrinsic characterization of affine partial cubes (Proposition~\ref{prop:affinepc}).

Section~\ref{sec:signvectors} is dedicated to the introduction of the systems of sign-vectors relevant to this paper, i.e.~COMs, OMs, AOMs, and LOPs, and their behavior under the usual minor-relations. Also, here quite some amount of definitions is introduced, that however coincides with the standard such as given in~\cite{Ban-15}.

In Section~\ref{sec:alltogethernow} we bring the content of the first two sections together and explain how systems of sign-vectors lead to partial cubes and vice versa. We show how metric properties of subgraphs and pc-minors correspond to axiomatic properties and minor relations of systems of sign-vectors. In particular we prove that tope graphs of COMs are antipodally gated (Theorem~\ref{thm:dictionaryOMC}), and characterize tope graphs of OMs, AOMs, and LOPs as special tope graphs of COMs. Theorem~\ref{thm:dictionaryOMC} gives the first implication for our characterization theorem (Theorem~\ref{thm:main}).

In Section~\ref{sec:forbidden} we introduce the (infinite) set of excluded pc-minors of tope graphs of COMs and provide some of its crucial properties, that will be used throughout the proofs in the following sections. In particular, we show that every member of the class has an antipodal subgraph that is not gated (Lemma~\ref{lem:minfamily}).
We conclude Section~\ref{sec:forbidden} with the the proof that partial cubes excluding all pc-minors from the class are tope graphs of COMs (Theorem~\ref{thm:topegraphs}). In particular, Theorem~\ref{thm:topegraphs} gives the second implication of Theorem~\ref{thm:main}. 

Finally, in Section~\ref{sec:antipodalminorclosed} we show that the class of antipodally gated partial cubes under pc-minors (Theorem~\ref{thm:AGminors}). Since the members of our set $\Q$ of excluded pc-minors have non-gated antipodal subgraphs, this yields the third and last implication of Theorem~\ref{thm:main}.

Section~\ref{sec:corollaries} is dedicated to the corollaries of our theorem, that are announced above. In, particular we prove the generalization of Handa's Theorem (Corollary~\ref{cor:Handa}) and prove a conjecture of~\cite{Che-16} (Corollary~\ref{cor:S4}). We conclude the paper with several further questions in Section~\ref{sec:conclusions}.

\section{Pc-minors, expansions, zone graphs, and metric subgraphs}\label{sec:pcminors}
In the present section we will give a thorough introduction to the theory of partial cubes and its elements that are important to our characterization. This will contain many definitions and lemmas, that will be used later. Of central importance are partial cube minors, expansions, zone graphs, and their interactions with metric subgraphs, such as convex, antipodal, affine, and gated subgraphs.


%

%
%
%

Let us start by giving an alternative way of characterizing partial cubes. 
Any isometric embedding of a partial cube into a hypercube leads to the same partition of edges into so-called $\Theta$-classes, where two edges are equivalent, if they correspond to a change in the same coordinate of the hypercube. This can be shown using the Djokovi\'c-Winkler-relation $\Theta$ which is defined in the graph without reference to an embedding, see~\cite{Djo-73,Win-84}. We will describe next, how the relation $\Theta$ can be defined independently of an embedding.

A subgraph $G'$ of $G$ is \emph{convex} if for all pairs of vertices in $G'$ all their shortest paths in $G$ stay in $G'$. 
For an edge $a=uv$ of $G$, define the sets $W(u,v)=\{ x\in V: d(x,u)<d(x,v)\}$. 
By a theorem of Djokovi\'c~\cite{Djo-73}, a graph $G$ is a partial cube if and only if $G$ is bipartite and for any edge $a=uv$
the sets $W(u,v)$ and $W(v,u)$ are convex. In this case, setting $a\Theta a'$ for $a=uv$ and $a'=u'v'$ if $u'\in W(u,v)$ and $v'\in W(v,u)$ yields $\Theta$.

We index the set of equivalence classes of $\Theta$ by a set ${\mathcal E}$. For $f\in\mathcal{E}$ we denote the equivalence class by $E_f$.For an arbitrary (oriented) edge $uv\in E_f$, let $E^-_f:=W(u,v)$ and $E^+_f:=W(v,u)$ the pair of complementary
convex halfspaces of $G$.
Now, identifying any vertex $v$ of $G$ with $v\in Q_{\mathcal{E}}=\{+, -\}^{\mathcal{E}}$ which for any class of $\Theta$ associates the sign of the halfspace containing $v$ gives an isometric embedding of $G$ into $Q_{\mathcal{E}}$.

\subsection{Pc-minors, expansions, and zone graphs}
We will now introduce the notions of Pc-minors (i.e., contraction and restriction) and zone graphs, which are methods to obtain smaller partial cubes from bigger ones, as well as, expansions, which are inverses of contractions. An important observation in this section is a polynomial time algorithm for checking for a given partial cube minor (Proposition~\ref{prop:polyminor}).

\subsubsection{Restrictions}

Given $f\in \mathcal{E}$,  an \emph{(elementary) restriction} consists in taking one of the subgraphs $G[E^-_f]$  or $G[E^+_f]$ induced by the
complementary halfspaces $E^-_f$ and $E^+_f$, which we will denote by $\rho_{f^-}(G)$ and $\rho_{f^+}(G)$, respectively. These graphs are isometric subgraphs of the hypercube $Q_{\mathcal{E}\setminus \{ f\}}$.
Now applying two elementary restriction with respect to different coordinates $f,g$, independently of the order of $f$ and $g$,
we will obtain one of the four (possibly empty) subgraphs induced by $E^-_f\cap E^-_g,E^-_f\cap E^+_g,E^+_f\cap E^-_g,$ and $E^+_f\cap E^+_g$. Since the intersection of convex subsets is convex, each of these
four sets is convex in $G$ and consequently induces an isometric subgraph of the hypercube $Q_{\mathcal{E}\setminus\{ f,g\}}$. More generally, a \emph{restriction} is a subgraph of $G$ induced by the
intersection of a set of (non-complementary) halfspaces of $G$. 
See Figures~\ref{fig:pcandminorsxmpl},~\ref{fig:COMminorexample}, and~\ref{fig:convexsubs} for examples of restrictions.  We denote restrictions by $\rho_{X}(G)$, where $X\in\{+, -\}^{\mathcal{E}}$ is a signed set of halfspaces of $G$.
For subset $S$ of the vertices of $G$ and $f\in \mathcal{E}$, we denote $\rho_{f^+}(S):=\rho_{f^+}(G)\cap S$ and $\rho_{f^-}(S):=\rho_{f^-}(G)\cap S$, respectively. We will say that $E_f$ \emph{crosses} a subset of vertices $S$ of $G$ if $\rho_{f^+}(S)\ne \emptyset$ and $\rho_{f^-}(S)\ne \emptyset$. 

The smallest convex subgraph of $G$ containing $V'$ is called the \emph{convex hull} of $V'$ and denoted by $\conv(V')$.
The following is well-known, also see Figure~\ref{fig:convexsubs}: 

\begin{lem}[\cite{Alb-16,Ban-89,Che-86}] \label{lem:restriction}
 The set of restrictions of a partial cube $G$ coincides with its set of convex subgraphs. Indeed, for any subset of vertices $V'$ we have that $\conv(V')$ is the intersection of all halfspaces containing $V'$. In particular, the class of partial cubes is closed under taking restrictions.
\end{lem}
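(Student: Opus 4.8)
The plan is to prove the three assertions of Lemma~\ref{lem:restriction} essentially at once, by establishing the second (and sharpest) claim: for any set of vertices $V'$ of a partial cube $G$, the convex hull $\conv(V')$ equals the intersection $H(V')$ of all halfspaces of $G$ that contain $V'$. Since halfspaces are convex (by Djokovi\'c's theorem, both $E^-_f$ and $E^+_f$ are convex) and intersections of convex sets are convex, $H(V')$ is a convex subgraph containing $V'$, hence $\conv(V')\subseteq H(V')$. The content is the reverse inclusion $H(V')\subseteq\conv(V')$, from which both remaining assertions follow: a restriction is by definition an intersection of halfspaces, so it equals the convex hull of its own vertex set and hence is convex; conversely any convex subgraph $C$ satisfies $C=\conv(C)=H(C)$, exhibiting $C$ as a restriction; and closure under restrictions is then immediate since convex subgraphs of partial cubes are again partial cubes (an isometric subgraph of an isometric subgraph of $Q_{\mathcal E}$ is isometric).

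\textbf{Key step.} To show $H(V')\subseteq\conv(V')$, I would argue contrapositively: take any vertex $x\notin\conv(V')$ and produce a halfspace containing $V'$ but not $x$. Embed $G$ isometrically in $Q_{\mathcal E}$ via its $\Theta$-classes. Since $x\notin\conv(V')$, there is a vertex $y\in\conv(V')$ at minimum distance from $x$ with $d_G(x,y)\ge 1$; pick an edge $a=yy'$ on a shortest $x$--$y$ path, lying in some $\Theta$-class $f$, so that $y'\in W(x\text{-side})$ more precisely $y$ and $x$ lie in the same halfspace $E^{\varepsilon}_f$ while $y'$ lies in the opposite one. I claim $\conv(V')\subseteq E^{\varepsilon}_f$, equivalently $\conv(V')$ meets only one side of the $\Theta$-class $f$: if some $z\in\conv(V')$ were on the $y'$-side, then a shortest $y$--$z$ path would have to cross $f$, and combining convexity of $\conv(V')$ with the fact that $W(y,y')$ and $W(y',y)$ are convex one gets a vertex of $\conv(V')$ strictly closer to $x$ than $y$, contradicting the choice of $y$. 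Hence $V'\subseteq\conv(V')\subseteq E^{\varepsilon}_f$ while $x\notin E^{\varepsilon}_f$, so $x\notin H(V')$.

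\textbf{Main obstacle.} The only delicate point is the claim that $\conv(V')$ lies entirely on one side of the separating $\Theta$-class $f$, i.e.\ the "closest-point" argument: one must use both that $\conv(V')$ is convex and that each halfspace $W(u,v)$ is convex (Djokovi\'c) to rule out a vertex of $\conv(V')$ on the far side of $f$ that would contradict minimality of $d_G(x,y)$. Concretely, if $z\in\conv(V')$ lies in $W(y',y)$, then since $y\in W(y,y')$ a shortest $z$--$y$ path crosses $f$ exactly once at some edge, and gatedness-type distance identities in partial cubes give $d_G(x,z)<d_G(x,y)$ is false in general — so instead one shows the edge $a$ can be chosen so that $y$ is actually the gate of $x$ in the edge $\Theta$-class, or more simply invokes that in a partial cube the convex hull of $V'$ equals $\bigcap\{E^\varepsilon_f : V'\subseteq E^\varepsilon_f\}$ because the coordinates of $Q_{\mathcal E}$ on which all of $V'$ agrees are exactly the coordinates on which $\conv(V')$ agrees. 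I would make this precise by noting $\conv(V')$, being convex, is itself a partial cube whose $\Theta$-classes are a subset of $\mathcal E$, and a coordinate $f$ is constant on $V'$ iff it is constant on $\conv(V')$ (a non-constant coordinate on $\conv(V')$ corresponds to a genuine $\Theta$-class, which must already separate two vertices of $V'$ by the standard fact that $\Theta$-classes of an isometric subgraph are inherited); this is exactly the assertion $\conv(V')=H(V')$, and it is the step that deserves the careful write-up.
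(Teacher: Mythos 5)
The paper itself gives no proof of this lemma --- it is quoted from the references in its statement --- so there is no internal argument to compare with; I can only assess your proposal on its own terms. Your overall strategy is the standard one and is correct in outline: reduce all three assertions to the equality $\conv(V')=H(V')$, get the easy inclusion from convexity of halfspaces, and prove the reverse inclusion by separating any $x\notin\conv(V')$ from $\conv(V')$ by a halfspace arising from a vertex $y\in\conv(V')$ nearest to $x$. The derivation of the other two assertions from this equality is also fine.

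As written, though, the key step has a genuine gap. First, a side mix-up: if $yy'$ is the edge of a shortest $x$--$y$ path incident with $y$, then the path crosses the $\Theta$-class $f$ of that edge only at $yy'$, so $x$ and $y'$ lie on the \emph{same} side of $E_f$ and $y$ on the other; the separating halfspace you want is $W(y,y')$, and your sentence putting $x$ and $y$ on the same side contradicts your own conclusion $x\notin E^{\varepsilon}_f$. More importantly, the claim that $\conv(V')\subseteq W(y,y')$ is exactly where the write-up stalls: you first gesture at a distance contradiction, then declare the needed identity doubtful, and finally fall back on the observation that a coordinate is constant on $V'$ iff it is constant on $\conv(V')$, asserting that this ``is exactly'' $\conv(V')=H(V')$. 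That fallback is circular: coincidence of the constant coordinates only gives $H(V')=H(\conv(V'))$; it does not show that the convex set $\conv(V')$ exhausts all vertices of $G$ agreeing on those coordinates, which is precisely the content of the lemma. The correct finish of your own approach is short: suppose $z\in\conv(V')\cap W(y',y)$. Since $y'$ is adjacent to $y$ and lies on the same side of $E_f$ as $z$, we have $d(z,y')=d(z,y)-1$, so $y'$ lies on a shortest $z$--$y$ path; convexity of $\conv(V')$ then forces $y'\in\conv(V')$, contradicting the choice of $y$ because $d(x,y')=d(x,y)-1$. With this inserted (and the sides straightened out) your proof is complete; note that convexity of the halfspaces is only needed for the easy inclusion and for closure under restrictions, not for this separation step.
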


\begin{figure}[htb]
\centering
\includegraphics[width=.7\textwidth]{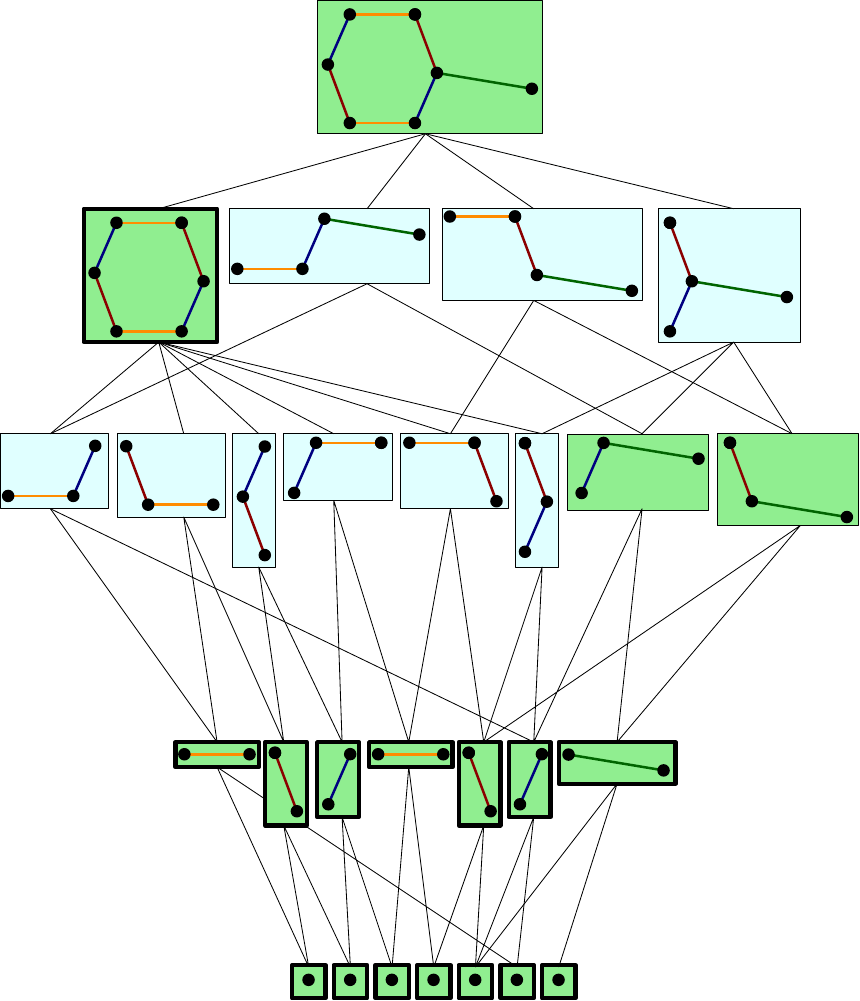}
\caption{The convex subgraphs of a partial cube ordered by inclusion. Green background means gated, thick outline means antipodal.}
\label{fig:convexsubs}
\end{figure}

%
%

\subsubsection{Contractions}

For $f\in \mathcal{E}$, we say that the graph  $G/E_f$ obtained from
$G$ by contracting the edges of the equivalence class $E_f$ is an \emph{(elementary) contraction} of $G$. For a vertex $v$ of $G$, we will denote by $\pi_f(v)$ the image of $v$ under the contraction in $G/E_f$, i.e.~if $uv$ is an
edge of $E_f$, then $\pi_f(u)=\pi_f(v)$, otherwise $\pi_f(u)\ne \pi_f(v)$. We will apply $\pi_f$ to subsets $S\subseteq V$, by setting $\pi_f(S):=\{\pi_f(v): v\in S\}$. In particular we denote the \emph{contraction} of $G$ by $\pi_f(G)$. See Figures~\ref{fig:pcandminorsxmpl},~\ref{fig:COMminorexample}, and~\ref{fig:contractions} for examples of contractions.

It is well-known and in particular follows from the proof of the first part of~\cite[Theorem 3]{Che-88} that $\pi_f(G)$ is an isometric subgraph of $Q_{\mathcal{E}\setminus \{ f\}}$. Since  edge contractions in graphs commute, i.e.~the resulting graph does not depend on the order in which a set of edges is contracted, we have:

\begin{lem}\label{lem:commut_contraction}
The class of partial cubes is closed under contractions. Moreover, contractions commute in partial cubes, i.e.~if $f,g\in \mathcal{E}$ and $f\ne g$, then $\pi_g(\pi_f(G))=\pi_f(\pi_g(G))$. 
\end{lem}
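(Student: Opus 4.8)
The lemma has two parts: (i) partial cubes are closed under elementary contractions, and (ii) contractions with respect to distinct $\Theta$-classes commute. For part (i), I would invoke the fact already cited in the paragraph preceding the lemma: it follows from the proof of the first part of~\cite[Theorem 3]{Che-88} that $\pi_f(G)$ is an isometric subgraph of $Q_{\mathcal{E}\setminus\{f\}}$, hence a partial cube. So the only real content to organize is (ii), and there the key observation, stated in the excerpt, is that \emph{edge contractions in graphs commute}: contracting a set of edges of a graph yields a result independent of the order of contraction.

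The plan for (ii) is therefore: first observe that $\pi_f(G)$ is obtained by contracting all edges in the $\Theta$-class $E_f$, and $\pi_g(G)$ by contracting all edges in $E_g$; since $f\neq g$, these are disjoint edge sets. Contracting $E_f$ and then $E_g$ in the \emph{graph} $G$ gives the same result as contracting $E_g$ then $E_f$, simply by commutativity of edge contractions. The one subtlety to address is that a priori the $\Theta$-class $E_g$ of $G$ need not coincide with the $\Theta$-class of $\pi_f(G)$ that one contracts when forming $\pi_g(\pi_f(G))$; one must check that the edges of $E_g$ in $G$ are exactly the edges of $\pi_f(G)$ whose contraction defines $\pi_g(\pi_f(G))$. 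Since contracting $E_f$ identifies the two endpoints of each edge of $E_f$ and leaves all other adjacencies intact, the edge set $E_g$ descends bijectively to a set of edges of $\pi_f(G)$, and by part (i) $\pi_f(G)$ is again a partial cube whose $\Theta$-classes partition its edges; the image of $E_g$ is a union of $\Theta$-classes of $\pi_f(G)$, and since contracting it yields the partial cube $\pi_f(\pi_g(G))$ obtained by the symmetric construction (again using commutativity of edge contraction in graphs), it is in fact a single $\Theta$-class. Thus $\pi_g(\pi_f(G)) = \pi_f(\pi_g(G))$ as graphs.

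I expect the main (though minor) obstacle to be precisely this bookkeeping: keeping the distinction between "the $\Theta$-class $E_g$ of $G$" and "the $\Theta$-class along which we contract in $\pi_f(G)$" clean, and confirming that contracting the image of $E_g$ in $\pi_f(G)$ is the operation $\pi_g$ applied to the partial cube $\pi_f(G)$. This is essentially formal once one notes that $\Theta$-classes of a partial cube are determined by the edge-to-coordinate assignment in the canonical embedding into $Q_{\mathcal{E}}$, and that an elementary contraction $\pi_f$ corresponds under this embedding to deleting coordinate $f$ from $Q_{\mathcal{E}}$; deleting coordinates $f$ and $g$ in either order yields the same subgraph of $Q_{\mathcal{E}\setminus\{f,g\}}$. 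Framing the argument through the coordinate description makes the commutativity transparent and sidesteps any ambiguity about $\Theta$-classes.
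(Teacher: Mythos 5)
Your proposal matches the paper's own justification: the paper gives no separate proof, deriving closure under contraction from the cited result of \cite{Che-88} and commutativity from the fact that edge contractions in graphs commute, exactly as you do. Your extra bookkeeping about the image of $E_g$ being a $\Theta$-class of $\pi_f(G)$ (or the equivalent coordinate-deletion view) is a harmless elaboration of the same argument.
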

%

Consequently, for a set $A\subseteq \mathcal{E}$, we denote by $\pi_A(G)$ the isometric subgraph of $Q(\mathcal{E}\setminus A)$ obtained from $G$ by contracting the classes $A\subseteq \mathcal{E}$ in $G$. See Figure~\ref{fig:contractions} for examples.

\begin{figure}[htb]
\centering
\includegraphics[width=.7\textwidth]{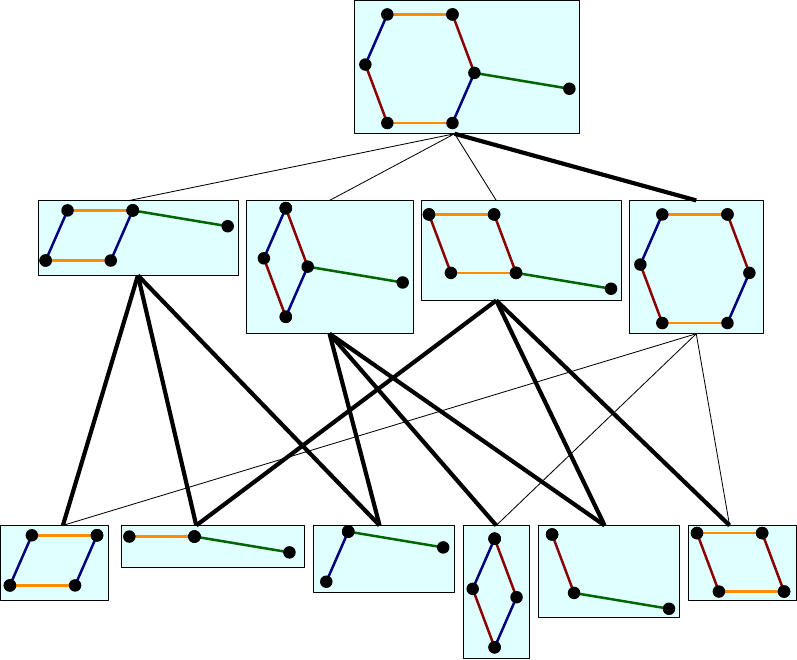}
\caption{Some contractions of a partial cube. Thick edges mean peripherality.}
\label{fig:contractions}
\end{figure}

The following can easily be derived from the definitions, see e.g.~\cite{Che-16}:

\begin{lem}\label{lem:commut_rest_contaction}
Contractions and restrictions commute in partial cubes, i.e.~if $f,g\in \mathcal{E}$ and $f\ne g$, then $\rho_{g^+}(\pi_f(G))=\pi_f(\rho_{g^+}(G))$.
\end{lem}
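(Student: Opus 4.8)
The plan is to prove that contractions and restrictions commute in partial cubes by arguing at the level of the vertex sets, using the fact that both operations have clean descriptions in terms of the $\Theta$-classes and the halfspaces. Fix distinct classes $f,g\in\mathcal E$. First I would observe that contracting $f$ does not destroy the $\Theta$-class $E_g$: since $f\ne g$, an edge of $E_g$ is not contracted by $\pi_f$, and by the established fact that $\pi_f(G)$ is an isometric subgraph of $Q_{\mathcal E\setminus\{f\}}$, the partition of its edges into $\Theta$-classes is exactly the image under $\pi_f$ of the classes of $G$ other than $E_f$. In particular the halfspace $E_g^+$ of $G$ maps onto the halfspace $(E_g^+)$ of $\pi_f(G)$, and a vertex $\pi_f(v)$ lies in the positive $g$-halfspace of $\pi_f(G)$ if and only if $v$ lies in $E_g^+$ (this is well defined precisely because the two endpoints of an $f$-edge lie on the same side of $g$, as $E_g^-,E_g^+$ are convex hence connected and an $f$-edge joining them would have to be parallel to $g$).

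With that in hand, the identity $\rho_{g^+}(\pi_f(G))=\pi_f(\rho_{g^+}(G))$ reduces to a set-theoretic check on vertex sets. On the left, the vertices are $\{\pi_f(v): v\in V,\ v\in E_g^+\}$ because, by the previous paragraph, $\pi_f(v)$ is in the positive $g$-halfspace of $\pi_f(G)$ iff $v\in E_g^+$. On the right, $\rho_{g^+}(G)$ has vertex set $E_g^+$, and applying $\pi_f$ gives $\{\pi_f(v):v\in E_g^+\}$; one only needs that restricting to $E_g^+$ first does not merge fewer or more pairs under the $f$-contraction than it should, i.e.\ that $E_f$ restricted to $G(E_g^+)$ is exactly $E_f\cap (E_g^+\times E_g^+)$, which again follows from convexity of halfspaces (an $f$-edge with both endpoints in $E_g^+$ is an $f$-edge of the restriction, and conversely). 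Hence the two graphs have the same vertex set as subsets of $Q_{\mathcal E\setminus\{f,g\}}$, and since both are isometric subgraphs of that hypercube — the right-hand side by Lemma~\ref{lem:restriction} followed by Lemma~\ref{lem:commut_contraction}, the left-hand side symmetrically — they coincide as graphs.

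I expect the main obstacle to be bookkeeping rather than depth: one must be careful that $E_g$ (as a $\Theta$-class) is genuinely preserved under $\pi_f$ and correctly identified with a $\Theta$-class of $\pi_f(G)$, since a priori contraction could identify vertices in a way that changes distances and hence $\Theta$. This is exactly where the quoted fact that $\pi_f(G)$ is an isometric subgraph of $Q_{\mathcal E\setminus\{f\}}$ does the work, together with the convexity (equivalently, connectedness) of the $g$-halfspaces which guarantees no $f$-edge crosses the $g$-cut. Once this compatibility is pinned down, the commutation is immediate from the coordinate description of vertices in $Q_{\mathcal E}$ given before Lemma~\ref{lem:restriction}. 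As the statement itself notes, the result is standard and can be extracted from \cite{Che-16}, so I would keep the argument brief, emphasizing the halfspace/coordinate viewpoint and citing the isometry of single contractions and restrictions as the only nontrivial inputs.
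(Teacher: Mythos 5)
Your argument is correct and is essentially the ``easily derived from the definitions'' route the paper intends (the paper itself gives no proof, deferring to~\cite{Che-16}): identify both sides with the same set of sign vectors in $Q_{\mathcal{E}\setminus\{f,g\}}$, using that no $E_f$-edge crosses the $g$-cut (an edge crossing it would lie in $E_g$, impossible since each edge belongs to exactly one $\Theta$-class) and that isometric subgraphs of a hypercube are induced, so equal vertex sets force equal graphs. The only points worth stating explicitly in a written-up version are these two facts, plus the observation that $\Theta$-classes of $\pi_f(G)$ and of $\rho_{g^+}(G)$ are the (nonempty) traces of the coordinate classes of $G$, which you do invoke correctly via the isometric embeddings.
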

%

The previous lemmas show that any set of restrictions and any set of contractions of a partial cube $G$ provide the same result, independently of the order in which we perform the restrictions and contractions. The resulting graph $G'$ is also a partial cube, and $G'$ is called a \emph{pc-minor} of $G$. In this paper we will study classes of partial cubes that are closed under taking pc-minors. Clearly, any such class has a (possibly infinite) set ${X}$ of minimal excluded pc-minors. We denote by $\mathcal{F}(X)$ the pc-minor closed class of partial cubes excluding $X$.

\begin{prop}\label{prop:polyminor}
 Let $X$ be a finite set of partial cubes. It is decidable in polynomial time if a partial cube $G$ is in $\mathcal{F}(X)$. 
\end{prop}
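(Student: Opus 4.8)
The plan is to reduce the problem to two computationally tractable subtasks: first, deciding membership in the class of partial cubes and computing the $\Theta$-classes; second, enumerating all pc-minors of $G$ of a given size and testing each against the finitely many graphs in $X$. The key input is that $X$ is fixed and finite, so the total number of vertices $N := \max_{H \in X} |V(H)|$ and the number of $\Theta$-classes $M := \max_{H \in X} |\mathcal{E}(H)|$ are constants. A pc-minor $G'$ of $G$ is obtained by choosing a set $A \subseteq \mathcal{E}(G)$ of classes to contract and then restricting to an intersection of halfspaces over the remaining classes; by Lemmas~\ref{lem:commut_contraction}, \ref{lem:commut_rest_contaction}, and \ref{lem:restriction}, the order is irrelevant and the restriction is just passing to a convex subgraph of $\pi_A(G)$. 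Since any $H \in X$ has at most $M$ $\Theta$-classes, if $H$ is a pc-minor of $G$ it is a convex subgraph of $\pi_A(G)$ for some $A$ with $|\mathcal{E}(G)| - |A| \le M$, i.e.\ we only contract down to at most $M$ classes.

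First I would recall that recognizing whether $G$ is a partial cube, and if so computing its $\Theta$-classes and an isometric embedding into $Q_{\mathcal{E}}$, can be done in polynomial time (e.g.\ in $O(|V|\,|E|)$ by Djokovi\'c's characterization via convexity of the sets $W(u,v)$, which is the standard algorithm). If $G$ is not a partial cube we reject. Otherwise we have $|\mathcal{E}| = m$ classes, $m \le |E|$. Next, for each $H \in X$ we must decide whether $H$ is a pc-minor of $G$. As noted, it suffices to consider contraction sets $A$ whose complement $B := \mathcal{E} \setminus A$ has size at most $M$; there are at most $\sum_{k \le M} \binom{m}{k} = O(m^M)$ such sets $B$, a polynomial since $M$ is constant. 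For each such $B$, form $\pi_{A}(G)$ (polynomial in $|V|$) and ask whether $H$ occurs as a convex subgraph of $\pi_A(G)$ using exactly the classes $B$. Since $H$ has at most $N$ vertices and $\pi_A(G)$ is an isometric subgraph of $Q_B$ with $|B| \le M$, one can test this by brute force: enumerate all at most $\binom{|V(\pi_A(G))|}{|V(H)|} = O(|V|^N)$ subsets of vertices of $\pi_A(G)$ of the right size, check each induces a subgraph isomorphic to $H$ (constant-size isomorphism test) and that this subgraph is convex in $\pi_A(G)$ (checked by verifying, via the precomputed distances, that it equals the intersection of the halfspaces of $\pi_A(G)$ containing it, cf.\ Lemma~\ref{lem:restriction}). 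If for some $H$ and some $B$ we find such a copy, $G \notin \mathcal{F}(X)$; if the search exhausts without success for all $H \in X$, then $G \in \mathcal{F}(X)$. Correctness is immediate from the characterization of pc-minors as convex subgraphs of contractions together with commutativity.

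Running-time bookkeeping: the outer loop over $H \in X$ is $O(1)$ iterations; the loop over contraction-complements $B$ is $O(m^M) = O(|E|^M)$; forming each contraction is polynomial; the inner brute-force search over vertex subsets of $\pi_A(G)$ is $O(|V|^N)$ with $O(1)$ work per subset (using precomputed distance/halfspace data, convexity of a constant-size set is checked in polynomial time). The total is polynomial in $|V|+|E|$ with exponents depending only on the fixed set $X$, as claimed.

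The main obstacle — really the only subtlety — is to argue cleanly that one need not contract all the way down but may stop once at most $M$ classes remain, and that a pc-minor realizing a fixed $H$ can always be witnessed as a convex subgraph of a single contraction $\pi_A(G)$ rather than some more elaborate interleaving of contractions and restrictions; this is exactly what Lemmas~\ref{lem:restriction}, \ref{lem:commut_contraction}, and \ref{lem:commut_rest_contaction} give us, so the argument is short but must be stated carefully. A secondary point worth a sentence is that contracting a class of $\pi_B(G)$ may merge $\Theta$-classes only in the sense of identifying edges that were already $\Theta$-equivalent — contractions do not create new identifications among the surviving classes — so ``$H$ uses exactly the classes of $B$'' is well-defined and the enumeration over $B$ is exhaustive.
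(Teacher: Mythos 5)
Your reduction breaks at exactly the point you flag as the ``only subtlety'', and the commutation lemmas do not repair it. From $H=\rho_X(\pi_{A_0}(G))$ you do get, via Lemmas~\ref{lem:restriction} and~\ref{lem:commut_rest_contaction}, that $H$ sits as a convex subgraph of the contraction $\pi_{A_0}(G)$; but $\pi_{A_0}(G)$ may still have many $\Theta$-classes, and you cannot in general contract the classes not crossing $H$ while keeping a convex copy of $H$. Lemma~\ref{contraction_convex} guarantees convexity of $\pi_f(H)$ only when $E_f$ crosses $H$ or is disjoint from it; when $E_f$ is incident to $H$ without crossing it, convexity can be lost (this is also why the inclusion in Lemma~\ref{convex_hull} is strict in general). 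A concrete counterexample to your key claim ``one may contract down to at most $M$ classes'': take $X=\{P_3\}$, so $N=3$, $M=2$, and $G=Q_3^-$, the $3$-cube minus a vertex. Then $P_3$ is a pc-minor of $G$ (it is the halfspace of any $\Theta$-class on the side of the deleted vertex, hence a restriction), yet every contraction of $G$ with at most two surviving $\Theta$-classes is isomorphic to $C_4$, $K_2$ or $K_1$, and none of these contains a convex $P_3$ (a $2$-path in $C_4$ is not convex). Your algorithm, which inspects only contractions with at most $M$ surviving classes, would therefore wrongly declare $Q_3^-\in\mathcal{F}(\{P_3\})$. The underlying error is that the budget for the contraction step cannot be tied to the number of $\Theta$-classes of $H$; completeness fails, so this is not a fixable bookkeeping issue but a wrong reduction.

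The paper's proof orders the two enumerations the other way and bounds the restriction step by the \emph{vertices} of the target: enumerate all vertex sets $V'$ of $G$ of size at most $n'=|V(H)|$ (there are $O(n^{n'})$ of them), form $\conv(V')$ --- by Lemma~\ref{lem:restriction} restrictions are exactly convex subgraphs, and one preimage per vertex of $H$ suffices to witness the needed restriction --- and then enumerate which $k'=|\mathcal{E}(H)|$ of the $k''$ classes crossing $\conv(V')$ survive, contracting the other $k''-k'$ inside $\conv(V')$; this is ${k''\choose k'}=O(n^{k'})$ choices, polynomial because $k'<n'$ is a constant. In that scheme every contracted class crosses the convex subgraph at hand, so Lemma~\ref{contraction_convex} applies and no convexity is destroyed. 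If you replace your ``contract to at most $M$ classes, then search for a convex copy'' step by this ``restrict to $\conv(V')$ for small $V'$, then choose which classes survive'' step, the remaining parts of your write-up (polynomial recognition of partial cubes and computation of $\Theta$-classes, constant-size isomorphism tests, halfspace-based convexity checks) are fine.
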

\begin{proof}
Let $G',G$ be partial cubes. Denote by $n'$ and $n$ the number of vertices of $G'$ and $G$, respectively, and with $k'$ and $k$ the number of $\Theta$-classes in $G'$ and $G$. We will show that testing if $G'$ is a pc-minor of $G$ can be done in polynomial time with respect to $n$. This clearly implies the result. 

For every subset $V'$ of at most $n'$ vertices of $G$ do the following: First compute $\conv(V')$ and count the number of $\Theta$-classes of $G$ crossing it, say it equals to $k''$. Then $k''\leq k$, and if $k'' < k'$ discard the subgraph. On the other hand, if $k'' \geq k'$, then for every subset $S$ of size $k''-k'$ of the $\Theta$-classes crossing  $\conv(V')$, contract  in $\conv(V')$ all the $\Theta$-classes  of $S$. Finally, check if the resulting graph is isomorphic to $G'$.
 
 Using Lemma~\ref{lem:commut_rest_contaction}, we know that $G'$ is a pc-minor if and only if it can be obtained by first restricting and then contracting, and by Lemma~\ref{lem:restriction}, taking restrictions coincides with taking convex hulls. We need to prove that one can take a convex hull of exactly $n'$ vertices. For that assume that $G'$ can be obtained from $G$ by first restricting to $G''$ and then contracting. For each vertex $v'$ of $G'$ pick a vertex $v''$ in $G''$ such that $v''$ maps to $v'$ under contraction. Then the set of all such $v''$ is also a subset in $G$, call it $S$. We claim that taking a convex hull of $S$ and then contracting gives $G'$. In fact, the obtained graph must be a subgraph of $G'$ since the convex hull of $S$ is a subset of $G''$. On the other hand, every vertex of $G'$ is obtained in this way since $S$ includes a representative of pre-image of every vertex of $G'$. This gives the correctness of the algorithm.
 
 For the running time, we have a loop of length $\mathcal{O}(n^{n'})$. In each execution we compute $\conv(V')$ which via Lemma~\ref{lem:restriction} can be easily done by intersecting all the halfspaces containing $V'$. Then we have $\mathcal{O}( {{k''}\choose{k''-k'}}) = \mathcal{O}({ {k''}\choose {k'}}) \leq \mathcal{O}({{ n }\choose{ k'}}) = \mathcal{O}(n^{k'})$ choices for the contractions of the $\Theta$-classes, each of which  can clearly be done in polynomial time, too. Note that $k'<n'$. Finally, we check if the obtained graph is isomorphic to $ G'$, which only depends on $n'$.
\end{proof}



\subsubsection{Expansions}

Later on we will also consider the inverse operation of contraction: a partial cube $G$ is an \emph{expansion} of a partial cube $G'$ if $G'=\pi_f(G)$ for some $\Theta$-class $f$ of $G$. Indeed expansions can be detected within the smaller graph. Let $G'$ be a partial cube containing two isometric subgraphs $G'_1$ and $G'_2$ such that $G'=G'_1\cup G'_2$, there are no edges from $V(G'_1\setminus G'_2)$ to $V(G'_2\setminus G'_1)$, and denote by $G'_0:=G[V(G'_1)\cap V(G'_2)]$ the subgraph induced by the vertices that are in both $G'_1$ and $G'_2$.  A graph $G$ is an expansion
of  $G'$ with respect to $G_0$ 
if $G$ is obtained from $G'$ by replacing each vertex $v$ of $G'_1$ by a vertex $v_1$ and each vertex $v$ of  $G'_2$ by a vertex $v_2$ such that $u_i$ and $v_i$, $i=1,2$ are adjacent in $G$ if and only if $u$ and $v$ are adjacent vertices of $G'_i$, and $v_1v_2$ is an edge of $G$ if and only if $v$ is a vertex of $G'_0$. The following is well-known:

\begin{lem}[\cite{Che-86,Che-88}]\label{lem:allfromone}
A graph $G$ is a partial cube if and only if $G$ can be obtained by a sequence of expansions from a single vertex.
\end{lem}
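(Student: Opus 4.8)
Both directions go by induction on the number of $\Theta$-classes of $G$ (equivalently, since partial cubes are connected, on $|V(G)|$); the base case is the single vertex, which is trivially a partial cube and requires no expansion.

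For the implication ``partial cube $\Rightarrow$ built by expansions'', suppose $G$ is a partial cube with an edge and pick any $f\in\mathcal{E}$. Set $G':=\pi_f(G)$, which by Lemma~\ref{lem:commut_contraction} is a partial cube, and whose $\Theta$-classes are $\mathcal{E}\setminus\{f\}$, one fewer than those of $G$; by induction $G'$ is built from a vertex by expansions. I would then check that $G$ is an expansion of $G'$, taking $G'_1:=\pi_f(G(E^-_f))$, $G'_2:=\pi_f(G(E^+_f))$ and $G'_0:=G'_1\cap G'_2$. Since $E^-_f$ and $E^+_f$ are complementary halfspaces, the only edges of $G$ between them are those of $E_f$; hence $\pi_f$ is injective on each halfspace and identifies $G(E^-_f)$ and $G(E^+_f)$ with $G'_1$ and $G'_2$, we have $G'=G'_1\cup G'_2$, and no edge of $G'$ joins $G'_1\setminus G'_2$ to $G'_2\setminus G'_1$ (such an edge would lift to an edge of $E_f$, which was contracted). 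The subgraphs $G'_1,G'_2$ are isometric in $G'$ because a geodesic between two vertices of a halfspace stays in that halfspace (halfspaces are convex, Lemma~\ref{lem:restriction}), so it is not shortened by the contraction and $d_{G'_i}=d_{G'}$ on the relevant pairs. Finally $G'_0$ is exactly the set of images of edges of $E_f$, and the bijection sending $v\in E^-_f$ to $\pi_f(v)_1$ and $v\in E^+_f$ to $\pi_f(v)_2$ identifies the expansion of $G'$ along $G'_0$ with $G$: edges inside each halfspace match the copies of $G'_1,G'_2$, and the edges of $E_f$ match the new ``doubling'' edges. This settles the forward direction.

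For ``built by expansions $\Rightarrow$ partial cube'', it suffices to show that an expansion $G$ of a partial cube $G'$ is again a partial cube. Fix an isometric embedding $\phi\colon G'\hookrightarrow Q_{\mathcal{E}}$ and define $\psi\colon V(G)\to Q_{\mathcal{E}\cup\{f\}}$ by $\psi(v_1)=(\phi(v),-)$ for $v\in G'_1$ and $\psi(v_2)=(\phi(v),+)$ for $v\in G'_2$ (a vertex of $G'_0$ gets both images, at Hamming distance $1$). This $\psi$ is injective and sends every edge of $G$ to an edge of $Q_{\mathcal{E}\cup\{f\}}$ (edges inside $G'_i$ flip one old coordinate since $G'_i\subseteq G'$, and edges $v_1v_2$ flip only the new one). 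To see $\psi$ is isometric one checks $d_G\le d_Q(\psi\cdot,\psi\cdot)$. If both endpoints lie on one side, say $x=u_1,y=v_1$, this is immediate: $d_Q(\psi x,\psi y)=d_{G'}(u,v)=d_{G'_1}(u,v)=d_G(u_1,v_1)$ by the isometry of $G'_1$. The only interesting case is $x=u_1$, $y=v_2$, where $d_Q(\psi x,\psi y)=d_{G'}(u,v)+1$: here any $u$--$v$ path in $G'$ must pass through $G'_0$, since it cannot step directly between $G'_1\setminus G'_2$ and $G'_2\setminus G'_1$, so there is a geodesic through some $w\in G'_0$, and then $d_G(u_1,v_2)\le d_{G'_1}(u,w)+1+d_{G'_2}(w,v)=d_{G'}(u,w)+d_{G'}(w,v)+1=d_{G'}(u,v)+1$. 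Hence $\psi$ is an isometric embedding into a hypercube, so $G$ is a partial cube; iterating along the finite expansion sequence completes the induction.

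I do not expect a genuine obstacle here, since the statement is classical and both implications amount to careful bookkeeping. The point demanding the most care is the mixed distance estimate in the second direction, which is precisely where all three defining conditions of an expansion ($G'=G'_1\cup G'_2$, the isometry of $G'_1$ and $G'_2$ in $G'$, and the absence of edges between $G'_1\setminus G'_2$ and $G'_2\setminus G'_1$) are used simultaneously; in the first direction the only subtlety is the observation that contraction of a $\Theta$-class does not shorten distances inside a convex halfspace.
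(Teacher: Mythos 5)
Your proof is correct. The paper does not prove Lemma~\ref{lem:allfromone} at all — it is quoted as known, with the expansion theorem of Chepoi cited — and your argument (contracting one $\Theta$-class and recognizing $G$ as the expansion of $\pi_f(G)$ along the images of the two halfspaces, plus the explicit re-embedding into $Q_{\mathcal{E}\cup\{f\}}$ with the mixed-distance estimate through $G'_0$ for the converse) is precisely the standard proof underlying that citation, with all three conditions in the paper's definition of expansion verified where they are needed.
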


We will make use of the following lemma about the interplay of contractions and expansions:

\begin{lem}\label{lem:G1G2}
Assume that we have the following commutative diagram of contractions:

\begin{center}
\begin{tikzcd}
G \arrow{r}{\pi_{f_1}} \arrow{d}{\pi_{f_2}} & \pi_{f_1} (G) \arrow{d}{\pi_{f_2}} \\%
\pi_{f_2} (G) \arrow{r}{\pi_{f_1}}& \pi_{f_1} (\pi_{f_2} (G))
\end{tikzcd}
\end{center}
If $G$ is expanded from $\pi_{f_1} (G)$ along sets $G_1,G_2\subseteq \pi_{f_1} (G)$, then $\pi_{f_2} (G)$ is expanded from $\pi_{f_1} (\pi_{f_2} (G))$ along sets $\pi_{f_2} (G_1)$ and $\pi_{f_2} (G_2)$.

\end{lem}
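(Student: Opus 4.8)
The plan is to use that an expansion is completely determined by its contraction together with the halfspaces of the newly created $\Theta$-class. Concretely, I would first record the following elementary reformulation: if $H$ is a partial cube and $f$ is one of its $\Theta$-classes with halfspaces $E_f^-,E_f^+$, then $H$ is an expansion of $\pi_f(H)$ along the sets $\pi_f(E_f^-)$ and $\pi_f(E_f^+)$; and, conversely, every expansion of a partial cube arises in exactly this way. The reason is that contracting $f$ identifies the two endpoints of each $f$-edge and changes nothing else: hence $\pi_f$ maps each halfspace of $f$ isomorphically (indeed isometrically, using convexity of the halfspaces together with the fact that $\pi_f$ preserves Hamming distance between vertices lying on the same side of $f$) onto a subgraph of $\pi_f(H)$; the two images cover $\pi_f(H)$; there is no edge between their private vertices, since every edge of $\pi_f(H)$ lifts to an edge inside a single halfspace; and the vertices of $\pi_f(H)$ with two preimages in $H$ are exactly those in the intersection of the two images, so re-doubling that intersection recovers $H$ together with its $\Theta$-classes. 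Applied to the hypothesis of the lemma, this yields $G_1=\pi_{f_1}(E_{f_1}^-(G))$ and $G_2=\pi_{f_1}(E_{f_1}^+(G))$, where $E_{f_1}^\pm(G)$ denote the two halfspaces of the class $f_1$ of $G$.

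I would then pass to $\pi_{f_2}(G)$ and apply the reformulation a second time, now to $\pi_{f_2}(G)$ and its class $f_1$. Since $f_1\neq f_2$, the class $f_1$ survives the contraction of $f_2$ (an $f_1$-edge of $G$ has its endpoints differing only in the coordinate $f_1\neq f_2$, so it is not contracted by $\pi_{f_2}$ and remains an $f_1$-edge), so $f_1$ is a genuine $\Theta$-class of $\pi_{f_2}(G)$, and by Lemma~\ref{lem:commut_rest_contaction} its halfspaces are $E_{f_1}^\pm(\pi_{f_2}(G))=\pi_{f_2}(E_{f_1}^\pm(G))$. Moreover $\pi_{f_1}(\pi_{f_2}(G))=\pi_{f_2}(\pi_{f_1}(G))$ by Lemma~\ref{lem:commut_contraction}, so the bottom edge of the commutative square is precisely the contraction of the class $f_1$ of $\pi_{f_2}(G)$. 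Hence the reformulation gives that $\pi_{f_2}(G)$ is an expansion of $\pi_{f_1}(\pi_{f_2}(G))$ along $\pi_{f_1}(E_{f_1}^\pm(\pi_{f_2}(G)))=\pi_{f_1}(\pi_{f_2}(E_{f_1}^\pm(G)))$. Finally, since the contraction maps $\pi_{f_1}$ and $\pi_{f_2}$ commute as maps on vertex sets (again Lemma~\ref{lem:commut_contraction}), this last expression equals $\pi_{f_2}(\pi_{f_1}(E_{f_1}^\pm(G)))=\pi_{f_2}(G_1)$ and $\pi_{f_2}(G_2)$ respectively, which is the assertion.

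I expect the only genuine work to be the ``converse'' half of the first paragraph: checking carefully that the pair of halfspace images really satisfies the three requirements in the definition of an expansion (being isometric subgraphs, covering $\pi_f(H)$, and having no private--private edges) and that re-doubling their intersection reproduces $H$ with the correct $\Theta$-classes. Once this reformulation is in place, everything else is bookkeeping with the commutation Lemmas~\ref{lem:commut_contraction} and~\ref{lem:commut_rest_contaction}, and in particular one avoids having to compare $\pi_{f_2}(G_1\cap G_2)$ with $\pi_{f_2}(G_1)\cap\pi_{f_2}(G_2)$.
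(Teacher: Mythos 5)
Your proof is correct, but it takes a genuinely different route from the paper. The paper argues on the contracted side: it lets $H_1,H_2$ be the sets along which $\pi_{f_2}(G)$ is expanded from $\pi_{f_1}(\pi_{f_2}(G))$, shows $H_1\cap H_2=\pi_{f_2}(G_1)\cap\pi_{f_2}(G_2)$ by analysing which vertices have an $E_{f_1}$-edge as preimage, and then identifies $H_i$ with $\pi_{f_2}(G_i)$ via a connectivity argument (removing the common part separates the private parts, and $\pi_{f_2}$ preserves connectedness). You instead make explicit the canonical form of an expansion -- the sets along which $H$ is expanded from $\pi_f(H)$ are exactly the $\pi_f$-images of the two halfspaces of $E_f$ -- and then the lemma reduces to pure bookkeeping with Lemma~\ref{lem:commut_contraction} and Lemma~\ref{lem:commut_rest_contaction}: the $f_1$-halfspaces of $\pi_{f_2}(G)$ are $\pi_{f_2}(E_{f_1}^{\pm}(G))$, and contracting them by $f_1$ gives $\pi_{f_2}(G_1)$ and $\pi_{f_2}(G_2)$ since the contraction maps commute on vertex sets. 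Your route is shorter and, as you note, avoids comparing $\pi_{f_2}(G_1\cap G_2)$ with $\pi_{f_2}(G_1)\cap\pi_{f_2}(G_2)$ as well as the paper's slightly informal step that removing the intersection leaves exactly two components; the price is that you must carefully verify the canonical-form claim (isometry of the halfspace images, covering, no private--private edges, and that re-doubling the intersection recovers $H$), which you correctly single out as the only real work and whose verification is routine from the convexity of halfspaces and the fact that only $E_f$-edges are contracted. The paper's proof, by contrast, works directly from the definition of expansion along sets and never needs this uniqueness statement.
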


\begin{proof}
Let $\pi_{f_2} (G)$ be expanded from $\pi_{f_1} (\pi_{f_2} (G))$ along sets $H_1,H_2$. Consider $v\in \pi_{f_1} (\pi_{f_2} (G))$. Vertex $v$ is in $H_1\cap H_2$ if and only if its preimage in $\pi_{f_2} (G)$ is an edge $a\in E_{f_1}$. This is equivalent to $\pi_{f_1}^{-1}(a)$ being intersected by $E^+_{f_1}$ and $E^-_{f_1}$ in $G$. But this means, that the image of $\pi_{f_2}^{-1}(a)$ in $\pi_{f_1} (G)$, say $I:=\pi_{f_1}(\pi_{f_2}^{-1}(a))$, has at least one vertex in $G_1\cap G_2$. The image $I$ is contracted to $v$ by $\pi_{f_2}$, thus $I$ is an edge or a vertex. Since every edge of $\pi_{f_1} (G)$ must have both its endpoints in $G_1$ or both its endpoints in $G_2$, we deduce that $I$ has a vertex in $G_1\cap G_2$ if and only if $v$ in $\pi_{f_2}(G_1)\cap \pi_{f_2}( G_2 )$.  This proves that $H_1\cap H_2 = \pi_{f_2}(G_1)\cap \pi_{f_2}(G_2)$.

Removing  $H_1\cap H_2= \pi_{f_2}(G_1)\cap \pi_{f_2}(G_2)$ from $\pi_{f_1} (\pi_{f_2} (G))$ cuts it into two connected components, one a subset of $H_1$, one a subset of  $H_2$. On the other hand, removing $G_1\cap G_2$ from $\pi_{f_1} (G)$ also cuts it into two connected components, one in $G_1$ and one in $G_2$. Since $\pi_{f_2}$ maps connected subgraphs to connected subgraphs, we see that $H_1=\pi_{f_2}(G_1)$ and $H_2=\pi_{f_2}(G_2)$, or the other way around.
\end{proof}

Let $G$ be a partial cube and $f\in\mathcal{E}$ indexing one of its $\Theta$-classes $E_f$. Assume that a halfspace $E_f^+$ (or $E_f^-$) is such that all its vertices are incident with edges from  $E_f$. Then we call  $E_f^+$ (or $E_f^-$) \emph{peripheral}. In such a case we will also call $E_f$ a peripheral $\Theta$-class, and call $G$ a \emph{peripheral expansion}  of $\pi_f(G)$. Note that an expansion along sets $G_1,G_2$ is peripheral if and only if one of the sets $G_1,G_2$ is the whole graph and the other one an isometric subgraph. An expansion is called \emph{full} if $G_1=G_2$. Note that in this case, the expanded graph is isomorphic to $G_1\square K_2$. See Figure~\ref{fig:contractions} for examples of peripheral and non-peripheral expansions.

\subsubsection{Zone graphs}

For a partial cube $G$ and $f\in\mathcal{E}$ the \emph{zone graph} of $G$ with respect to $f$ is the graph $\zeta_f(G)$ whose vertices correspond to the edges of $E_f$ and two vertices are connected by an edge if the corresponding edges of $E_f$ lie in a convex cycle of $G$, see~\cite{Kla-12}. Here, a \emph{convex cycle} is just a convex subgraph that is a cycle. In particular, $\zeta_f$ can be seen as a mapping from edges of $G$ that are not in $E_f$ but lie on a convex cycle crossed by $E_f$ to the edges of $\zeta_f(G)$. If $\zeta_f(G)$ is a partial cube, then we say that $\zeta_f(G)$ is \emph{well-embedded} if for two edges $a,b$ of $\zeta_f(G)$ we have $a\Theta b$ if and only if the sets of $\Theta$-classes crossing $\zeta_f^{-1}(a)$ and $\zeta_f^{-1}(b)$ coincide and otherwise they are disjoint. As an example, note that all zone graphs of the graph in Figure~\ref{fig:COMtopegraphexample} are well-embedded paths, while all zone graphs of the graph on top in Figure~\ref{fig:pcandminorsxmpl} are triangles. For yet another example, see Figure~\ref{fig:zonegraphs}. A consequence of Corollary~\ref{cor:Handa} will be that out of these three examples only the first one is the tope graph of a COM.

\begin{figure}[htb]
\centering
\includegraphics[width=.6\textwidth]{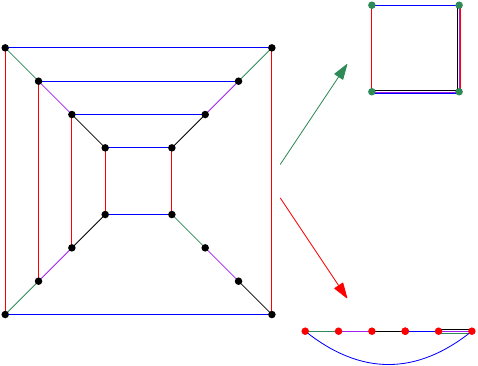}
\caption{A partial cube all of whose zone graphs are either 4-cycles or 6-cycles, but none of them are well-embedded.}
\label{fig:zonegraphs}
\end{figure}

For discussing zone graphs in partial cubes the following will be useful.
Let $v_1u_1, v_2u_2 \in E_e$ be edges in a partial cube $G$ with $v_1,v_2\in E_e^+$. Let $C_1,\ldots, C_n$, $n\geq 1$, be a sequence of convex cycles such that $v_1u_1$ lies only on $C_1$, $v_2u_2$ lies only on $C_n$, and each pair $C_i$ and $C_{i+1}$, for $i\in \{1,\ldots,n-1\}$, intersects in exactly one edge and this edge is in $E_e$, all the other pairs do not intersect. If the shortest path from $v_1$ to $v_2$ on the union of $C_1,\ldots, C_n$ is a shortest $v_1, v_2$-path in $G$, then we call $C_1,\ldots, C_n$ a \emph{convex traverse} from $v_1u_1$ to $v_2u_2$. In~\cite{Mar-16} it was shown that for every pair of edges $v_1u_1, v_2u_2$ in relation $\Theta$ there exists a convex traverse connecting them.

\begin{lem}\label{lem:pchyperplane}
 Let $G$ be a partial cube and $f\in\mathcal{E}$. Then $\zeta_f(G)$ is a well-embedded partial cube if and only if for any two convex cycles $C, C'$ that are crossed by $E_f$ and some $E_g$ both $C$ and $C'$ are crossed by the same set of $\Theta$-classes.
\end{lem}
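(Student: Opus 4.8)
The plan is to track, for every edge $a$ of $\zeta_f(G)$, the set $S(a)$ of $\Theta$-classes of $G$ crossing $\zeta_f^{-1}(a)$ (note $E_f\notin S(a)$, since $\zeta_f^{-1}(a)$ contains no edge of $E_f$). Call a convex cycle $C$ of $G$ a \emph{realizer} of $a$ if the two edges of $E_f$ lying on $C$ are exactly the two ends of $a$; then $\zeta_f^{-1}(a)$ consists of the edges not in $E_f$ that lie on some realizer of $a$. I would first prove an unconditional fact: \emph{if convex cycles $C,D$ are crossed by $E_f$ in the same pair of edges $\{e_1,e_2\}$, then $C$ and $D$ cross the same $\Theta$-classes of $G$.} Writing $e_i=u_iv_i$ with $u_i\in E_f^-$, the $u_1$--$u_2$ arc of $C$ lies in $E_f^-$ (as $E_f$ meets $C$ only in $e_1,e_2$) and, by convexity of $C$, is a shortest $u_1$--$u_2$ path in $G$, and likewise for $D$; since any two shortest paths between the same two vertices of a partial cube cross exactly the $\Theta$-classes separating those vertices, $C$ and $D$ cross the same classes on the $E_f^-$-side, hence the same classes overall. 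In particular $S(a)$ equals the set of $\Theta$-classes crossing $C$, minus $\{E_f\}$, for \emph{any} single realizer $C$ of $a$; equivalently $S(a)$ is the set of classes separating $u_1$ from $u_2$, where $u_1v_1,u_2v_2$ (with $u_i\in E_f^-$) are the ends of $a$.

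\textbf{($\Leftarrow$)} Assuming the cycle condition, I would first note that $S(a)\cap S(a')\neq\emptyset$ forces $S(a)=S(a')$: a class in the intersection crosses a realizer of $a$ and a realizer of $a'$, both crossed by $E_f$, so the hypothesis applies. Hence the distinct nonempty sets $S(a)$ are pairwise disjoint; call them \emph{blocks}. Let $\Phi$ send a vertex $e=uv$ of $\zeta_f(G)$ (with $u\in E_f^-$) to the restriction of $u$ to $\mathcal{E}\setminus\{f\}$; by the fact above, moving along an edge $a$ of $\zeta_f(G)$ flips exactly the coordinates of the block $S(a)$. Moreover $\zeta_f(G)$ is connected, since any two edges of $E_f$ are joined by a convex traverse \cite{Mar-16} whose consecutive cycles each carry two edges of $E_f$ and so trace a path in $\zeta_f(G)$. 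Therefore the set $\Phi(e)\,\triangle\,\Phi(e')$ of coordinates on which $\Phi(e),\Phi(e')$ disagree is always a disjoint union of blocks, say of $t$ of them, and any $e$--$e'$ path in $\zeta_f(G)$ needs a separate edge for each of these blocks, so $d_{\zeta_f(G)}(e,e')\geq t$. For the reverse inequality I would induct on $t$: if $t=0$ then $\Phi(e)=\Phi(e')$, which forces $e=e'$; if $t\geq1$, take a convex traverse from $e$ to $e'$ and consider its first cycle $C_1$, which contains $e$ — the first edge of the shortest path of $G$ carried by the traverse lies on $C_1$ and in a class separating the two $E_f^+$-endpoints, so the block consisting of the classes crossing $C_1$ other than $E_f$ meets $\Phi(e)\,\triangle\,\Phi(e')$ and is hence one of its $t$ blocks; passing to the other end of the $\zeta_f(G)$-edge carried by $C_1$ decreases $t$ by one. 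Thus $d_{\zeta_f(G)}(e,e')=t$, so choosing one coordinate from each block turns $\Phi$ into an isometric embedding of $\zeta_f(G)$ into a hypercube; hence $\zeta_f(G)$ is a partial cube whose $\Theta$-classes are exactly the blocks. Finally $a\Theta b$ iff $S(a)=S(b)$, and otherwise $S(a)\cap S(b)=\emptyset$; since $S(a)$ is the set of classes crossing $\zeta_f^{-1}(a)$, this is well-embeddedness.

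\textbf{($\Rightarrow$)} Now let $\zeta_f(G)$ be a well-embedded partial cube and let $C,C'$ be convex cycles crossed by $E_f$ and by a common class $E_g$. Let $a$ (resp. $a'$) be the edge of $\zeta_f(G)$ whose ends are the two $E_f$-edges of $C$ (resp. $C'$), so $C$ realizes $a$ and $C'$ realizes $a'$. Since $E_g\neq E_f$, the two $E_g$-edges of $C$ lie in $\zeta_f^{-1}(a)$, so $E_g$ crosses $\zeta_f^{-1}(a)$, and likewise $E_g$ crosses $\zeta_f^{-1}(a')$; thus the sets of classes crossing $\zeta_f^{-1}(a)$ and $\zeta_f^{-1}(a')$ are not disjoint, so well-embeddedness gives $a\Theta a'$ and equality of these two sets. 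By the fact above they equal, respectively, the classes crossing $C$ other than $E_f$ and the classes crossing $C'$ other than $E_f$; hence $C$ and $C'$ cross the same $\Theta$-classes, as asserted.

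\textbf{Main obstacle.} The delicate point is the inequality $d_{\zeta_f(G)}(e,e')\leq t$ in the $\Leftarrow$ direction: a convex traverse from $e$ to $e'$ need not project onto a shortest path of $\zeta_f(G)$, since it may run through cycles whose blocks do not separate $e$ from $e'$, so one cannot simply read a shortest path off it. The fix is to use only the first cycle of the traverse and recurse; the crux — and the place where the defining property of a convex traverse, namely that it carries a shortest path of $G$, is genuinely used — is arguing that this first cycle's block is one of the $t$ blocks of $\Phi(e)\,\triangle\,\Phi(e')$.
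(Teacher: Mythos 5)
Your proposal is correct and takes essentially the same route as the paper: both directions rest on the observation that all convex cycles through the same pair of $E_f$-edges cross the same $\Theta$-classes, on the existence of convex traverses between edges of $E_f$, and on the equivalence classes (your ``blocks'') of $\Theta$-classes that the cycle hypothesis induces, which is exactly the relation $\sim$ the paper defines on edges of $\zeta_f(G)$. Your write-up is simply a more explicit rendering: the unconditional realizer fact and the induction giving $d_{\zeta_f(G)}(e,e')=t$ spell out the details the paper compresses into its claim that every $\sim$-class cuts $\zeta_f(G)$ into two convex halves.
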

\begin{proof}
 The direction ``$\Rightarrow$'' follows immediately from the definition of well-embedded. 
 
 For ``$\Leftarrow$'' let $G$ satisfy the property that for any two convex cycles $C, C'$ that are crossed by $E_f$ and some $E_g$ both $C, C'$ are crossed by the same set of $\Theta$-classes. 
 
 Define an equivalence relation on the edges of $\zeta_f(G)$ by $a\sim b$ if and only if $\zeta_f^{-1}(a)$ and $\zeta_f^{-1}(b)$ are crossed by the same set of $\Theta$-classes. Let $a',b'\in E_f$ be two edges of $G$ corresponding to vertices of $\zeta_f(G)$. Then, there exists a convex traverse $T$ from $a'$ to $b'$, i.e., no two cycles in $T$ share $\Theta$-classes apart from $f$. By the property on convex cycles in $G$ all such paths from $a'$ to $b'$ in $\zeta_f(G)$ are crossed by the same set of equivalence classes and each exactly once. Furthermore, if there was a path in $\zeta_f(G)$ not corresponding to a traverse, its cycles would repeat $\Theta$-classes of $G$, thus cross several times equivalence classes of $\zeta_f(G)$. Thus, every equivalence class of $\sim$ cuts $\zeta_f(G)$ into two convex subgraphs. We have that $\zeta_f(G)$ is a partial cube and the embedding we defined shows that it is well-embedded.
\end{proof} 
 
 A well-embedded zone graph $\zeta_f(G)$ thus induces an equivalence relation on the $\Theta$-classes of $G$ except $f$, that are involved in convex cycles crossed by $E_f$. We denote by $\overline{e}$ the class of $\Theta$-classes containing $E_e$. Note that $\overline{e}$ corresponds to a $\Theta$-class of $\zeta_f(G)$ and vice versa.
 
 The following will be useful:
\begin{lem}\label{lem:parallel}
Let $\zeta_f(G)$ be a well-embedded partial cube and $g,h$ two equivalent $\Theta$-classes of $G$ and $C$ a convex cycle  crossed by $E_f,E_g,E_h$. If $a\in E_f$ is an edge such that $a = C\cap E_f\cap E_g^+ = C\cap E_f\cap E_h^+$, then each edge of $E_f$ is either in $E_g^+\cap E_h^+$ or in $E_g^-\cap E_h^-$.
\end{lem}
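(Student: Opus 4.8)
The plan is to fix an arbitrary edge $b\in E_f$ and to prove that $b$ lies in the set $D:=(E_g^+\cap E_h^+)\cup(E_g^-\cap E_h^-)$; equivalently, that $b$ lies neither in $E_g^+\cap E_h^-$ nor in $E_g^-\cap E_h^+$. If $b=a$ there is nothing to show, so assume $b\neq a$. The hypothesis $a=C\cap E_f\cap E_g^+=C\cap E_f\cap E_h^+$ gives the base case $a\in E_g^+\cap E_h^+\subseteq D$, and the idea is to transport membership in $D$ from $a$ to $b$ along a convex traverse that stays inside $E_f$. Concretely, by~\cite{Mar-16} there is a convex traverse $C_1,\ldots,C_n$ from $a$ to $b$; let $a=a_0,a_1,\ldots,a_n=b$ be the associated sequence of edges of $E_f$, so that $a_{i-1}$ and $a_i$ both lie on $C_i$ and $a_{i-1}\neq a_i$ (consecutive cycles of a traverse meet in a single edge of $E_f$, and $C_1$ contains $a$ while $C_n$ contains $b$). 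I will prove $a_i\in D$ for all $i$ by induction on $i$.

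The induction step rests on two facts about each cycle $C_i$ of the traverse. First, $C_i$ is a convex cycle crossed by $E_f$: it is a convex cycle by definition of a traverse, and it contains the edge $a_{i-1}\in E_f$ (where $a_0=a$), so it has exactly two edges in $E_f$, namely $a_{i-1}$ and $a_i$. Moreover, $C_i$ is crossed by $E_g$ if and only if it is crossed by $E_h$: the cycle $C$ from the statement is crossed by $E_f$ and by both $E_g$ and $E_h$, so if $C_i$ is crossed by $E_g$ then $C_i$ and $C$ are convex cycles both crossed by $E_f$ and by the common class $E_g$, whence, since $\zeta_f(G)$ is well-embedded, Lemma~\ref{lem:pchyperplane} forces them to be crossed by the same set of $\Theta$-classes; as $C$ is crossed by $E_h$, so is $C_i$, and the reverse implication is symmetric. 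Second, I will use the standard fact that a convex, hence isometric, cycle is crossed by each $\Theta$-class either zero times or on exactly two antipodal edges.

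With these in hand, consider the cycle $C_i$ and assume $a_{i-1}\in D$. If $C_i$ is crossed by neither $E_g$ nor $E_h$, then $C_i$ lies entirely on one side of $E_g$ and entirely on one side of $E_h$, so $a_{i-1}$ and $a_i$ lie in the same one of the four quadrants $E_g^{\pm}\cap E_h^{\pm}$; in particular $a_i\in D$. If $C_i$ is crossed by $E_g$, then, by the first fact, it is also crossed by $E_h$, and by the second fact the two $E_f$-edges $a_{i-1},a_i$ of $C_i$ are antipodal on $C_i$; hence each of the two arcs of $C_i$ between them meets $E_g$ exactly once and meets $E_h$ exactly once, so $a_{i-1}$ and $a_i$ lie on opposite sides of $E_g$ and, simultaneously, on opposite sides of $E_h$. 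Thus $a_{i-1}\in E_g^+\cap E_h^+$ yields $a_i\in E_g^-\cap E_h^-$, and $a_{i-1}\in E_g^-\cap E_h^-$ yields $a_i\in E_g^+\cap E_h^+$; in either case $a_i\in D$. Together with the base case $a_0=a\in D$, the induction gives $b=a_n\in D$, which is the claim.

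The step I expect to be the crux is this single-cycle analysis: one must combine the metric structure of the isometric cycle $C_i$ (antipodality of its two $E_f$-edges, and exactly one crossing each of $E_g$ and $E_h$ along either arc) with the well-embeddedness of $\zeta_f(G)$---via the reference cycle $C$---to ensure that $E_g$ and $E_h$ cross precisely the same cycles of the traverse, so that their halfspace transitions stay synchronized. Once that synchronization is in place, the rest is routine bookkeeping along the traverse.
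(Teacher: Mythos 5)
Your proof is correct and follows essentially the same route as the paper: both arguments take a convex traverse between the two $E_f$-edges (via~\cite{Mar-16}) and use Lemma~\ref{lem:pchyperplane} to force $E_g$ and $E_h$ to cross exactly the same cycles of the traverse, so their halfspace signs change in sync. The paper phrases this as a short contradiction (a cycle crossed by $E_f,E_h$ but not $E_g$), while you run the same idea as a direct induction along the traverse; the difference is purely presentational.
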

\begin{proof}
Suppose otherwise, that there is an edge $b$ in $E_f\cap E_g^+\cap E_h^-$. The interval from $a$ to $b$ is crossed by $E_h$ but not by $E_g$. Let $T$ be a convex traverse from $a$ to $b$. Then there exists a convex cycle on $T$ crossed by $E_f$ and $E_h$ but not by $E_g$ contradicting Lemma \ref{lem:pchyperplane}. 
\end{proof}

 Lemma~\ref{lem:parallel} justifies that if $\zeta_f(G)$ is a well-embedded partial cube and  $g\in\mathcal{E}\setminus\{f\}$, then we can orient $\overline{g}$ in $\zeta_f(G)$ such that $\rho_{\overline{g}^+}(\zeta_f(G))=\zeta_f(\rho_{\overline{g}^+}(G))$.

\begin{lem}\label{lem:hyperplanescommute}
 Let $G$ be a partial cube, $f\in\mathcal{E}$ such that $\zeta_f(G)$ is a well-embedded partial cube, $A\subseteq\mathcal{E}\setminus\{f\}$ and $X\in\{+, -\}^A$. We have $\pi_{\{\overline{e}\, \mid \, e\subseteq A\}}(\zeta_f(G))=\zeta_f(\pi_A(G))$ and $\rho_{\{\overline{e}^{X_e}\mid e\in A\}}(\zeta_f(G))=\zeta_f(\rho_X(G))$. 
\end{lem}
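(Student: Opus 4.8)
The statement asserts two commutativity facts: zone graphs commute with contractions (along classes not equal to $f$), and zone graphs commute with restrictions. The plan is to reduce both to the elementary case of a single $\Theta$-class $g \neq f$, since contractions commute with contractions (Lemma~\ref{lem:commut_contraction}), restrictions with restrictions (Lemma~\ref{lem:restriction}), and restrictions with contractions (Lemma~\ref{lem:commut_rest_contaction}); moreover, by Lemma~\ref{lem:pchyperplane}, the hypothesis that $\zeta_f(G)$ is a well-embedded partial cube is inherited by $\pi_g(G)$ and $\rho_{g^\pm}(G)$, because convex cycles of a contraction/restriction crossed by $E_f$ come from convex cycles of $G$ crossed by $E_f$ and the same set of $\Theta$-classes (up to removing $g$), so the elementary steps can be iterated. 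So it suffices to prove $\pi_{[g]}(\zeta_f(G)) = \zeta_f(\pi_g(G))$ when $[g]$ is a single $\Theta$-class of $\zeta_f(G)$ (i.e. $A = \{g\}$), and likewise $\rho_{[g]^{+}}(\zeta_f(G)) = \zeta_f(\rho_{g^+}(G))$.

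For the restriction case, the identification $\rho_{[g]^+}(\zeta_f(G)) = \zeta_f(\rho_{g^+}(G))$ is precisely what Lemma~\ref{lem:parallel} and the sentence following it already grant: the orientation of $[g]$ in $\zeta_f(G)$ is defined so that a vertex of $\zeta_f(G)$ — i.e. an edge $a \in E_f$ of $G$ — lies in $[g]^+$ iff $a$ lies in $E_g^+$ in $G$, and an edge of $\zeta_f(G)$ survives in $\rho_{[g]^+}$ iff the convex cycle it comes from survives in $\rho_{g^+}(G)$, which by convexity of halfspaces is iff that convex cycle is not crossed by $E_g$ and lies in $E_g^+$. I would just spell out that the vertex sets and edge sets match under this orientation, citing the remark after Lemma~\ref{lem:parallel}. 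For the contraction case, a vertex of $\zeta_f(\pi_g(G))$ is an edge of $\pi_g(E_f)$ in $\pi_g(G)$; since $E_f$ and $E_g$ are distinct $\Theta$-classes, contracting $E_g$ is injective on the edges of $E_f$ unless two edges of $E_f$ bound a common square crossed by $E_g$ — and such squares are convex cycles crossed by both $E_f$ and $E_g$, hence exactly the cycles that $\zeta_f$ records as $[g]$-edges. Thus $\pi_g$ identifies precisely the pairs of $E_f$-edges joined by a $[g]$-edge in $\zeta_f(G)$, which is the definition of $\pi_{[g]}(\zeta_f(G))$ on vertices. For edges: a convex cycle $C$ of $\pi_g(G)$ crossed by $\pi_g(E_f)$ lifts to a convex cycle of $G$ crossed by $E_f$ and not by $E_g$ (a $4$-cycle crossed by both would contract to an edge, and well-embeddedness rules out longer cycles crossed by both collapsing nontrivially), so convex cycles through $E_f$ in $\pi_g(G)$ correspond bijectively to convex cycles through $E_f$ in $G$ not crossed by $E_g$, i.e. to the $\zeta_f(G)$-edges not in $[g]$ — which is exactly the edge set of $\pi_{[g]}(\zeta_f(G))$.

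The main obstacle I expect is the careful handling of convex cycles of $G$ that are crossed by $E_f$ and also by $E_g$ but are \emph{not} $4$-cycles, in the contraction argument: I must confirm that after contracting $E_g$ such a cycle either becomes a shorter convex cycle still recorded by $\zeta_f$ with the correct incidences, or else its $\zeta_f$-edges are precisely those being identified — and that no spurious new convex cycles crossed by $\pi_g(E_f)$ appear in $\pi_g(G)$. This is where well-embeddedness (via Lemma~\ref{lem:pchyperplane}) does the real work: two convex cycles of $G$ crossed by $E_f$ and $E_g$ carry the same $\Theta$-classes, so they all behave uniformly under $\pi_g$, and one can track the traverse structure from the paragraph before Lemma~\ref{lem:pchyperplane} to see that traverses in $\pi_g(G)$ are images of traverses in $G$. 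Once the elementary steps are established with these bijections respecting $\Theta$-classes, assembling the general statement for arbitrary $A$ and $X$ is a routine induction using the commutation lemmas, and I would state it as such rather than write it out.
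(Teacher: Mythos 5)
The restriction half of your argument is fine and is essentially the paper's: for a single $g$ the identity $\rho_{[g]^+}(\zeta_f(G))=\zeta_f(\rho_{g^+}(G))$ is exactly the remark following Lemma~\ref{lem:parallel}, and the paper likewise handles a set of restrictions by orienting all members of each class consistently via that lemma. The genuine gap is in the contraction half. Your elementary step, $\pi_{[g]}(\zeta_f(G))=\zeta_f(\pi_g(G))$ for a single $\Theta$-class $g$ of $G$, is false whenever the parallel class $[g]$ contains more than one $\Theta$-class of $G$, and iterating it over $g\in A$ would prove a wrong statement (it would contract every zone class meeting $A$, not only those with $[e]\subseteq A$). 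Concretely, let $G=C_6$ with $\Theta$-classes $f,g,h$: then $\zeta_f(G)\cong K_2$ is well-embedded with the single class $[g]=\{g,h\}$, and $\zeta_f(\pi_g(G))=\zeta_f(C_4)\cong K_2$, whereas $\pi_{[g]}(\zeta_f(G))$ is a single vertex. This is precisely why the statement only contracts classes $[e]$ with $[e]\subseteq A$. Your edge-level claim that ``convex cycles through $E_f$ in $\pi_g(G)$ correspond bijectively to convex cycles through $E_f$ in $G$ not crossed by $E_g$'' fails in the same situation: by well-embeddedness a convex cycle crossed by $E_f$ and $E_g$ is crossed by \emph{all} of $[g]$, so when $[g]\neq\{g\}$ it has length at least six and contracts to a shorter convex cycle still crossed by $E_f$ and the remaining members of $[g]$, still producing a $[g]$-edge of the zone graph; well-embeddedness forces this survival rather than ruling it out. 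So your induction is missing the second elementary case, $g\in A$ with $[g]\not\subseteq A$, where one must show the zone graph is unchanged, together with the bookkeeping of how the classes $[e]$ evolve under the intermediate contractions.

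A secondary issue: your iteration needs that well-embeddedness is inherited by $\pi_g(G)$ (and by the restrictions), which you justify by saying convex cycles of the contraction ``come from'' convex cycles of $G$ crossed by the same classes. This lifting is not immediate: the preimage of a convex cycle under $\pi_g$ is convex but may be a prism rather than a cycle, so an argument is required. The paper sidesteps both problems by treating the whole set $A$ at once: a contraction of a class of $\zeta_f(G)$ corresponds to contracting all members of that class in $G$, and conversely contracting $A$ in $G$ affects only those zone classes all of whose members lie in $A$; no intermediate zone graphs ever appear. If you want to keep the one-class-at-a-time scheme, you must add the non-singleton case and the inheritance of well-embeddedness as separate lemmas.
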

\begin{proof}
For the contractions, clearly any contraction in $\zeta_f(G)$ corresponds to contracting the corresponding equivalence classes in $G$. Conversely, if some $\Theta$-classes $A$ are contracted in $G$, this affects only the classes of $\zeta_f(G)$ such that all the corresponding edges in $G$ are contained in $A$.

Taking a restriction in $\zeta_f(G)$ can be modeled by restricting to the respective sides of all the elements of the corresponding class of $\Theta$-classes of $G$.

By Lemma~\ref{lem:parallel}, if a set of restrictions in $G$ leads to a non-empty zone graph, there is an orientation for all the elements of the classes of $\Theta$-classes containing them leading to the same result.
\end{proof}

\subsection{Pc-minors and expansions versus metric subgraphs}
In this section we present  conditions under which contractions, restrictions, and expansions preserve metric properties of subgraphs, such as convexity, gatedness, antipodality, and affinity. An important result of the section is an intrinsic characterization of affine partial cubes (Proposition~\ref{prop:affinepc}).

\subsubsection{Convex subgraphs}

Let $G=(V,E)$ be an isometric subgraph of the hypercube $Q_{\mathcal{E}}$ and let $S$ be a subset of vertices of $G$. Let $f$ be any coordinate of $\mathcal{E}$. We will say that $E_f$ is \emph{disjoint} from $S$ if it does not cross $S$ and has no vertices in $S$. Note that a non-crossing class $E_f$ can have vertices in $S$, e.g., $S\cap E^+_f\ne\emptyset$. Thus, disjointness is stronger than to be non-crossing.
The following three lemmas describe the behavior of convex subgraphs under contractions, restrictions, and expansions. Their (short) proofs can be found in~\cite{Che-16}.

\begin{lem}\label{contraction_convex}
If $H$ is a convex subgraph of $G$ and $f\in\mathcal{E}$, then $\rho_{f^+}(H)$ is a convex subgraph of $\rho_{f^+}(G)$. If $E_f$ crosses $H$ or is disjoint from $H$, then also $\pi_f(H)$ is a convex subgraph of $\pi_f(G)$.
\end{lem}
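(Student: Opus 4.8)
The plan is to reduce both assertions to Lemma~\ref{lem:restriction} (convex subgraphs of a partial cube are exactly its restrictions, and $\conv(V')$ is the intersection of all halfspaces containing $V'$) and to Lemma~\ref{lem:commut_rest_contaction} (contractions commute with restrictions in other coordinates). We may assume $H\neq\emptyset$, the empty case being trivial, and we recall that convex subgraphs are connected. For the first assertion, write $H=\rho_X(G)$ for a signed set $X$ of halfspaces of $G$. Then $\rho_{f^+}(H)=\rho_{f^+}(G)\cap H$: if $X$ prescribes the $E_f^-$-side of $G$, this set is empty and hence vacuously convex; otherwise it is obtained from $\rho_{f^+}(G)$ by intersecting with the halfspaces prescribed by $X$ on coordinates different from $f$, so it is a restriction of $\rho_{f^+}(G)$. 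Either way, Lemma~\ref{lem:restriction} shows that $\rho_{f^+}(H)$ is convex in $\rho_{f^+}(G)$.

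For the second assertion, I would first show that under either hypothesis $H$ admits a description $H=\rho_Y(G)$ by a signed set $Y$ of halfspaces of $G$ none of which is $E_f^+$ or $E_f^-$. If $E_f$ crosses $H$, then neither $E_f^+$ nor $E_f^-$ contains $H$, so already the canonical description $H=\bigcap\{\text{halfspaces containing }H\}$ of Lemma~\ref{lem:restriction} does not involve $f$. If $E_f$ is disjoint from $H$, then $H$, being connected and not crossed by $E_f$, lies in one of the two halfspaces, say $H\subseteq E_f^-$; let $H^\flat$ be the intersection of all halfspaces containing $H$ except $E_f^-$ itself. If $H^\flat$ were not contained in $E_f^-$, then, being convex and hence connected, $H^\flat$ would meet both sides of $E_f$ and thus contain an edge of $E_f$, whose $E_f^-$-endpoint would lie in $H^\flat\cap E_f^-=H$ while being incident to $E_f$ --- contradicting the disjointness of $E_f$ from $H$. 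Hence $H^\flat\subseteq E_f^-$, so $H^\flat=H^\flat\cap E_f^-=H$, which is the desired description.

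Given this description, iterating Lemma~\ref{lem:commut_rest_contaction} over the coordinates occurring in $Y$ yields $\pi_f(H)=\pi_f(\rho_Y(G))=\rho_Y(\pi_f(G))$, which is a restriction of $\pi_f(G)$ and hence convex in $\pi_f(G)$ by Lemma~\ref{lem:restriction}. The only step that is not purely formal is the reduction in the disjoint case: the hypothesis that no vertex of $H$ lies on an edge of $E_f$ is exactly what is needed to delete the halfspace $E_f^-$ from a halfspace description of $H$. Once that is in place, both parts of the lemma follow at once from Lemmas~\ref{lem:restriction} and~\ref{lem:commut_rest_contaction}, the remaining care being the bookkeeping around empty restrictions and around $\Theta$-classes that become trivial after a restriction.
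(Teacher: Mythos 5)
Your argument is correct. The paper states this lemma without proof, treating it as a routine consequence of the preceding machinery, and your route---writing $H$ as a restriction by halfspaces that avoid the coordinate $f$ (which is exactly where the crossing/disjointness hypothesis is needed, via your $H^\flat\subseteq E_f^-$ argument) and then commuting $\pi_f$ past the restrictions using Lemmas~\ref{lem:restriction} and~\ref{lem:commut_rest_contaction}---is precisely the argument the paper's setup intends.
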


\begin{lem}\label{convex_hull} If $S$ is a subset of vertices of $G$ and $f\in \mathcal{E}$, then $\pi_f(\conv(S))\subseteq \conv(\pi_f(S))$. If $E_f$ crosses $S$, then $\pi_f(\conv(S))= \conv(\pi_f(S))$.
\end{lem}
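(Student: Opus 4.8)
The plan is to prove the two assertions separately, both by exploiting Lemma~\ref{lem:restriction}, which says that convex hulls in a partial cube are exactly intersections of halfspaces.

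For the inclusion $\pi_f(\conv(S))\subseteq \conv(\pi_f(S))$, I would argue purely metrically. A vertex of $\pi_f(\conv(S))$ is $\pi_f(w)$ for some $w\in\conv(S)$. Since $w$ lies on a geodesic between two vertices of $S$ (or, more robustly, since $w$ is in the intersection of all halfspaces containing $S$), and since the contraction map $\pi_f$ is nonexpansive and maps geodesics of $G$ to walks of $\pi_f(G)$ whose length drops by at most one, I would show that every $\Theta$-class $E_g$ of $\pi_f(G)$ (i.e.\ every $g\in\mathcal{E}\setminus\{f\}$) that separates $\pi_f(S)$ on one side also has $\pi_f(w)$ on that side: indeed if all of $S$ lies in $E_g^+$ of $G$ then all of $S$ lies in $E_g^+$, hence $\conv(S)\subseteq E_g^+$ by Lemma~\ref{lem:restriction}, hence $w\in E_g^+$, hence $\pi_f(w)\in E_g^+$ in $\pi_f(G)$ because contracting $E_f$ does not move vertices across $E_g$. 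Thus $\pi_f(w)$ lies in every halfspace of $\pi_f(G)$ containing $\pi_f(S)$, so $\pi_f(w)\in\conv(\pi_f(S))$ again by Lemma~\ref{lem:restriction}. This direction needs no hypothesis on $E_f$.

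For the reverse inclusion under the extra assumption that $E_f$ crosses $S$, I would take $z\in\conv(\pi_f(S))$ and must produce a preimage of $z$ inside $\conv(S)$. Lift $z$ to its two (or one) preimages $z^-,z^+$ in $G$, lying in $E_f^-$ and $E_f^+$ respectively; it suffices to show at least one of them is in $\conv(S)=\bigcap\{\text{halfspaces}\supseteq S\}$. For any $g\ne f$, the halfspace $E_g^\varepsilon$ of $G$ contains $S$ if and only if the corresponding halfspace of $\pi_f(G)$ contains $\pi_f(S)$, and in that case it contains $z$, hence contains both $z^-$ and $z^+$. So the only halfspaces of $G$ that could fail to contain $z^-$ or $z^+$ are the two halfspaces $E_f^-,E_f^+$ themselves — but since $E_f$ crosses $S$, \emph{neither} of $E_f^-,E_f^+$ contains $S$, so neither is one of the halfspaces whose intersection defines $\conv(S)$. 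Therefore $z^-\in\conv(S)$ (and likewise $z^+$), and $\pi_f(z^-)=z$, giving $z\in\pi_f(\conv(S))$. Combined with the first part this yields equality.

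I expect the only real subtlety — and hence the main thing to get right — is the bookkeeping that contracting $E_f$ sets up a bijection between halfspaces $E_g^\varepsilon$ of $G$ ($g\ne f$) and halfspaces of $\pi_f(G)$ that is compatible with containing $S$ versus containing $\pi_f(S)$, together with the observation that this bijection misses exactly the pair $\{E_f^-,E_f^+\}$; once that is in place both inclusions fall out of Lemma~\ref{lem:restriction}. Everything else is routine, and no appeal to expansions or to the earlier zone-graph machinery is needed.
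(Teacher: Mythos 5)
Your proof is correct. The paper in fact states this lemma without proof (it is one of the routine facts in Section~2, in the spirit of~\cite{Che-16}), so there is no argument to compare against; your route via Lemma~\ref{lem:restriction} is exactly the intended one: since $\pi_f$ only deletes the coordinate $f$, a halfspace $E_g^{\varepsilon}$ of $\pi_f(G)$ with $g\neq f$ contains $\pi_f(S)$ if and only if $E_g^{\varepsilon}$ contains $S$ in $G$, which gives the inclusion $\pi_f(\conv(S))\subseteq\conv(\pi_f(S))$ unconditionally, and when $E_f$ crosses $S$ neither $E_f^+$ nor $E_f^-$ occurs in the intersection defining $\conv(S)$, so the preimages of any $z\in\conv(\pi_f(S))$ lie in $\conv(S)$, giving equality. (Only a cosmetic slip: in the first part the sentence ``if all of $S$ lies in $E_g^+$ of $G$ then all of $S$ lies in $E_g^+$'' should read ``if all of $\pi_f(S)$ lies in $E_g^+$ in $\pi_f(G)$ then all of $S$ lies in $E_g^+$ in $G$''.)
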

%

\begin{lem}\label{convex_expansion} If $H'$ is a convex subgraph of $G'$ and $G$ is obtained from $G'$ by an isometric expansion, then the expansion of $H$ of $H'$ is a convex subgraph of $G$.
\end{lem}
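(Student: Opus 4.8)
The plan is to identify the expansion $H$ of $H'$ with the preimage of $H'$ under a contraction of $G$, and then to read off convexity from the halfspace description of convex subgraphs in Lemma~\ref{lem:restriction}.

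First I would set up notation. Write $G'=G'_1\cup G'_2$ with $G'_0=G'_1\cap G'_2$ as in the definition of expansion, so that $G$ has a $\Theta$-class $E_f$ with $\pi_f(G)=G'$ and, after orienting $f$ appropriately, $E_f^+$ and $E_f^-$ are the copies of $G'_1$ and $G'_2$ sitting inside $G$. Put $H'_i:=H'\cap G'_i$ for $i\in\{0,1,2\}$; then $H$ is the expansion of $H'$ along $H'_1,H'_2$. The first step is to verify that $V(H)=\pi_f^{-1}(H')$ and that $H$ is exactly the subgraph of $G$ induced on this vertex set. This is a direct comparison: a vertex of $G$ lies over $H'$ if and only if it is the copy $v_1$ of some $v\in H'_1$ or the copy $v_2$ of some $v\in H'_2$, with both copies present precisely when $v\in H'_0$, which is exactly the vertex set of the expansion; and an edge of $G$ joining two such vertices is either of the form $u_iv_i$ with $uv$ an edge of $G'_i$, hence of $H'_i$ (all subgraphs here being determined by their vertex sets), or of the form $v_1v_2$ with $v\in G'_0$, hence $v\in H'_0$ --- and $G$ has no further edges among these vertices. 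These are precisely the edges of $H$.

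Next comes the core step. Since $H'$ is convex in $G'$, Lemma~\ref{lem:restriction} writes it as an intersection $H'=\bigcap_{g\in Y}(E'_g)^{Y_g}$ of halfspaces of $G'$, where $Y$ is a signed set of $\Theta$-classes of $G'$ (equivalently, of $\Theta$-classes of $G$ other than $f$) and $E'_g$ is the image of $E_g$ under $\pi_f$. Because contraction commutes with restriction (Lemma~\ref{lem:commut_rest_contaction}), $\pi_f$ carries the halfspace $(E_g)^{Y_g}$ of $G$ onto $(E'_g)^{Y_g}$; and since $g\neq f$ the class $E_g$ is not contracted, so its two halfspaces remain distinct in $G'$ and $\pi_f^{-1}\big((E'_g)^{Y_g}\big)=(E_g)^{Y_g}$. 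Pulling the intersection back through $\pi_f$ therefore gives
\[
V(H)=\pi_f^{-1}(H')=\bigcap_{g\in Y}(E_g)^{Y_g},
\]
an intersection of halfspaces of $G$, which by Lemma~\ref{lem:restriction} is convex in $G$. Together with the first step this shows that $H$ is a convex subgraph of $G$.

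I do not anticipate a real obstacle: the only point needing care is the bookkeeping identity $V(H)=\pi_f^{-1}(H')$ together with keeping the orientations of $(E_g)^{\pm}$ in $G$ and of $(E'_g)^{\pm}$ in $G'$ matched up. Should one wish to bypass the halfspace description altogether, the same conclusion follows by lifting geodesics: for $u,v\in H$, a shortest $u,v$-path $P$ in $G$ crosses each $\Theta$-class at most once, so its image $\pi_f(P)$ is a shortest $\pi_f(u),\pi_f(v)$-path in $G'$; convexity of $H'$ forces $\pi_f(P)\subseteq H'$, hence every vertex of $P$ lies in $\pi_f^{-1}(H')=V(H)$, whence $P\subseteq H$.
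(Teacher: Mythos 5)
Your argument is correct. Note, however, that the paper states this lemma (together with Lemmas~\ref{contraction_convex} and~\ref{convex_hull}) without any proof, treating it as a routine fact, so there is no proof of record to compare against; your write-up is a valid way to fill that gap. Both of your routes work: the identification $V(H)=\pi_f^{-1}(V(H'))$ plus the observation that $\pi_f^{-1}$ of a halfspace $(E'_g)^{Y_g}$ of $G'=\pi_f(G)$ is the halfspace $(E_g)^{Y_g}$ of $G$ (which is exactly the content of Lemma~\ref{lem:commut_rest_contaction} combined with Lemma~\ref{lem:restriction}) immediately exhibits $H$ as an intersection of halfspaces, hence convex; and the geodesic-lifting argument at the end is even more economical, since a path in a partial cube crossing every $\Theta$-class at most once is automatically a geodesic, so $\pi_f(P)$ is a geodesic of $G'$ lying in $H'$ by convexity, forcing $P$ into $\pi_f^{-1}(H')$. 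The only bookkeeping point that genuinely needs the check you perform is that the expansion of $H'$ along $H'\cap G'_1$ and $H'\cap G'_2$ is precisely the subgraph of $G$ induced on $\pi_f^{-1}(V(H'))$ (using that convex subgraphs are induced); once that is in place, either closing argument is complete.
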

%

\subsubsection{Antipodal subgraphs}

Let $H$ be a subgraph of $G$. If for a vertex $x\in H$ there is a vertex $-_Hx\in H$ such that $\conv(x,-_Hx)=H$ we say that $-_Hx$ is the \emph{antipode} of $x$ with respect to $H$ and we omit the subscript $H$ if this causes no confusion. Intervals in a partial cube are convex since intervals in hypercubes equal (convex) subhypercubes, therefore $\conv(x,-_Hx)$ consists of all the vertices on the shortest paths connecting $x$ and $-_Hx$. Then it is easy to see, that if a vertex has an antipode, it is unique. We call a subgraph $H$ of a partial cube $G=(V,E)$ \emph{antipodal} if every vertex $x$ of $H$ has an antipode with respect to $H$. Note that antipodal graphs are sometimes defined in a different but equivalent way and then are called symmetric-even, see~\cite{Ber-88}. By definition, antipodal subgraphs are convex. See Figure~\ref{fig:convexsubs} for examples of antipodal and non-antipodal subgraphs. Their behavior with respect to pc-minors has been described in~\cite{Che-16} in the following way:

\begin{lem}\label{lem:antipodal_minor}
 Let $H$ be an antipodal subgraph of $G$ and $f\in\mathcal{E}$. If $E_f$ is disjoint from $H$, then $\rho_{f^+}(H)$ is an antipodal subgraph of $\rho_{f^+}(G)$. If $E_f$ crosses $H$ or is disjoint from $H$, then $\pi_f(H)$ is an antipodal subgraph of $\pi_f(G)$. 
\end{lem}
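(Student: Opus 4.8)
The statement to prove is Lemma~\ref{lem:antipodal_minor}: antipodality of subgraphs is preserved under contractions (when $E_f$ crosses $H$ or is disjoint from it) and, for restrictions, when $E_f$ is disjoint from $H$. The natural strategy is to work directly with the defining property: $H$ is antipodal iff for every $x \in H$ there is a vertex $-x \in H$ with $\conv(x,-x) = H$. Both parts of the lemma should follow from two facts already available in the excerpt: Lemma~\ref{contraction_convex} (contractions and restrictions send convex subgraphs to convex subgraphs, under the stated disjointness/crossing hypotheses) and Lemma~\ref{convex_hull} (the contraction of a convex hull is contained in the convex hull of the image, with equality when $E_f$ crosses the set). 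The plan is to verify, for a candidate antipode in the minor, that its convex hull with the image of $x$ is the whole image of $H$.

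\textbf{Restriction case.} Suppose $E_f$ is disjoint from $H$, so $H \subseteq E_f^+$ (say), and $\rho_{f^+}(H) = H$ as a subgraph, just re-embedded in $Q_{\mathcal E \setminus \{f\}}$. Take $x \in H$ with antipode $-x$. Since $\conv(x,-x) = H$ computed in $G$, and $H$ lies entirely inside the halfspace $E_f^+$, every shortest $x,-x$-path in $G$ stays in $E_f^+$, hence stays in $\rho_{f^+}(G)$; so the convex hull of $\{x,-x\}$ computed in $\rho_{f^+}(G)$ is still all of $H$. One should also note that $\rho_{f^+}(H)$ is convex in $\rho_{f^+}(G)$ by Lemma~\ref{contraction_convex}, though the interval argument already gives what is needed. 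This part is essentially immediate.

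\textbf{Contraction case.} Suppose $E_f$ crosses $H$ or is disjoint from $H$; by Lemma~\ref{contraction_convex}, $\pi_f(H)$ is convex in $\pi_f(G)$. Take $x \in H$ with antipode $-x$; I claim $\pi_f(-x)$ is the antipode of $\pi_f(x)$ in $\pi_f(H)$. We have $\conv(\pi_f\{x,-x\}) \supseteq \pi_f(\conv(x,-x)) = \pi_f(H)$ by Lemma~\ref{convex_hull}; conversely $\pi_f\{x,-x\} \subseteq \pi_f(H)$ and $\pi_f(H)$ is convex, so $\conv(\pi_f\{x,-x\}) \subseteq \pi_f(H)$. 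Hence $\conv(\pi_f(x),\pi_f(-x)) = \pi_f(H)$, giving antipodality. When $E_f$ is disjoint from $H$, $\pi_f$ restricted to $H$ is an isomorphism onto $\pi_f(H)$ and one can argue even more directly (every shortest path in $\pi_f(G)$ lifts); when $E_f$ crosses $H$, the equality half of Lemma~\ref{convex_hull} is exactly the tool needed since $E_f$ then crosses the set $\{x,-x\}$ only if... — here one must be slightly careful: $E_f$ crossing $H$ does not force $E_f$ to cross the two-element set $\{x, -x\}$.

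\textbf{Main obstacle.} The delicate point is precisely this last remark: in the contraction case with $E_f$ crossing $H$, I want $\conv(\pi_f\{x,-x\}) = \pi_f(\conv(x,-x))$, but Lemma~\ref{convex_hull} only guarantees the $\subseteq$ inclusion $\pi_f(\conv(S)) \subseteq \conv(\pi_f(S))$ in general, with equality requiring $E_f$ to cross $S = \{x,-x\}$. So the argument must instead go: $\conv(\pi_f(x),\pi_f(-x)) \supseteq \pi_f(\conv(x,-x)) = \pi_f(H)$ always holds, and $\subseteq$ holds because $\pi_f(H)$ is convex and contains both $\pi_f(x),\pi_f(-x)$. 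That is, one only ever needs the easy inclusion from Lemma~\ref{convex_hull} plus convexity of $\pi_f(H)$ from Lemma~\ref{contraction_convex}; the equality clause of Lemma~\ref{convex_hull} is a red herring here. I expect the whole proof to be short once it is organized this way, with the only real content being the correct bookkeeping of which inclusion comes from which lemma, and the observation that when $E_f$ crosses $H$ the antipodal structure could a priori degenerate (two antipodal vertices of $H$ could be identified) — but they cannot, because their images still span the convex hull $\pi_f(H)$, which has the same $\Theta$-classes as $H$ minus $f$ and in particular is nontrivial whenever $H$ is. I would present it as: first dispatch the restriction statement via the interval characterization of $\conv$, then dispatch both contraction statements uniformly via "$\supseteq$ from Lemma~\ref{convex_hull}, $\subseteq$ from convexity (Lemma~\ref{contraction_convex})."
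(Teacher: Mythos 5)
Your proof is correct. Note that the paper itself does not prove Lemma~\ref{lem:antipodal_minor} at all — it is quoted from~\cite{Che-16} — so there is no internal proof to compare against; your argument supplies one, and it is the natural one. The restriction case is exactly as you say: disjointness puts $H$ inside one (convex) halfspace, so convex hulls of subsets of $H$ computed in $\rho_{f^+}(G)$ agree with those computed in $G$, and each antipode survives unchanged. The contraction case is also correctly organized: for $x\in H$ with antipode $-x$, the inclusion $\pi_f(H)=\pi_f(\conv(x,-x))\subseteq\conv(\pi_f(x),\pi_f(-x))$ needs only the unconditional half of Lemma~\ref{convex_hull}, while the reverse inclusion follows from convexity of $\pi_f(H)$ in $\pi_f(G)$, which is precisely what Lemma~\ref{contraction_convex} gives under the hypothesis that $E_f$ crosses $H$ or is disjoint from it; surjectivity of $\pi_f\colon H\to\pi_f(H)$ then yields antipodality. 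One small remark: your cautionary aside that ``$E_f$ crossing $H$ does not force $E_f$ to cross $\{x,-x\}$'' is actually backwards — since $H=\conv(x,-x)$ is the interval between $x$ and $-x$ and halfspaces are convex, if both $x$ and $-x$ lay on the same side of $E_f$ then all of $H$ would, so crossing $H$ does force $E_f$ to separate $x$ from $-x$, and the equality clause of Lemma~\ref{convex_hull} would also have been available. But your route deliberately avoids needing this, so the inaccuracy is confined to a side comment and does not affect the proof; likewise the possible identification $\pi_f(x)=\pi_f(-x)$ (which happens exactly when $H$ is a single $E_f$-edge) causes no harm, since a single vertex is trivially antipodal.
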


In particular, Lemma~\ref{lem:antipodal_minor} implies that the class of antipodal partial cubes is closed under contractions. Next we will deduce a characterization of those expansions that generate all antipodal partial cubes from a single vertex, in the same way as Lemma~\ref{lem:allfromone} characterizes all partial cubes. Let $G$ be an antipodal partial cube and $G_1, G_2$ two subgraphs corresponding to an isometric expansion. We say that it is an \emph{antipodal expansion} if and only if $-G_1=G_2$, where $-G_1$ is defined as the set of antipodes of $G_1$.

\begin{lem}\label{lem:antipodalexpansion}
Let $G$ be a partial cube and $\pi_e(G)$ antipodal. Then $G$ is an antipodal expansion of $\pi_e(G)$ if and only if $G$ is antipodal. In particular, all antipodal partial cubes arise from a single vertex by a sequence of antipodal expansion.
\end{lem}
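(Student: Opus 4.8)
The plan is to prove both directions of the equivalence, then deduce the ``in particular'' clause by combining it with Lemma~\ref{lem:allfromone} and Lemma~\ref{lem:antipodal_minor}.

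\medskip

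\noindent\textbf{Direction ``$G$ antipodal $\Rightarrow$ antipodal expansion''.} Suppose $G$ is antipodal and $\pi_e(G)$ is antipodal. Write $G$ as the expansion of $G'=\pi_e(G)$ along isometric subgraphs $G_1, G_2$ with $G_0 = G_1\cap G_2$, so that $E_e$ is the $\Theta$-class created by the expansion. I want to show $-_G G_1 = G_2$ (identifying, as usual, $G_1$ with the halfspace $E_e^-$ and $G_2$ with $E_e^+$, or rather their images). Pick a vertex $x$ in $G_1$; since $G$ is antipodal, $x$ has an antipode $-x$ with $\conv(x,-x)=G$. Because $G=E_e^-\cup E_e^+$ and both halfspaces are proper, any interval equal to all of $G$ must be crossed by $E_e$; in particular every shortest $x,-x$-path uses an edge of $E_e$, which forces $-x\in E_e^+$, i.e.\ $-x\in G_2$. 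Thus $-_G G_1\subseteq G_2$, and by the symmetric argument (antipodality is an involution, $-(-x)=x$) $-_G G_2\subseteq G_1$, giving $-_G G_1 = G_2$. Hence the expansion is antipodal.

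\medskip

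\noindent\textbf{Direction ``antipodal expansion $\Rightarrow$ $G$ antipodal''.} Now suppose $\pi_e(G)=G'$ is antipodal and $G$ is an antipodal expansion, so $-_{G'}G_1 = G_2$ (note this already forces $G_1\cup G_2 = G'$ since for any $y\in G'$, $y$ is the antipode of some $x\in G_1$ hence lies in $G_2$, so in fact $G'=G_2$ and by symmetry $G'=G_1$; thus the expansion is actually a \emph{full} expansion, $G_1=G_2=G'$, with $G_0=G'$ as well — this is the key structural consequence I will exploit). Given a vertex $v_1\in E_e^-$ of $G$, lying over $v\in G'$, I claim its antipode in $G$ is $(-_{G'}v)_2$, the copy in $E_e^+$ of the $G'$-antipode of $v$. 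To verify $\conv_G\big(v_1,(-v)_2\big)=G$, I will check that this interval is crossed by every $\Theta$-class of $G$. It is crossed by $E_e$ by construction. For any other class $E_f$ of $G$, contract $E_e$: by Lemma~\ref{lem:antipodal_minor} (or directly), $\pi_e$ maps the interval $I_G(v_1,(-v)_2)$ onto $I_{G'}(v,-_{G'}v)=G'$, which is crossed by (the image of) $E_f$; since $E_f\neq E_e$, this means $E_f$ crosses $I_G(v_1,(-v)_2)$ as well. Hence the interval is all of $G$, so $(-v)_2$ is the antipode of $v_1$, and symmetrically every vertex of $E_e^+$ has an antipode in $E_e^-$. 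Therefore $G$ is antipodal.

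\medskip

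\noindent\textbf{The ``in particular'' clause.} A single vertex is trivially antipodal. By Lemma~\ref{lem:allfromone} every partial cube, in particular every antipodal partial cube $G$, arises from a single vertex by a sequence of expansions $G^{(0)}=K_1, G^{(1)},\dots, G^{(m)}=G$. By Lemma~\ref{lem:antipodal_minor}, contractions preserve antipodality, so every $G^{(i)}=\pi_{f_{i+1}}\cdots\pi_{f_m}(G)$ is antipodal (it is an iterated contraction of $G$). Thus at each step we have $G^{(i+1)}$ an expansion of the antipodal partial cube $G^{(i)}$ with $G^{(i+1)}$ antipodal, so by the equivalence just proved each of these expansions is an antipodal expansion. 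Hence $G$ is obtained from a single vertex by a sequence of antipodal expansions.

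\medskip

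\noindent\textbf{Main obstacle.} The delicate point is the forward direction: establishing that an antipode of $x\in G_1$ must lie in the \emph{opposite} halfspace. This rests on the observation that $\conv(x,-x)=G$ together with the fact that, in an expansion, both halfspaces $E_e^\pm$ are nonempty and proper, so the only way an interval can exhaust $G$ is to cross $E_e$; I should make sure the argument that ``$\conv(x,-x)=G$ forces $E_e$ to cross the interval'' is airtight (equivalently: if the interval avoided $E_e$ it would be contained in one halfspace, a proper convex subgraph, contradiction). The reverse direction's subtlety — realizing the expansion is forced to be full — is a pleasant simplification rather than an obstacle, but it is worth stating explicitly since it makes the antipode formula $v_1\mapsto(-_{G'}v)_2$ well defined.
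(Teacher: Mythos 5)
Your overall strategy (antipode of a vertex in one halfspace must lie across $E_e$; conversely, the antipode of $v_1$ is the copy of $-_{G'}v$ on the other side) is exactly the paper's, but your write-up of the converse direction contains a false claim that you yourself advertise as ``the key structural consequence'': from $-_{G'}G_1=G_2$ you deduce that the expansion is \emph{full}, $G_1=G_2=G'$. This is wrong, and the deduction offered is unjustified: knowing only $G_1\cup G_2=G'$ and $-_{G'}G_1=G_2$, a vertex $y\in G'$ need not be the antipode of a vertex of $G_1$ (its antipode may lie in $G_2\setminus G_1$), so all you recover is $y\in G_1\cup G_2$. Concretely, $C_8$ arises from $C_6$ (vertices $1,\dots,6$ cyclically) by expanding along the geodesics $G_1=\{1,2,3,4\}$ and $G_2=\{4,5,6,1\}$; here $-_{C_6}G_1=G_2$ and both $C_6$ and $C_8$ are antipodal, yet $G_1\neq G'$. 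Indeed, if your claim were true, then combined with the ``in particular'' part every antipodal partial cube would be built from a vertex by full expansions only, i.e.\ would be a hypercube --- contradicting $C_6$. Fortunately the rest of your argument never really uses fullness: the only thing needed for the antipode formula $v_1\mapsto(-_{G'}v)_2$ to make sense is that $-_{G'}v\in G_2$ whenever $v\in G_1$, which is literally the hypothesis $-_{G'}G_1=G_2$. After deleting the fullness claim, your verification is sound: the interval is crossed by $E_e$ by construction, and for $f\neq e$ one uses $\pi_e(\conv(v_1,(-v)_2))=\conv(v,-_{G'}v)=G'$ --- the correct citation for this is Lemma~\ref{convex_hull} (equality since $E_e$ crosses the pair), not Lemma~\ref{lem:antipodal_minor} --- so every $\Theta$-class crosses the interval, whence $d(v_1,(-v)_2)$ equals the number of $\Theta$-classes and the interval is all of $G$ (this last implication deserves its one-line justification via the canonical embedding: the endpoints differ in every coordinate, so every vertex lies on a shortest path between them).

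A smaller point in the forward direction: the definition of antipodal expansion is a condition on the subgraphs $G_1,G_2$ of $\pi_e(G)$ under the antipodal map of $\pi_e(G)$, whereas you prove that the antipodal map of $G$ sends $E_e^-$ into $E_e^+$. To close the gap you still need $\pi_e(-_Gx)=-_{\pi_e(G)}\pi_e(x)$: since $d_G(x,-_Gx)$ equals the number of $\Theta$-classes of $G$ and a shortest path crosses $E_e$ exactly once, the contracted pair is at distance equal to the number of classes of $\pi_e(G)$, hence antipodal there --- this is precisely the step ``$\pi_e(-v')=-v$'' that the paper makes explicit. With that line added and the fullness claim removed, your proof (including the ``in particular'' clause via Lemmas~\ref{lem:allfromone} and~\ref{lem:antipodal_minor}, which is fine) coincides with the paper's.
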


\begin{proof}
Say $\pi_e(G)$ is expanded to $G$ along sets $G_1,G_2$. Let $v\in G_1$ and $v'\in G$ a vertex with $\pi_e(v')=v$. 

If $G$ is antipodal, there exists a vertex $-v'$ whose distance to $v'$ is equal to the number of $\Theta$-classes of $G$. In particular, the shortest path must cross $E_e$, proving that $\pi_e(-v')\in G_2$. But $\pi_e(-v') = -v$ proving that $-v\in G_2$. 

Conversely, if $-G_1=G_2$ it is easy to see, that the antipode of $v'$ is in $\pi^{-1}_e(-v)$.
\end{proof}

A further useful property of antipodal subgraphs of partial cubes proved in~\cite{Che-16} is the following:
\begin{lem}\label{lem:antipodal_cycle}
Let $H$ be an antipodal subgraph of $G$ and $u,v\in H$, then $H$ contains an isometric cycle $C$ through $v,u-_Hv$ such that $\conv(C)=H$.
\end{lem}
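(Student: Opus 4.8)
The plan is to produce $C$ explicitly from the antipodal structure. Since $H$ is convex in $G$ it is itself a partial cube; write $\mathcal{E}(H)$ for its set of $\Theta$-classes and $k:=|\mathcal{E}(H)|$, and note that we may assume $k\ge 2$, as otherwise $H$ is a vertex or an edge and contains no cycle. The first step is to record that in a partial cube $d(x,y)$ equals the number of $\Theta$-classes separating $x$ and $y$, so $\conv(x,y)$ is determined by the classes \emph{not} separating them; in particular, for $x\in H$ the condition $\conv(x,y)=H$ holds precisely when every class of $H$ separates $x$ from $y$, i.e.\ when $y$ differs from $x$ in all $k$ coordinates of the embedding $H\hookrightarrow Q_{\mathcal{E}(H)}$. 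Hence the antipode map of the antipodal subgraph $H$ is the restriction to $H$ of the coordinate-wise negation of $Q_{\mathcal{E}(H)}$; since $H$ is an isometric, hence induced, subgraph of this cube and is closed under negation, negation restricts to an involutive automorphism $\alpha$ of $H$, and $d_H(x,-x)=k$ for every vertex $x$.

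Next, I would fix a vertex $v$ and a shortest path $P=(v=p_0,p_1,\dots,p_k=-v)$ in $H$; it crosses every $\Theta$-class exactly once, so $d_H(p_i,p_j)=|i-j|$. Applying $\alpha$ gives a second shortest path $-P=(-p_0,\dots,-p_k)$ from $-v$ to $v$. One checks that $P$ and $-P$ share only their endpoints: an equality $p_i=-p_j$ would force $p_i$ and $p_j$ to differ in all $k$ coordinates, hence $d_H(p_i,p_j)=k$ and $\{i,j\}=\{0,k\}$. Therefore $C:=P\cup(-P)$ is a cycle of length $2k$, whose edge set is partitioned by $\mathcal{E}(H)$ into pairs $\{e,\alpha(e)\}$, one pair per class; moreover, tracking cyclic positions shows that in each such pair the two edges lie diametrically opposite on $C$.

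It remains to see that $C$ is isometric and that $\conv(C)=H$. The convex-hull claim is immediate: $v=p_0$ and $-v=p_k$ lie on $C$, so $\conv(C)\supseteq\conv(v,-v)=H$, while $\conv(C)\subseteq H$ since $H$ is convex. For isometry, fix $x,y\in C$: walking the shorter of the two arcs of $C$ from $x$ to $y$ crosses some set $S$ of $\Theta$-classes, each exactly once, with $d_C(x,y)=|S|$; since the two edges of every class are antipodal on $C$, the class of an edge is crossed by the short arc exactly when $x$ and $y$ lie on opposite sides of that class in $H$, so $S$ is precisely the set of classes of $H$ separating $x$ and $y$, whence $d_C(x,y)=|S|=d_H(x,y)$. (Alternatively, one verifies $d_C=d_H$ on $C$ directly via the triangle inequality together with $d_H(p_j,-p_j)=k$.) This produces the required cycle.

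The step I expect to require the most care is the identification of the antipode with hypercube negation together with its consequence that $\alpha$ is an automorphism of $H$: this is what makes $-P$ a legitimate path in $H$ and what forces the pairs $\{e,\alpha(e)\}$ to be antipodal on $C$. Once that is in place, the isometry of $C$ reduces to the standard fact that distances in a partial cube count separating $\Theta$-classes, and the rest is bookkeeping.
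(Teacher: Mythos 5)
Your proof is correct. Note that the paper itself contains no proof of this lemma: it is quoted from \cite{Che-16}, so there is no in-text argument to compare against. Your construction is the natural (and essentially the standard) one: identify the antipodal map of $H$ with coordinatewise negation in $Q_{\mathcal{E}(H)}$ (your characterization $\conv(x,y)=H$ iff every $\Theta$-class of $H$ separates $x$ and $y$ is justified, since $\conv(x,y)$ is the intersection of the halfspaces containing $x$ and $y$ and $H$ is the intersection of the halfspaces containing $H$), take a shortest $v$-to-$(-v)$ path $P$, and close it up with its image under this automorphism; then each $\Theta$-class of $H$ occurs on exactly two, diametrically opposite, edges of the resulting $2k$-cycle $C$, which yields both $\conv(C)=H$ and the isometry of $C$ via counting separating classes. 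Two harmless points worth making explicit if you write this up: when $d_C(x,y)=k$ the ``shorter arc'' is not unique, but either arc works because any arc of at most $k$ edges meets each class at most once (this is also the step that turns ``crossed by the arc'' into ``separates $x$ from $y$'' via parity); and the degenerate cases $k\le 1$ that you set aside (a vertex or an edge, which are antipodal but contain no cycle) are degenerate for the statement itself, so excluding them is the right convention.
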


%
%
%

\subsubsection{Affine subgraphs}

We call a partial cube \emph{affine} if it is a halfspace of an antipodal partial cube. All graphs except the one on the top and the $K_{1,3}$ in Figure~\ref{fig:convexsubs} are affine. We can give the following intrinsic characterization of affine partial cubes, that will play a crucial role in our characterization of tope graphs of AOMs, see Corollary~\ref{cor:AOM}.

\begin{prop}\label{prop:affinepc}
 A partial cube $G$ is affine if and only if for all $u,v$ vertices of $G$ there are $w,-w$ in $G$ such that $\conv(u,w)$ and $\conv(v,-w)$ are crossed by disjoint sets of $\Theta$-classes.
\end{prop}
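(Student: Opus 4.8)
Here is how I would prove Proposition~\ref{prop:affinepc}. The plan is to reformulate the metric condition in terms of \emph{separators} and then treat the two implications separately, proving the backward one by an explicit construction.

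Fix an isometric embedding $G\subseteq Q_{\mathcal E}$, and for $x,y\in V(G)$ write $S(x,y)=\{g\in\mathcal E: x_g\neq y_g\}$; this is exactly the set of $\Theta$-classes crossing $\conv(x,y)$. Recall the standard facts that isometric subgraphs of hypercubes are induced, that intervals in partial cubes are convex, and that $S(x,z)=S(x,y)\,\triangle\,S(y,z)$, so that $y\in\conv(x,z)$ iff $S(x,y)\cap S(y,z)=\emptyset$. The existence of an antipode $-w$ of $w$ \emph{in $G$} means $\conv(w,-w)=G$, equivalently $S(w,-w)=\mathcal E$, equivalently the sign vector $\overline w$ obtained by reversing every coordinate of $w$ also lies in $V(G)$, in which case $-w=\overline w$. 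Since $S(v,\overline w)=\mathcal E\setminus S(v,w)$, the condition that $\conv(u,w)$ and $\conv(v,-w)$ be crossed by disjoint sets of $\Theta$-classes reads simply $S(u,w)\subseteq S(v,w)$. So the statement becomes: \emph{$G$ is affine iff for all $u,v\in V(G)$ there is $w\in V(G)$ with $\overline w\in V(G)$ and $S(u,w)\subseteq S(v,w)$.}

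For the forward direction, let $G=E_f^+$ inside an antipodal partial cube $\hat G$ with antipodal involution $\sigma$, which reverses all coordinates and swaps $E_f^+$ with $E_f^-$. Given $u,v\in V(G)$, let $I:=\conv_{\hat G}(u,\sigma(v))$; it is convex and contains $u\in E_f^+$ and $\sigma(v)\in E_f^-$, so a shortest $u$–$\sigma(v)$ path, lying in $I$, crosses $f$, and hence $I$ contains an edge $ww'\in E_f$ with $w\in E_f^+$, $w'\in E_f^-$. Put $-w:=\sigma(w')\in E_f^+$. Since $w$ and $w'$ differ only in $f$, the vertex $-w$ differs from $w$ in every coordinate of $\mathcal E(G)$, so $\conv(w,-w)=G$, i.e.\ $w$ and $-w$ are antipodes in $G$. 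Finally $w\in I$ forces $S(u,w)\cap S(w,\sigma(v))=\emptyset$, i.e.\ $w_g=\overline{v_g}$ whenever $u_g\neq w_g$; as $(-w)_g=\overline{w_g}$ for $g\neq f$, this is precisely $S(u,w)\cap S(v,-w)=\emptyset$.

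For the backward direction I would construct the antipodal partial cube directly: let $\hat G$ be the subgraph of $Q_{\mathcal E\cup\{f\}}$ induced on $\{(x,+):x\in V(G)\}\cup\{(\overline x,-):x\in V(G)\}$. The first set is a copy of $G$, and coordinate reversal $\sigma$ preserves the whole vertex set, hence is an involution of $\hat G$. Therefore, \emph{once we know $\hat G$ is a partial cube}, it is automatically antipodal: for every vertex $p$ one has $\conv_{\hat G}(p,\sigma(p))=I_{\hat G}(p,\sigma(p))=V(\hat G)$ because in a hypercube the interval between antipodal vertices is everything; and then $E_f^+\cong G$ is a halfspace, so $G$ is affine. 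The crux — the step I expect to be the main obstacle — is showing $\hat G$ is an \emph{isometric} subgraph of $Q_{\mathcal E\cup\{f\}}$. Distances between two vertices on the same side of $f$ are correct since $G$ is a partial cube. For $p=(x,+)$ and $q=(\overline y,-)$ one gets $d_{Q}(p,q)=1+|\mathcal E|-d_G(x,y)$, while any $p$–$q$ path in $\hat G$ uses an $f$-edge $(z,+)(z,-)$ with $\overline z\in V(G)$, so $d_{\hat G}(p,q)=1+\min\bigl(d_G(x,z)+|\mathcal E|-d_G(z,y)\bigr)$, the minimum over all such $z$; by the triangle inequality this is always $\ge |\mathcal E|-d_G(x,y)$, with equality iff some such $z$ has $x\in\conv_G(z,y)$. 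Applying the hypothesis with $u:=x$, $v:=y$ yields $w$ with $\overline w\in V(G)$ and $S(x,w)\subseteq S(y,w)$, which by the symmetric-difference identity means $S(w,x)\cap S(x,y)=\emptyset$, i.e.\ $x\in\conv_G(w,y)$; so $z:=w$ works and $\hat G$ is isometric. (Nonemptiness of the set of admissible $z$, hence connectivity of $\hat G$, is the hypothesis with $u=v$.) Everything else is routine; the only delicate point is this distance computation and its matching with the separator condition.
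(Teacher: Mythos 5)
Your proposal is correct, and its second half coincides with the paper's: the antipodal partial cube is built exactly as in the paper (a reversed copy of $G$ attached along a new coordinate $f$ by edges joining each $w$ to the copy of its antipode), the hypothesis is used in the same way to certify that cross-copy distances are realized (you via an explicit distance/separator computation, the paper via a path that crosses no $\Theta$-class twice), and antipodality follows in both cases from the fact that a vertex and its full coordinate reversal are separated by every $\Theta$-class, so their convex hull is everything. Where you genuinely diverge is the forward direction: the paper invokes Lemma~\ref{lem:antipodal_cycle} to get an isometric cycle through $u$, $v$ and $-_{\widetilde{G}}v$ whose convex hull is all of $\widetilde{G}$, and reads off $w,-_Gw$ as the two vertices of that cycle incident to $E_f$-edges; you instead take a geodesic from $u$ to the antipode $\sigma(v)$, let $ww'$ be the $E_f$-edge it crosses, and set $-w:=\sigma(w')$, checking the disjointness of separators directly. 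Your route is more elementary and avoids a subtle point in the paper's argument, namely that Lemma~\ref{lem:antipodal_cycle} as stated only produces some spanning isometric cycle, not one through three prescribed vertices; the paper's cycle-based argument, on the other hand, buys a geometric picture (both $w$ and $-_Gw$ arise simultaneously on one cycle) that is reused elsewhere in the paper. Your up-front translation of the condition into $S(u,w)\subseteq S(v,w)$ also makes the bookkeeping in both directions cleaner than the paper's phrasing in terms of crossed $\Theta$-classes.
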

\begin{proof}
 Let $G=E^+_f(\widetilde{G})$ be a halfspace of an antipodal partial cube $\widetilde{G}$. For $u,v\in G$ consider the antipode $-_{\widetilde{G}}v$ of $v$ in $E^-_f(\widetilde{G})$. By Lemma~\ref{lem:antipodal_cycle}, we can consider an isometric cycle $C$ through $v,u,-_{\widetilde{G}}v$ such that $\conv(C)=\widetilde{G}$.  The two vertices $w,z$ on $C\cap E^+_f(\widetilde{G})$ that are incident with edges from $E_f(\widetilde{G})$ are connected on $C$ by a shortest path crossing all the $\Theta$-classes of $G$, i.e.~$z=-_Gw$. By symmetry of $w,-_Gw$, we can assume that $v$ appears before $u$ on a shortest path from $w$ to $-_Gw$. Thus $w,-_Gw\in G$ are such that $\conv(u,w)$ and $\conv(v,-_Gw)$ are crossed by disjoint sets of $\Theta$-classes.
 
 Conversely, let $G$ be such that for all $u,v\in G$ there are $w,-w\in G$ such that $\conv(u,w)$ and $\conv(v,-w)$ are crossed by disjoint sets of $\Theta$-classes. We construct $\widetilde{G}$ by taking a copy $G'$ of $G$ and join $w$ with an edge to $(-w)'$ for each pair $w,-w\in G$. Associating all these new edges to a new coordinate of the hypercube we get an embedding into a hypercube of dimension one higher. First we show that $\widetilde{G}$ is a partial cube. Since $G$ and its copy on their own are partial cubes, suppose now that $u\in G$ and $v'\in G'$. In $G$ we can take $w,-_Gw\in G$ such that $\conv_G(u,w)$ and $\conv_G(v,-_Gw)$ are crossed by disjoint sets of $\Theta$-classes. Consider a shortest path from $u$ to $w$, then the edge to $(-w)'$, and finally a shortest path from $(-w)'$ to $v'$. Since none of the original $\Theta$-classes was crossed twice, this is a shortest path of the hypercube that $\widetilde{G}$ is embedded in.

 It remains to show that $\widetilde{G}$ is antipodal. For every vertex $v\in G$ there exists $w,-w\in G$ such that $\conv(v,w)$ and $\conv(v,-w)$ are crossed by disjoint sets of $\Theta$-classes. In fact, in this case $\conv(v,w)$ and $\conv(v,-w)$ together cross all $\Theta$-classes of $G$. 
Hence taking a shortest path from $v$ to $w$, then the edge to $(-w)'$ and from there a shortest path to $v'$ yields a path from $v$ to $v'$ crossing each $\Theta$-class of $\widetilde{G}$ exactly once. This implies that $v'$ is an antipode of $v$.
 \end{proof}

By Lemma~\ref{lem:commut_rest_contaction} a contraction of a halfspace is a halfspace and by Lemma~\ref{lem:antipodal_minor} antipodal partial cubes are closed under contraction, therefore we immediately get:
 
 \begin{lem}\label{lem:affine_minor}
  The class of affine partial cubes is closed under contraction.
 \end{lem}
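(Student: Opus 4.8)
The plan is to unfold the definition of \emph{affine} and to push the contraction through to the ambient antipodal partial cube. Suppose $G$ is affine, so by definition $G=E^+_f(\widetilde{G})$ is a halfspace of an antipodal partial cube $\widetilde{G}$, and let $g\in\mathcal{E}$ be a $\Theta$-class of $G$ along which we wish to contract. Since a halfspace is convex, $G$ is a convex subgraph of $\widetilde{G}$, and the $\Theta$-classes of $G$ are precisely the nonempty traces $E_h\cap E(G)$ of the $\Theta$-classes $E_h$ of $\widetilde{G}$ that cross $G$; pick such a class $\widetilde{g}$ of $\widetilde{G}$ whose trace on $G$ is $g$. Note that $\widetilde{g}\neq f$, because $E_{\widetilde{g}}$ crosses $G=E^+_f(\widetilde{G})$ whereas $E_f$ does not.

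First I would apply Lemma~\ref{lem:antipodal_minor}: as $E_{\widetilde{g}}$ crosses $\widetilde{G}$, the contraction $\pi_{\widetilde{g}}(\widetilde{G})$ is again an antipodal partial cube. Next, since $f\neq\widetilde{g}$, the class $E_f$ is unaffected by this contraction and remains a $\Theta$-class of $\pi_{\widetilde{g}}(\widetilde{G})$, so $E^+_f\bigl(\pi_{\widetilde{g}}(\widetilde{G})\bigr)$ is a halfspace of this antipodal partial cube. Finally, by Lemma~\ref{lem:commut_rest_contaction} restrictions and contractions commute (here with the roles $f\ne\widetilde g$), hence
$$E^+_f\bigl(\pi_{\widetilde{g}}(\widetilde{G})\bigr)=\rho_{f^+}\bigl(\pi_{\widetilde{g}}(\widetilde{G})\bigr)=\pi_{\widetilde{g}}\bigl(\rho_{f^+}(\widetilde{G})\bigr)=\pi_{\widetilde{g}}(G)=\pi_g(G).$$
Therefore $\pi_g(G)$ is a halfspace of the antipodal partial cube $\pi_{\widetilde{g}}(\widetilde{G})$, i.e.\ $\pi_g(G)$ is affine, and since every contraction of $G$ is obtained by iterating elementary contractions (which commute, Lemma~\ref{lem:commut_contraction}), the class of affine partial cubes is closed under contraction.

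The only point needing a little care — and the likeliest place to slip — is the identification of the $\Theta$-class $g$ of the halfspace $G$ with a $\Theta$-class $\widetilde{g}$ of the ambient graph $\widetilde{G}$, together with the bookkeeping that $\widetilde{g}\neq f$ so that $E_f$ survives the contraction; once this is pinned down, the statement is an immediate consequence of the two cited lemmas, exactly as the sentence preceding the lemma indicates, and no genuine obstacle is expected.
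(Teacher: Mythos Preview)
Your proof is correct and follows exactly the approach the paper indicates in the sentence preceding the lemma: use Lemma~\ref{lem:commut_rest_contaction} to see that a contraction of a halfspace is a halfspace, and Lemma~\ref{lem:antipodal_minor} to see that the ambient antipodal graph stays antipodal. Your added care about matching the $\Theta$-class $g$ of $G$ with a class $\widetilde{g}\neq f$ of $\widetilde{G}$ is just a clean way of making explicit what the paper leaves implicit.
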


%

\subsubsection{Gated subgraphs}

A subgraph $H$ of $G$, or just a set of vertices of $H$, is called \emph{gated} (in $G$) 
if for every vertex $x$ outside $H$ there exists a vertex $x'$ in $H$, the \emph{gate} of $x$, such that each vertex $y$ of $H$ is
connected with $x$ by a shortest path passing through the gate
$x'$. It is easy to see that if $x$ has a gate in $H$, then it is unique
and that gated subgraphs are convex. See~\cite{Dre-87} for several results on gated sets in metric spaces. See Figure~\ref{fig:convexsubs} for examples of gated and non-gated subgraphs.

In~\cite{Che-16} it was shown that gated subgraphs behave well with respect to pc-minors:
\begin{lem}\label{contraction_gated}
If $H$ is a gated subgraph of $G$, then $\rho_{f^+}(H)$ and $\pi_f(H)$ are  gated subgraphs of $\rho_{f^+}(G)$ and $\pi_f(G)$, respectively.
\end{lem}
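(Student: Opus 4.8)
The plan is to reduce gatedness to a purely metric (indeed set-theoretic) condition and then check that this condition is inherited under restriction and contraction. Recall that $x'\in H$ is the gate of $x$ in $H$ exactly when $d(x,y)=d(x,x')+d(x',y)$ for every $y\in H$. Using the isometric embedding of $G$ into $Q_{\mathcal E}$, write $\Delta(u,v)$ for the set of $\Theta$-classes separating $u$ from $v$, so that $d(u,v)=|\Delta(u,v)|$ and $\Delta(x,y)$ is always the symmetric difference of $\Delta(x,x')$ and $\Delta(x',y)$. Since $|\Delta(x,x')\bigtriangleup\Delta(x',y)|=|\Delta(x,x')|+|\Delta(x',y)|-2\,|\Delta(x,x')\cap\Delta(x',y)|$, the additivity $d(x,y)=d(x,x')+d(x',y)$ is equivalent to $\Delta(x,x')\cap\Delta(x',y)=\emptyset$, i.e.\ to $\Delta(x,y)=\Delta(x,x')\sqcup\Delta(x',y)$ being a disjoint union. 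So the task is: starting from such a disjoint union for all $y\in H$ in $G$, produce the analogous disjoint union in $\rho_{f^+}(G)$ and in $\pi_f(G)$. The two ingredients I will use are that halfspaces are convex in $G$, hence isometric (Lemma~\ref{lem:restriction}), and that $\pi_f(G)$ is an isometric subgraph of $Q_{\mathcal E\setminus\{f\}}$, as recalled above.

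\emph{Restriction.} I may assume $\rho_{f^+}(H)\neq\emptyset$, the empty case being vacuous. Let $x\in\rho_{f^+}(G)$ with $x\notin\rho_{f^+}(H)$; then $x\notin H$, so $x$ has a gate $x'$ in $H$. Fixing any $y_0\in\rho_{f^+}(H)\subseteq E_f^+$, we have $f\notin\Delta(x,y_0)=\Delta(x,x')\sqcup\Delta(x',y_0)$, hence $f\notin\Delta(x,x')$ and therefore $x'\in E_f^+$, i.e.\ $x'\in\rho_{f^+}(H)$. For any $y\in\rho_{f^+}(H)$ the identity $d_G(x,y)=d_G(x,x')+d_G(x',y)$ holds, and as $\rho_{f^+}(G)=G[E_f^+]$ is convex in $G$, these three distances are unchanged when measured in $\rho_{f^+}(G)$. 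Thus $x'$ is the gate of $x$ in $\rho_{f^+}(H)$.

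\emph{Contraction.} First note that for $u,v\in G$ one has $d_{\pi_f(G)}(\pi_f(u),\pi_f(v))=|\Delta(u,v)\setminus\{f\}|$, since $\pi_f$ leaves every $\Theta$-class other than $f$ and the corresponding halfspace sign untouched; in particular this value is independent of the chosen preimages, as two preimages of a single vertex of $\pi_f(G)$ differ only in coordinate $f$. Now take $\bar x\in\pi_f(G)\setminus\pi_f(H)$, fix a preimage $x$ (necessarily $x\notin H$), let $x'$ be its gate in $H$, and set $\bar x':=\pi_f(x')\in\pi_f(H)$. Given $\bar y\in\pi_f(H)$, choose a preimage $y\in H$. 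Because $\Delta(x,x')$ and $\Delta(x',y)$ are disjoint, $f$ lies in at most one of them, so removing $f$ preserves the partition: $\Delta(x,y)\setminus\{f\}=(\Delta(x,x')\setminus\{f\})\sqcup(\Delta(x',y)\setminus\{f\})$. Taking cardinalities gives $d_{\pi_f(G)}(\bar x,\bar y)=d_{\pi_f(G)}(\bar x,\bar x')+d_{\pi_f(G)}(\bar x',\bar y)$ for every $\bar y\in\pi_f(H)$, so $\bar x'$ is the gate of $\bar x$ in $\pi_f(H)$. The only genuinely non-bookkeeping point — the likely "obstacle" such as it is — is precisely the observation in the contraction case that $f$ cannot lie in both $\Delta(x,x')$ and $\Delta(x',y)$: this is exactly the disjointness encoded by the gate property in $G$, and it is what prevents contracting $f$ from shortening one of the two legs relative to the whole and thereby destroying additivity.
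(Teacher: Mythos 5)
Your proof is correct. The paper does not prove this lemma itself but only cites it from~\cite{Che-16}; your coordinate-based argument is a valid self-contained verification: the translation of the gate condition into the disjointness $\Delta(x,x')\cap\Delta(x',y)=\emptyset$ is exact, the restriction case correctly uses convexity (hence isometry) of halfspaces plus the observation that the gate stays in $E_f^+$, and the contraction case correctly uses that $\pi_f(G)$ embeds isometrically in $Q_{\mathcal{E}\setminus\{f\}}$ so that distances are obtained by deleting coordinate $f$, with disjointness preserved under removing $f$. The only cosmetic caveat is the empty-restriction case, which you rightly dismiss as vacuous under the usual convention that gatedness concerns nonempty subgraphs.
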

In the next lemma we will see that expansions can turn gated graphs into non-gated graphs.

\begin{lem}\label{lem:nongatedexpansion}
Let $G$ be an expansion of $\pi_e(G)$ along sets $G_1,G_2$. Let $H$ be a gated subgraph of $\pi_e(G)$, $v$ a vertex of $\pi_e(G)$ and $v'$ the gate of $v$ in $H$. If $v\in G_1\cap G_2$, $v'\notin G_1\cap G_2$ and there exist $v''\in H$, $v''\in G_1\cap G_2$, then the expansion of $H$ in $G$ is not gated.
\end{lem}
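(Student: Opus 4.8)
The plan is to produce a single vertex of $G$ that has no gate in the expanded subgraph $\widetilde H$ of $H$; by definition this shows $\widetilde H$ is not gated. Write $G'=\pi_e(G)$ and use the standard coordinatization of an isometric expansion: a vertex $u$ of $G'$ lying in $G_0:=G_1\cap G_2$ lifts to two vertices $u_1\in E_e^+$ and $u_2\in E_e^-$ of $G$, whereas a vertex of $G_1\setminus G_2$ (resp.\ $G_2\setminus G_1$) lifts to a single vertex $u_1$ (resp.\ $u_2$). The only metric input needed is the elementary identity
\[
 d_G(u_i,w_j)=d_{G'}(u,w)+[i\neq j],
\]
where the last summand equals $1$ if $i\neq j$ and $0$ if $i=j$; this is immediate from the facts that $G$ and $G'$ embed isometrically into their hypercubes and that the coordinate $e$ separates $E_e^+$ from $E_e^-$. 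I also record at the outset that $v\notin H$: otherwise $v$ would be its own gate in $H$, forcing $v'=v\in G_0$ and contradicting $v'\notin G_0$; hence $v_1,v_2\notin\widetilde H$. Finally, by the symmetry between $G_1$ and $G_2$ I may assume $v'\in G_1\setminus G_2$, so that $v'$ lifts to a single vertex $v'_1\in\widetilde H$, while $v''\in G_0$ yields $v''_2\in\widetilde H$.

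The claim will be that $v_2$ has no gate in $\widetilde H$. Suppose it had one, say $g=w_i$ with $w\in H\cap G_i$. If $i=1$, apply the gate condition along $v''_2\in\widetilde H$: by the distance identity, $d_G(v_2,v''_2)=d_G(v_2,g)+d_G(g,v''_2)$ becomes $d_{G'}(v,v'')=\bigl(d_{G'}(v,w)+1\bigr)+\bigl(d_{G'}(w,v'')+1\bigr)$, contradicting the triangle inequality. If $i=2$, apply the gate condition along $v'_1\in\widetilde H$: this gives $d_{G'}(v,v')=d_{G'}(v,w)+d_{G'}(w,v')$, so $w$ lies on a shortest path from $v$ to $v'$ in $G'$; on the other hand $v'$ is the gate of $v$ in $H$ and $w\in H$, so $d_{G'}(v,w)=d_{G'}(v,v')+d_{G'}(v',w)$, i.e.\ $v'$ lies on a shortest path from $v$ to $w$. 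Substituting one relation into the other forces $d_{G'}(v',w)=0$, so $w=v'$, contradicting $w\in G_2$ and $v'\in G_1\setminus G_2$. In both cases we reach a contradiction, so $v_2$ has no gate and $\widetilde H$ is not gated.

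I do not expect a genuine obstacle: once the expansion is set up, each case is a one-line computation with the distance identity. The point deserving care is the bookkeeping of the expansion — which vertices of $G'$ have one lift versus two, and their membership in $E_e^+$ versus $E_e^-$ — together with the observation that the whole argument refers only to distances in $G$ and $G'$ and to the vertex set of $\widetilde H$, and never to the edges of $\widetilde H$; this is why no extra hypothesis on $H$ (such as being an induced subgraph, or $H\cap G_1$ and $H\cap G_2$ being isometric in $H$) is needed.
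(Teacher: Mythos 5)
Your proof is correct and follows essentially the same route as the paper: you take the lift of $v$ on the side opposite to the gate $v'$ and show that its only possible gate in the expanded subgraph would be the missing second lift of $v'$, using $v''$ to exclude gates on the $v'$-side. Your explicit distance identity $d_G(u_i,w_j)=d_{G'}(u,w)+[i\neq j]$ makes the two-case check clean, and your choice of side is the right one -- the paper's text literally places its witness vertex $u$ on the \emph{same} side as $v'$ (where the lift of $v'$ actually is a gate), an orientation slip that your version avoids.
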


\begin{proof}
Let $v,v',v'',H$ be as in the lemma and without loss of generality assume that $v'\in G_1\backslash G_2$. Let $E_e^+$ correspond to $G_1$ and $E_e^-$ to $G_2$ in $G$. Since vertex $v\in G_1\cap G_2$, it is expanded to an edge in $G$. Let $u$ be the vertex on this edge in $E_e^+$. Then every shortest path form $u$ to the expansion of $H$ must cross at least the same $\Theta$-classes as a shortest path from $v$ to $v'$. On the other hand, $v,v'\in G_1$, thus in $G_1$ there exists a shortest path from $v$ to $v'$. Then there exists a shortest path from $u$ to the expansion of $H$, first crossing $E_e$ and then all the $\Theta$-classes in the shortest path from $v$ to  $v'$. Note that the expansion of $v'$ is not the gate of $u$ since there is no shortest path from $u$ to the expansion of $v''$ in $E_e^+$ passing this vertex. Thus if $u$ has a gate to the expansion of $H$, it must be at distance $d(v,v')$ to $u$ and the shortest path connecting them must be crossed by exactly those $\Theta$-classes that cross shortest paths from $v$ to $v'$. Then this gate must be adjacent to the expansion of $v'$ and be in $E_e^+$. This is impossible, since $v'\in G_1\backslash G_2$.
\end{proof}

\section{Systems of sign-vectors}\label{sec:signvectors}
In the present sections we will introduce the standard definitions concerning systems of sign-vectors. In particular we will introduce the axiomatics for COMs, AOMs, OMs, and LOPs and operations such as reorientations and minors. Recall that Figure~\ref{fig:COMexample} depicts the example of a COM.

We follow the standard OM notation from~\cite{bjvestwhzi-93} and concerning COMs we stick to~\cite{Ban-15}. Let $\mathcal{E}$ be a non-empty finite (ground) set and let $\emptyset\neq\covectors\subseteq\{+, - ,0\}^\mathcal{E}$. The elements of $\covectors$ are referred to as \emph{covectors}. 

For $X \in \covectors$, and $e\in \mathcal{E}$ let $X_e$ be the value of $X$ at the coordinate $e$. The subset $\underline{X} = \{e\in \mathcal{E}: X_e\neq 0\}$
is called the \emph{support} of $X$ and  its complement  $X^0=\mathcal{E}\setminus \underline{X}=\{e\in \mathcal{E}: X_e=0\}$ the \emph{zero set} of $X$. 
For $X,Y\in \covectors$, we call $S(X,Y)=\{f\in \mathcal{E}: X_fY_f=-\}$
the \emph{separator} of $X$ and $Y$. The \emph{composition} of $X$ and $Y$ is the sign-vector $X\circ Y,$ where
$(X\circ Y)_e = X_e$  if $X_e\ne 0$  and  $(X\circ Y)_e=Y_e$  if $X_e=0$. For a subset $A\subseteq\mathcal{E}$ and $X\in\covectors$ the \emph{reorientation} of $X$ with respect to $A$ is the sign-vector defined by $$
(_AX)_e:=\left\{\begin{array}{ll}
-X_e & \textrm{ if } e \in A\\
X_e &\textrm{ otherwise.}
\end{array}\right.
$$ In particular $-X:=_{\mathcal{E}}X$. The \emph{reorientation} of $\covectors$ with respect to $A$ is defined as $_A\covectors:=\{_AX\mid X\in\covectors\}$. In particular, $-\covectors:=_{\mathcal{E}}\covectors$.

We continue with the formal definition of the main axioms relevant for COMs, AOMs, OMs, and LOPs. All of them are closed under reorientation.

\medskip\noindent
{\bf Composition:}
\begin{itemize}
\item[{\bf (C)}] $X\circ Y \in  \covectors$  for all $X,Y \in  \covectors$.
\end{itemize}

Since $\circ$ is associative, arbitrary
finite compositions can be written without bracketing $X_1\circ\ldots \circ X_k$ so
that (C) entails that they all belong to $\covectors$. Note that contrary to a convention sometimes made in
OMs we do not consider compositions over an empty index set, since this would
imply that the zero sign-vector belonged to $\covectors$. The same consideration applies for the following two strengthenings of (C). 

\medskip\noindent
{\bf Face symmetry:}
\begin{itemize}
\item[{\bf (FS)}] $X\circ -Y \in  \covectors$  for all $X,Y \in  \covectors$.
\end{itemize}


By (FS) we first get $X\circ -Y\in\covectors$ and then $X\circ Y= (X\circ -X)\circ Y= X\circ -(X\circ -Y) \in \covectors$ for all $X,Y\in \covectors$. Thus, (FS) implies (C).


\medskip\noindent
{\bf Ideal composition:}
\begin{itemize}
\item[{\textbf{(IC)} }] $X\circ Y \in  \covectors$  for all $X \in  \covectors$ and $Y\in\{+, -, 0\}^\mathcal{E}$.
\end{itemize}

Note that (IC) implies (C) and (FS). The following axiom is part of all the systems of sign-vectors discussed in the paper:

\medskip\noindent
{\bf Strong elimination:}
\begin{itemize}
\item[{\bf (SE)}]  for each pair $X,Y\in\covectors$ and for each $e\in  S(X,Y)$ there exists $Z \in  \covectors$ such that
$Z_e=0$  and  $Z_f=(X\circ Y)_f$  for all $f\in \mathcal{E}\setminus S(X,Y)$.
\end{itemize}


%



An axiom particular to OMs is:

\medskip\noindent
{\bf Zero vector:}
\begin{itemize}
\item[{\bf (Z)}]  the zero sign-vector ${\bf 0}$ belongs to $\covectors$.
\end{itemize}

We will not make proper use of the axiomatization of AOMs due to~\cite{Bau-16,Kar-92} apart from illustrating  that AOMs are a natural subclass of COMs, see Definition~\ref{def:main}. 
Let us however briefly introduce an operation on sign-vectors needed to axiomatize AOMs:

$$
(X\oplus Y)_e:=\left\{\begin{array}{ll}
0 & \textrm{ if } e \in S(X,Y)\\
(X\circ Y)_e &\textrm{ otherwise.}
\end{array}\right.
$$

\medskip\noindent
{\bf Affinity:}
\begin{itemize}
\item[{\bf (A)}] Let $X,Y\in\covectors$ such that for all $e\in  S(X,-Y)$ and $W \in  \covectors$ with $W_e=0$ there are $f,g\in \mathcal{E}\setminus S(X,-Y)$ such that $W_f\neq(X\circ -Y)_f$ and $W_g\neq(-X\circ Y)_g$. We have $(X\oplus -Y)\circ Z\in\covectors$ for all $Z\in\covectors$.
\end{itemize}

We are now ready to define the central systems of sign-vectors of the present paper:
\begin{defi}\label{def:main}
 
 A system of sign-vectors  $(\mathcal{E},\covectors)$ is called a:
 \begin{itemize}
  \item\emph{complex of oriented matroids (COM)} if  $\covectors$ satisfies (FS) and (SE),
  \item\emph{affine oriented matroid (AOM)} if $\covectors$ satisfies (A), (FS), and (SE),
  \item\emph{oriented matroid (OM)} if $\covectors$ satisfies (Z), (FS), and (SE),
  \item\emph{lopsided system (LOP)} if $\covectors$ satisfies (IC) and (SE).
 \end{itemize}
\end{defi}

Note that for OMs one can replace (Z) and (FS) by (C) and:

\medskip\noindent
{\bf Symmetry:}
\begin{itemize}
\item[{\bf (Sym)}] $-X\in\covectors$ for all $X\in\covectors$.
\end{itemize}

%
%
%
%

Let $\covectors \subseteq  \{0,-1,1\}^\mathcal{E}$ be a system of sign-vectors and $e\in \mathcal{E}$. For $X\in \covectors$ let $X\backslash e$ be the element of $\{0,-1,1\}^{\mathcal{E}\backslash \{e \}}$ obtained by deleting the coordinate $e$ from $X$.  Define operations  $\covectors/e=\{X \backslash e \mid X\in \covectors, X_e=0\}$ as taking the \emph{hyperplane} of $e$ (usually referred to as contraction) and $\covectors\backslash e=\{ X \backslash e \mid X\in \covectors\}$ as the \emph{deletion} of $e$. A sign-system that arises by deletion and taking hyperplanes from another one is called a \emph{minor}. Furthermore denote by
$\covectors_e^+:= \{ X\backslash e\in \mid X\in \covectors, X_e=+\}$ and
$\covectors_e^-:=\{ X\backslash e\mid X\in \covectors, X_e=-\}$ the positive and negative (open) \emph{halfspaces} with respect to $e$.

A theorem due to Karlander~\cite{Kar-92} characterizes AOMs as exactly the halfspaces of OMs. However, his proof contains a flaw that has only been observed and fixed recently in~\cite{Bau-16}.

The following is easy to see, see e.g.~\cite[Lemma 2]{Ban-15}.

\begin{lem}\label{lem:commute}
For any system of sign-vectors the operations of taking halfspaces, hyperplanes and deletion commute.
\end{lem}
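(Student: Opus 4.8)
The plan is to observe that all four operations in question are instances of one construction and then check that this construction, applied at two distinct elements of the ground set, is order-independent. For $e\in\mathcal{E}$ and a subset $S\subseteq\{+,-,0\}$, define on $\mathcal{E}\setminus\{e\}$ the system of sign-vectors
$$\sigma_{e,S}(\covectors):=\{\,X\backslash e \mid X\in\covectors,\ X_e\in S\,\}.$$
Then the deletion $\covectors\backslash e$ is $\sigma_{e,\{+,-,0\}}(\covectors)$, the hyperplane $\covectors/e$ is $\sigma_{e,\{0\}}(\covectors)$, and the positive and negative halfspaces $\covectors_e^{+},\covectors_e^{-}$ are $\sigma_{e,\{+\}}(\covectors)$ and $\sigma_{e,\{-\}}(\covectors)$. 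So it suffices to prove that $\sigma_{e,S}$ and $\sigma_{f,S'}$ commute whenever $e\neq f$, for arbitrary $S,S'\subseteq\{+,-,0\}$.

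The key step is the following identity. Fix $e\neq f$ in $\mathcal{E}$; for $X\in\covectors$ write $X\backslash\{e,f\}$ for the sign-vector on $\mathcal{E}\setminus\{e,f\}$ obtained by forgetting the coordinates $e$ and $f$. Since deleting distinct coordinates of a function simply forgets two entries, we have $(X\backslash e)\backslash f=(X\backslash f)\backslash e=X\backslash\{e,f\}$, independently of order. Moreover, because $e\neq f$, deleting coordinate $e$ leaves the $f$-coordinate untouched: $(X\backslash e)_f=X_f$. Hence an element of $\sigma_{f,S'}(\sigma_{e,S}(\covectors))$ has the form $(X\backslash e)\backslash f$ with $X\in\covectors$, $X_e\in S$ and $(X\backslash e)_f=X_f\in S'$, which gives
$$\sigma_{f,S'}\bigl(\sigma_{e,S}(\covectors)\bigr)=\{\,X\backslash\{e,f\}\mid X\in\covectors,\ X_e\in S,\ X_f\in S'\,\}=\sigma_{e,S}\bigl(\sigma_{f,S'}(\covectors)\bigr),$$
the second equality being the same computation with the roles of $(e,S)$ and $(f,S')$ exchanged.

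Specializing $S$ and $S'$ to the sets $\{0\}$, $\{+\}$, $\{-\}$, $\{+,-,0\}$ shows that taking hyperplanes, taking either halfspace, and deletion all pairwise commute when performed at distinct ground set elements; and since any composition of such operations at a set $A=\{e_1,\dots,e_k\}$ of distinct elements equals $\{\,X\backslash A\mid X\in\covectors,\ X_{e_i}\in S_{e_i}\text{ for all }i\,\}$ (by iterating the identity above), the outcome does not depend on the order. I do not expect a genuine obstacle here: the only points needing care are that forgetting two distinct coordinates is order-independent and that the selection condition at $f$ is unaffected by first forgetting coordinate $e\neq f$, both of which are immediate from the definitions.
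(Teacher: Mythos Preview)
Your proof is correct; the paper does not actually give a proof of this lemma but simply states that it is easy to see (with a reference to \cite{Ban-15}), and your direct verification via the uniform operator $\sigma_{e,S}$ is exactly the routine check one would perform.
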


Our systems of sign-vectors behave well with respect to the above operations:
\begin{lem}\label{lem:signminors}
 The classes of COMs, AOMs, OMs, and LOPs are minor closed. Moreover, COMs and LOPs are closed under taking halfspaces.
\end{lem}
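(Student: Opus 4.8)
The plan is to verify each axiom is preserved under each of the three operations (deletion, taking hyperplanes, taking halfspaces), using Lemma~\ref{lem:commute} to reduce to checking single-step operations, and the fact that all axioms are stated as closure properties under compositions/eliminations that interact naturally with coordinate-wise deletion. First I would treat the strong elimination axiom (SE), which is common to all four classes: given $X\backslash e, Y\backslash e$ in a minor $\covectors' = \covectors\backslash e$ or $\covectors/e$, lift to $X,Y\in\covectors$ (in the hyperplane case with $X_e=Y_e=0$), apply (SE) in $\covectors$ to obtain $Z$, and observe that $Z\backslash e$ works in $\covectors'$; in the hyperplane case one uses that $e\notin S(X,Y)$ so $Z_e = (X\circ Y)_e = 0$, keeping $Z$ in the hyperplane. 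For halfspaces $\covectors_e^+$, one lifts $X\backslash e, Y\backslash e$ to $X, Y$ with $X_e = Y_e = +$, and then $e\notin S(X,Y)$ again forces $Z_e = (X\circ Y)_e = +$, so $Z\backslash e\in\covectors_e^+$.

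Next I would handle the composition-type axioms. For (C), (FS), and (IC): the key observation is that composition, negation, and the operations $X\mapsto X\circ -Y$ all commute with deleting a coordinate, so $(X\backslash e)\circ(Y\backslash e) = (X\circ Y)\backslash e$ etc., giving closure under deletion immediately. For hyperplanes, if $X_e = Y_e = 0$ then $(X\circ Y)_e = 0$ and $(X\circ -Y)_e = 0$, so these compositions stay in the hyperplane; for (IC) in $\covectors/e$ one needs $Y\in\{\pm,0\}^{\mathcal{E}\setminus\{e\}}$ to be extended by $Y_e := 0$ and apply (IC) in $\covectors$, landing in the hyperplane. For halfspaces and (IC) (relevant to LOP): given $X\backslash e\in\covectors_e^+$ and arbitrary $Y$ on $\mathcal{E}\setminus\{e\}$, extend $X$ with $X_e = +$ and $Y$ with $Y_e = +$; then $(X\circ Y)_e = +$, so $(X\circ Y)\backslash e\in\covectors_e^+$, giving that LOPs are closed under halfspaces; closure under deletion and hyperplanes of (IC) gives LOP minor-closedness. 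The zero-vector axiom (Z) for OMs is preserved under deletion and hyperplanes trivially (the restriction of $\mathbf 0$ is $\mathbf 0$, and $\mathbf 0$ lies in the hyperplane since $\mathbf 0_e = 0$), which together with (FS) and (SE) handles OMs; note we do not claim OMs are closed under halfspaces, consistent with Karlander's theorem that halfspaces of OMs are AOMs.

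The genuinely delicate point, and the step I expect to be the main obstacle, is the affinity axiom (A) for AOMs under minors. Here the quantifier structure is heavy: (A) asserts that $(X\oplus -Y)\circ Z\in\covectors$ whenever a condition involving \emph{all} $W\in\covectors$ with $W_e=0$ (for $e\in S(X,-Y)$) holds. Under deletion one must check that the hypothesis of (A) in $\covectors\backslash e$ implies the hypothesis in $\covectors$ for appropriate lifts — the subtlety being that $W$ ranging over $\covectors\backslash e$ does not obviously correspond to $W$ ranging over $\covectors$, since a single $W'\in\covectors\backslash e$ may lift to several $W\in\covectors$. One resolves this by lifting along compositions: for $W\in\covectors$ the element $W\backslash e$ satisfies the needed inequalities because $S(X,-Y)$ does not involve $e$ when $X,Y$ are chosen with matching $e$-coordinate, and because $\oplus$ and $\circ$ commute with deletion. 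For the hyperplane $\covectors/e$ one restricts attention to $X,Y,Z$ with vanishing $e$-coordinate, so $(X\oplus -Y)\circ Z$ also vanishes at $e$ and the witnesses $f,g$ can be taken in $\mathcal{E}\setminus(S(X,-Y)\cup\{e\})$. I would likely just cite that (A) is designed to mirror the metric behavior of affine subgraphs under pc-minors (Lemma~\ref{lem:affine_minor} and Proposition~\ref{prop:affinepc}), and that a routine but careful coordinate bookkeeping — done in the same spirit as \cite{Ban-15} — establishes the claim, rather than writing out all cases in full.
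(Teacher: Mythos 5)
Your verifications for (SE), (C), (FS), (IC) and (Z) under deletion, hyperplanes and halfspaces are correct (one tiny omission: for COMs closed under halfspaces you also need (FS) in $\covectors_e^+$, but that is the same one-line sign check, since $X_e=+$ forces $(X\circ -Y)_e=+$). These parts are essentially a written-out version of what the paper disposes of by citation to~\cite{Ban-15} and to the OM folklore.

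The genuine gap is the AOM case. Your lifting argument for axiom (A) under deletion does not go through as sketched: given $X',Y',Z'\in\covectors\backslash e$ satisfying the hypothesis of (A), you must work with whatever lifts $X,Y,Z\in\covectors$ exist -- you cannot ``choose'' their $e$-coordinates -- and if the lifts satisfy $X_e=Y_e\neq 0$ then $e\in S(X,-Y)$ (note $S(X,-Y)=\{f: X_f=Y_f\neq 0\}$, so ``matching $e$-coordinate'' puts $e$ \emph{into} the separator rather than keeping it out). In that case the hypothesis of (A) in $\covectors$ demands witnesses $f,g$ for \emph{every} $W\in\covectors$ with $W_e=0$, a condition that has no counterpart in $\covectors\backslash e$ and does not follow from the assumed hypothesis there; so you cannot conclude $(X\oplus -Y)\circ Z\in\covectors$. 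Your fallback -- citing the metric analogy (Lemma~\ref{lem:affine_minor}, Proposition~\ref{prop:affinepc}) and ``bookkeeping in the spirit of~\cite{Ban-15}'' -- does not close this, since those results concern partial cubes and COMs, not axiom (A); indeed the paper explicitly remarks that the purely axiomatic proof of AOM minor-closedness is ``more involved'' and points to~\cite{Del-17} for it. The route the paper actually takes sidesteps (A) entirely: by Karlander's theorem~\cite{Kar-92} (stated just before the lemma) AOMs are exactly the halfspaces of OMs, so by Lemma~\ref{lem:commute} a deletion or hyperplane of an AOM is a halfspace of the corresponding deletion or hyperplane of the ambient OM, which is again an OM by the OM case you already proved; hence the minor is again an AOM. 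Replacing your (A)-analysis by this two-line argument would make your proof complete.
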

\begin{proof}
 The result for OMs is folklore and can be found for instance in~\cite{bj}. For COMs this was shown in~\cite[Lemma 1]{Ban-15} and~\cite[Lemma 4]{Ban-15}. For LOPs it is easy to see, that (IC) is preserved under minors and taking halfspaces. The minor-closedness of AOMs is a little more involved, when only using the axioms given above, see e.g.~\cite{Del-17} but using that they are halfspaces of OMs this follows directly from Lemma~\ref{lem:commute}.
\end{proof}

The \emph{rank} of a system of sign-vectors $(\mathcal{E},\covectors)$ is the largest integer $r$ such that there is subset $A\subseteq\mathcal{E}$ of size $|\mathcal{E}|-r$ such that $\covectors\backslash A=\{+, -, 0\}^r$. In other words, the rank of $(\mathcal{E},\covectors)$ is just the VC-dimension of $\covectors$, see~\cite{Vap-15}. Note that this definition of rank coincides with
the usual rank definition for OMs, see~\cite{daS-95}.


A system of sign-vectors $(\mathcal{E},\covectors)$ is \emph{simple} if it satisfies the following two conditions:

\begin{itemize}
\item[{\bf (N1$^*$)}]  for each $e \in \mathcal{E}$,  $\{+, -,0 \}=\{X_e: X\in \covectors\}$;
\item[{\bf (N2$^*$)}]  for each pair $e\neq f$ in $\mathcal{E}$,  there exist $X,Y \in \covectors$ with $\{X_eX_f,Y_eY_f\}=\{+, -\}$.
\end{itemize}

An element $e \in \mathcal{E}$ not satisfying (N1$^*$) is called \emph{redundant}. Note that redundant elements in OMs are zero everywhere and are called \emph{loops}, see~\cite{bjvestwhzi-93}, but in COMs also a sign can be present on a redundant element. Two elements $e,f\in\mathcal{E}$ are called \emph{parallel} if they do not satisfy (N2$^*$). Note that parallelism is an equivalence relation on $\mathcal{E}$. We denote by $\overline{e}$ the class of elements parallel to $e$, for $e\in\mathcal{E}$. The notion of parallelism coincides with the one in OMs.

For every COM $(\mathcal{E}, \covectors)$ there exists up to reorientation and relabeling of coordinates a unique simple COM, obtained by successively applying operation $\covectors\backslash e$ to the redundant coordinates $e\in \mathcal{E}$ and to elements of parallel classes with more than one element. See~\cite[Proposition 3]{Ban-15} for the details. Note that by Lemma~\ref{lem:commute} the order in which these operations are taken is irrelevant and by Lemma~\ref{lem:signminors} all the classes of systems of sign-vectors at consideration here, are closed under this operation. We will denote by $\mathcal{S}(\mathcal{E},\covectors)$ the unique \emph{simplification} of $(\mathcal{E}, \covectors)$.


\section{Systems of sign-vectors and partial cubes}\label{sec:alltogethernow}
This section finally builds the link between systems of sign-vectors and partial cubes, that in a sense is the very basis of our results. It is therefore built on properties established in Sections~\ref{sec:pcminors} and~\ref{sec:signvectors}. We begin with a kind of dictionary between axiomatic properties of systems of sign vectors and the behavior of metric subgraphs of partial cubes. In particular we will show in Theorem~\ref{thm:dictionaryOMC} that in tope graphs of COMs all antipodal subgraphs are gated and its corollaries for OMs, AOMs, and LOPs later. Recall that Figure~\ref{fig:COMtopegraphexample} shows the tope graph of a COM.

The \emph{topes} of a system of sign-vectors $(\mathcal{E},\covectors)$ are the elements of $\topes:=\covectors\cap\{+, -\}^\mathcal{E}$.
If $(\mathcal{E},\covectors)$ is simple, we define the \emph{tope graph} $G(\covectors)$ of $(\mathcal{E},\covectors)$ as the (unlabeled) subgraph of $Q_{\mathcal{\mathcal{E}}}$ induced by $\topes$.
If $(\mathcal{E}, \covectors)$ is non-simple, we consider $G(\covectors)$ as the tope graph of its simplification $\mathcal{S}(\mathcal{E},\covectors)$.

In general $G(\covectors)$ is an unlabeled graph and even though it is defined as a subgraph of a hypercube $Q_{\mathcal{\mathcal{E}}}$ it could possibly have multiple non-equivalent embeddings in $Q_{\mathcal{\mathcal{E}}}$. We call a system $(\mathcal{E},\covectors)$ a \emph{partial cube system} if its tope graph $G(\covectors)$ is an isometric subgraph of $Q_{\mathcal{\mathcal{E}}}$  in which the edges correspond to sign-vectors of $\covectors$ with a single $0$. It is well-known that partial cubes have a unique embedding in $Q_{\mathcal{\mathcal{E}}}$ up to automorphisms of $Q_{\mathcal{\mathcal{E}}}$, see e.g.~\cite[Chapter 5]{Ovc-11}. In other words, the tope graph of a simple partial cube system is invariant under reorientation. For this reason we will, possibly without an explicit note, identify vertices of a partial cube $G(\covectors)$ with subsets of $\{+, -\}^\mathcal{E}$.
The following was proved in~\cite[Proposition 2]{Ban-15}:

\begin{lem}\label{lem:OMCtoTopeGraph}
Simple COMs 
are partial cube systems.
\end{lem}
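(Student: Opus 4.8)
The plan is to show that if $(\mathcal{E},\covectors)$ is a simple COM then its tope graph $G(\covectors)$ is an isometric subgraph of $Q_{\mathcal{E}}$ whose edges correspond exactly to the covectors of $\covectors$ with a single zero. First I would recall Djoković's characterization from Section~\ref{sec:pcminors}: it suffices to check that $G(\covectors)$ is connected, bipartite, and that for every edge $uv$ the sets $W(u,v)$ and $W(v,u)$ partition the topes and are convex. Bipartiteness is immediate since $G(\covectors)$ is a subgraph of the hypercube $Q_{\mathcal{E}}$. The nontrivial content is (a) that adjacency in the metric sense coincides with differing in exactly one coordinate, i.e.\ that the graph distance equals the Hamming distance, and (b) that this distance is realized by edges carrying covectors with a single~$0$.

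The heart of the argument is a shortest-path/connectivity statement: for any two topes $T, T'\in\topes$ with $|S(T,T')|=k$, there is a sequence $T=T_0, T_1,\dots, T_k=T'$ of topes with $|S(T_i,T_{i+1})|=1$, i.e.\ a geodesic in $Q_{\mathcal{E}}$ that stays inside $\topes$, and moreover each step $T_iT_{i+1}$ is witnessed by a covector with exactly one zero (namely the sign-vector agreeing with $T_i$ and $T_{i+1}$ off their separator). I would prove this by induction on $k=|S(T,T')|$ using strong elimination (SE): pick $e\in S(T,T')$; by (SE) there is $Z\in\covectors$ with $Z_e=0$ and $Z_f=(T\circ T')_f=T_f$ for all $f\notin S(T,T')$. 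Composing, $Z\circ T\in\covectors$ by (C) (which follows from (FS)), and $Z\circ T$ is a tope differing from $T$ only possibly on coordinates in $S(T,T')$; one checks $Z\circ T$ agrees with $T'$ at $e$ and still agrees with $T$ off $S(T,T')$, so it is strictly closer to $T'$. Iterating inside $S(T,T')$ one peels off one separating coordinate at a time, and a further application of (SE) (or a direct composition argument) produces the single-zero covector realizing each elementary step. This gives a geodesic in $\topes$ of length exactly $|S(T,T')|$, so $d_{G(\covectors)}(T,T')=d_{Q_{\mathcal{E}}}(T,T')$, and simultaneously shows every edge of $G(\covectors)$ corresponds to a covector with a single~$0$; conversely a covector with a single~$0$ composes (via (FS)/(C)) with suitable topes to produce two adjacent topes, so such covectors do give edges.

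It remains to verify convexity of the halfspaces $W(T,T')$ for an edge $TT'$; equivalently, that the topes in the ``$+$'' side of a coordinate $e$ form a convex subgraph. This is where simplicity and (SE) are used again: if a geodesic between two topes both having $+$ at $e$ left the halfspace, it would cross $e$ twice, contradicting that geodesics in the hypercube are non-repeating — but one has to know the geodesic can be taken inside $\topes$, which is exactly the statement proved in the previous paragraph, applied to the relevant subintervals. Alternatively, and perhaps more cleanly, I would cite that for any tope $T$ and any covector $Z$, $Z\circ T$ is a tope lying on a geodesic from $T$ toward the ``$Z$-side,'' so the set of topes extending the partial sign-pattern of $Z$ is closed under taking intermediate topes, giving convexity directly. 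The main obstacle I anticipate is bookkeeping in the inductive step: ensuring that the tope produced by composition really is closer to $T'$ and that after removing $e$ from the separator one has genuinely reduced to a smaller COM-like situation — here one should be careful that we are not contracting, just staying in $\covectors$, so the right statement to maintain is purely about separators shrinking, and the axioms (FS) and (SE) suffice without passing to minors. Once the geodesic-in-topes claim is established, isometry and the edge-labeling statement both follow, completing the proof.
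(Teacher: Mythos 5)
The paper offers no proof of Lemma~\ref{lem:OMCtoTopeGraph} at all --- it is imported from \cite[Proposition 2]{Ban-15} --- so your attempt has to stand on its own. Your overall plan is the right one: prove $d_{G(\covectors)}(T,T')=|S(T,T')|$ for all topes by induction on the separator using (SE) and composition, and obtain the single-zero covectors on edges by one further application of (SE) to a pair of adjacent topes (your last paragraph on convexity of $W(u,v)$ is in fact superfluous: once the distance formula holds, $G(\covectors)$ is by definition an isometric subgraph of $Q_{\mathcal{E}}$, so Djokovi\'c's criterion need not be checked). But the inductive step contains a concrete slip and, more seriously, a genuine gap. The slip: since $Z_e=0$, the tope $Z\circ T$ has $(Z\circ T)_e=T_e$, so it does \emph{not} agree with $T'$ at $e$; to flip the coordinate $e$ you must invoke (FS) and work with $Z\circ(-T)$ (or $Z\circ T'$) instead.

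The gap: even with the corrected composition, the step ``it is strictly closer to $T'$'' can fail, because (SE) may only supply a degenerate $Z$, namely one vanishing on all of $S(T,T')$ and equal to $T$ elsewhere; then $Z\circ T=T$ and $Z\circ(-T)=T'$, no intermediate tope is produced, and the separator does not shrink. This cannot be waved away, since the statement is false without simplicity: $\covectors=\{(+,+),(-,-),(0,0)\}$ satisfies (FS) and (SE), yet its two topes are at Hamming distance $2$ and non-adjacent, so the tope graph is not isometric in $Q_2$. Your induction never uses simplicity (you invoke it only, and vaguely, for the unnecessary convexity step), so as written it would ``prove'' the distance formula for this example too --- a sure sign something is missing. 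The repair is to use (N2$^*$) exactly at the stalled step. If no tope besides $T,T'$ agrees with $T$ outside $S:=S(T,T')$, then the $Z$ produced by (SE) must vanish on all of $S$, and for every $X\in\covectors$ the topes $(Z\circ X)\circ T$ and $(Z\circ X)\circ(-T)$ agree with $T$ off $S$ and hence lie in $\{T,T'\}$; coordinatewise this forces the restriction of $X$ to $S$ to equal that of $T$, that of $-T$, or to be identically zero. For $|S|\geq 2$ and $e\neq f$ in $S$ this gives $X_eX_f\in\{T_eT_f,0\}$ for all $X\in\covectors$, contradicting (N2$^*$). Hence some intermediate tope exists whenever $|S(T,T')|\geq 2$, and the induction goes through (alternatively, pass to the face $\{Z\circ Y\mid Y\in\covectors\}$, which is an OM on the ground set $S$, and treat antipodal pairs of topes of OMs separately). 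With this added, the rest of your outline is sound.
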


Before presenting basic results regarding partial cube systems, we discuss how the minor operations and taking halfspaces as defined in Section~\ref{sec:signvectors} affect tope graphs. So let $(\mathcal{E},\covectors)$ be a simple partial cube system.

First note that deletion does not affect the simplicity of $(\mathcal{E},\covectors)$. Furthermore, since $(\mathcal{E},\covectors)$ is a partial cube system, the tope graph $G(\covectors\backslash e)$ corresponds to $\pi_e(G(\covectors))$ obtained from $G(\covectors)$ by contracting all the edges in the $\Theta$-class corresponding to coordinate $e$, as defined in Section~\ref{sec:pcminors}. 
Noticing that $\covectors(Q_r)$, for a hypercube $Q_r$, equals $\{+, -, 0\}^r$ we immediately get the following lemma from the definition of the rank of a system of sign-vectors.

\begin{lem}\label{lem:rankchar}
The rank of a partial cube system $(\mathcal{E},\covectors)$ is the largest $r$ such that $G(\covectors)$ contracts to $Q_r$.
\end{lem}

Also, the halfspace $\covectors_e^+$ is easily seen to be simple and its tope graph corresponds to the restriction $\rho_{e^+}(G(\covectors))$ to the positive halfspace of $E_e$ for $e\in\mathcal{E}$, as defined in Section~\ref{sec:pcminors}.

The hyperplane $\covectors/e$ does not need to be a simple system of sign-vectors nor a partial cube system. However, we can establish the following:

\begin{lem}\label{lem:hyperplane=hyperplane}
 Let $(\mathcal{E},\covectors)$ be a partial cube system and $e\in\mathcal{E}$. If $\zeta_e(G(\covectors))$ is a well-embedded partial cube, then $\zeta_e(G(\covectors))\cong G(\covectors/e)$.
\end{lem}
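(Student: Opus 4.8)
The plan is to set up an explicit correspondence between the vertices of $\zeta_e(G(\covectors))$ — which, by definition, are the edges of the $\Theta$-class $E_e$ in $G(\covectors)$ — and the topes of $\covectors/e$, and then check this bijection is a graph isomorphism. Recall that an edge of $E_e$ in $G(\covectors)$ is a sign-vector $X\in\covectors$ with $\underline{X}^0=\{e\}$, i.e.\ $X_e=0$ and $X_f\in\{\pm\}$ for all $f\neq e$; deleting the coordinate $e$ gives exactly a tope of $\covectors/e$. Conversely every tope of $\covectors/e$ arises this way. So on the level of vertex sets the correspondence is immediate and canonical: vertices of $\zeta_e(G(\covectors))$ $\leftrightarrow$ elements $X\backslash e$ with $X\in\covectors$, $X_e=0$, $\underline X\supseteq\mathcal E\setminus\{e\}$. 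One should also note here that $\covectors/e$ is simple when restricted to these topes (redundancy and parallelism among the remaining coordinates would contradict simplicity of $\covectors$, using (SE) to produce the needed witnesses), so $G(\covectors/e)$ is genuinely the induced subgraph of $Q_{\mathcal E\setminus\{e\}}$ on this tope set.

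Next I would match up the edges. Two topes of $\covectors/e$ are adjacent in $G(\covectors/e)$ iff they differ in exactly one coordinate $g\neq e$, equivalently iff there is a covector $W\in\covectors/e$ with $\underline W^0=\{g\}$ lying between them; lifting back, this means there are edges $a=X_1\backslash e$, $b=X_2\backslash e$ of $E_e$ with $X_1,X_2\in\covectors$, $X_1{}_e=X_2{}_e=0$, agreeing everywhere except at $g$, and a covector $Z$ with $Z_e=Z_g=0$ that is a ``common face''. On the $\zeta_e$ side, $a$ and $b$ are adjacent iff the edges $\zeta_e^{-1}(a)$ and $\zeta_e^{-1}(b)$ — i.e.\ the $E_e$-edges $X_1, X_2$ viewed as edges of $G$ — lie on a common convex cycle of $G(\covectors)$ crossed by $E_e$. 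The substance of the proof is the equivalence of these two conditions. One direction: a convex cycle of $G(\covectors)$ crossed by $E_e$ and $E_g$ is, by the face structure of COMs, the tope graph of a rank-$2$ face, whose covector supported away from $\{e,g\}$ provides exactly the desired $Z$; and such a cycle crosses only $E_e, E_g$ precisely because we may take a convex traverse (the result quoted from~\cite{Mar-16}), and well-embeddedness of $\zeta_e(G(\covectors))$ together with Lemma~\ref{lem:pchyperplane} forces all the cycles in that traverse to cross the same $\Theta$-classes, pinning $[g]$ down to a single class. Conversely, given the covector $Z$ with $Z_e=Z_g=0$ and the two edges $X_1,X_2$, composing $Z$ with $\pm$-extensions in coordinates $e,g$ produces the four topes of a convex $4$-cycle crossed exactly by $E_e$ and $E_g$ and containing the edges $X_1,X_2$, giving the $\zeta_e$-adjacency.

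I would then observe that the well-embeddedness hypothesis is exactly what makes this local edge correspondence coherent: by Lemma~\ref{lem:pchyperplane} two convex cycles crossed by $E_e$ and some common $E_g$ cross identical $\Theta$-class sets, so the class $[g]$ of $\zeta_e(G(\covectors))$ is a single $\Theta$-class of $G(\covectors)/e$-type data, matching a single coordinate $g$ of $\covectors/e$; without this, a vertex of $\zeta_e$ could correspond ambiguously and the map need not even be well-defined as claimed. Finally, since we have a bijection of vertex sets compatible with adjacency in both directions, $\zeta_e(G(\covectors))\cong G(\covectors/e)$, and the isomorphism is the identity on the identification of edges of $E_e$ with topes of $\covectors/e$.

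The main obstacle is the ``$\zeta_e$-adjacency $\Rightarrow$ $G(\covectors/e)$-adjacency'' direction, specifically controlling which $\Theta$-classes a convex cycle through two $E_e$-edges crosses: a priori such a cycle could cross $E_e$ together with several classes, so one must invoke the convex-traverse decomposition and Lemma~\ref{lem:pchyperplane} (equivalently Lemma~\ref{lem:parallel}) to reduce to a cycle crossing only $E_e$ and one further class, and then identify that cycle's covector inside $\covectors$ via (FS) and (SE). Verifying that the resulting $4$-cycle is convex and that its ``middle'' covector indeed lies in $\covectors/e$ (i.e.\ has $e$ in its zero set) is where the COM axioms do the real work.
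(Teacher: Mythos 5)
Your proposal rests on a premise that is false and that the paper explicitly warns against right before the lemma: you claim that $\covectors/e$ is simple (so that $G(\covectors/e)$ is just the subgraph of $Q_{\mathcal{E}\setminus\{e\}}$ induced by the topes, with adjacency meaning ``differ in exactly one coordinate''), and you justify this via (SE). Neither part works. First, the hypothesis of the lemma is only that $(\mathcal{E},\covectors)$ is a partial cube system; the COM axioms (FS), (SE) and any ``face structure of COMs'' are not available (indeed the lemma is applied in the proof of Theorem~\ref{thm:topegraphs} exactly in a situation where (SE) is not yet known), so every step of yours that composes covectors, produces a rank-$2$ face covector $Z$ with $Z_e=Z_g=0$, or eliminates parallelism via (SE) is unjustified. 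Second, the conclusion is simply false even for simple OMs: take $\covectors$ the rank-$2$ oriented matroid on three elements, whose tope graph is $C_6$. Then $\zeta_e(C_6)\cong K_2$ (the two $E_e$-edges lie on the single convex cycle $C_6$), but the two topes of $\covectors/e$ differ in \emph{both} remaining coordinates; as an induced subgraph of $Q_2$ they are non-adjacent, and $G(\covectors/e)\cong K_2$ only because $G(\covectors/e)$ is by definition the tope graph of the \emph{simplification} $\mathcal{S}(\covectors/e)$, which identifies the two parallel coordinates. This is the heart of the lemma: an edge of the zone graph comes from a convex cycle that may cross $E_e$ together with \emph{several} $\Theta$-classes, and well-embeddedness (via Lemmas~\ref{lem:pchyperplane} and~\ref{lem:parallel}) only forces those classes to form one parallel class of $\covectors/e$, not to be a single coordinate. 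Your closing remark that the class $[g]$ ``matches a single coordinate $g$ of $\covectors/e$'' is exactly the conflation that breaks the argument, and your claimed edge characterizations on both sides (``$\zeta_e$-adjacent iff the topes differ in one coordinate'', ``$G(\covectors/e)$-adjacent iff there is a covector with zero set $\{e,g\}$'') are respectively false and unavailable for a mere partial cube system.

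The paper's proof avoids sign-vector axioms entirely. Vertices correspond as you say. For an edge of $G(\covectors/e)$, the two topes differ in exactly one parallel class, so the interval between the corresponding $E_e$-edges contains no further $E_e$-edge; a convex traverse between them must then consist of a single convex cycle, yielding a $\zeta_e$-edge. Conversely, a $\zeta_e$-edge gives a convex cycle $C$, and Lemmas~\ref{lem:pchyperplane} and~\ref{lem:parallel} show that the $\Theta$-classes crossing $C$ other than $E_e$ are pairwise parallel in $\covectors/e$; hence the two topes differ in exactly one parallel class and are adjacent in the tope graph of $\mathcal{S}(\covectors/e)$. To repair your proposal you would need to drop the simplicity claim, work with parallel classes throughout, and replace the appeals to (FS)/(SE)/faces by the convex-traverse and well-embeddedness arguments.
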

\begin{proof}
Clearly, both sets of vertices correspond to the set of edges of $G(\covectors)$. If there is an edge in $G(\covectors/e)$, it corresponds to two edges of $G(\covectors)$ in $E_e$ such that the $\Theta$-classes different than $E_e$ crossing the interval between the two edges form parallel elements of  $G(\covectors/e)$. This means that the interval does not cross $E_e$ in other elements besides the two edges. The interval must include a convex traverse between the two edges, but since there is no other edge in $E_e$ in it, the traverse is a convex cycle. This implies that there is a corresponding edge in $\zeta_e(G(\covectors))$.

Now, let $\zeta_e(G(\covectors))$ be a well-embedded partial cube. An edge of it corresponds to a convex cycle $C$. By Lemma~\ref{lem:pchyperplane} all cycles crossing $E_e$ and another $\Theta$-class from $C$ cross all its $\Theta$-classes. By Lemma~\ref{lem:parallel}, the corresponding elements in $\covectors/e$ are parallel. Therefore the edge corresponds to an edge of $G(\covectors/e)$.
\end{proof}


%

%
%
%

The correspondences before the lemma in particular give that deletions and halfspaces of partial cube systems coincide with pc-minors, which together with Lemma~\ref{lem:signminors} gives:
\begin{prop}\label{prop:minorsandrank}
 Let $(\mathcal{E},\covectors)$ and $(\mathcal{E}',\covectors')$ be simple partial cube systems with tope graphs $G(\covectors)$ and $G(\covectors')$, respectively. If $(\mathcal{E}',\covectors')$ arises from $(\mathcal{E},\covectors)$ by deletion and taking halfspaces, then $G(\covectors')$ is a pc-minor of $G(\covectors)$. Moreover, the families of tope graphs of COMs and LOPs are pc-minor closed.
\end{prop}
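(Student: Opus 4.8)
The plan is to verify Proposition~\ref{prop:minorsandrank} by reducing the minor operation on sign-vector systems to elementary pc-operations on the corresponding tope graphs, using the material assembled earlier in this section. By Lemma~\ref{lem:commute}, any minor of a sign-vector system obtained by deletions and taking halfspaces can be realized by first performing all the deletions and then all the halfspace operations (or in any order we like), so it suffices to handle a single elementary deletion and a single elementary halfspace, and then iterate. For a single deletion $\covectors\backslash e$, the paragraph before Lemma~\ref{lem:hyperplane=hyperplane} records that $G(\covectors\backslash e)\cong\pi_e(G(\covectors))$; for a single halfspace $\covectors_e^+$, the next paragraph records that $G(\covectors_e^+)\cong\rho_{e^+}(G(\covectors))$, and in each case the resulting system is again a simple partial cube system. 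Hence $G(\topes')$ is obtained from $G(\covectors)$ by a sequence of contractions $\pi_e$ and restrictions $\rho_{e^+}$, which is precisely a pc-minor by the definition given after Lemma~\ref{lem:commut_rest_contaction}. This gives the first assertion.

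For the second assertion, I would combine this translation with the closure statements of Section~\ref{sec:signvectors}. By Lemma~\ref{lem:signminors}, the class of COMs is closed under deletion, taking hyperplanes, and taking halfspaces; in particular it is closed under deletion and halfspaces, which are exactly the operations whose tope-graph effect we just identified as elementary restrictions and contractions. So if $G$ is the tope graph of a (simple, WLOG via simplification) COM and $G'$ is any pc-minor of $G$, then $G'$ is obtained from $G$ by a sequence of $\pi_e$'s and $\rho_{e^+}$'s; translating each step back, $G'$ is the tope graph of a system obtained from $\covectors$ by the corresponding sequence of deletions and halfspace operations, which by Lemma~\ref{lem:signminors} is again a COM. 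Hence the family of tope graphs of COMs is pc-minor closed. The identical argument with ``COM'' replaced by ``LOP'' works verbatim, since Lemma~\ref{lem:signminors} also asserts that LOPs are closed under minors and under taking halfspaces.

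One technical point I would be careful about is the passage between simple and non-simple systems: $G(\covectors)$ is defined as the tope graph of the simplification $\SS(\mathcal{E},\covectors)$, and the classes COM and LOP are closed under simplification (the remarks after the definition of $\SS(\mathcal{E},\covectors)$, invoking \cite{Ban-15} and Lemma~\ref{lem:signminors}), so it is legitimate to assume throughout that the systems in play are simple partial cube systems; contraction/restriction of a simple partial cube system need not be simple, but re-simplifying does not change the (unlabeled) tope graph, so the correspondence with pc-minors is unaffected. I would also note that hyperplanes (contractions of sign-vector systems) play no role here: only deletions and halfspaces appear in the statement, and those are precisely the two operations with a clean tope-graph description, so there is no need to invoke Lemma~\ref{lem:hyperplane=hyperplane}.

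The main obstacle is essentially bookkeeping rather than mathematics: one must be scrupulous that ``pc-minor'' as defined (restrictions then contractions, in any order, by Lemmas~\ref{lem:commut_contraction} and~\ref{lem:commut_rest_contaction}) matches up with ``minor of a sign-vector system'' as defined (deletions and hyperplanes, commuting by Lemma~\ref{lem:commute}, here restricted to deletions together with the halfspace operation), and that at each elementary step the tope graph transforms as claimed in the two paragraphs preceding Lemma~\ref{lem:hyperplane=hyperplane}. Once that dictionary is set up, the proposition is immediate from Lemma~\ref{lem:signminors}.
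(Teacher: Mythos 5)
Your proposal is correct and follows essentially the same route as the paper, which treats Proposition~\ref{prop:minorsandrank} as an immediate consequence of the correspondences deletion~$\leftrightarrow$~contraction $\pi_e$ and halfspace~$\leftrightarrow$~restriction $\rho_{e^\pm}$ stated just before Lemma~\ref{lem:hyperplane=hyperplane}, combined with the closure properties of Lemma~\ref{lem:signminors}. Your extra remark on re-simplification is a welcome (and slightly more careful) handling of a point the paper passes over when it asserts that halfspaces of simple systems are ``easily seen to be simple''; since simplification does not change the unlabeled tope graph, this does not affect the argument.
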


In the following, we will describe further how pc-minors and equivalently deletions and halfspaces of partial cube systems translate metric graph properties as introduced in Section~\ref{sec:pcminors} into properties of sign-vectors.

 For $X\in\covectors$ we set $\topes(X):=\{T\in \topes\mid X\circ T=T\}$ and denote by $G(X)$ the subgraph of $G(\covectors)$ induced by $\topes(X)$. Note that in OMs the set $\topes(X)$ is sometimes denoted as $\mathrm{star}(X)$, see~\cite{bjvestwhzi-93}. Furthermore, let $\mathcal{H}(\covectors)=\{G(X)\mid X\in\covectors\}$ be the set of subgraphs of $G(\covectors)$ obtained by considering $G(X)$ for all $X\in\covectors$. Conversely, given a convex subgraph $G'$ of a partial cube $G$ with $\Theta$-classes $\mathcal{E}$ denote by $\chi(G')\in\{+,-,0\}^{\mathcal{E}}$ the sign-vector defined by setting for $e\in\mathcal{E}$: $$\chi(G')_e=\begin{cases}
 + & \text{if } G'\subseteq E_e^{+},\\                                                                                                                                                                                                                                                                                                                                                                                                                                               
 - & \text{if } G'\subseteq E_e^{-},\\
 0 & \text{otherwise.}
 \end{cases}$$
Note that for each vertex $v\in G(\covectors)$, $\chi(v)=v$.
Furthermore, let $\covectors(\mathcal{H})=\{\chi(G')\mid G'\in \mathcal{H}\}$ for a set $\mathcal{H}$ of convex subgraphs of $G$.

\begin{prop}\label{prop:dictionarybasic}
 In a simple partial cube system $(\mathcal{E},\covectors)$ for each $X\in\covectors$ its tope-graph $G(X)$ is a convex subgraph of $G(\covectors)$. Conversely, if $G=(V,E)$ is a partial cube and  $\mathcal{H}$ a set of convex subgraphs of $G$, such that $\mathcal{H}$ includes all the vertices of $G$, then there is a simple $(\mathcal{E},\covectors)$ such that $G=G(\covectors)$ and $\mathcal{H}=\mathcal{H}(\covectors)$.
\end{prop}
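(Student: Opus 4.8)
The plan is to prove both directions by translating back and forth between sign-vectors and convex subgraphs, using that a partial cube has an essentially unique embedding into its hypercube $Q_{\mathcal{E}}$. For the first (easy) direction, let $(\mathcal{E},\covectors)$ be a simple partial cube system and $X\in\covectors$. I would first observe that $\topes(X)=\{T\in\topes\mid X\circ T=T\}$ is exactly $\{T\in\topes\mid T_e=X_e \text{ for all } e\in\underline{X}\}$, i.e.\ the set of topes lying in the intersection of the halfspaces $E_e^{X_e}$ over $e\in\underline{X}$. Since each halfspace of a partial cube is convex and intersections of convex sets are convex (Lemma~\ref{lem:restriction}), $G(X)$ is a convex subgraph of $G(\covectors)$; in fact it equals the restriction $\rho_{\{e^{X_e}\mid e\in\underline X\}}(G(\covectors))$. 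One should also note $X(G(X))=X$, using simplicity (condition (N1$^*$) guarantees that on each coordinate of $\underline X$ the topes of $\topes(X)$ genuinely lie strictly on one side, and on coordinates outside $\underline X$ both signs occur, so the recovered sign-vector has zero set exactly $X^0$).

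For the converse, given a partial cube $G=(V,E)$ with $\Theta$-classes $\mathcal{E}$ and a set $\mathcal{H}$ of convex subgraphs containing all the singletons $\{v\}$, I would define $\covectors:=\covectors(\mathcal{H})=\{X(G')\mid G'\in\mathcal{H}\}$, where $X(G')_e$ records which side of $E_e$ the (connected, hence $\Theta$-class-consistent) convex subgraph $G'$ lies on, and $0$ if $G'$ is crossed by $E_e$. Then I claim $(\mathcal{E},\covectors)$ is a simple partial cube system with $G(\covectors)=G$ and $\mathcal{H}(\covectors)=\mathcal{H}$. The identity $G(\covectors)=G$ follows because the singleton subgraphs $\{v\}\in\mathcal{H}$ produce exactly the full sign-vectors $X(\{v\})=v\in\{\pm\}^{\mathcal{E}}$, so $\topes=\covectors\cap\{\pm\}^{\mathcal{E}}=V$ with the inherited hypercube embedding, which is isometric by hypothesis; the edge condition (edges $\leftrightarrow$ sign-vectors with a single $0$) holds because an edge $uv$ of $G$ lies on a single $\Theta$-class and conversely $\conv(u,v)$ is an edge. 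Simplicity (N1$^*$), (N2$^*$) is inherited from $G$ being a genuine partial cube embedded in $Q_{\mathcal{E}}$ with no redundant or parallel coordinates. For $\mathcal{H}(\covectors)=\mathcal{H}$ I would check the two maps $G'\mapsto X(G')$ and $X\mapsto G(X)$ are mutually inverse on $\mathcal{H}$: for $G'\in\mathcal{H}$ convex, $G(X(G'))$ is the intersection of all halfspaces containing $G'$, which by Lemma~\ref{lem:restriction} is precisely $\conv(G')=G'$; conversely $X(G(X))=X$ as in the first direction.

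The one genuinely delicate point — and the step I expect to require the most care — is verifying that the constructed $(\mathcal{E},\covectors)$ really \emph{is} a system of sign-vectors in the sense needed, and in particular that $G(X(G'))=G'$ rather than something larger. This hinges on the fact (Lemma~\ref{lem:restriction}) that a convex subgraph of a partial cube equals the intersection of the halfspaces containing it, so no convexity is ``lost'' when passing to sign-vectors; without convexity of the members of $\mathcal{H}$ the recovery would fail. A secondary subtlety is that $\covectors$ as defined need not satisfy (C), (FS) or (SE) — but the proposition only asserts that it is a simple partial cube system (i.e.\ has the right tope graph with the right edge-labelling), not that it is a COM, so I would be careful \emph{not} to claim more than that. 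Everything else is bookkeeping with the embedding and repeated appeals to Lemma~\ref{lem:restriction} and Lemma~\ref{lem:commut_rest_contaction}.
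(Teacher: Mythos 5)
Your proof is correct and essentially the same as the paper's: the first direction identifies $G(X)$ with the restriction $\bigcap_{e\in\underline{X}}E_e^{X_e}$, hence a convex subgraph, and the converse takes $\covectors=\{X(G')\mid G'\in\mathcal{H}\}$, recovering $G'=G(X(G'))$ via Lemma~\ref{lem:restriction} and obtaining the topes from the vertices contained in $\mathcal{H}$. One caution: your side claim that this $\covectors$ is a \emph{partial cube system} (with edges corresponding to sign-vectors of $\covectors$ having a single zero) can fail when $\mathcal{H}$ does not contain the edges of $G$ as subgraphs, but the proposition does not assert this, so the proof of the actual statement is unaffected.
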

\begin{proof}
 Let $X\in\covectors$. Since $G(\covectors)$ is an isometric subgraph of $Q_{\mathcal{\mathcal{E}}}$, each $e\in \mathcal{E}$ can be identified with a $\Theta$-class $E_e$ for $e\in\mathcal{E}$ such that the halfspaces $E_e^+$ and $E_e^-$ of $G(\covectors)\setminus E_e$ are convex. Now, $\topes(X)$ induces the subgraph $\bigcap_{e\in\underline{X}}E_e^{X_e}$, i.e.~is a restriction of $G(\covectors)$ and therefore is convex.

 For the converse, by Lemma~\ref{lem:restriction} any convex subgraph $G'\in\mathcal{H}$ may be described as the intersection of convex halfspaces $E_e^{+, -}$ for some $E_e$ with $e\in\mathcal{E}$. This allows for a correspondence of convex graphs $G'\in\mathcal{H}$ and sign-vector via $\chi(G')\in\{+, -,0\}^\mathcal{E}$ and establishes $\mathcal{H}=\mathcal{H}(\covectors)$. Since every vertex is contained in $\mathcal{H}$, we have $G=G(\covectors)$.
\end{proof}



The following establishes a connection between the gates of a convex set and the composition operator. The statement specialized to tope graphs of OMs can be found in~\cite[Exercise 4.10]{bjvestwhzi-93}.

\begin{lem}\label{lem:dictionarygated}
Let $G$ be a partial cube embedded in a hypercube, $G'$ a convex subgraph of $G$ and $v$ a vertex of $G$. Then $w$ is the gate for $v$ in $G'$ if and only if $\chi(w)=\chi(G')\circ \chi(v)$. Therefore, a subgraph $G'$ is gated if and only if for all $v\in G$ there is a $w\in G$ such that $\chi(G')\circ \chi(v)=\chi(w)$.
\end{lem}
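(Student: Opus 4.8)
The plan is to reduce the statement to sign-vector arithmetic via the standard distance formula for isometric subgraphs of hypercubes: if $G$ is embedded in $Q_{\mathcal{E}}$, then $d(a,b)=|S(X(a),X(b))|$ for all vertices $a,b$, where $X(a),X(b)$ are full $\{\pm\}$-vectors. From this I would first extract the elementary ``betweenness'' reformulation: for vertices $a,b,w$ of $G$, the vertex $w$ lies on a shortest $a$--$b$-path (equivalently $d(a,b)=d(a,w)+d(w,b)$) if and only if $X(w)_e=X(a)_e$ for every coordinate $e$ with $X(a)_e=X(b)_e$. Indeed, each such $e$ contributes $0$ to $|S(X(a),X(b))|$ but $0$ or $2$ to $|S(X(a),X(w))|+|S(X(w),X(b))|$, while every $e\in S(X(a),X(b))$ contributes exactly $1$ to each side. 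I would also record that a convex subgraph $G'$ equals $\bigcap_{e\in\underline{X(G')}}E_e^{X(G')_e}$, which is immediate from Lemma~\ref{lem:restriction} and the definition of $X(G')$.

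For the implication ``$X(w)=X(G')\circ X(v)\Rightarrow w$ is the gate of $v$ in $G'$'', first note $(X(G')\circ X(v))_e=X(G')_e$ for all $e\in\underline{X(G')}$, so $w\in E_e^{X(G')_e}$ for all such $e$, hence $w\in G'$ by the intersection description. Then, given any $y\in G'$ and any coordinate $e$ with $X(v)_e=X(y)_e$, I would split cases: if $e\in X(G')^0$ then $X(w)_e=X(v)_e$ by definition of composition; if $e\in\underline{X(G')}$ then $X(w)_e=X(G')_e$ and also $X(y)_e=X(G')_e$ since $y\in G'\subseteq E_e^{X(G')_e}$, so again $X(w)_e=X(v)_e$. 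In either case $X(w)_e=X(v)_e$, so by the betweenness reformulation $w$ lies on a shortest $v$--$y$-path; since $y\in G'$ was arbitrary, $w$ is the gate.

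For the converse, let $w$ be the gate of $v$ in $G'$. Since $w\in G'$ we get $X(w)_e=X(G')_e=(X(G')\circ X(v))_e$ for every $e\in\underline{X(G')}$. For $e\in X(G')^0$ the class $E_e$ crosses $G'$, so there is an edge of $G'$ in $E_e$ and in particular a vertex $y\in G'$ with $X(y)_e=X(v)_e$; as $w$ lies on a shortest $v$--$y$-path, the betweenness reformulation forces $X(w)_e=X(v)_e=(X(G')\circ X(v))_e$. Hence $X(w)=X(G')\circ X(v)$. The ``therefore'' is then immediate: $G'$ is gated precisely when every $v\in G$ has a gate in $G'$, and by the equivalence just proved such a gate exists exactly when the full sign-vector $X(G')\circ X(v)$ is realized by some vertex of $G$ (which, as shown above, then automatically lies in $G'$).

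The only points requiring care are the betweenness reformulation and the representation of a convex subgraph as the intersection of the halfspaces containing it; both follow routinely from the distance formula and Lemma~\ref{lem:restriction}, so I do not expect a genuine obstacle.
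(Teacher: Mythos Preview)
Your proof is correct and follows essentially the same approach as the paper. The paper phrases the key observation as the equivalence $X(w)=X(G')\circ X(v)\Leftrightarrow S(X(v),X(w))=S(X(v),X(G'))$ and then argues via concatenation of shortest paths and separators, while you unpack the same content coordinate-wise through your betweenness reformulation; the underlying idea is identical.
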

\begin{proof}
 First note that $\chi(w)=\chi(G')\circ \chi(v) \Leftrightarrow S(\chi(v),\chi(w))=S(\chi(v),\chi(G'))$. Thus, if $\chi(w)=\chi(G')\circ \chi(v)$ then $S(\chi(v),\chi(w))=S(\chi(v),\chi(G'))$. Hence, the concatenation of a shortest $(v,w)$-path and a shortest $(w,w')$-path for $w'\in G'$ does not cross any $\Theta$-class twice, since by convexity of $G'$ the $\Theta$-classes crossed by the second part all are from $\chi(G')^0$. Hence it is a shortest path.

 Now, let $v\in G$ and $w$ a gate for $v$ in $G'$. Suppose $e\in S(\chi(v),\chi(w))\setminus S(\chi(v),\chi(G'))$. This means a $\Theta$-class $E_e$ for $e\in\mathcal{E}$ splits $G$ and also $G'$ into two halfspaces, but $v$ and $w$ do not lie on the same side. This is $w$ cannot lie on a shortest path from $v$ to a vertex in the halfspace of $G'$ not containing $w$. This contradicts that $w$ is a gate for $v$ in $G'$ and hence $S(\chi(v),\chi(w))=S(\chi(v),\chi(G'))$, which implies $\chi(w)=\chi(G')\circ \chi(v)$.
\end{proof}

\begin{lem}\label{lem:antipodes}
In an antipodal partial cube $G$, the \emph{antipodal mapping} $v\mapsto -v$ is a graph automorphism and for every convex subgraph $-\chi(G')=\chi(-G')$.
\end{lem}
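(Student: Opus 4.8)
The plan is to reduce everything to the single observation that in an antipodal partial cube the antipode of a vertex is its coordinate-wise negation in the hypercube embedding. Concretely, for a vertex $x$ of $G$ the interval from $x$ to $-x$ is convex (all intervals in partial cubes are), so $\conv(x,-x)$ equals this interval, which is crossed by exactly the $\Theta$-classes in $S(X(x),X(-x))$. If some $E_e$ did not cross it, then $\conv(x,-x)$ would be contained in a halfspace $E_e^{X(x)_e}$, which is a proper subgraph of $G$, contradicting $\conv(x,-x)=G$. Hence $S(X(x),X(-x))=\mathcal{E}$, i.e.\ $X(-x)=-X(x)$. Since distinct vertices of a partial cube have distinct sign-vectors, this already shows $v\mapsto -v$ is an involution, in particular a bijection on the vertex set.

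For the automorphism statement I would then use that in a partial cube embedded in $Q_{\mathcal{E}}$ a pair $u,v$ forms an edge precisely when $|S(X(u),X(v))|=1$. As $S(X(-u),X(-v))=S(-X(u),-X(v))=S(X(u),X(v))$, the map $v\mapsto -v$ both preserves and reflects edges, so it is a graph automorphism. Consequently it maps convex subgraphs to convex subgraphs, so $-G'$ is again convex whenever $G'$ is, and $X(-G')$ is well-defined.

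For the identity $-X(G')=X(-G')$ I would argue coordinate by coordinate over the $\Theta$-classes $E_e$. If $E_e$ crosses $G'$, then $X(G')_e=0$; choosing an edge $uv\in E_e$ with $u,v\in G'$, the pair $(-u)(-v)$ is again an edge and lies in $E_e$ (since $S(X(-u),X(-v))=S(X(u),X(v))=\{e\}$) with $-u,-v\in -G'$, so $E_e$ crosses $-G'$ and $X(-G')_e=0=(-X(G'))_e$. If instead $G'\subseteq E_e^{s}$ for some sign $s\in\{+,-\}$, then every $v\in G'$ has $X(v)_e=s$, hence $X(-v)_e=-s$, so $-G'\subseteq E_e^{-s}$ and $X(-G')_e=-s=(-X(G'))_e$. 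These cases are exhaustive, so the two sign-vectors agree everywhere.

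The whole argument is essentially bookkeeping; the only genuine content is the first reduction, namely extracting $X(-x)=-X(x)$ from the definition $\conv(x,-x)=G$, which relies on intervals being convex and on each $\Theta$-class genuinely crossing $G$. I expect no real obstacle beyond being careful that $-G'$ is convex so that $X(-G')$ is meaningful, which is exactly what the automorphism part supplies.
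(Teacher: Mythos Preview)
Your argument is correct and follows essentially the same route as the paper: both proofs rest on the key observation that $X(-v)=-X(v)$, obtained from $\conv(v,-v)=G$. You state this explicitly at the outset and then read off the automorphism from the fact that adjacency in a partial cube is equivalent to $|S(X(u),X(v))|=1$; the paper instead derives $S(X(-v),X(-w))=\{e\}$ via a slightly more indirect separator computation, and for the second assertion gives a one-line version of your coordinate-wise case split. The content is the same, your presentation is just a bit more direct.
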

\begin{proof}
Since every vertex has a unique antipode, $v\mapsto -v$ is indeed a mapping. Since it is an involution it is bijective. To show that it is indeed a homomorphism let $vw$ be an edge of $G$. Since $G$ is a partial cube and since $\conv(v,-v)=G$ there is a shortest $(v,-v)$-path starting with the edge $vw$. Analogously, there is a shortest $(w,-w)$-path starting with the edge $vw$. Say this edge is in $\Theta$-class $E_e$. Now, $S(\chi(v),\chi(-v))\setminus S(\chi(v),\chi(-w))=e$ and hence $S(\chi(-v),\chi(-w))=e$, i.e.~$-v-w$ is an edge.

Now, let $G'$ be a convex subgraph of $G$. Since $\chi(G')$ just records the signs of the halfspaces of the classes of $\mathcal{E}$ which contain $G'$ and the antipodal mapping sends each vertex $v$ to the vertex sitting on all the other sides, we obtain the result.
\end{proof}

For a partial cube $G$ isometrically embedded in a hypercube  $Q_\mathcal{E}$ 
define the set of sign vectors $$\covectors(G)=\{X\in \{0,+, -\}^\mathcal{E}\mid \textrm{ for all } v\in G\textrm{ there exists } w\in G :X\circ (-\chi(v))=\chi(w)\}.$$


\begin{lem}\label{lem:L(G)}
 Let $G$ be a partial cube isometrically embedded in a hypercube  $Q_\mathcal{E}$. Then $\covectors(G)$ is a partial cube system that satisfies (FS) (and therefore (C)) and the set $\mathcal{H}(\covectors(G))$ of corresponding subgraphs coincides with the antipodal gated subgraphs of $G$.
\end{lem}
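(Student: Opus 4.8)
The plan is to prove the two assertions — that $\covectors(G)$ is a partial cube system satisfying (FS), and that its subgraph collection $\mathcal{H}(\covectors(G))$ is exactly the family of antipodal gated subgraphs of $G$ — more or less simultaneously, because the axiom (FS) and the characterization of $\mathcal{H}$ both hinge on unwinding the defining condition $X\circ(-Y)\in\topes(X(G))$ for all $Y\in\topes(X(G))$. First I would record the obvious reformulation: for $X\in\{0,\pm\}^{\mathcal{E}}$, writing $G(X)$ for the restriction $\bigcap_{e\in\underline X}E_e^{X_e}$ of $G$ (which is convex and nonempty precisely when $X=X(G(X))$, and in that case its topes are $\topes(X)$), the condition ``$X\circ(-Y)\in\topes(X)$ for all $Y\in\topes(X)$'' says exactly that for every tope $Y$ of $G(X)$ its reflection across all coordinates in $\underline X$ (and keeping the $Y$-coordinates outside $\underline X$) lies again in $G(X)$; equivalently, $G(X)$ is an antipodal subgraph of $G$, with the antipode of $Y$ in $G(X)$ being $X\circ(-Y)$. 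So the key preliminary step is: $X\in\covectors(G)$ if and only if $X=X(G(X))$ (i.e.\ $G(X)\neq\emptyset$ is genuinely the convex hull of $\topes(X)$) and $G(X)$ is antipodal in $G$. Conversely, any nonempty convex antipodal subgraph $H$ equals $G(X(H))$ with $X(H)\in\covectors(G)$, by Proposition~\ref{prop:dictionarybasic} and the definition of antipodal.

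Next I would pin down the gated part. By Lemma~\ref{lem:dictionarygated}, a convex subgraph $G'$ is gated iff $X(G')\circ X(v)\in\topes$ for all vertices $v$ of $G$. Now given $X\in\covectors(G)$, i.e.\ $G(X)$ antipodal, I want: $G(X)$ is gated iff $X\circ Z\in\topes(X)$ for all topes $Z$ of $G$ — but we only know $X\circ(-Y)\in\topes(X)$ for topes $Y$ of $G(X)$, not for all topes of $G$. This is the crux. The bridge is the antipodal automorphism: by Lemma~\ref{lem:antipodes} the map $Y\mapsto -_{G(X)}Y = X\circ(-Y)$ is a graph automorphism of $G(X)$, and a tope $Y$ of $G(X)$ together with its antipode realizes all coordinates outside $\underline X$; composing a shortest path argument (exactly as in Lemma~\ref{lem:dictionarygated}) shows $X\circ X(v)$ is always a tope: the $\Theta$-classes separating $v$ from $X\circ X(v)$ are precisely $\underline X \cap \{e : v \in E_e^{-X_e}\}$, and following a shortest path from $v$ into $G(X)$ that first fixes these coordinates lands at a vertex whose further shortest paths into $G(X)$ only cross coordinates in $X^0$, hence no repetition. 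The point where antipodality of $G(X)$ is genuinely used is to guarantee that $X\circ X(v)$, once we know it lies in the hypercube face $\bigcap_{e\in\underline X}E_e^{X_e}$, is actually a vertex of $G$: this is where I would invoke that $G(X)$ being antipodal (in particular, by Lemma~\ref{lem:antipodal_cycle}, containing an isometric cycle whose convex hull is all of $G(X)$) forces enough of the face to be present — more precisely, I expect to argue that for an antipodal $H=G(X)$, every sign-vector $W$ with $\underline W\supseteq\underline X$ and $W|_{\underline X}=X$ that is ``between'' two topes of $H$ is itself a tope of $H$, which is immediate from convexity plus antipodality (the interval from a tope $Y$ to its antipode is all of $H$). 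Conversely if $G(X)$ is gated then in particular $X\circ(-Y)\in\topes$ for topes $Y$ of $G(X)$, giving $X\in\covectors(G)$; so $\mathcal{H}(\covectors(G))$ is contained in the antipodal gated subgraphs, and the reverse containment is the preliminary step together with this gatedness equivalence. This establishes the second assertion.

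Finally, (FS): I must show $X\circ(-Y)\in\covectors(G)$ for all $X,Y\in\covectors(G)$. Using the dictionary just built, $X,Y\in\covectors(G)$ mean $G(X)$ and $G(Y)$ are antipodal gated subgraphs; I need $Z:=X\circ(-Y)$ to satisfy $Z=X(G(Z))$ with $G(Z)$ antipodal and gated. The natural route is: $G(Z)$ is (up to the fact that we must check nonemptiness) the gated hull-type combination of $G(X)$ and a reflected copy of $G(Y)$, and gated subgraphs compose well — specifically I would show $G(Z)$ is the gated subgraph whose sign-vector is $X(G(X))\circ (-X(G(Y)))$ by checking that reflecting the gate characterization of Lemma~\ref{lem:dictionarygated} through $Y$ behaves correctly, and that the intersection/composition of two antipodal gated subgraphs along a common coordinate pattern is again antipodal gated (the antipodal map of the composite is built by composing the two antipodal involutions, which agree where their supports overlap because both subgraphs are gated — the gate/composition identity $X(G')\circ X(v)=X(w)$ ensures compatibility). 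The main obstacle I anticipate is precisely this last point — verifying that $Z\circ(-T)\in\topes(Z)$ for every tope $T$ of $G(Z)$, i.e.\ that the composite subgraph is antipodal — and checking it is nonempty; I expect to handle nonemptiness by exhibiting an explicit tope (compose a tope of $G(X)$ suitably with the antipodal structure of $G(Y)$) and antipodality by the shortest-path/no-repeated-$\Theta$-class argument that is by now standard in this section, leaning on Lemmas~\ref{lem:dictionarygated} and~\ref{lem:antipodes}. That $\covectors(G)$ is a partial cube system in the technical sense (tope graph isometric, edges $\leftrightarrow$ single-$0$ sign-vectors) then follows because its topes are exactly the vertices of $G$ (take $X$ with $\underline X=\mathcal{E}$, forcing $X\in\topes$, and note every vertex arises this way), and $G$ is a partial cube by hypothesis; (C) follows from (FS) as noted in Section~\ref{sec:signvectors}.
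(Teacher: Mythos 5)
There is a genuine gap, and it originates in a misreading of the definition of $\covectors(G)$. In the paper, $\covectors(G)=\{X\in\{0,\pm\}^{\mathcal{E}}\mid X\circ(-Y)\in\topes(X(G))\ \text{for all }Y\in\topes(X(G))\}$, and since $X(G)$ is the all-zero sign-vector (every $\Theta$-class crosses $G$), the quantifier runs over \emph{all} vertices $Y$ of $G$, and the requirement is that $X\circ(-Y)$ is again a vertex of $G$. You instead read the condition as ``$X\circ(-Y)\in\topes(X)$ for all $Y\in\topes(X)$'', i.e.\ only over topes of $G(X)$, which would make $\covectors(G)$ the set of sign-vectors of all antipodal subgraphs. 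Under that reading the lemma is simply false: for $G\in\Q$ (say $Q_4^{-*}$) there is an antipodal $C_6$ that is not gated (Lemma~\ref{lem:minfamily}), so $\mathcal{H}$ would not coincide with the antipodal \emph{gated} subgraphs. Consequently the step you yourself identify as the crux --- deducing $X\circ X(v)\in\topes$ for every vertex $v$ of $G$ from antipodality of $G(X)$ alone --- cannot be proved; it is exactly the (false) assertion that antipodal subgraphs of partial cubes are gated, which is the hard content of the whole paper, not of this lemma. The auxiliary claim you lean on there is also wrong: for an antipodal convex subgraph $H$, it is not true that every sign-vector lying coordinatewise between two topes of $H$ (and agreeing with $X$ on $\underline{X}$) is a tope of $H$; that would force $H$ to be a hypercube face, and already an antipodal convex $C_6$ violates it. Convexity only guarantees that vertices of $G$ between two vertices of $H$ lie in $H$.

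With the correct definition the intended proof is short and does not need any of the machinery you propose for (FS). From the defining property one first gets $X\circ Y\in\topes(X)$ for every tope $Y$ of $G$, via $X\circ Y=X\circ-(X\circ-Y)$. Then (FS) is the one-line identity $(X\circ(-Y))\circ(-Z)=X\circ(-(Y\circ Z))$ together with this composition property of $Y$ and the defining property of $X$; your plan of building the antipodal involution of $G(X\circ-Y)$ geometrically out of those of $G(X)$ and $G(Y)$ is both unnecessary and left essentially unverified (nonemptiness and antipodality of the composite are only sketched). For the second assertion: if $X\in\covectors(G)$, then for $v\in G(X)$ the vertex $X\circ-X(v)$ is the antipode of $v$ in $G(X)$ (they differ exactly in the classes of $X^{0}$, and convexity gives $\conv(v,X\circ-X(v))=G(X)$), so $G(X)$ is antipodal; and $X\circ X(v)\in\topes(X)$ for \emph{all} $v\in G$ together with Lemma~\ref{lem:dictionarygated} gives that $G(X)$ is gated. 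Conversely, for an antipodal gated subgraph $A$, gatedness gives $X(A)\circ X(v)\in\topes$ for all $v$, and the antipode in $A$ of the gate of $v$ is $X(A)\circ-X(v)$, hence a vertex of $G$, so $X(A)\in\covectors(G)$. Your converse direction and your identification of topes of $\covectors(G)$ with vertices of $G$ are fine, but the central forward implication as you set it up would fail, so the proposal as written does not establish the lemma.
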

\begin{proof}
 Clearly, the topes of $\covectors(G)$ correspond to the vertices of $G$. Moreover, a sign-vector with a single 0-entry is in $\covectors(G)$ if and only if both of the possibly signings of that entry are topes of $\covectors(G)$.
 
 Since (FS) implies (C) (also when restricted to topes) we have that $X\in\covectors(G)$ implies $X\circ \chi(v)\in \covectors(G)$ for all $v\in G$. To show (FS) for $\covectors(G)$ let $X,Y\in\covectors(G)$ and $v\in G$ and note that $(X\circ(-Y))\circ(-\chi(v))=X\circ(-(Y\circ \chi(v)))$. Since by (C) we have that $Y\circ \chi(v)\in\covectors(G)$ it follows that $X\circ(-Y)\in\covectors(G)$.
 
 We prove now the second part of the statement. Let $X\in \covectors(G)$ and a vertex $v\in G(X)$. We have that $X\circ -\chi(v)\in \topes(X)$ is the antipode of $v$ in $G(X)$. This is, $G(X)$ is antipodal. Furthermore, since (FS) implies (C) we also have that $X\circ \chi(v)\in \topes(X)$, for all $v\in G$. By Lemma~\ref{lem:dictionarygated} we have that $G(X)$ is gated.
 
 Conversely, if $A$ is an antipodal gated subgraph of $G$, then by Lemma~\ref{lem:dictionarygated}, we have that for the gate $v'$ of $v$ in $A$ it holds $\chi(A)\circ \chi(v)=\chi(v')$. Now, the antipode of the gate of $v'$ in $A$ has to correspond to $\chi(A)\circ -\chi(v)$. Thus, $\chi(A)\in\covectors(G)$.
 
%
%
%
%
\end{proof}

%
%

%


Proposition~\ref{prop:dictionarybasic} states that in a simple system of sign-vectors there is a correspondence between its vectors and a subset of the set of convex subgraphs of its tope graph. The following proposition determines which convex subgraphs are in the subset if the system is a COM. For its statement denote by $\topegraphs$ the class of tope graphs of COMs. Moreover, we call a partial cube \emph{antipodally gated} if all its antipodal subgraphs are gated and denote their class by $\AG$.

\begin{thm} \label{thm:dictionaryOMC}
For a simple COM $(\mathcal{E},\covectors)$ with embedded tope graph $G$ we have

							  \begin{center}

		  $\begin{array}{rl}
                                                           \covectors & =\{\chi(G')\mid G'\text{ antipodal subgraph of }G(\covectors)\}\\
                                                           &=\{\chi(G')\mid G'\text{ antipodal gated subgraph of }G(\covectors)\}\\
                                                           &=\covectors(G).
                                                          \end{array}$\end{center}
In particular, tope graphs of COMs are antipodally gated, i.e, $\topegraphs\subseteq \AG$.
\end{thm}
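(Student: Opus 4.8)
The plan is to establish the three displayed equalities in a cycle and then read off the ``in particular'' claim. Write $\covectors' := \{X(G') \mid G' \text{ antipodal subgraph of } G(\covectors)\}$, $\covectors'' := \{X(G') \mid G' \text{ antipodal gated subgraph of } G(\covectors)\}$, and recall $\covectors(G)$ from Lemma~\ref{lem:L(G)}. Trivially $\covectors'' \subseteq \covectors'$ since every antipodal gated subgraph is in particular antipodal. By Lemma~\ref{lem:L(G)}, $\mathcal{H}(\covectors(G))$ is exactly the set of antipodal gated subgraphs of $G$, so $\covectors(G) = \covectors''$; moreover Lemma~\ref{lem:L(G)} tells us $\covectors(G)$ is a partial cube system satisfying (FS). Thus it remains to show (i) $\covectors \subseteq \covectors(G)$, and (ii) $\covectors' \subseteq \covectors$. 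Given these, we get $\covectors \subseteq \covectors(G) = \covectors'' \subseteq \covectors' \subseteq \covectors$, forcing equality throughout, and in particular $\covectors' = \covectors''$ says every antipodal subgraph of $G(\covectors)$ is gated.

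For (i), take $X \in \covectors$. I want to show $X \circ (-Y) \in \topes(X(G))$ for every tope $Y \in \topes(X)$ — wait, more precisely for every $Y \in \topes(X(G(\covectors)))$, i.e.\ every tope of the system. First note that by (SE)-type reasoning or directly from (FS), for any tope $T$ we have $X \circ T \in \covectors$ and, since $X \circ T$ has support $\mathcal{E}$ (because $T$ does), $X \circ T \in \topes$. So $\topes(X) = \{X \circ T : T \in \topes\}$ is nonempty and $G(X)$ is the convex subgraph cut out by the halfspaces $E_e^{X_e}$, $e \in \underline{X}$, as in Proposition~\ref{prop:dictionarybasic}. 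Now for $Y \in \topes$, by (FS) we have $X \circ (-Y) \in \covectors$, and again its support is all of $\mathcal{E}$, so $X \circ (-Y) \in \topes$; since $(X \circ (-Y))_e = X_e$ for $e \in \underline{X}$, it lies in $\topes(X)$. Hence $X$ satisfies the defining condition of $\covectors(G)$, giving (i). (One must be slightly careful that $X(G(X)) = X$; this holds because $G(X)$ lies in $E_e^{X_e}$ for $e \in \underline X$ and, by (N1$^*$)/(N2$^*$) simplicity together with composition, $G(X)$ genuinely crosses every $E_e$ with $e \in X^0$, so $X(G(X))_e = 0$ there.)

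For (ii), this is the substantive direction: given an antipodal subgraph $G'$ of $G(\covectors)$ I must produce the covector $X(G') \in \covectors$. The idea is to build it up from a single vertex by strong elimination. Pick a vertex $v = X(v) \in G'$ and its antipode $-_{G'}v \in G'$; by antipodality $\conv(v, -_{G'}v) = G'$, so the support of $X(G')$ is exactly $S(X(v), X(-_{G'}v))$, the separator. Now apply (SE) repeatedly: starting from the pair $X(v), X(-_{G'}v) \in \covectors$ with separator $S$, for each $e \in S$ there is $Z^{(e)} \in \covectors$ with $Z^{(e)}_e = 0$ and $Z^{(e)}_f = (X(v) \circ X(-_{G'}v))_f = X(v)_f$ for $f \notin S$. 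I would like to iterate to zero out all of $S$ simultaneously and land on $X(G')$. The cleanest route: show by induction on $|S \setminus A|$ that for every $A \subseteq S$ there is $W \in \covectors$ with $W_e = 0$ for $e \in A$ and $W_f = X(v)_f$ for $f \notin S$, i.e.\ $W$ ``interpolates'' toward $X(G')$; the base case $A = \emptyset$ is $W = X(v) \circ X(-_{G'}v) \in \covectors$ (by (C), which follows from (FS)), and the inductive step picks $e \in S \setminus A$, applies (SE) to $W$ and a suitable partner covector whose separator with $W$ contains $e$ but avoids $A$ — here is where I expect the main friction, since one needs a partner (obtained from $X(-_{G'}v)$ composed with $W$, or $X(v)$ composed appropriately) that has the right sign pattern so that (SE) zeros out $e$ without disturbing coordinates already in $A$ or outside $S$. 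Once $A = S$ we obtain $W$ with $\underline{W} = \mathcal{E} \setminus S$ and $W_f = X(v)_f = X(G')_f$ for $f \notin S$, and $W_e = 0 = X(G')_e$ for $e \in S$, so $W = X(G')$, proving $X(G') \in \covectors$.

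The main obstacle, then, is the bookkeeping in step (ii): controlling the strong elimination so that each application kills one new support element of the target antipodal subgraph while preserving the signs already fixed. A convenient way to tame this is to use that $G'$ is convex (antipodal subgraphs are convex by definition) together with Lemma~\ref{lem:antipodal_cycle}, which gives an isometric cycle $C \subseteq G'$ with $\conv(C) = G'$; walking around $C$ gives an explicit ordering of $S = \underline{X(G')}$ and natural partner topes (the vertices of $C$) for the successive (SE) applications, which makes the non-interference automatic. With (i) and (ii) in hand the equalities close up as described, and since $\covectors' = \covectors''$ every antipodal subgraph equals some $G(X)$ with $X \in \covectors(G)$, which is gated by Lemma~\ref{lem:L(G)}, establishing the final sentence.
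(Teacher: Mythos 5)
The easy parts of your plan (the trivial inclusion, $\covectors(G)=\covectors''$ via Lemma~\ref{lem:L(G)}, and $\covectors\subseteq\covectors(G)$ from (FS)) coincide with the paper's proof and are fine. The gap is exactly where you flag ``the main friction'': the inclusion $\{X(G')\mid G'\text{ antipodal}\}\subseteq\covectors$, and your proposed fix does not close it. In your inductive step you have $W\in\covectors$ with $W_a=0$ for all $a\in A$, and you want to zero out a further $e\in S\setminus A$ by applying (SE) to $W$ and a partner. But for any $a\in A$ we have $a\notin S(W,Y)$ (since $W_a=0$), so (SE) forces $Z_a=(W\circ Y)_a=Y_a$; hence the previously created zeros survive only if the partner $Y$ itself vanishes on $A$. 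Your suggested partners --- vertices of the isometric cycle $C$ from Lemma~\ref{lem:antipodal_cycle} --- are topes, which have no zero coordinates at all, so every application of (SE) with such a partner destroys all zeros accumulated so far; the ``non-interference'' is not automatic, it fails outright. Producing suitable partner covectors with prescribed zeros on $A$ and the right signs elsewhere is essentially the statement you are trying to prove, so as written the argument is circular/incomplete at its only substantive step. (Also, a small slip: the separator $S(X(v),X(-_{G'}v))$ is the \emph{zero set} of $X(G')$, not its support, though your later identification of $W$ with $X(G')$ uses the correct relation.)

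The paper's device avoids this bookkeeping entirely: it inducts on $|X(G')^0|$, takes a \emph{maximal proper antipodal subgraph} $G''$ of $G'$ (which exists since vertices are antipodal, and has strictly smaller zero set, so $X(G'')\in\covectors$ by induction), forms the subgraph $G'''$ of antipodes of $G''$ inside $G'$ (also a covector, via Lemma~\ref{lem:antipodes}), and applies (SE) \emph{once} to $X(G'')$ and $X(G''')$ at some $e\in\underline{X(G'')}\cap X(G')^0$. The uncontrolled coordinates inside $S(X(G''),X(G'''))$ are never pinned down by hand: instead, the already-proved inclusion $\covectors\subseteq\covectors(G)$ shows the resulting $Z$ corresponds to an antipodal (gated) subgraph $G(Z)$ with $G''\subsetneq G(Z)\subseteq G'$, and maximality of $G''$ forces $G(Z)=G'$, i.e.\ $Z=X(G')$. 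If you want to salvage your approach, you would need some such structural argument in place of coordinatewise control; as it stands, the inductive step cannot be carried out.
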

\begin{proof}
Trivially, we have $$\{\chi(G')\mid G'\text{ antipodal gated subgraph of }G(\covectors)\}\subseteq\{\chi(G')\mid G'\text{ antipodal subgraph of }G(\covectors)\}.$$ The equality
$$\covectors(G)= \{\chi(G')\mid G'\text{ antipodal gated subgraph of }G(\covectors)\}$$ is precisely Lemma~\ref{lem:L(G)}, while
%
the inclusion $\covectors\subseteq \covectors(G)$ follows immediately from (FS).

Thus, we end by proving $\{\chi(G')\mid G'\text{ antipodal subgraph of }G(\covectors)\}\subseteq \covectors$.
Let $G'$ be an antipodal subgraph of $G(\covectors)$. We show $\chi(G')\in\covectors$ by induction on $|\chi(G')^0|$. If $|\chi(G')^0|=0$ then $\chi(G')\in\topes$ and we are done. If $|\chi(G')^0|>0$ then take a maximal proper antipodal subgraph $G''$ of $G'$. Since vertices are antipodal subgraphs, $G''$ exists. By induction hypothesis $\chi(G'')\in\covectors$. Define $G'''$ to be the subgraph of $G'$ induced by all antipodes of vertices of $G''$ with respect to $G'$. By Lemma~\ref{lem:antipodes}, we have $G'''\cong G''$. Moreover, $\chi(G'')=\chi(G')\circ-\chi(G''')$. 
Since $G''$ is a proper antipodal subgraph of $G'$, there exists an $e\in \underline{\chi(G'')}\cap \chi(G')^0$. Then $e\in S(\chi(G''),\chi(G'''))$ and 
we now can apply (SE) to $\chi(G'')$ and $\chi(G''')$ with respect to $e$.  
We obtain $Z \in  \covectors$ such that
$Z_e=0$  and  $Z_f=(\chi(G'')\circ \chi(G'''))_f$  for all $f\in \mathcal{E}\setminus S(\chi(G''),\chi(G'''))$. By the inclusions that we have shown in the first two parts of the proof, $Z$ corresponds to an antipodal subgraph $G(Z)$. By (SE), $G(Z)$ strictly contains $G''$ and is contained in $G'$. By the maximality of $G''$ we have $G(Z)=G'$ and therefore $\chi(G')=Z\in\covectors$.
\end{proof}

As a consequence of Theorem~\ref{thm:dictionaryOMC} we immediately get:

\begin{cor}\label{cor:determined}
 Every simple COM is uniquely determined by its tope set and up to reorientation by its tope graph.
\end{cor}
 Corollary~\ref{cor:determined} had only been proved in a non-constructive way, see~\cite[Propositions 1 \& 3]{Ban-15}. The constructive statement here is in fact a generalization of a theorem known for OMs, usually attributed to Mandel, see~\cite{Cor-85} and the fact that an OM is determined up to reorientation by its tope graph is due to~\cite{Bjo-90}. 
 
 The following justifies that we can restrict ourselves to simple COMs when studying tope graphs.
\begin{lem}\label{lem:covectors(G)}
A partial cube $G$ is in $\topegraphs$ if and only if $\covectors(G)$ is a simple COM.
\end{lem}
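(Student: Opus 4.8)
The statement is an equivalence, so there are two directions. The ``if'' direction is almost immediate: if $\covectors(G)$ is a simple COM, then by Proposition~\ref{prop:dictionarybasic} its tope graph $G(\covectors(G))$ equals $G$ (since $\covectors(G)$ contains all vertices of $G$ as topes, because each vertex $v=X(v)$ satisfies $X(v)\circ(-Y)\in\topes$ trivially when $\underline{X(v)}=\mathcal{E}$), so $G$ is a tope graph of a COM and hence $G\in\topegraphs$. So the content is in the ``only if'' direction.

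\textbf{The ``only if'' direction.} Suppose $G\in\topegraphs$, say $G=G(\M)$ for some simple COM $(\mathcal{E},\M)$. The first step is to identify $\M$ with $\covectors(G)$. By Theorem~\ref{thm:dictionaryOMC} applied to $(\mathcal{E},\M)$, we have $\M=\covectors(G(\M))=\covectors(G)$. So $\covectors(G)=\M$ as systems of sign-vectors. Now I need to check the two things: (i) $\covectors(G)$ satisfies (FS) and (SE), i.e.~is a COM; and (ii) it is simple. For (i), Lemma~\ref{lem:L(G)} already gives that $\covectors(G)$ always satisfies (FS) (for \emph{any} partial cube $G$), so the only axiom needing verification is (SE) — but since $\covectors(G)=\M$ and $\M$ is a COM, (SE) holds for $\covectors(G)$. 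For (ii), simplicity: $(\mathcal{E},\M)$ is a simple COM by hypothesis, and $\covectors(G)=\M$, so $\covectors(G)$ is simple. This seems to close the argument, but there is a subtlety I should be careful about: the ground set. The notation $\covectors(G)$ is defined with respect to a specific isometric embedding $G\hookrightarrow Q_{\mathcal{E}}$; since $G$ is a partial cube it has an essentially unique such embedding (up to automorphisms of $Q_{\mathcal{E}}$, as recalled in the text), and this is exactly the embedding realizing $G=G(\M)$. So $\covectors(G)$ and $\M$ live on the same ground set and the identification is legitimate.

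\textbf{Structure and the main obstacle.} So the proof reduces to citing Theorem~\ref{thm:dictionaryOMC} (which gives $\M=\covectors(G)$ whenever $G$ is the tope graph of the simple COM $\M$) together with Lemma~\ref{lem:L(G)} (which gives (FS) and partial-cube-system-ness of $\covectors(G)$ for free) and Proposition~\ref{prop:dictionarybasic} (for the converse direction, to recover $G$ from $\covectors(G)$). The only genuinely delicate point — and the step I would spell out most carefully — is the ``if'' direction's claim that $G(\covectors(G))=G$: one must verify that $\covectors(G)$ indeed contains all of $G$'s vertices as topes (so that the tope graph of $\covectors(G)$ is really all of $G$ and not a proper subgraph), which follows because for a vertex $v$ the sign-vector $X(v)$ has full support and $X(v)\circ(-Y)=X(v)$ for every tope $Y$, so $X(v)\in\covectors(G)$. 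Everything else is bookkeeping. I expect no serious obstacle; the lemma is essentially a repackaging of Theorem~\ref{thm:dictionaryOMC}, recording that the passage $G\mapsto\covectors(G)$ is the inverse of $\M\mapsto G(\M)$ on the class of simple COMs.
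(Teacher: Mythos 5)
Your proof is correct and follows essentially the same route as the paper: the ``only if'' direction is exactly the paper's appeal to Theorem~\ref{thm:dictionaryOMC} (giving $\M=\covectors(G)$ for the up-to-reorientation unique simple COM $\M$ with tope graph $G$), and the ``if'' direction is the paper's observation that $G$ is by definition the tope graph of $\covectors(G)$, which you justify by the same full-support computation $X(v)\circ(-Y)=X(v)$. Your citation of Proposition~\ref{prop:dictionarybasic} is not really needed there, but this does not affect correctness.
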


\begin{proof}
If $\covectors(G)$ is a simple COM, then $G$ is by definition its tope graph, thus $G$ is in $\topegraphs$. On the other hand, let $G$ be a graph in $\topegraphs$, and $\covectors$ its up to reorientation unique simple COM. By Theorem~\ref{thm:dictionaryOMC}, $\covectors = \{X\in\{0, -1, 1 \}^{\mathcal{E}}\mid X\circ -T\in\topes \text{ for all } T\in\topes\} = \covectors(G)$.
%
%
%
%
\end{proof}

Th following will be essential for the Handa-type characterization of tope graphs of COMs in Corollary~\ref{cor:Handa}. It can be seen as the graph theoretical analogue of the fact the COMs are closed under taking hyperplanes.
\begin{lem}\label{lem:COMhyperplane}
 In a COM $(\mathcal{E},\covectors)$ all zone graphs $\zeta_f(G(\covectors))$ are well-embedded partial cubes. In particular, $\topegraphs$ is closed under taking zone graphs.
\end{lem}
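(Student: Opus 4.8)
The plan is to apply the structural criterion of Lemma~\ref{lem:pchyperplane}: a zone graph $\zeta_f(G(\covectors))$ is a well-embedded partial cube precisely when any two convex cycles $C, C'$ that are both crossed by $E_f$ and by a common class $E_g$ are crossed by exactly the same set of $\Theta$-classes. So it suffices to verify this "parallel convex cycles" property in every COM. First I would translate the hypothesis into sign-vector language: a convex cycle $C$ of $G(\covectors)$ crossed by $E_f$ corresponds to $X(C)\in\covectors$ with $X(C)_f=0$, and by Lemma~\ref{lem:antipodal_cycle} together with Theorem~\ref{thm:dictionaryOMC} the convex cycles are exactly the rank-$2$ covectors (those $X$ whose zero set supports a two-dimensional "face"); its set of crossing $\Theta$-classes is $\underline{X(C)}$. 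So the claim becomes: if $X, X'\in\covectors$ both have $X_f=X'_f=0$, both have rank $2$ (i.e.\ the induced subgraphs $G(X),G(X')$ are isometric cycles), and there is some $g$ with $g\in\underline{X}\cap\underline{X'}$, then $\underline{X}=\underline{X'}$.

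The key step is to exploit that $\covectors/f$ is itself a COM — more precisely, its simplification is, by Lemma~\ref{lem:signminors} and Lemma~\ref{lem:commute}. In $\covectors/f$, the covectors $X\backslash f$ and $X'\backslash f$ coming from our two convex cycles are cocircuits (rank-one covectors of the contracted system, up to the parallelism identification), and they share the element $g$ in their support. In an OM, two cocircuits with a common element whose supports are "interlocked" in the right way must have equal support only up to a careful argument; but here the cleaner route is: in a COM, for a fixed face $G(Y)$ of rank $2$, the $\Theta$-classes crossing it are exactly the elements $e$ with $Y_e\ne 0$, and the face $G(Y)$ is an isometric cycle whose edge set, read via $\zeta_f$, collapses to two vertices. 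Two such cycles sharing the class $[g]$ of $\zeta_f$ lie, after contracting everything outside $\underline{X}\cup\underline{X'}$, in a COM of rank at most $3$; a rank-$3$ COM with two rank-$2$ faces through a common pair of classes is small enough to analyze directly, and one checks that the (SE)/(FS) axioms force the two cycles to span the same set of coordinates — otherwise one produces, via a convex traverse and (SE), a third convex cycle crossed by $E_f$ and $E_g$ but missing a class crossing one of $C,C'$, contradicting convexity of the cycles or minimality.

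The main obstacle I expect is the last implication: showing that the sign-vector axioms genuinely prevent two convex cycles through $E_f$ and $E_g$ from having differing supports. The cleanest way to discharge it is probably to argue contrapositively inside $G(\covectors)$: suppose $C$ is crossed by $E_h$ but $C'$ is not, with both crossed by $E_f,E_g$. Take a vertex $v$ on $C\cap E_f$ and a vertex $v'$ on $C'\cap E_f$ on the same side of $E_g$, and consider a convex traverse from the $E_f$-edge of $C$ to the $E_f$-edge of $C'$; by Lemma~\ref{lem:parallel} applied inside the (as yet only conjecturally well-embedded) setting one gets a contradiction, so instead I would argue directly with (SE) on $X(C)$ and $X(C')$ at the element $g$ to produce a covector of smaller zero set that, by Theorem~\ref{thm:dictionaryOMC}, is an antipodal — hence convex — subgraph interpolating between the two cycles, and track which $\Theta$-classes it is forced to cross. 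Once the parallel-cycle property is established, Lemma~\ref{lem:pchyperplane} gives that $\zeta_f(G(\covectors))$ is a well-embedded partial cube, and then Lemma~\ref{lem:hyperplane=hyperplane} identifies it with $G(\covectors/f)$, which is the tope graph of the COM $\covectors/f$; hence $\topegraphs$ is closed under zone graphs.
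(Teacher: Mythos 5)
Your overall frame is the same as the paper's: reduce to the convex-cycle criterion of Lemma~\ref{lem:pchyperplane}, and get the ``in particular'' part from Lemma~\ref{lem:signminors} together with Lemma~\ref{lem:hyperplane=hyperplane}; that last step is correct as you state it. The gap is in the core step, namely showing that two convex cycles $C,C'$ both crossed by $E_f$ and by a common $E_g$ must be crossed by the same $\Theta$-classes. Neither of the mechanisms you propose for it works. First, you cannot ``argue directly with (SE) on $X(C)$ and $X(C')$ at the element $g$'': since both cycles are crossed by $E_g$ we have $X(C)_g=X(C')_g=0$, so $g\notin S(X(C),X(C'))$, and (SE) only eliminates elements of the separator -- the intended application does not even start (and the separator of the two cycles may well be empty). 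Second, the reduction to ``a COM of rank at most $3$'' after contracting everything outside the relevant coordinates is unjustified -- nothing bounds the rank of that minor -- and ``small enough to analyze directly, and one checks\dots'' is a placeholder rather than an argument. There is also a persistent mix-up between support and zero set: the classes crossing a convex subgraph $C$ are the zero set $X(C)^0$, not $\underline{X(C)}$, so the statement you set out to prove should be $X(C)^0=X(C')^0$, and the contractions you consider should keep $X(C)^0\cup X(C')^0$.

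The idea you are missing is the gatedness half of Theorem~\ref{thm:dictionaryOMC}, which you quote but never use: convex cycles are antipodal subgraphs, hence gated in a COM tope graph. With that, the argument is short. Suppose $C$ is crossed by $E_f,E_g,E_h$ and $C'$ by $E_f,E_g$ but not by $E_h$, say $C'\subseteq E_h^+$. In a convex cycle the two edges of any crossing class are antipodal, so the path $C\cap E_h^+$ crosses each of $E_f,E_g$ exactly once and hence meets only three of the four $(E_f,E_g)$-quadrants; reorient so that it misses $E_f^-\cap E_g^+$. On the other hand the crossings of $E_f$ and $E_g$ interleave along $C'$, so $C'$ meets all four quadrants, and we may pick $v\in C'\cap E_f^-\cap E_g^+\subseteq E_h^+$. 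By Lemma~\ref{lem:dictionarygated} the gate of $v$ in $C$ would have sign-vector $X(C)\circ X(v)$, hence would lie in $E_f^-\cap E_g^+\cap E_h^+$, which is disjoint from $C$ -- contradicting that $C$ is gated. This is exactly the route the paper takes, and it replaces both of your problematic steps.
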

\begin{proof}
 Suppose there are two convex cycles $C,C'$ in $G(\covectors)$ that contradict Lemma~\ref{lem:pchyperplane}, i.e., both are crossed by $E_f$ and $E_g$ and $C$ is crossed by $E_h$ but $C'$ is not. By the second equivalence in Theorem~\ref{thm:dictionaryOMC}, the cycles are gated. Without loss of generality assume that $C'$ is completely in $E_h^+$. On the other hand, we can reorient $E_f$ and $E_g$ in a way that the vertices on $C\cap E_h^+$ are in $E_f^+\cap E_g^+, E_f^+\cap E_g^-$ and $E_f^-\cap E_g^-$. But then any vertex in $C'$ that is in $E_f^- \cap E_g^+$ has no gate to $C$, a contradiction. 
 
 Now, since COM is closed under taking hyperplanes by Lemma~\ref{lem:signminors}, Lemma~\ref{lem:hyperplane=hyperplane} gives that $\topegraphs$ is closed under taking zone graphs.
 
%
%
%
\end{proof}

Let us finally describe how tope graphs of the other systems of sign-vectors from Section~\ref{sec:signvectors} specialize tope graphs of COMs.  We will denote the classes of tope graphs of OMs, AOMs, and LOPs by $\topegraphsOM$, $\topegraphsAOM$, and $\topegraphsLOP$. The following three propositions will be cast in the proofs of the characterizations of $\topegraphsOM$, $\topegraphsAOM$, and $\topegraphsLOP$, see Corollaries~\ref{cor:OM},~\ref{cor:AOM}, and~\ref{cor:LOP}.

A consequence of Lemma~\ref{lem:antipodes} is:
\begin{prop} \label{prop:OMandCOM}
 A graph is in $\topegraphsOM$ if and only if it is antipodal and in $\topegraphs$.
\end{prop}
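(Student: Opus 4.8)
The plan is to prove both implications of Proposition~\ref{prop:OMandCOM} by exploiting the characterizations of $\covectors$ in terms of antipodal subgraphs established in Theorem~\ref{thm:dictionaryOMC}, together with the axiom (Sym) that distinguishes OMs among COMs. Recall from the remark after the definitions that an OM is exactly a COM additionally satisfying (Sym), i.e.\ $-X\in\covectors$ for all $X\in\covectors$ (equivalently (Z), since (FS) and (Z) together yield (Sym): $-X = X\circ -X \circ -X$, or more directly $-X=-(X\circ\mathbf 0)$ using $\mathbf 0\in\covectors$; conversely (Sym) plus (C) gives $\mathbf 0 = X\circ -X\in\covectors$). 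So the content of the proposition is: for a simple COM $(\mathcal{E},\covectors)$, the tope graph $G(\covectors)$ is antipodal if and only if $\covectors$ satisfies (Sym).

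First I would prove the forward direction. Suppose $G=G(\covectors)\in\topegraphsOM$, so $\covectors$ is a (simple) OM; I must show $G$ is antipodal, i.e.\ every tope $T\in\topes$ has an antipode $-_G T$ with $\conv(T,-_G T)=G$. The natural candidate is the sign-vector $-T$, which lies in $\topes$ by (Sym). It remains to check $\conv(T,-T)=G$. By Lemma~\ref{lem:restriction}, $\conv(T,-T)$ is the intersection of all halfspaces containing both $T$ and $-T$; but for every $e\in\mathcal{E}$ we have $T_e=-(-T)_e$, so $T$ and $-T$ lie in opposite halfspaces of every $\Theta$-class $E_e$, hence no proper halfspace contains both, and $\conv(T,-T)=G$. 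Thus $-T$ is the antipode of $T$ in $G$, and since $T$ was arbitrary, $G$ is antipodal. Combined with $G\in\topegraphs$ (immediate, since an OM is a COM), this gives one direction.

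For the converse, suppose $G=G(\covectors)\in\topegraphs$ and $G$ is antipodal; I must show $\covectors$ satisfies (Sym), equivalently that $\covectors$ is an OM. The whole graph $G$ is (trivially) an antipodal subgraph of itself, so by Theorem~\ref{thm:dictionaryOMC}, $X(G)=\mathbf 0\in\covectors$, which is exactly axiom (Z); together with (FS) and (SE) this makes $(\mathcal{E},\covectors)$ an OM. Here I am using the first equality in Theorem~\ref{thm:dictionaryOMC}, namely $\covectors=\{X(G')\mid G'\text{ antipodal subgraph of }G(\covectors)\}$, applied to $G'=G$: since $G\subseteq E_e^+$ and $G\subseteq E_e^-$ are both false (and $G$ meets both halfspaces of each class because $G$ has all $\Theta$-classes crossing it, being embedded in $Q_{\mathcal{E}}$ with each coordinate genuinely used by simplicity via (N1$^*$)), we get $X(G)_e=0$ for all $e$. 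Hence $\mathbf 0\in\covectors$ and $\covectors$ is an OM, so $G\in\topegraphsOM$.

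The proof is short and the only point requiring any care is making sure that in the converse direction the zero vector really is $X(G)$ — i.e.\ that simplicity forces every $\Theta$-class to cross $G$ rather than have $G$ entirely on one side — and that Theorem~\ref{thm:dictionaryOMC} applies to $G$ itself as its own antipodal (gated) subgraph; both are immediate from the definitions and (N1$^*$). The main conceptual obstacle, such as it is, was already absorbed into Theorem~\ref{thm:dictionaryOMC}: once one knows that the covectors of a COM are precisely the sign-vectors of antipodal subgraphs of the tope graph, the additional axiom (Z)/(Sym) characterizing OMs translates transparently into the single metric condition that the whole tope graph be antipodal.
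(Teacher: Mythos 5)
Your proof is correct and follows essentially the route the paper intends (the paper states the proposition as a consequence of Lemma~\ref{lem:antipodes}, read together with Theorem~\ref{thm:dictionaryOMC}): your forward direction is the standard one, and in the converse you extract (Z) from $X(G)=\mathbf 0\in\covectors$ rather than extracting (Sym) from the antipodal automorphism, which is an equivalent one-line variant. One cosmetic caveat: the parenthetical identities $-X=X\circ -X\circ -X$ and $\mathbf 0=X\circ -X\in\covectors$ are false as written (indeed $X\circ -X=X$), but they are not used in your actual argument; the clean derivation of (Sym) from (Z) and (FS) is simply $-X=\mathbf 0\circ(-X)\in\covectors$.
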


A not yet intrinsic description of tope graphs of AOMs follows:
\begin{prop} \label{prop:affOMandOM}
 A graph is in $\topegraphsAOM$ if and only if it is a halfspace of a graph in $\topegraphsOM$.
\end{prop}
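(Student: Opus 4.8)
The plan is to deduce this from Karlander's theorem \cite{Kar-92,Bau-16} -- that AOMs are exactly the halfspaces of OMs -- together with the dictionary set up earlier in this section: for a simple partial cube system $(\mathcal{E},\covectors)$ and $e\in\mathcal{E}$, the halfspace $\covectors_e^+$ is again simple and its tope graph is the restriction $\rho_{e^+}(G(\covectors))$. Since a partial cube embeds essentially uniquely into its cube, being a halfspace of a graph in $\topegraphsOM$ is the same as arising as $\covectors_e^+$ from a simple OM, so the statement should reduce to a translation back and forth through this dictionary.

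For the ``if'' direction I would take $G$ a halfspace of some $H\in\topegraphsOM$, write $H=G(\M)$ for a simple OM $(\mathcal{E},\M)$, so that $G=\rho_{e^+}(G(\M))$ for some $e\in\mathcal{E}$ (after a reorientation if needed). By the dictionary $G=G(\M_e^+)$ with $\M_e^+$ simple, and $\M_e^+$ is an AOM by Karlander's theorem; hence $G\in\topegraphsAOM$. For the ``only if'' direction I would take $G=G(\covectors)$ with $(\mathcal{E},\covectors)$ a simple AOM; Karlander's theorem produces an OM $(\mathcal{E}\cup\{p\},\M)$ with $\covectors=\M_p^+$. The crux is to observe that $\M$ is then already simple: the element $p$ is not redundant (it takes value $0$ because $\M$ is an OM, value $+$ because $\covectors\neq\emptyset$, and value $-$ because $\M=-\M$); $p$ is not parallel to any $f\in\mathcal{E}$ (if it were, one of the signs $+,-$ would fail to occur in the $f$-coordinate of any covector with $p$-coordinate $+$, making $f$ redundant in $\covectors$); and a redundant element or a parallel pair inside $\mathcal{E}$ would, since every covector of $\covectors$ is a restriction of one of $\M$, give a redundant element, respectively a parallel pair, of $\covectors$. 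All of these contradict simplicity of $\covectors$, so $\M$ is simple, $G(\M)\in\topegraphsOM$, and $G=G(\covectors)=G(\M_p^+)=\rho_{p^+}(G(\M))$ is a halfspace of it.

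The argument is essentially bookkeeping on top of Karlander's theorem; the only point requiring attention is the verification that the OM it supplies may be taken simple -- equivalently, that passing to the simplification of that OM leaves untouched the coordinate along which one restricts -- which is exactly the observation spelled out above. I do not anticipate any further obstacle.
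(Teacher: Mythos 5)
Your proposal is correct and matches the paper's (implicit) argument: the paper states this proposition without proof, treating it as an immediate consequence of Karlander's theorem that AOMs are exactly the halfspaces of OMs together with the already-established correspondence between halfspaces of simple partial cube systems and restrictions $\rho_{e^+}$ of their tope graphs. Your additional verification that the OM supplied by Karlander's theorem can be taken simple is sound bookkeeping that the paper leaves tacit.
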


Interpreting axiom (IC) in the partial cube model we also get:
\begin{prop} \label{prop:LOPandCOM}
 A graph is in $\topegraphsLOP$ if and only if all its antipodal subgraphs are hypercubes and it is in $\topegraphs$.
\end{prop}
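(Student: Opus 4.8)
\textbf{Proof proposal for Proposition~\ref{prop:LOPandCOM}.}

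The plan is to translate axiom (IC) through the dictionary established in Theorem~\ref{thm:dictionaryOMC}. First I would observe that LOPs are exactly the COMs that additionally satisfy (IC) (this is immediate from the definitions in Section~\ref{sec:signvectors}, since (IC) implies (FS)). Hence a partial cube $G$ lies in $\topegraphsLOP$ if and only if $G\in\topegraphs$ \emph{and} the (up to reorientation) unique simple COM $\covectors(G)$ given by Lemma~\ref{lem:covectors(G)} satisfies (IC). So the whole task reduces to showing: for $G\in\topegraphs$, the system $\covectors(G)$ satisfies (IC) if and only if every antipodal subgraph of $G$ is a hypercube.

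For the key step, I would use that by Theorem~\ref{thm:dictionaryOMC} the covectors $X\in\covectors(G)$ are exactly the sign-vectors $X(H)$ for $H$ an antipodal (equivalently antipodal gated) subgraph of $G$, and that antipodal subgraphs are precisely the $\conv(x,-_Hx)$. Now (IC) says $X\circ Y\in\covectors(G)$ for every $X\in\covectors(G)$ and every $Y\in\{\pm,0\}^\mathcal{E}$. Taking $Y$ to range over all topes $T\in\topes$, (IC) forces $X\circ T\in\topes$, i.e.\ $X(H)\circ v\in\topes$ for every antipodal subgraph $H$ and every vertex $v$ of $G$; but this already holds for COMs by (C)/Lemma~\ref{lem:dictionarygated} (it just says $H$ is gated). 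The extra content of (IC) over (FS) is precisely that we may compose with sign-vectors $Y$ whose zero set is \emph{arbitrary}, not only the zero sets realized inside $\covectors(G)$. Concretely, (IC) restricted to a fixed antipodal subgraph $H$ with zero set $Z=X(H)^0$ amounts to saying that every sign-vector in $\{\pm,0\}^{Z}$ (composed with the fixed sign pattern $X(H)$ on $\mathcal E\setminus Z$) is again a covector; since those covectors all correspond to faces of $H$, this says that the face lattice of $H$ is the full cube lattice on the coordinate set $Z$, i.e.\ $H$ is a $|Z|$-dimensional hypercube. I would make this precise by checking: $H$ is a hypercube on $Z$ iff for every $W\in\{\pm,0\}^{Z}$ the subgraph of $H$ selecting signs $W$ is nonempty, iff $X(H)\circ W'\in\covectors(G)$ for the extension $W'$ of $W$ by $X(H)$ on $\mathcal E\setminus Z$; then run this over all antipodal $H$ to get exactly (IC). The reverse direction is symmetric: if every antipodal subgraph is a hypercube, then composing any $X=X(H)$ with an arbitrary $Y$ lands in the face corresponding to restricting $H$ to the coordinates of $Y^0\cap Z$ and re-signing on $\underline Y$, which is again a (sub-hypercube, hence antipodal) subgraph, so its sign-vector is a covector.

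The main obstacle I anticipate is the bookkeeping in the "faces of $H$ are all sub-hypercubes" step: one must verify that the antipodal gated subgraphs of $G$ that are contained in a hypercube subgraph $H$ are exactly the subcubes of $H$, and conversely that $X(H)\circ Y$ for arbitrary $Y$ always names such a subcube (in particular is realized, i.e.\ the corresponding vertex set is nonempty). This is where one uses that $H$, being a hypercube, has all of $\{\pm,0\}^{Z}$ as covectors of its own COM and that the gated (equivalently convex, for subcubes) antipodal subgraphs of a cube are precisely its faces; combined with Lemma~\ref{lem:antipodal_minor} and the gatedness furnished by Theorem~\ref{thm:dictionaryOMC}, this closes the argument. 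Everything else is a direct unwinding of the definition of (IC) against the equalities in Theorem~\ref{thm:dictionaryOMC}.
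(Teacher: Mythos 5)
Your proposal is correct and is essentially the paper's own argument: the paper dispatches this proposition with the single remark that it follows by ``interpreting axiom (IC) in the partial cube model,'' and your write-up is exactly that interpretation made explicit via Theorem~\ref{thm:dictionaryOMC} (composing $X(H)$ with arbitrary sign-vectors forces all sign patterns on $X(H)^0$ to be topes, so antipodal subgraphs are hypercubes, and conversely faces of hypercube antipodal subgraphs are antipodal, hence covectors). The bookkeeping you flag goes through without trouble, so no gap remains.
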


\section{The excluded pc-minors}\label{sec:forbidden}
In the present section we will introduce the set $\Q$ of minimal excluded pc-minors for tope graphs of COMs. After providing several properties with respect to pc-minors and zone graphs, we give a couple of methods of detecting a member of $\Q$ in a given graph.  The main result of the section is that partial cubes excluding $\Q$ are topes graphs of COMs (Theorem~\ref{thm:topegraphs}).
The proofs in this section use properties of zone graphs established in Section~\ref{sec:pcminors}, minor-closedness of COMs seen in Section~\ref{sec:signvectors}, as well as properties of tope graphs of COMs shown in Section~\ref{sec:alltogethernow}

Let $Q_n$ be the hypercube, $v\in Q_n$ any of its vertices and $-v$ its antipode. Let $Q^-_n:=Q_n\setminus -v$ be the hypercube minus one vertex. Consider the set of partial cubes arising from $Q_n^-$ by deleting any subset of $N(v)\cup\{v\}$.
It is easy to see that if $n\geq 4$ a graph obtained this way from $Q_n^-$ is a partial cube unless $v$ is not deleted but at least two of its neighbors are deleted. Denote by $Q_n^{-*}$ the partial cube obtained by deleting exactly one neighbor of $v$, and by $Q_n^{--}(m)$ the graph obtained by deleting $v$ and $m$ neighbors of $v$, respectively, where for $Q_n^{--}(0)$ we sometimes simply write $Q_n^{--}$. It is easy to see that $Q_n^-$ and $Q_n^{--}$ are tope graphs of (realizable) COMs. For $n\leq 3$ all the partial cubes arising by the above procedure are isomorphic to $Q_n^-$ or $Q_n^{--}$, thus the interesting graphs appear for $n\geq 4$. Denote their collection by $\Q=\{Q_n^{-*},Q_n^{--}(m)\mid 4\leq n; 1\leq m\leq n\}$. 

\begin{figure}[htb]
\centering
\includegraphics[width=\textwidth]{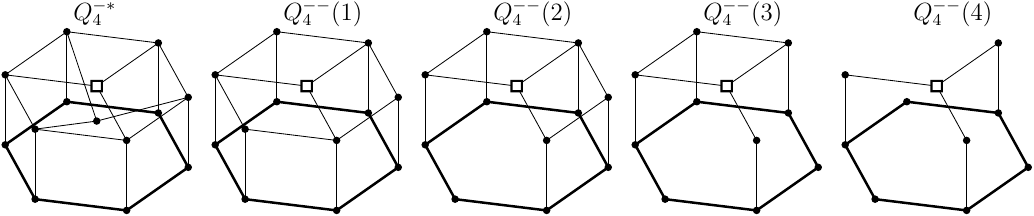}
\caption{Graphs $Q_4^{-*},Q_4^{--}(m)$, for $1\leq m \leq 4$. The 
square vertex has no gate in the bold $C_6$.}
\label{fig:Q4s}
\end{figure}


\begin{lem}\label{lem:minfamily}
The set $\Q$ is pc-minor minimal, i.e.~any pc-minor of a graph in $\Q$ is not in $\Q$. Furthermore, any graph in  $\Q$ contains an antipodal subgraph that is not gated, i.e.~$\Q\subseteq\overline{\mathrm{\AG}}$, where $\overline{\mathrm{\AG}}$ denotes the complementary class of $\AG$.
\end{lem}

\begin{proof}
%

For any $G\in\Q$ a contraction or a restriction of it is a graph isomorphic to a hypercube, a hypercube minus a vertex, or a hypercube minus two antipodal vertices. All pc-minors of these graphs are isomorphic to a hypercube or a hypercube minus a vertex. Thus, no proper pc-minor of $G$ is in $\Q$.

To see that any graph in  $\Q$ contains an antipodal subgraph that is not gated, let $G\in\Q$ and let $w$ be a neighbor of $v$ that was deleted from $Q_n^-$ to obtain $G$. The convex subgraph $A\subseteq G$ obtained from restricting $Q_n^-$ to the halfspace containing $w$ and not $v$ is isomorphic to $Q_{n-1}^{- -}$. In particular, $A$ is antipodal. But $A$ is not gated, since the neighbor $u$ of $-v$ in $G\setminus A$ has no gate in $A$. In fact, in can be seen by Lemma~\ref{lem:dictionarygated}, that the gate of $u$, if existent, must be of the form $\chi(A)\circ \chi(u)=\chi(-v)$ which is not in $G$.
\end{proof}

%
%
The following lemma will be useful for detecting pc-minors from $\Q$. We define $H$ to be the \emph{full subdivision} of a graph $G$ if every edge of it is replaced by a path of length $2$ to obtain $H$. The vertices of $H$ that correspond to vertices of $G$ are called its \emph{original vertices}.

\begin{lem}\label{lem:K_n}
Let $G$ be a partial cube and $H$ an isometric subgraph isomorphic to a full subdivision of $K_m$ such that
\begin{itemize}
\item no vertex of $G$ is adjacent to all the original vertices of $H$,
\item the convex hull of $H$ is neither isomorphic to $Q_m^-$ nor $Q_m^{--}$,
\item $H$ is inclusion minimal with this properties.
\end{itemize}
Then the convex hull of $H$ is in $\Q$.
\end{lem}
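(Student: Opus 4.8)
The plan is to analyze the convex hull $\conv(H)$ of the minimal isometric full subdivision $H$ of $K_m$, and show it must be one of the graphs in $\Q$ built on $Q_m$. First I would set up coordinates: let $v_1,\dots,v_m$ be the original vertices of $H$, and for $i\neq j$ let $w_{ij}$ be the subdivision vertex on the path between $v_i$ and $v_j$. Since $H$ is isometric in $G$, $d_G(v_i,v_j)=2$ for all $i\neq j$, and $d_G(v_i,w_{jk})$ is either $1$ (if $i\in\{j,k\}$) or $3$. The key first step is to identify the $\Theta$-classes: for each pair $\{i,j\}$ the two edges $v_iw_{ij}$ and $v_jw_{ij}$ lie in distinct $\Theta$-classes (since $w_{ij}$ has degree $2$ in $H$ and the path $v_i w_{ij} v_j$ is geodesic), and I would argue that the edge $v_iw_{ij}$ is $\Theta$-equivalent to $v_iw_{ik}$ only when forced. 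More precisely, I expect to show there are exactly $m$ relevant $\Theta$-classes $E_1,\dots,E_m$, where $E_i$ separates $v_i$ from everything else in $H$; the edge $v_iw_{ij}$ lies in $E_i$. This uses that in a partial cube, $d(v_i,v_j)=2$ means the geodesic $v_iw_{ij}v_j$ crosses two classes, one "belonging" to $i$ and one to $j$, and a counting/consistency argument (no edge class can be crossed twice on a geodesic) pins this down across all pairs.

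Next I would locate $\conv(H)$ inside the hypercube $Q_{E_1,\dots,E_m}$. Pick the sign reference so that $v_i$ is the vertex with a $-$ in coordinate $i$ and $+$ elsewhere; then $w_{ij}$ is the all-$+$ vector except $-$ in coordinates $i$ and $j$. So $H$ sits inside $Q_m$ with the $v_i$ at Hamming weight $m-1$ (counting $+$'s... I'll fix the convention so $v_i$ has exactly one $-$) and the $w_{ij}$ at weight two $-$'s. The convex hull of these points in $Q_m$ is then everything at "distance at most..." — in fact $\conv(H)$ consists of all sign vectors obtained as intervals between pairs of these, which is precisely the set of vectors with at least one coordinate equal to $-$, i.e. $Q_m$ minus the all-$+$ vertex, which is the antipode of the hypothetical common neighbor. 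The all-$+$ vertex is exactly the vertex adjacent to all $v_i$; by the first hypothesis it is not in $G$. So $\conv(H)\subseteq Q_m^-$, with the deleted vertex being $-v$ in the notation of the excerpt (here $v$ would be the all-$-$ vertex, whose neighbors are the $v_i$). Hence $\conv(H)$ is $Q_m^-$ with possibly some of $N(v)\cup\{v\}$ further absent, i.e. it is $Q_m^-$, $Q_m^{--}(p)$, or $Q_m^{-*}$, or something with two or more neighbors of $v$ deleted but $v$ present — but that last case is not a partial cube (as noted in the text right before the definition of $\Q$), contradicting that $\conv(H)$ is a convex subgraph of a partial cube. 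Since the hypotheses exclude $Q_m^-$ and $Q_m^{--}=Q_m^{--}(0)$, we land in $\Q$.

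The remaining point is minimality of $H$: I must make sure the "$\geq 2$ neighbors deleted, $v$ present" case and the degenerate small-$m$ cases don't slip through, and that the inclusion-minimality of $H$ forces $m\geq 4$ (for $m\leq 3$ every such configuration has hull $Q_m^-$ or $Q_m^{--}$, explicitly excluded). Here the minimality hypothesis does double duty: if $\conv(H)$ had some neighbor $v_i$ of $v$ missing, one checks that deleting the vertex $w_{ij}$ and one of the paths still leaves an isometric full subdivision of $K_{m-1}$ satisfying the first two hypotheses, contradicting minimality — so actually I expect minimality to force $\conv(H)$ to be exactly $Q_m^{--}(p)$ for the specific $p$ equal to the number of deleted neighbors, with no "extra" structure, and all of these are in $\Q$ once we know $\conv(H)\not\cong Q_m^-,Q_m^{--}$.

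The main obstacle I anticipate is the $\Theta$-class identification step: showing that a minimal isometric full subdivision of $K_m$ in a partial cube really does force exactly $m$ $\Theta$-classes arranged as above, rather than some larger or skew configuration. This requires carefully using the partial-cube geodesic condition (no $\Theta$-class crossed twice on a shortest path) together with convexity of halfspaces (Lemma \ref{lem:restriction}) and the inclusion-minimality of $H$ to rule out the subdivision vertices "spreading out" into extra coordinates; I'd handle it by arguing that any additional $\Theta$-class crossing $\conv(H)$ but not separating $H$ the prescribed way would let me restrict $H$ to a smaller isometric full subdivision of $K_m$ still meeting the hypotheses, contradicting minimality.
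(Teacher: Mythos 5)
Your overall strategy (identify the $m$ $\Theta$-classes crossing $H$, embed everything in a subcube $Q_m$, locate the unique vertex $u$ of $Q_m$ adjacent to all original vertices, note $u\notin G$, and conclude that $\conv(H)$ is $Q_m$ with only $u$, the antipode $-u$ and neighbors of $-u$ possibly missing) is the same as the paper's, and the $\Theta$-class step you worry about is in fact easy: $H$ is itself a partial cube with the standard embedding $v_i\mapsto e_i$, $w_{ij}\mapsto e_i+e_j$ into $Q_m$, and partial cubes embed uniquely, so by Lemma~\ref{lem:restriction} the hull lies in that subcube. The genuine gap is at the central claim that ``$\conv(H)$ is precisely the set of vectors with at least one $-$'', i.e.\ that every vertex of $Q_m$ at distance between $1$ and $m-2$ from $u$ actually belongs to $G$. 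Your interval computation only tells you which hypercube vertices \emph{would} lie between vertices of $H$; it cannot certify that those vertices are present in $G$ (the hull in $G$ is just $G$ intersected with the subcube, and a priori arbitrarily many intermediate vertices could be absent). Without this, you cannot conclude that only vertices from $\{-u\}\cup N(-u)$ are missing, which is exactly what places $\conv(H)$ in $\Q$.

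Moreover, your plan for exploiting inclusion-minimality is aimed at the wrong statement and rests on a mislabeling: with your sign convention the neighbors of the all-minus vertex are \emph{not} the $v_i$ (those are the neighbors of the all-plus vertex $u$ and are always present, being vertices of $H$), and minimality is not meant to show that no neighbor of the all-minus vertex is missing --- such vertices are allowed to be missing; that is precisely what produces $Q_m^{-*}$ and $Q_m^{--}(p)$. The paper uses minimality in the opposite direction: for each $(m-1)$-subset of the original vertices, the induced sub-subdivision of $K_{m-1}$ still has no common neighbor in $G$ (the only candidate is $u$), so by minimality of $H$ its convex hull \emph{must} be isomorphic to $Q_{m-1}^-$ or $Q_{m-1}^{--}$ with $u$ the missing vertex; ranging over all such subsets this forces every vertex of $Q_m$ at distance at most $m-2$ from $u$, other than $u$ itself, to lie in $G$, after which your concluding case analysis (partial-cube obstruction for ``$v$ present, two neighbors missing''; hypotheses excluding $Q_m^-$ and $Q_m^{--}$) goes through. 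You need to replace your minimality step by this argument to close the proof.
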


\begin{proof}
Let $H$ be as in the lemma. First note that $m\geq 4$ since for $m=2$, $H$ is isomorphic to $P_3 = Q_2^-$, and for $m=3$, $H$ is isomorphic to $C_6$. Taking into account that there is no vertex of $G$ adjacent to all the original vertices of $H$, the convex hull in both cases is isomorphic to $Q_m^-$ or to $Q_m^{--}$.

Now consider $H$ embedded in a hypercube $Q_m$ with $m\geq 4$. Since $H$ is an isometric subgraph of $G$, it is a partial cube and thus has up to reorientation a unique embedding in $Q_m$. Note that one (and therefore the only) possible embedding, is such that there exists a vertex $v$ in $Q_m$ adjacent to exactly the original vertices of $H$. Indeed, embed the original vertices as vectors with precisely one $+$ and the subdivision vertices as vectors with two $+$-signs. Then, $v$ can be chosen to be the vector consisting only of $-$. By the assumption, $v$ is not in $G$. Let $\{v_1,\ldots, v_{m-1}\}$ be any subset of the original vertices of $H$ of size $m-1$. Since $H$ is inclusion minimal, the convex hull of $\{v_1,\ldots, v_{m-1}\}$ is a graph isomorphic to $Q_{m-1}^-$ or $Q_{m-1}^{--}$ in which $v$ is a missing vertex. Note that the antipode of $v$ in $Q_m$ is at distance $m$ from $v$. We have proved that all the vertices of $Q_m$ at distance at most $m-2$ from $v$ are in $G$ while some of the neighbors of the antipode of $v$ in $Q_m$ are possibly not in $G$. Thus the convex hull of $H$ is a graph in $\Q$.
\end{proof}

A second useful lemma, tells us how to find excluded pc-minors in non-antipodal graphs with antipodal contractions.

\begin{lem}\label{lem:allcontractionsantipodal}
 Let $G$ be a partial cube with $v\in G$ such that $-v\notin G$, and $E_{e_1},\ldots, E_{e_k}$ be $\Theta$-classes of $G$ such that $\pi_{e_i}(G)$ is antipodal for all $1\leq i\leq k$. Then either $G$ contains a convex subgraph from  $\Q$ or $G$ contains a convex $Q_k^-$ or $Q_k^{--}$ crossed by precisely $E_{e_1},\ldots, E_{e_k}$ such that $-v$ is a missing vertex of it. In particular, if the latter holds and $E_{e_1},\ldots, E_{e_k}$ are all the $\Theta$-classes of $G$, then $G$ is isomorphic to $Q_k^-$.
\end{lem}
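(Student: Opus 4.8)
The plan is to convert each antipodal contraction $\pi_{e_i}(G)$ into one well-chosen vertex of $G$, and then to recognise among these vertices (together with the natural midpoints between them) an isometric full subdivision of $K_k$ on which Lemma~\ref{lem:K_n} can be run --- unless its convex hull already happens to be $Q_k^-$ or $Q_k^{--}$, which is exactly the second alternative.

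First I would reorient $\mathcal E$ so that $v$ is the all-$+$ vertex of $Q_{\mathcal E}$; then $-v$ is the all-$-$ vertex, which by hypothesis is missing from $G$. Contracting one $\Theta$-class keeps all the others, so $\pi_{e_i}(G)$ is a partial cube spanning $Q_{\mathcal E\setminus\{e_i\}}$, and being antipodal and spanning its cube it is closed under the coordinatewise-complement map of $Q_{\mathcal E\setminus\{e_i\}}$ (if some $\Theta$-class of an antipodal subgraph failed to separate a vertex from its antipode, that halfspace would already contain the whole subgraph). Applying this to $\pi_{e_i}(v)$ and lifting the resulting vertex back to $G$ yields a vertex agreeing with $v$ in no coordinate except possibly $e_i$; flipping $e_i$ too would give $-v\notin G$, so this vertex $u_i$ agrees with $v$ exactly in coordinate $e_i$, i.e.\ it is the neighbour of $-v$ in direction $E_{e_i}$. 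The $u_i$ are pairwise distinct and all lie in the subcube $R$ of $Q_{\mathcal E}$ through $-v$ in directions $E_{e_1},\dots,E_{e_k}$, and $R\cap G$ is convex, being an intersection of halfspaces of $G$. For $i\ne j$ the vertices $u_i,u_j$ are at distance $2$, their only common neighbours in $Q_{\mathcal E}$ being $-v$ and the vertex $m_{ij}$ agreeing with $v$ exactly in $\{e_i,e_j\}$; since a shortest $u_iu_j$-path must stay in $G$ and cannot pass through $-v$, we get $m_{ij}\in G$. Hence $H:=\{u_1,\dots,u_k\}\cup\{m_{ij}:i<j\}$ is an isometric full subdivision of $K_k$ inside $G$, contained in $R$, so $\conv(H)\subseteq R\cap G$ is crossed only by classes among $E_{e_1},\dots,E_{e_k}$ and does not contain $-v$.

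Now I would split into cases. If $\conv(H)\cong Q_k^-$ or $Q_k^{--}$, then it has $k$ $\Theta$-classes, necessarily $E_{e_1},\dots,E_{e_k}$, so it is a spanning subgraph of $R\cong Q_k$ missing $-v$; being missing exactly one, resp.\ one antipodal pair, of vertices, it is $Q_k\setminus\{-v\}$, resp.\ $Q_k$ minus $-v$ and its $Q_k$-antipode --- this is the second alternative (the cases $k\le 2$ land here directly, with $\conv(H)=P_3=Q_2^-$ for $k=2$). Otherwise $H$ meets hypotheses (i) and (ii) of Lemma~\ref{lem:K_n}: (ii) is the case assumption, and for (i) note that when $k\ge 3$ the only vertex of $Q_{\mathcal E}$ adjacent to all of $u_1,\dots,u_k$ is $-v\notin G$. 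Choosing $H^*$ inclusion-minimal among isometric full subdivisions of complete graphs in $G$ satisfying (i) and (ii) --- which exists since the family is nonempty and one can descend within the finite graph $H$ --- Lemma~\ref{lem:K_n} gives $\conv(H^*)\in\mathcal Q^-$, a convex subgraph of $G$, which is the first alternative.

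For the final sentence: if the second alternative holds and $E_{e_1},\dots,E_{e_k}$ are all of $\mathcal E$, then $G\subseteq R\cong Q_k$ and $G$ contains a spanning $Q_k^-$ or $Q_k^{--}$ missing $-v$; together with $-v\notin G$ this forces $G=Q_k\setminus\{-v\}$ or $G=Q_k\setminus\{-v,v'\}$ where $v'$ is the $Q_k$-antipode of $-v$. Here $v'=v\in G$, so the second possibility is excluded and $G\cong Q_k^-$. I expect the main technical burden to be checking that $H$ really is an isometric full subdivision of $K_k$ and that the inclusion-minimal $H^*$ still satisfies all hypotheses of Lemma~\ref{lem:K_n}; the conceptual core --- extracting the vertices $u_i$ from the antipodal contractions --- is short.
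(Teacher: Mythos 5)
Your proposal is correct and follows essentially the same route as the paper: you lift the antipodes of $\pi_{e_i}(v)$ from the antipodal contractions to obtain the neighbours $u_i$ of $-v$, observe that together with the midpoints they form an isometric full subdivision of $K_k$ with no common neighbour of the original vertices (since $-v\notin G$), and then either read off the convex $Q_k^-$/$Q_k^{--}$ or pass to an inclusion-minimal subdivision and apply Lemma~\ref{lem:K_n}, exactly as in the paper's proof. Your treatment is somewhat more explicit (the subcube $R$, the verification that the hull is crossed by precisely $E_{e_1},\ldots,E_{e_k}$ with $-v$ missing, and the final ``in particular'' step), but the argument is the same.
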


\begin{proof}
Let $v\in G$ be a vertex without an antipode in $G$. Let $E_{e_1},\ldots, E_{e_k}$ be the $\Theta$-classes as above. Every contraction $\pi_{e_i}(G)$ is antipodal, therefore $\pi_{e_i}(v)$ has an antipode in $\pi_{e_i}(G)$. 
Let $v_i\in G$ be the preimage of the antipode $-_{\pi_{e_i}(G)}\pi_{e_i}(v)$ of $\pi_{e_i}(v)$ in $\pi_{e_i}(G)$ for $1\leq i\leq k$, respectively. By definition, $v_1,\ldots,v_k$ are pairwise at distance 2, thus a part of an isometric subgraph isomorphic to the subdivision of $K_k$. Moreover, there is no vertex adjacent to all of them, since $v$ has no antipode in $G$. By Lemma~\ref{lem:K_n}, the convex hull of $v_1,\ldots,v_k$ is isomorphic to one of $Q_k^-, Q_k^{--}$, or there is an inclusion minimal subdivided $K_{k'}$ for $k'\leq k$ whose convex closure is not isomorphic to $Q_{k'}^-$ or $Q_{k'}^{--}$.
In the former case we are done while in the latter case the convex closure of the inclusion minimal subdivided $K_{k'}$ is in $\Q$. 

Assuming that $E_{e_1},\ldots, E_{e_k}$ are all the classes of $G$, the convex hull of $v_1,\ldots,v_k$ is crossed by all the $\Theta$-classes of $G$, hence it is $G$. Then $G$ is isomorphic to $Q_k^-$ or $Q_k^{--}$ if it has no convex subgraph in $\Q$. Since $G$ is not antipodal, it must be isomorphic to $Q_k^-$.
\end{proof}

\begin{lem}\label{lem:hyperplaneforbidden}
Let $G$ be a partial cube. If a zone graph $\zeta_e(G)$ is not a well-embedded partial cube, then $G$ has a pc-minor in $\{Q_4^{-*}, Q_4^{--}(m) \mid 1\leq m \leq 4\}$. 
\end{lem}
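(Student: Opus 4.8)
The strategy is to translate the failure of well-embeddedness into the existence of two convex cycles with the configuration forbidden by Lemma~\ref{lem:pchyperplane}, and then to extract a small pc-minor from that configuration. By Lemma~\ref{lem:pchyperplane}, if $\zeta_e(G)$ is not a well-embedded partial cube, then there exist two convex cycles $C, C'$ of $G$, both crossed by $E_e$ and by some common class $E_g$, such that $C$ is crossed by some class $E_h$ that does not cross $C'$. First I would contract all $\Theta$-classes of $G$ other than $E_e, E_g, E_h$, and those classes needed to keep $C$, $C'$, and a connecting convex traverse intact. More precisely, I would use a convex traverse from an edge of $C$ in $E_e$ to an edge of $C'$ in $E_e$ (which exists by the result of~\cite{Mar-16} quoted before Lemma~\ref{lem:pchyperplane}), and argue that after contracting everything except $E_e, E_g, E_h$ the images of $C$ and $C'$ remain convex cycles violating the conclusion of Lemma~\ref{lem:pchyperplane} inside the contracted graph. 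Here one must be careful that contraction does not accidentally merge $C$ and $C'$ or destroy convexity; Lemma~\ref{contraction_convex} and Lemma~\ref{lem:antipodal_minor}-style arguments, together with the commuting of contractions (Lemma~\ref{lem:commut_contraction}), should handle this, but it may be cleaner to contract down to a bounded number of classes first, observe the configuration survives, and then argue within a graph of bounded dimension.

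Once we are in a partial cube $G'$ of small dimension containing convex cycles $C, C'$ with $C$ crossed by $E_e, E_g, E_h$ and $C'$ crossed by $E_e, E_g$ but not $E_h$, I would analyze the possible shapes directly. A convex cycle crossed by exactly three $\Theta$-classes is a $6$-cycle, and one crossed by exactly two is a $4$-cycle (squares and hexagons are the only convex cycles in partial cubes of these dimensions). The union of a convex $C_6$ and a convex $C_4$ sharing an edge in $E_e$ forces a rich local structure: after contracting all but the classes actually appearing in $C \cup C'$ (at most four of them), I expect to land in a partial cube on $Q_4$ (or fewer) whose vertex set must avoid having any vertex adjacent to all ``original'' vertices of an appropriate subdivided complete graph. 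This is exactly the hypothesis of Lemma~\ref{lem:K_n}: the relevant vertices of $C$ and $C'$ form (part of) a full subdivision of some $K_m$ with $m \le 4$, and the obstruction that $C'$ misses $E_h$ while $C$ does not prevents the existence of a common neighbor, so the convex hull lands in $\Q$. Restricted to dimension $4$, the members of $\Q$ are exactly $Q_4^{-*}$ and $Q_4^{--}(m)$ for $1 \le m \le 4$, giving the claimed pc-minor.

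The main obstacle I anticipate is the bookkeeping in the first paragraph: making sure that after contracting the irrelevant classes the two cycles $C$ and $C'$ both survive as \emph{convex} cycles that still witness the failure of Lemma~\ref{lem:pchyperplane} — in particular that $E_h$ still fails to cross the image of $C'$, that $E_e$ and $E_g$ still cross both images, and that the images of $C$ and $C'$ are genuinely distinct cycles rather than having collapsed onto each other or onto shared edges. One clean way around this is to not try to contract down to exactly three or four classes in one step, but rather to first apply Lemma~\ref{lem:hyperplanescommute} (zone graphs commute with contractions when the zone graph is well-embedded) in contrapositive form, or alternatively to pass to a minimal pc-minor still failing well-embeddedness and argue it must already be small. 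Once a bounded-dimension configuration is secured, the extraction of the $\Q$-member via Lemma~\ref{lem:K_n} (or a short direct case check on $Q_4$) is routine. A secondary subtlety is handling the case where $C'$ is not crossed by $E_g$ but the two cycles interact through a longer traverse; here one should use the convex traverse connecting the two $E_e$-edges and repeatedly apply the argument of Lemma~\ref{lem:parallel} to locate, along the traverse, a single convex cycle crossed by $E_e$ and $E_h$ but not by some class crossing $C'$, reducing to the two-cycle case.
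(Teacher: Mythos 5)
Your opening and closing moves match the paper (Lemma~\ref{lem:pchyperplane} to get the two convex cycles, Lemma~\ref{lem:K_n} to finish), but the middle of your argument has a genuine gap, and it sits exactly where the real work is. You propose to contract down to the classes "actually appearing in $C\cup C'$ (at most four of them)" and to treat the survival of the configuration as bookkeeping. Neither part holds up. First, $C$ and $C'$ are arbitrary convex cycles, so the set of $\Theta$-classes crossing $C\cup C'$ is not bounded by four; reducing $C$ to a $6$-cycle is fine (contracting classes that cross $C$ preserves convexity by Lemma~\ref{convex_hull}), but there is no argument that $C'$ survives as a convex cycle, that the two images stay distinct, or that the violating pattern persists once you contract classes crossing neither cycle — and Lemma~\ref{lem:parallel}, which you invoke for the "longer traverse" case, assumes $\zeta_e(G)$ is well-embedded, which is precisely what fails here. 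Second, and more fundamentally, the configuration provably does \emph{not} survive contraction all the way down: if the convex $6$-cycle on $E_e,E_f,E_g$ persisted together with a vertex of (the image of) $C'$ in the fourth orthant $E_e^-\cap E_f^+\cap E_g^+$, then after contracting every other class that vertex would lie in the convex hull of the $6$-cycle, a contradiction. So your endgame picture — a partial cube on at most four classes still containing both cycles and the violation — is not what one arrives at, and your stated source of the "no common neighbor" hypothesis for Lemma~\ref{lem:K_n} (that $C'$ misses $E_h$ while $C$ does not) is not justified.

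The paper's proof turns this failure-to-survive into the construction itself. From the two cycles it extracts a vertex $v$ of $C_2$ lying in the orthant of $(E_e,E_f)$ missed by $C_1\cap E_g^+$, i.e.\ a vertex with no gate in $C_1$; this non-gatedness is the contraction-stable invariant. After contracting the extra classes crossing $C_1$ to make it a convex $6$-cycle, one takes a \emph{maximal} sequence of further contractions keeping the hull of the $6$-cycle a $6$-cycle; the gate obstruction shows this sequence cannot exhaust all classes, and contracting one more class forces the hull to become $Q_3^-$ or $Q_3$, producing a vertex $u$ whose preimage is at distance $2$ from three vertices of the $6$-cycle with no common neighbor (because the hull is still a $6$-cycle before that last contraction). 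This yields an isometric subdivided $K_4$ whose convex hull contains a convex $6$-cycle, hence is neither $Q_4^-$ nor $Q_4^{--}$, and Lemma~\ref{lem:K_n} then places a pc-minor in $\{Q_4^{-*},Q_4^{--}(m)\}$. To repair your write-up you would need to replace the "contract down and hope the configuration survives" step with an argument of this kind (or some other mechanism that actually produces the subdivided $K_4$ with no common neighbor and a non-cube-like hull).
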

\begin{proof}
For the sake of contradiction assume $G$ does not satisfy the conditions of Lemma~\ref{lem:pchyperplane}, i.e., there are convex cycles $C_1,C_2$
both crossed by $E_e$ and $E_f$ and $C_1$ is crossed by $E_g$ but $C_2$ is not. Without loss of generality assume that $C_2$ is completely in $E_g^+$. On the other hand, we can reorient $E_e$ and $E_f$ in a way that the vertices on $C_1\cap E_g^+$ are in $E_e^+\cap E_f^+, E_e^+\cap E_f^-$ and $E_e^-\cap E_f^-$. But then any vertex $v$ in $C_2$ that is in $E_e^- \cap E_f^+$ has no gate to $C_1$.

%
%

We will now see, that this leads to the existence of a pc-minor in $\{Q_4^{-*}\}\cup \{Q_4^{--}(m) \mid 1\leq m \leq 4\}$.
First contract any $\Theta$-class different from $E_e,E_f,E_g$ but crossing $C_1$, obtaining graph $G'$. Then $C_1$ is contracted to a convex 6-cycle $C_1'$ in $G'$, by Lemma~\ref{convex_hull}. It is non-gated since the image of $v$ in $G'$ still is in $E_e^-\cap E_f^+\cap E_g^+$ while $E_e^-\cap E_f^+\cap E_g^+\cap C'_1=\emptyset$. Now consider a maximal sequence $S$ of contractions of $\Theta$-classes  different from $E_e,E_f,E_g$ such that for the image $C_1''$ of $C_1'$ we have that $\conv(C_1'')$ is a $6$-cycle. Since $v$ has no gate to $C_1'$, contracting all the  $\Theta$-classes different from $E_e,E_f,E_g$ maps $v$ to a vertex in $\conv(C_1'')\setminus C_1''$ contradicting that $\conv(C_1'')$ is a $6$-cycle. Thus $S$ is not equal to all the $\Theta$-classes different from $E_e,E_f,E_g$.

Pick any $\Theta$-class $E_h$ not in $S\cup \{E_e,E_f,E_g\}$. By maximality  $\conv(\pi_h(C_1''))$ is not a $6$-cycle thus it must be isomorphic to $Q_3^-$ or $Q_3$. Let $u$ be a vertex in $Q_3^-$ or $Q_3$ adjacent to three vertices $\pi_h(u_1),\pi_h(u_2),\pi_h(u_3)\in\pi_h(C_1'')$ with $u_i\in C_1''$ for $i\in \{1,2,3\}$. No preimage $u'$ of $u$ is adjacent to any $u_i$, for $i\in \{1,2,3\}$, since otherwise $u'$ would be in $\conv(C_1'')$. Thus, $u', u_1,u_2,u_3$ are pairwise at distance two. Since $\conv(C_1'')$ is a $6$-cycle, there is no vertex adjacent to all of them. Together with their connecting 2-paths they form an isometric $K_4$. Moreover, their convex hull is not isomorphic to $Q_4^-$ or $Q_4^{--}$ since such graphs have no convex 6-cycles. By Lemma~\ref{lem:K_n}, $G$ has a pc-minor in $\{Q_4^{-*},Q_4^{--}(m) \mid 1\leq m \leq 4\}$.
%
%
%
%
\end{proof}

\begin{lem}\label{lem:hyperplanesofQ}
 If $G\in\Q$, then there is a sequence $(e_1,\ldots,e_k)$ of $\Theta$-classes such that $\zeta_{e_1,\ldots,e_k}(G)$ is not a partial cube.
\end{lem}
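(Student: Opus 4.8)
The goal is to show that every member $G$ of the family $\Q=\{Q_n^{-*},Q_n^{--}(m)\}$ has an iterated zone graph $\zeta_{e_1,\dots,e_k}(G)$ that fails to be a partial cube; equivalently, by Lemma~\ref{lem:hyperplaneforbidden}, some zone graph along the way is not well-embedded. The natural strategy is to combine Lemma~\ref{lem:minfamily} (each $G\in\Q$ has a non-gated antipodal subgraph, hence $G\notin\AG$) with the fact proved in Lemma~\ref{lem:COMhyperplane} that tope graphs of COMs — and in particular graphs in $\AG$, once the main theorem is in place — have all zone graphs well-embedded. But since the final equivalence $\topegraphs=\AG$ is not yet available at this point in the paper, I would instead argue directly and constructively, exhibiting the bad sequence of contractions.

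\textbf{Key steps.} First, I would reduce to small cases by contraction. Given $G\in\Q$, say $G$ is obtained from $Q_n^-$ (with deleted antipode $-v$), I would contract $\Theta$-classes until only a few remain, keeping control of the image of the non-gated antipodal subgraph $A\cong Q_{n-1}^{--}$ identified in Lemma~\ref{lem:minfamily} and of the vertex $u$ (neighbor of $-v$) that has no gate in $A$. By Lemma~\ref{lem:antipodal_minor} contractions preserve antipodality of $A$, and by Lemma~\ref{contraction_convex} convexity is preserved, so after contracting appropriately I land in one of the base graphs $Q_4^{-*}$ or $Q_4^{--}(m)$ — or possibly a $Q_3$-level graph — in which a small convex cycle $C_1$ (a hexagon) sits with a vertex $v$ in the "forbidden octant" $E_e^-\cap E_f^+\cap E_g^+$ that has no gate in $C_1$. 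Then I would pick the specific three $\Theta$-classes $E_e,E_f,E_g$ spanned by such a hexagon together with one more class $E_g$ (or whichever triple witnesses the non-gatedness) and verify, by the very same octant argument as in Lemma~\ref{lem:hyperplaneforbidden} and Lemma~\ref{lem:COMhyperplane}, that there are two convex cycles $C_1,C_2$ both crossed by $E_e,E_f$ with $C_1$ crossed by a further class $E_h$ but $C_2$ not. This violates the criterion of Lemma~\ref{lem:pchyperplane}, so $\zeta_e(G')$ (for the appropriately contracted $G'$) is not a well-embedded partial cube; contracting one more step, or invoking the mechanism in Lemma~\ref{lem:hyperplaneforbidden}, forces an iterated zone graph that is genuinely not a partial cube. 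Collecting the contractions performed gives the required sequence $(e_1,\dots,e_k)$.

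\textbf{Where the work is.} The routine part is the bookkeeping of which contractions to perform and checking that the relevant convex cycles survive as convex cycles (using Lemma~\ref{convex_hull} and Lemma~\ref{contraction_convex}); I would handle $Q_n^{-*}$ and $Q_n^{--}(m)$ uniformly by noting that in every case restricting to a halfspace through a deleted neighbor of $v$ yields a $Q_{n-1}^{--}$, so one really only needs to analyze the pair (missing antipode $-v$, ungated vertex $u$). \textbf{The main obstacle} is to pin down a single clean triple $E_e,E_f,E_g$ of $\Theta$-classes whose associated convex cycles demonstrably violate Lemma~\ref{lem:pchyperplane} — i.e.\ to produce two convex cycles in the relation "crossed by $E_e$ and some $E_g$" that are crossed by different sets of classes. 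The danger is that after a careless contraction the offending hexagon collapses or the ungated vertex gets absorbed into the convex hull; choosing the contraction order so that $u$ stays outside $\conv(C_1)$ while $C_1$ remains a $6$-cycle is exactly the delicate point, and it is essentially the same maximality-of-contraction-sequence trick used in the proof of Lemma~\ref{lem:hyperplaneforbidden}, so I expect that argument to be reusable almost verbatim.
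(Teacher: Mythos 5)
There is a genuine gap, and it lies at the heart of your plan. The lemma asks for a sequence $(e_1,\dots,e_k)$ of \emph{zone-graph} operations applied to $G$ itself so that $\zeta_{e_1,\ldots,e_k}(G)$ fails to be a partial cube; your construction instead performs \emph{contractions} and then analyses a zone graph of the contracted graph, and at the end you propose to ``collect the contractions performed'' as the required sequence. Contraction and taking a zone graph are different operations, and Lemma~\ref{lem:hyperplanescommute} only lets you commute contractions past zone graphs when the relevant zone graphs are already well-embedded partial cubes -- it gives no way to convert an obstruction found in a pc-minor of $G$ into an obstruction in an iterated zone graph of $G$. (That conversion is exactly what Lemma~\ref{lem:hyperplanesinQ} and the argument at the end of Theorem~\ref{thm:topegraphs} have to work for, in the other direction.)

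Worse, the reduction-by-contraction step cannot even get started: by Lemma~\ref{lem:minfamily} the graphs in $\Q$ are pc-minor \emph{minimal}, and indeed every contraction or restriction of a $G\in\Q$ is a hypercube, a hypercube minus a vertex, or a hypercube minus two antipodal vertices. These are tope graphs of (realizable) COMs, so by Lemma~\ref{lem:COMhyperplane} all their iterated zone graphs are well-embedded partial cubes; the non-gated hexagon configuration you want to preserve dies after a single contraction, and you can never ``land in $Q_4^{-*}$ or $Q_4^{--}(m)$'' by contracting a larger member of $\Q$. The paper's proof avoids this entirely by iterating the zone-graph operation itself: for $n>4$ one checks that $\zeta_e(G)$, taken along a class $e$ through an edge at the distinguished vertex $v$ (or a deleted such edge in the case $Q_n^{--}(n)$), is again a member of $\Q$ with $n$ lowered by one, and in the base case $n=4$ a direct inspection shows the zone graph contains a $C_5$ or a $C_3$, hence is not bipartite and so not a partial cube. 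If you want to salvage your approach you would have to replace every contraction in your reduction by a zone-graph step and re-verify the structure after each such step -- which is precisely the paper's induction.
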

\begin{proof}
 Let $G\in\{Q_n^{-*},Q_n^{--}(m)\mid 1\leq m\leq n\}$ for some $n\geq 4$. Let $v\in Q_n$ the vertex from the definition of $\Q$.
 
 If $n=4$ one can easily see in Figure \ref{fig:Q4s} that $G$ has a zone graph containing a $C_5$ if $G\in\{Q_4^{-*},Q_4^{--}(1),Q_4^{--}(2)\}$ or a $C_3$ if $G\in\{Q_4^{--}(3),Q_4^{--}(4)\}$ i.e., the zone graph is not a partial cube.
 
%
 
 Let now $n>4$. If $G \in \{Q_n^{-*}, Q_{n}^{--}(m) \mid 1\leq m \leq n-1\}$, let $w$ be one of the neighbors of $v$ that is in $G$ and let $e$ be the $\Theta$-class of $G$ coming from the edge $vw$ in $Q_n$. It is easy to see that $\zeta_e(G) \in \{Q_{n-1}^{-*}, Q_{n-1}^{--}(m) \mid 1\leq m \leq n-1\}$. If otherwise $G = Q_n^{--}(n)$, then let $w$ be any of the neighbors of $v$. Then $w$ is missing in $G$ and let $e$ be the $\Theta$-class of $G$ coming from the edge $vw$ in $Q_n$. One can check that $\zeta_e(G) = Q_{n-1}^{--}(n-1)$.
The lemma follows by iterating the given zone graphs until arriving at a non partial cube.
\end{proof}

A useful sort of converse of the proof of Lemma~\ref{lem:hyperplanesofQ} is the following: 

\begin{lem}\label{lem:hyperplanesinQ}
 If $G$ is partial cube and $\zeta_e(G)\in\Q$ for some $\Theta$-class $e$, then $G$ has a pc-minor in $\Q$.
\end{lem}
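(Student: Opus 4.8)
The plan is to combine the zone‑graph machinery of the preceding lemmas with an analysis of the expansion $\pi_e(G)\rightsquigarrow G$. First, if $\zeta_e(G)$ is not a well‑embedded partial cube, Lemma~\ref{lem:hyperplaneforbidden} immediately yields a pc‑minor of $G$ in $\{Q_4^{-*},Q_4^{--}(m)\}\subseteq\Q$, so I may assume $\zeta_e(G)$ is well‑embedded. Let $[g_1],\ldots,[g_n]$ be its $\Theta$-classes (so $n\ge 4$). Contracting in $G$ every $\Theta$-class other than $e$ and one chosen representative $g_i$ of each $[g_i]$ leaves $\zeta_e(G)$ unchanged by Lemma~\ref{lem:hyperplanescommute}; since pc‑minors compose, I may therefore assume that $G$ has $\Theta$-classes exactly $e,g_1,\ldots,g_n$, that every equivalence class of $\Theta$-classes induced by $\zeta_e(G)$ is a singleton, and that $\zeta_e(G)\cong Q_n^{-*}$ or $Q_n^{--}(m)$.

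In this reduced situation every convex cycle of $G$ crossed by $E_e$ is a $4$-cycle (well‑embeddedness together with the singleton classes forbids longer ones), so $\zeta_e(G)$ is exactly the subgraph of $P:=\pi_e(G)$ induced on the set $G_0$ of images of the $E_e$-edges, and $G$ is the expansion of $P$ along $G_1:=\rho_{e^+}(G)$ and $G_2:=\rho_{e^-}(G)$ with $G_1\cap G_2=G_0$. Embedding $P$ in the cube $Q_n$ on $g_1,\ldots,g_n$, I write $G_i=Q_n\setminus D_i$ for sets of deleted vertices $D_i$, so that $D_1\cup D_2=D$, where $Q_n\setminus D=\zeta_e(G)$ and $D$ is normalized to be the standard deleted set of the corresponding member of $\Q$ (it contains $-v^*$ together with either one neighbour $w^*$ of $v^*$, or $v^*$ and $m\ge1$ neighbours of $v^*$). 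I will use three facts: $D$ always contains both $-v^*$ and a neighbour of $v^*$; each $G_i$ is a partial cube, hence no $D_i$ contains two neighbours of $v^*$ without also containing $v^*$; and the expansion is legitimate, so $Q_n$ has no edge between $D_1\setminus D_2$ and $D_2\setminus D_1$.

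Since $-v^*\in D_1\cup D_2$, by the symmetry $G_1\leftrightarrow G_2$ I may assume $-v^*\in D_1$. Then $G_1=Q_n\setminus D_1$ with $-v^*\in D_1\subseteq D$, being a partial cube, is by the first two facts isomorphic to $Q_n^-$, to $Q_n^{--}(0)$, or to a member of $\Q$; in the last case $G_1=\rho_{e^+}(G)$ is the desired minor. Otherwise $D_1\in\{\{-v^*\},\{v^*,-v^*\}\}$, which together with $D_1\cup D_2=D$ leaves only finitely many candidates for $D_2$; discarding those that make $G_2$ fail to be a partial cube or the expansion illegitimate, each surviving candidate gives either $G_2=\rho_{e^-}(G)\in\Q$ (a $Q_n^{--}(m)$ or a $Q_n^{-*}$), or $P=Q_n$ with $G\cong Q_{n+1}$ minus two vertices at distance $n$, which is $Q_{n+1}^{-*}\in\Q$. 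Either way the reduced $G$, and hence the original $G$, has a pc‑minor in $\Q$.

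The main obstacle is exactly this last bookkeeping: the obvious candidate minors $\rho_{e^\pm}(G)$ do genuinely fail to lie in $\Q$ when one of them collapses to $Q_n^-$ or $Q_n^{--}(0)$, and one must then exploit the extra deleted vertex (a neighbour of $v^*$) forced by $\zeta_e(G)\in\Q$, plus legitimacy of the expansion, to locate the minor in the opposite halfspace or in $G$ itself; tracking which pairs $(D_1,D_2)$ are actually realizable is the only delicate point. Everything else — the reduction, the identification of $\zeta_e(G)$ with $P[G_0]$, and the recognition of $Q_{n+1}^{-*}$ as ``$Q_{n+1}$ minus two vertices at distance $n$'' — is routine given the earlier lemmas.
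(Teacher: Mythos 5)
Your reduction and the identification of $\zeta_e(G)$ with $G_1\cap G_2$ after contracting down to one representative $\Theta$-class per zone class are sound, and this route is genuinely different from the paper's (which takes $G$ pc-minor minimal subject to preserving $\zeta_e(G)$ and applies Lemma~\ref{lem:K_n} to the two copies of the subdivided $K_N$ sitting in the two halfspaces). The problem is the concluding case analysis, which you yourself single out as the delicate point and then assert incorrectly instead of carrying out. When $\zeta_e(G)=Q_n^{--}(m)$, i.e. $D=\{-v^*,v^*,w_1,\ldots,w_m\}$, the pair $D_1=\{-v^*\}$, $D_2=\{v^*,w_1,\ldots,w_m\}$ survives all of your discarding criteria: $G_2=Q_n\setminus\{v^*,w_1,\ldots,w_m\}$ is an isometric subgraph of $Q_n$ and hence a partial cube, and there is no edge of $Q_n$ between $D_1\setminus D_2$ and $D_2\setminus D_1$ since all relevant distances are at least $n-1\geq 3$. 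In this surviving case neither halfspace is in $\Q$, and $G$ is $Q_{n+1}$ minus $m+2\geq 3$ vertices, so it is not ``$Q_{n+1}$ minus two vertices at distance $n$''; neither of your two announced outcomes occurs. Moreover the configuration is realizable, e.g. by $G=Q_{n+1}^{--}(m)$ with $e$ the class of an edge $vw$ for a surviving neighbour $w$ of $v$ (compare the computation in Lemma~\ref{lem:hyperplanesofQ}); the analogous pair $D_1=\{-v^*,v^*\}$, $D_2=\{v^*,w_1,\ldots,w_m\}$ survives as well. The argument can be repaired: in these cases one recognizes $G$ itself as $Q_{n+1}^{--}(m)$, respectively $Q_{n+1}^{--}(m+1)$, which again lies in $\Q$. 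But as written the classification of surviving $(D_1,D_2)$ — which in your approach is the entire content of the proof — is false, so there is a genuine gap.

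A secondary point: your fact that no $D_i$ may contain two neighbours of $v^*$ without containing $v^*$ is taken from the paper's remark about graphs obtained from $Q_n^-$, i.e. graphs in which $-v^*$ is deleted; your $D_i$ need not contain $-v^*$, so this requires a short separate verification (it is true, for instance by exhibiting a failure of transitivity of $\Theta$ on six suitable vertices, but it is not covered by the remark you invoke). With these two repairs your route would give a correct alternative to the paper's proof.
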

\begin{proof}
Assume that $\zeta_e(G)$ is well-embedded since otherwise $G$ has a pc-minor in $\Q$, by Lemma \ref{lem:hyperplaneforbidden}. Let $G$ be pc-minor minimal, without affecting $H:=\zeta_e(G)$ and let $N$ be the number of $\Theta$-classes of $H$. Then by minimality $G$ is the convex hull of $E_e$ and has $N+1$ $\Theta$-classes. There exist two isometric copies of $H$ in $B$ with edges of $E_e$ being a matching of them. Let $v_1,\ldots,v_N$ be the original vertices of the subdivided $K_N$ in the first copy and $v'_1,\ldots,v'_N$ the original vertices of the subdivided $K_N$ in the second copy. Not both subdivisions can have a vertex adjacent to all of the original vertices of the subdivisions since then there would be an edge in $E_e$ between the two vertices, which is not true by the definition of $H$. Without loss of generality, assume that $v_1,\ldots,v_N$ have no common neighbor. Then by Lemma~\ref{lem:K_n}, either $G$ has a pc-minor in $\Q$ and we are done, or the convex hull of $v_1,\ldots,v_N$ is isomorphic to $Q_{N}^-$ or $Q_N^{--}$. 
 Analogously, if $v'_1,\ldots,v'_N$ have no common neighbor their convex hull is isomorphic to $Q_{N}^-$ or $Q_N^{--}$. But then $H$ is isomorphic to $Q_{N}^-$ or $Q_N^{--}$ which is not the case. Thus $v'_1,\ldots,v'_N$ have a common neighbor, say $u$. Then $u,v_1,\ldots,v_N$ are pairwise at distance 2, without a common neighbor. Then by Lemma~\ref{lem:K_n}, either $G$ has a pc-minor in $\Q$ or their convex hull $H$ is isomorphic to $Q_{N+1}^-$ or $Q_{N+1}^{--}$. The latter cannot be since then $H$ is not in $\Q$. Therefore, $G$ has the pc-minor in $\Q$.
\end{proof}

%
%

We are ready to prove the main theorem of this section.

%
%
%

\begin{thm}\label{thm:topegraphs}
A partial cube that has no pc-minor from the set $\Q$  is the tope graph of a COM, i.e.,  $\mathcal{F}(\Q)\subseteq \topegraphs$.
\end{thm}

\begin{proof}
For the sake of contradiction, assume that we can pick $G$, a smallest graph that is not in $\topegraphs$ but in $\mathcal{F}(\Q)$. In particular, since $\mathcal{F}(\Q)$ is pc-minor closed, every pc-minor of $G$ is in $\topegraphs$. 

By Lemma~\ref{lem:covectors(G)}, $G$ is a graph in $\topegraphs$ if and only if $\covectors(G)$ is a COM. 
Since $G$ is not in $\topegraphs$, but $G$ is a partial cube and by Lemma~\ref{lem:L(G)} $\covectors(G)$ is a partial cube system that satisfies (C),~\cite[Theorem 3]{Ban-15} gives that $\covectors(G)$ has a hyperplane $\covectors(G)/e$ that is not a COM. 

Since $G\in\mathcal{F}(\Q)$, by Lemma~\ref{lem:hyperplaneforbidden} we have that $G':=\zeta_e(G)$ is a well-embedded partial cube.
By Lemma~\ref{lem:hyperplane=hyperplane} we get $G' \cong G(\covectors(G)/e)$, i.e., it is the tope graph of the hyperplane.


\begin{claim}
We have $\mathcal{S}(\covectors(G)/e)=\covectors(G')$.
\end{claim}

\begin{proof}
By Lemma~\ref{lem:L(G)}, the elements of $\covectors(G')$ correspond to antipodal subgraphs of $G'$. Furthermore, it is not hard to see that elements of $\mathcal{S}(\covectors(G)/e)$ correspond to antipodal subgraphs of $G$ that are crossed by $E_e$, where redundant coordinates have been deleted. If $A$ is an antipodal subgraph of $G$ crossed by $E_e$, then by Lemma \ref{lem:antipodes}, each edge $uv\in E_e$ of $A$ has an antipodal edge $-_Au-_Av\in E_e$. Thus the zone graph of $A$ corresponding to $E_e$ is an antipodal subgraph of $G'$ and we get that $\mathcal{S}(\covectors(G)/e) \subseteq \covectors(G')$.

Conversely, assume that there is an antipodal graph $A'$ in $G'$ that does not correspond to a zone graph of an antipodal subgraph of $G$. By definition of $G'$ we can identify its vertices with edges of $G$ in $E_e$. Let $A$ be the convex hull of those edges in $G$ that correspond to vertices of $A'$, and let $E_e,E_{e_1},\ldots,E_{e_k}$ be the $\Theta$-classes crossing $A$. Since $A'$ does not correspond to a zone graph of an antipodal graph, $A$ is not antipodal.

By minimality of $G$, for every $E_{e_j}$ the contraction $\pi_{e_j}(G)$ is the tope graph of a COM. This is, $\covectors( \pi_{e_j}(G)) = \covectors(G) \backslash e_j$ is a COM. Hence, by Lemma~\ref{lem:signminors} its hyperplane $(\covectors(G) \backslash e_j)/ e$ is a COM as well. Now Lemma~\ref{lem:commute} gives
$\mathcal{S}((\covectors(G) \backslash e_j)/ e) = \mathcal{S}(\covectors(G)/e)\backslash e_j$. We have proved that $\mathcal{S}(\covectors(G)/e)\backslash e_j$ is a COM for every $E_{e_j}\in \{E_{e_1},\ldots,E_{e_k}\}$. Note that $\pi_{e_j}(G')$ is the tope graph of $\mathcal{S}(\covectors(G)/e)\backslash e_j$ and therefore it is in $\topegraphs$.


By Theorem~\ref{thm:dictionaryOMC}, the covectors of the COM corresponding to $\pi_{e_j}(G')$ are precisely its antipodal subgraphs. Since $\pi_{e_j}(A')$ is antipodal, it corresponds to a covector. But then this covector is in $\mathcal{S}((\covectors(G)\backslash e_j)/e)$, i.e.~there is an antipodal graph in $\pi_{e_j}(G)$ whose zone graph is $\pi_{e_j}(A')$. By definition this must be $\pi_{e_j}(A)$, proving that $\pi_{e_j}(A)$ is antipodal for every $E_{e_j}\in \{E_{e_1},\ldots,E_{e_k}\}$.


Let $v\in A$ be a vertex without an antipode in $A$. Without loss of generality, $v\in E_e^+$. By Lemma~\ref{lem:allcontractionsantipodal}, $A$ either has a pc-minor in $\Q$ and we are done, or there is a $Q_k^-$ or $Q_k^{--}$ in $A$ crossed by precisely  $E_{e_1},\ldots,E_{e_k}$ and its missing vertex is the missing antipode of $v$ in $A$. Then this convex subgraph is precisely $E_e^- \cap A$.

First assume that $E_e^-\cap A$ is isomorphic to $Q_k^-$. Thus, $v$ has a neighbor in $E_e^-$, say $u$. If $u$ has no antipode in $A$ we deduce as above that $E_e^+$ is isomorphic to $Q_k^-$ or $Q_k^{--}$. Since $v\in A$ the halfspace $E_e^+$ must be isomorphic to $Q_k^-$ and thus $A \cong Q_k^- \square K_2$. But then the zone graph of $A$ corresponding to $E_e$ is not antipodal and $A'$ is not antipodal. A contradiction.
 
Finally, assume that $E_e^-$ is isomorphic to $Q_k^{--}$. Then there are $k$ vertices in $E_e^-$ at distance 2 from $v$ and pairwise also at distance 2 but there is no vertex adjacent to all of them. By Lemma~\ref{lem:K_n}, $A$ either has a pc-minor in $\Q$ and we are done, or $A$ is isomorphic to $Q_{k+1}^-$ or $Q_{k+1}^{--}$. In the first case, none of the zone graphs of $Q_{k+1}^-$ is antipodal, while in the second case $A$ is antipodal. A contradiction. This finishes the proof that $\covectors(G)/e = \covectors(G')$.

\end{proof}

Now we can assume that $G'$ is a well-embedded partial cube, but is not in $\topegraphs$ since $\covectors(G')=\mathcal{S}(\covectors(G)/e)$, and $\covectors(G)/e$ is not a COM. By minimality of $G$, $G'=\zeta_e(G)$ has a pc-minor $H'\in\Q$, i.e., $H'=\rho_X(\pi_A(\zeta_e(G)))$ for some $\Theta$-classes $A$ and an oriented set $X$ of $\Theta$-classes of $G'$. By Lemma~\ref{lem:hyperplanescommute} we have $H'=\zeta_e(\rho_{X'}(\pi_{A'}(G)))$ for $\Theta$-classes $A'$ and an oriented set $X'$ of $\Theta$-classes of $G$. Let $H$ be the graph $\rho_{X'}(\pi_{A'}(G))$. Lemma~\ref{lem:hyperplanesinQ} gives that $H$ has a pc-minor in $\Q$, contradicting that $G\in\mathcal{F}(\Q)$. This concludes the proof of Theorem~\ref{thm:topegraphs}. 

%
%
%
%
\end{proof}

%
%
%
%
%
%
%
%
\section{Antipodally gated partial cubes are pc-minor closed}\label{sec:antipodalminorclosed}
The main result of the present section is that if in a partial cube all antipodal subgraphs are gated, then the same holds for all its minors (Theorem~\ref{thm:AGminors}). 
Recall that the class of these antipodally gated partial cubes is denoted by $\AG$. Since by Lemma~\ref{lem:minfamily} none of the graphs in $\Q$ is in $\AG$, minor-closedness of $\AG$ implies that antipodally gated partial cubes exclude minors from $\Q$, i.e., $\AG\subseteq \mathcal{F}(\Q)$. This section is proof wise the hardest one of the paper. It heavily builds on interactions of antipodality and gatedness with respect to pc-minors, expansions, and zone-graphs established in Section~\ref{sec:pcminors}. We want to prove:

\begin{thm}\label{thm:AGminors}
If $G$ is antipodally gated, then so are all pc-minors of $G$.
\end{thm}

For this we will show two auxiliary statements. The first one is:

\begin{lem}\label{lem:expansion}
Let $G$ be an antipodal graph from $\AG$ such that all its pc-minors are in $\AG$ as well and $G'$ an expansion of $G$. Then one of the following occurs:
\begin{enumerate}
\item $G'$ is antipodal,
\item $G'$ is a peripheral expansion of $G$,
\item $G'$ is not in $\AG$.
\end{enumerate}
\end{lem}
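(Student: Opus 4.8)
The plan is to argue by assuming that $G'$ is neither antipodal nor a peripheral expansion, and to produce a non-gated antipodal subgraph of $G'$. Say $G'$ is obtained from $G$ by an expansion along isometric subgraphs $G_1,G_2$ with $G_0 = G_1\cap G_2$, and let $E_e$ be the new $\Theta$-class, with $E_e^+$ the copy of $G_1$ and $E_e^-$ the copy of $G_2$. Since $G$ is antipodal but $G'$ is not, by Lemma~\ref{lem:antipodalexpansion} the expansion is not antipodal, i.e.~$-G_1\neq G_2$ (equivalently the antipodal map of $G$ does not swap $G_1$ and $G_2$). Since $G'$ is not a peripheral expansion, neither $G_1$ nor $G_2$ equals $G$; combined with $G_1\cup G_2 = G$ this forces $G_0=G_1\cap G_2$ to be a proper nonempty convex subgraph of $G$ that is ``not antipodally placed''.

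The key step is to find, inside $G$, a vertex $v$ and a convex subgraph $H$ witnessing the hypothesis of Lemma~\ref{lem:nongatedexpansion}, namely: $v\in G_0$, the gate $v'$ of $v$ in $H$ satisfies $v'\notin G_0$, and there is some $v''\in H\cap G_0$. Then Lemma~\ref{lem:nongatedexpansion} immediately gives that the expansion of $H$ in $G'$ is not gated; and if we choose $H$ to be an \emph{antipodal} subgraph of $G$, then since $G\in\AG$ implies $H$ is gated in $G$ the gate $v'$ exists, and by Lemma~\ref{lem:antipodal_minor}/Lemma~\ref{convex_expansion}-type arguments the expansion of $H$ is antipodal in $G'$ — so $G'\notin\AG$, giving case~3. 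To locate such an $H$: because $-G_1\neq G_2$, there is a vertex $w\in G_1$ with $-_G w\in G_1$ as well (the antipodal map fails to send all of $G_1$ into $G_2$); equivalently some edge of $E_e$ in $G'$, or some vertex of $G_0$, has its ``$G$-antipodal partner'' lying on the same side. Starting from such a $w$ one builds the antipodal subgraph $H:=\mathrm{conv}(w,-_G w)=G$ itself, or more cleverly a minimal antipodal subgraph of $G$ containing a point of $G_0$ but not contained in $G_0$, using Lemma~\ref{lem:antipodal_cycle} to get an isometric cycle generating it; then one checks the gate of a suitable $v\in G_0$ lands outside $G_0$ precisely because $G_0$ is not antipodally symmetric.

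The main obstacle I expect is the bookkeeping needed to guarantee all three conditions of Lemma~\ref{lem:nongatedexpansion} hold \emph{simultaneously} for one choice of $H$ and $v$ — in particular ensuring the gate $v'$ genuinely leaves $G_0$ rather than staying inside it, which is exactly where the failure of $-G_1=G_2$ must be converted into a statement about gates. I would handle this by taking $H$ to be an \emph{inclusion-minimal} antipodal subgraph of $G$ that meets both $G_1\setminus G_2$ (via an antipode) and $G_0$; minimality plus the structure of antipodal graphs (Lemma~\ref{lem:antipodal_cycle}) should pin down the gate of a boundary vertex of $G_0$ and force it off $G_0$. A secondary point to verify carefully is that the expansion of this $H$ is itself antipodal in $G'$ (so that we land in $\AG$'s complement and not merely among non-gated subgraphs), which follows because $H$ is crossed by none of the relevant data in a bad way and the expansion of an antipodal graph along the induced sets $H\cap G_1$, $H\cap G_2$ can be arranged to be antipodal by Lemma~\ref{lem:antipodalexpansion} applied inside $H$'s expansion — unless that expansion is peripheral, which would circle back and need a separate small argument.
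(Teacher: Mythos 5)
Your plan correctly identifies Lemma~\ref{lem:nongatedexpansion} as the device that will ultimately certify non-gatedness (the paper also uses it, but only at the very end of its argument), yet the central difficulty is left unresolved: you must exhibit an \emph{antipodal} subgraph of $G'$ that is not gated, and your proposed witness --- the expansion of an antipodal subgraph $H$ of $G$ --- need not be antipodal at all. Lemma~\ref{lem:antipodal_minor} only says antipodality survives contraction (and restriction by a disjoint class), not expansion; by Lemma~\ref{lem:antipodalexpansion} the expansion of $H$ is antipodal precisely when the induced expansion data on $H$ is antipodal, and arranging this \emph{simultaneously} with the gate conditions of Lemma~\ref{lem:nongatedexpansion} (gate of some $v\in G_1\cap G_2$ lying outside $G_1\cap G_2$, while $H$ meets $G_1\cap G_2$) is exactly the content of the lemma, not a bookkeeping afterthought. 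Your sketch ``take an inclusion-minimal antipodal subgraph meeting $G_1\setminus G_2$ and $G_0$; minimality should force the gate off $G_0$'' is where the proof has to happen, and no argument is given; note also that your fallback choice $H=\conv(w,-_Gw)=G$ is useless, since its expansion is $G'$ itself, which is not antipodal by assumption.

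A telling symptom is that your plan makes no essential use of the hypothesis that \emph{all pc-minors} of $G$ lie in $\AG$. The paper's proof leans on this hypothesis throughout: it runs a minimal-counterexample induction (via Lemma~\ref{lem:exp->AGminors}) so that every pc-minor of $G'$ is in $\AG$, classifies the $\Theta$-classes of $G'$ according to whether contracting them yields an antipodal graph or makes the new class peripheral, contracts down to a graph isomorphic to $Q_n^-$ using Lemma~\ref{lem:allcontractionsantipodal}, and then, using Lemma~\ref{lem:K_n} repeatedly (which is where the pc-minor hypothesis enters, through convex hulls being $Q_n^-$ or $Q_n^{--}$), reconstructs enough of the structure of $G'$ --- in particular that a suitable halfspace is a hypercube --- to locate a hypercube interval between a vertex of $G_1\setminus G_2$ and a vertex of $G_2\setminus G_1$ whose expansion is antipodal (a $Q^{--}$-type graph) and non-gated by Lemma~\ref{lem:nongatedexpansion}; a separate case (full expansion) is settled by Lemma~\ref{lem:K_n} directly. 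In short, the step you flag as ``the main obstacle'' is the entire proof, and the direct construction you outline does not supply it, nor does it explain how the failure of $-G_1=G_2$ together with non-peripherality forces the required gate behaviour without the structural control that the paper obtains from the pc-minor hypothesis.
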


The second one shows how to use the first one in order to prove Theorem~\ref{thm:AGminors}.

\begin{lem}\label{lem:exp->AGminors}
 If Lemma~\ref{lem:expansion} holds for all pairs $G$, $G'$ where $G'$ is on less than $n$ vertices, then Theorem~\ref{thm:AGminors} holds for all the partial cubes on at most $n$ vertices.
\end{lem}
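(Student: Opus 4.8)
\emph{Plan.} The argument is an induction on the number of vertices, with the one-vertex base case trivial. So suppose the theorem is known for all partial cubes on fewer than $n$ vertices (and, as hypothesised, Lemma~\ref{lem:expansion} holds whenever the larger of the two graphs has fewer than $n$ vertices), and let $G$ be a partial cube on $n$ vertices lying in $\AG$. First I reduce to the case of a single elementary contraction. Note that every convex subgraph $H$ of a graph in $\AG$ is again in $\AG$: an antipodal subgraph $B$ of $H$ is convex in $H$, hence convex — and therefore antipodal — in $G$, hence gated in $G$; and since $H$ is convex (thus isometric) in $G$, the $G$-gate of a vertex $x\in H$ lies in $B\subseteq H$ and is visibly its gate in $B$ inside $H$. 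Since restrictions of a partial cube are exactly its convex subgraphs (Lemma~\ref{lem:restriction}), every elementary restriction of $G$ is in $\AG$ and has fewer than $n$ vertices, so by induction all its pc-minors are in $\AG$; using that restrictions and contractions commute (Lemma~\ref{lem:commut_rest_contaction}), any pc-minor of $G$ that uses at least one restriction is a pc-minor of some elementary restriction of $G$ and hence is in $\AG$. Therefore it suffices to prove $\pi_f(G)\in\AG$ for every $\Theta$-class $f$: granting this, $\pi_f(G)$ has fewer than $n$ vertices, and induction disposes of all pure contractions $\pi_A(G)=\pi_{A\setminus\{f\}}(\pi_f(G))$.

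Fix $f$, write $\bar G:=\pi_f(G)$, and let $A$ be an antipodal subgraph of $\bar G$; we must show $A$ is gated in $\bar G$. If $A=\bar G$ there is nothing to prove, so assume $A\subsetneq\bar G$ and set $\hat A:=\pi_f^{-1}(A)$. Then $\hat A$ is a convex subgraph of $G$ with $\pi_f(\hat A)=A$; since $A\subsetneq\bar G$ we have $\hat A\subsetneq G$, so $|\hat A|\le n-1<n$, and by the observation above $\hat A\in\AG$, whence all pc-minors of $\hat A$ are in $\AG$ by induction. If $E_f$ does not cross $\hat A$, then $\hat A\cong A$ is antipodal, so $\hat A$ is an antipodal — hence gated — subgraph of $G$, and by Lemma~\ref{contraction_gated} its image $A=\pi_f(\hat A)$ is gated in $\bar G$. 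The same conclusion holds if $E_f$ crosses $\hat A$ but $\hat A$ happens to be antipodal.

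The remaining, essential, case is that $E_f$ crosses $\hat A$ and $\hat A$ is \emph{not} antipodal. Then $\hat A$ is an expansion of the antipodal graph $A$, and $A$ is a pc-minor of $\hat A$, so $A\in\AG$ and all pc-minors of $A$ are in $\AG$; since moreover $|\hat A|<n$, Lemma~\ref{lem:expansion} applies to the pair $(A,\hat A)$. Its first alternative ($\hat A$ antipodal) is excluded by assumption and its third ($\hat A\notin\AG$) by the previous paragraph, so $\hat A$ is a \emph{peripheral} expansion of $A$. Hence one of the two halfspaces of $\hat A$ along $f$, say $E_f^+$, is mapped isomorphically onto $A$ by $\pi_f$. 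Now $A_+:=\hat A\cap E_f^+$ is an intersection of two convex subgraphs of $G$, hence convex in $G$, and $A_+\cong A$ is antipodal, so $A_+$ is a gated subgraph of $G$; by Lemma~\ref{contraction_gated}, $A=\pi_f(A_+)$ is gated in $\bar G$. In every case $A$ is gated in $\bar G$, so $\bar G=\pi_f(G)\in\AG$, completing the induction.

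The only genuinely delicate point is this last case: the naive lift $\hat A$ of $A$ need not be antipodal, and the argument turns on feeding $\hat A$ to Lemma~\ref{lem:expansion} and then exploiting that a peripheral expansion retains a full convex (hence isometric) copy $A_+\cong A$ of $A$ inside $G$, on which membership of $G$ in $\AG$ can be used directly and then pushed down by contraction. One must also keep careful track that $\rho_{f^+}(G)$, $\hat A$, and $A_+$ are all strictly smaller than $G$ — on at most $n-1<n$ vertices — so that both the inductive hypothesis and the hypothesis on Lemma~\ref{lem:expansion} are legitimately available; this is exactly why the statement is phrased as the passage from the ``$<n$'' to the ``$\le n$'' case.
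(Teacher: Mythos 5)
Your proof is correct and follows essentially the same route as the paper: reduce to a single elementary contraction, lift an antipodal subgraph $A$ of $\pi_f(G)$ to its convex preimage in $G$, invoke Lemma~\ref{lem:expansion} (ruling out the non-$\AG$ alternative because convex subgraphs of graphs in $\AG$ stay in $\AG$), and in the antipodal or peripheral cases exhibit an antipodal, hence gated, subgraph of $G$ contracting onto $A$, pushing gatedness down via Lemma~\ref{contraction_gated}. The only difference is presentational (direct induction rather than a minimal counterexample), so nothing further is needed.
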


\begin{proof}[Proof of Lemma~\ref{lem:exp->AGminors}.]
 Suppose that Theorem~\ref{thm:AGminors} does not hold and let $G$ be a minimal counterexample, while Lemma~\ref{lem:expansion} holds for all the expansions of size less than the size of $G$. First, observe that $\AG$ is closed under restrictions, since restrictions cannot create new antipodal subgraphs and gated subgraphs remain gated. So, let $\pi_e(G)\notin \AG$ be a contraction of $G$ that is not in $\AG$. Let $A$ be a smallest antipodal subgraph of $\pi_e(G)$, that is not gated in $\pi_e(G)$. In particular $A$ is a proper subgraph. By the minimality in the choice of $A$, itself is in $\AG$. Now, by the minimality in the choice of $G$, all pc-minors of $A$ are also in $\AG$. Let $A'$ denote the expansion of $A$ with respect to $e$, that appears as a proper subgraph of $G$. If $E_e$ does not cross $A'$, then $A'\cong A$ is  antipodal subgraph and is non-gated, since otherwise $A=\pi_e(A')$ would be gated as well by Lemma~\ref{contraction_gated}. This contradicts $G\in \AG$. If $E_e$ crosses $A'$, we can apply Lemma~\ref{lem:expansion} to $A$, since $A'$ has less vertices than $G$. We get that either $A'$ is antipodal, $A'$ is a peripheral expansion of $A$, or $A'$ is not in $\AG$. The latter cannot be since $G$ in $\AG$. In the former two cases, either $A'$ is antipodal or has $A$ as a subgraph. In both cases, we have an antipodal subgraph that is contracted to $A$ in $\pi_e(G)$. By Lemma~\ref{contraction_gated}, the antipodal subgraph in $G$ is non-gated contradicting $G\in \AG$.
\end{proof}

\begin{proof}[Proof of Lemma~\ref{lem:expansion}.]
Suppose that the lemma is false. Let $G$, $G'$ be a minimal counterexample, i.e.~$G$ is an antipodal graph from $\AG$ such that all its pc-minors are in $\AG$. Furthermore, $G'$ is an expansion of $G$ that is not antipodal, not peripheral, but in \AG, with minimal number of vertices possible. 
Let $E_c$ be the $\Theta$-class such that $\pi_c(G')=G$.


\begin{claim}\label{claim:AG}
 Any pc-minor of $G'$ is in $\AG$.
\end{claim}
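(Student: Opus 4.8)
The plan is to deduce Claim~\ref{claim:AG} formally from Lemma~\ref{lem:exp->AGminors}, exploiting that $(G,G')$ was chosen as a counterexample to Lemma~\ref{lem:expansion} with $|V(G')|$ as small as possible. Set $n=|V(G')|$. First I would observe that this minimality means precisely that Lemma~\ref{lem:expansion} holds for every pair with the expanded graph on fewer than $n$ vertices: if some pair $(H,H')$ satisfying the hypotheses of Lemma~\ref{lem:expansion} (so $H$ antipodal, in $\AG$, with all pc-minors in $\AG$, and $H'$ an expansion of $H$) violated its conclusion while $|V(H')|<n$, then $(H,H')$ would be a strictly smaller counterexample, contradicting the choice of $(G,G')$.

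Next I would feed this into Lemma~\ref{lem:exp->AGminors} with this value of $n$: its hypothesis is exactly the statement established in the previous step, so its conclusion gives that Theorem~\ref{thm:AGminors} holds for every partial cube on at most $n$ vertices. Since $G'$ is itself a partial cube on $n$ vertices and, by assumption in the running proof, $G'\in\AG$, applying Theorem~\ref{thm:AGminors} to $G'$ yields that every pc-minor of $G'$ lies in $\AG$. That is precisely the claim, so no separate case analysis over contractions, restrictions, or the distinguished class $E_c$ is needed here.

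There is no genuine obstacle in this step; the only points requiring care are bookkeeping. One must note that Lemma~\ref{lem:exp->AGminors} is a \emph{conditional} statement whose proof invokes Lemma~\ref{lem:expansion} only as a hypothesis and never refers to the proof of Lemma~\ref{lem:expansion} itself, so using it inside the proof of Lemma~\ref{lem:expansion} is not circular; and one should keep in mind that a proper pc-minor always has strictly fewer vertices than the original graph (an elementary contraction identifies at least one pair of vertices, an elementary restriction deletes a nonempty halfspace), which is what makes ``Theorem~\ref{thm:AGminors} for partial cubes on at most $n$ vertices'' strong enough to cover $G'$ and hence all its pc-minors in one stroke. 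The substantive work of the proof of Lemma~\ref{lem:expansion} is postponed to the subsequent steps, where the assumption that $G'$ is neither antipodal nor a peripheral expansion of $G$ must be driven to a contradiction; the present claim just clears the ground by ensuring that contractions and restrictions performed inside $G'$ stay within $\AG$.
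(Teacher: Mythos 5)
Your proof is correct and follows essentially the same route as the paper: use minimality of the counterexample $(G,G')$ to get Lemma~\ref{lem:expansion} for all expansions on fewer than $n=|V(G')|$ vertices, invoke Lemma~\ref{lem:exp->AGminors} to conclude Theorem~\ref{thm:AGminors} for partial cubes on at most $n$ vertices, and apply it to $G'\in\AG$. The remarks on non-circularity and on minors having fewer vertices are sound bookkeeping that the paper leaves implicit.
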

 \begin{proof}
Let $n$ be the number of vertices in $G'$. Since $G'$ is a minimal counterexample to Lemma~\ref{lem:expansion}, the lemma holds for all the graphs on less than $n$ vertices. Then by Lemma~\ref{lem:exp->AGminors}, Theorem~\ref{thm:AGminors} holds for all graphs on at most $n$ vertices. In particular it holds for $G'$, thus all its pc-minors are in $\AG$.
\end{proof}

We will call a contraction of a partial cube \emph{antipodal} if the contracted graph is antipodal. The following claim is immediate since a contraction of an antipodal graph is antipodal and contractions commute.

\begin{claim}\label{claim:antpodaltominors}
If $\pi_e(H)$ is an antipodal contraction of $H$, then  $\pi_e(\pi_f(H))$ is an antipodal contraction of $\pi_f(H)$ for all $f\in\mathcal{E}$.
\end{claim}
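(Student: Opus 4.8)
The plan is to deduce the claim directly from two facts already recorded in the text: the class of antipodal partial cubes is closed under contractions (this is the consequence of Lemma~\ref{lem:antipodal_minor} obtained by taking the antipodal subgraph to be the whole graph, which is crossed by each of its $\Theta$-classes), and contractions of partial cubes commute (Lemma~\ref{lem:commut_contraction}).

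First I would dispose of the degenerate case $f=e$, where $\pi_f(H)=\pi_e(H)$ is already antipodal by hypothesis and there is nothing to prove. So assume $f\ne e$; then $E_f$ is still a $\Theta$-class of the partial cube $\pi_e(H)$. Since $\pi_e(H)$ is antipodal by hypothesis, closure of antipodal partial cubes under contraction gives that $\pi_f(\pi_e(H))$ is antipodal. Finally, Lemma~\ref{lem:commut_contraction} yields $\pi_f(\pi_e(H))=\pi_e(\pi_f(H))$, so $\pi_e(\pi_f(H))$ is antipodal, i.e.\ it is an antipodal contraction of $\pi_f(H)$, as desired.

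There is no real obstacle here: the argument is a two-step chaining of previously established lemmas, which is exactly why the surrounding text calls the claim \emph{immediate}. The only point requiring a moment's care is checking that $E_f$ survives as a $\Theta$-class after contracting $E_e$, so that $\pi_f(\pi_e(H))$ is defined and equals $\pi_e(\pi_f(H))$; this is automatic once $f\ne e$.
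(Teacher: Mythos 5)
Your proof is correct and is essentially the paper's own argument: the paper dismisses the claim as immediate precisely because a contraction of an antipodal partial cube is antipodal (Lemma~\ref{lem:antipodal_minor}) and contractions commute (Lemma~\ref{lem:commut_contraction}). Your extra care about the degenerate case $f=e$ and about $E_f$ surviving as a $\Theta$-class is harmless bookkeeping that the paper leaves implicit.
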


Every contraction $\pi_e(G')$ of $G'$, $E_e\neq E_c$, makes $E_c$ peripheral or it is antipodal, since otherwise the contraction $\pi_e(G')$ together with the contraction $\pi_c(\pi_e(G'))$ would yield a smaller counterexample to the lemma. We can divide the $\Theta$-classes of $G'$ into two sets: call the index set of the $\Theta$-classes of the antipodal contractions $\mathcal{A}$, and the index set of the remaining $\Theta$-classes $\mathcal{B}$. By the above, a  contraction $\pi_e(G')$ of $G'$, for every $e\in \mathcal{B}$, makes $E_c$ peripheral in $\pi_e(G')$. Note that $c\in \mathcal{A}$, i.e.~in particular $\mathcal{A}$ is non-empty. Also $\mathcal{B}$ is non-empty, because otherwise, every contraction of $G'$ is antipodal, thus by Lemma~\ref{lem:allcontractionsantipodal}, $G'$ is isomorphic to $Q_n^{-}$ or $Q_n^{--}$. The latter cannot be since $G'$ is not antipodal. The former is impossible since then $G=\pi_c(G')\cong Q_{n-1}$, and $G'$ is a peripheral expansion. 

Furthermore, note that peripherality of a $\Theta$-class is preserved under contraction.

 \begin{claim}\label{claim:symmetry}
 For every $e\in \mathcal{B}$ and every $f\in \mathcal{A}$, the $\Theta$-class $E_f$ is peripheral in $\pi_e(G')$.
 \end{claim}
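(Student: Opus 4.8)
The claim asserts that for every $E_e \in \mathcal{B}$ and every $E_f \in \mathcal{A}$, the class $E_f$ is peripheral in $\pi_e(G')$. The plan is to work inside the contraction $\pi_e(G')$, where we already know (from the definition of $\mathcal{B}$ together with Claim~\ref{claim:antpodaltominors} and the setup of the proof) that $E_c$ is peripheral and that $\pi_e(G')$ is a (possibly peripheral) expansion of $\pi_c(\pi_e(G')) = \pi_e(G)$, the latter being an antipodal graph since $G$ is antipodal. So the real content is: in a partial cube $H$ which is an expansion of an antipodal graph $\pi_c(H)$ along a peripheral $\Theta$-class $E_c$, and such that every contraction of $H$ by a class in $\mathcal{A}$ is antipodal, each $E_f \in \mathcal{A}$ is peripheral in $H$.

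First I would fix $E_f \in \mathcal{A}$ and consider $\pi_f(\pi_e(G'))$, which is antipodal by definition of $\mathcal{A}$ and Claim~\ref{claim:antpodaltominors}. I want to lift antipodality back through the expansion $\pi_e(G') \to \pi_f(\pi_e(G'))$ — no wait, the expansion goes the other way: $\pi_e(G')$ expands to... actually $\pi_f(\pi_e(G'))$ is obtained from $\pi_e(G')$ by contracting $E_f$, so $\pi_e(G')$ is an expansion of $\pi_f(\pi_e(G'))$ along $E_f$. By Lemma~\ref{lem:antipodalexpansion}, since $\pi_f(\pi_e(G'))$ is antipodal, $\pi_e(G')$ is antipodal if and only if the expansion along $E_f$ is an antipodal expansion. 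It is not necessarily antipodal (indeed if $E_e \in \mathcal{B}$ made things antipodal we'd be in a different case), so I need a finer argument. The key idea is to use the peripheral class $E_c$ in $\pi_e(G')$: since $E_c$ is peripheral, one halfspace, say $(E_c)^+$ in $\pi_e(G')$, is mapped isomorphically onto $\pi_c(\pi_e(G')) = \pi_e(G)$, which is antipodal. I would then use Lemma~\ref{lem:G1G2} to track how the expansion sets behave under contracting $E_f$, concluding that the expansion of $\pi_f(\pi_e(G))$ to $\pi_e(G')$ contracted appropriately, and combine with the antipodality of $\pi_f(\pi_e(G'))$.

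More concretely: let $H_1, H_2$ be the expansion sets for $\pi_e(G') \to \pi_f(\pi_e(G'))$. By Lemma~\ref{lem:antipodalexpansion} applied inside the antipodal graph $\pi_f(\pi_e(G'))$, if this expansion is \emph{not} antipodal then $-H_1 \neq H_2$, and one checks that then $\pi_e(G')$ cannot be antipodal — consistent with our situation. But I don't actually need $E_f$ peripheral to follow from antipodality; rather I need to show the expansion \emph{is} peripheral, i.e.\ one of $H_1, H_2$ is the whole graph $\pi_f(\pi_e(G'))$. For this I would argue by contradiction: if $E_f$ is not peripheral in $\pi_e(G')$, then neither halfspace of $E_f$ equals a full copy, so both $H_1, H_2$ are proper isometric subgraphs of $\pi_f(\pi_e(G'))$ meeting in $H_1 \cap H_2 \neq \emptyset$. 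Now contract further by a class $E_b \in \mathcal{B}$, $E_b \neq E_e$ (such exists since $\mathcal{B}$ is nonempty and if $\mathcal{B} = \{E_e\}$ then after contracting $E_e$ every further contraction of $\pi_e(G')$ is antipodal, forcing $\pi_e(G') \cong Q^-_{n-1}$ or $Q^{--}_{n-1}$ by Lemma~\ref{lem:allcontractionsantipodal} — but $\pi_e(G')$ being an expansion of the antipodal $\pi_e(G)$ along peripheral $E_c$ with $E_c \in \mathcal{A}$... I'd rule this out by a direct inspection, since $Q^-$ and $Q^{--}$ are not expansions of antipodal graphs along a peripheral class with the required $\mathcal{A}/\mathcal{B}$ split, or simply because $Q^{--} \in \mathcal{Q}^-$ and $G' \in \AG$ excludes it by Lemma~\ref{lem:minfamily}, while $Q^-$ forces peripherality trivially). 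Then in $\pi_b(\pi_e(G'))$ the class $E_c$ is peripheral (peripherality is preserved under contraction, as noted just before the claim), and by minimality / Claim~\ref{claim:AG} this smaller graph is in $\AG$; using Lemma~\ref{lem:G1G2} the expansion $\pi_f$ there still has non-full expansion sets $\pi_b(H_1), \pi_b(H_2)$, and I iterate.

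The cleanest route, which I would ultimately pursue, is: contract \emph{all} of $\mathcal{B} \setminus \{E_e\}$ — equivalently, realize that after also contracting all remaining non-antipodal classes we reach a graph where the only classes are $E_c$, $E_f$ and classes from $\mathcal{A}$, all of whose single contractions are antipodal; then invoke Lemma~\ref{lem:allcontractionsantipodal} with the vertex $v$ being an endpoint of a $c$-edge whose antipode is missing (it is missing precisely because $\pi_e(G')$, hence this minor, is not antipodal in the relevant copy) to conclude the minor is $Q^-_k$ (the $Q^{--}_k$ and $\mathcal{Q}^-$ options are excluded by $G' \in \AG$). In $Q^-_k$ every $\Theta$-class is peripheral, so in particular the image of $E_f$ is peripheral there; then I pull peripherality of $E_f$ back up through the expansions (contractions preserve peripherality, but I need the converse direction — I'd instead argue directly that if $E_f$ were non-peripheral upstairs, some vertex on both sides of $E_f$ not incident to $E_f$ survives the contractions, contradicting peripherality of the image). \textbf{The main obstacle} I anticipate is exactly this last pull-back step: peripherality is destroyed, not created, by expansions in general, so I cannot blindly lift it; the argument must instead show that a \emph{non}-peripheral $E_f$ upstairs would persist as non-peripheral downstairs, which requires carefully choosing which classes to contract (only those in $\mathcal{B}$, staying away from $E_f$ and $\mathcal{A}$) and checking via Lemma~\ref{lem:G1G2} and Lemma~\ref{lem:allcontractionsantipodal} that the obstruction to antipodality — the missing antipode of $v$ — is not itself a $Q^{--}$ or $\mathcal{Q}^-$ pattern, which is where the hypothesis $G' \in \AG$ together with Lemma~\ref{lem:minfamily} does the essential work.
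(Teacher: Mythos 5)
There is a genuine gap, and it is exactly the one you flag yourself: your ``cleanest route'' contracts down to a graph isomorphic to $Q_k^-$, observes that every $\Theta$-class is peripheral there, and then needs to transport this back to $\pi_e(G')$. For that you would need the implication ``$E_f$ non-peripheral upstairs $\Rightarrow$ $E_f$ non-peripheral after contracting other classes'', and this is false in general: in $C_6$ no $\Theta$-class is peripheral, yet contracting any one class yields $C_4=Q_2$, where every class is peripheral. Your proposed fix (``some vertex on both sides of $E_f$ not incident to $E_f$ survives the contractions'') is precisely the statement that needs proof and is not justified by anything you set up; the earlier iterative variant (contract one $E_b\in\mathcal{B}$ at a time and ``iterate'') has the same unresolved endgame, so the argument does not close.

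What you are missing is that no structural analysis is needed at this point: the claim follows from the minimality of the counterexample pair in the proof of Lemma~\ref{lem:expansion}. Suppose $E_f$ is not peripheral in $\pi_e(G')$ for some $E_e\in\mathcal{B}$, $E_f\in\mathcal{A}$. Then $\pi_f(\pi_e(G'))$ is antipodal by Claim~\ref{claim:antpodaltominors} (since $E_f\in\mathcal{A}$), and by Claim~\ref{claim:AG} the graph $\pi_e(G')$ and all of its pc-minors lie in $\AG$. Moreover $\pi_e(G')$ is not antipodal (that is what $E_e\in\mathcal{B}$ means), and by assumption it is not a peripheral expansion of $\pi_f(\pi_e(G'))$. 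Hence the pair $\bigl(\pi_f(\pi_e(G')),\,\pi_e(G')\bigr)$ satisfies all hypotheses of Lemma~\ref{lem:expansion} while violating all three alternatives of its conclusion, and $\pi_e(G')$ has fewer vertices than $G'$ --- contradicting the choice of $(G,G')$ as a counterexample with the minimal number of vertices. Your heavier machinery (Lemma~\ref{lem:allcontractionsantipodal}, Lemma~\ref{lem:K_n}-type arguments, the $Q_k^-$ structure) is indeed used in the paper, but only later, in Claims~\ref{claim:hypercubebeta-} and~\ref{claim:structpiG'}; for the present claim the short minimality argument suffices and avoids the peripherality pull-back problem entirely.
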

 \begin{proof}
Assume that this is not the case. Then $\pi_e(G')$ is not antipodal by definition, while $\pi_f(\pi_e(G'))$ is antipodal by Claim~\ref{claim:antpodaltominors}. Moreover, $E_f$ is not peripheral in $\pi_e(G')$ by assumption, while all pc-minors $\pi_e(G')$  are in $\AG$ by Claim~\ref{claim:AG}. Thus $\pi_e(G')$ and $\pi_f(\pi_e(G'))$ are a smaller counterexample to the lemma, a contradiction.
 \end{proof}
 
Now, consider the halfspaces $E_c^+$ and $E_c^-$ in $G'$. Contracting any $\Theta$-class $e$ from $\mathcal{B}$ makes $E_c$ peripheral in $\pi_e(G')$, which implies that either $\pi_e(E_c^+)$ or $\pi_e(E_c^-)$ is a peripheral halfspace in $\pi_e(G')$. For this reason denote by $\mathcal{B}^+$ those $e\in\mathcal{B}$ such that $\pi_e(E_c^+)$ is peripheral and let $\mathcal{B}^-=\mathcal{B}\setminus\mathcal{B}^+$, i.e., $\pi_e(E_c^-)$ is peripheral for $e\in \mathcal{B}^-$.

\begin{claim}\label{claim:beta+-}
Let $e \in \mathcal{B}^+$ end $f \in \mathcal{B}^-$. Then $\pi_f(\pi_e(G'))$ is antipodal. Moreover, $E_f$ is peripheral in $\pi_e(G')$.
\end{claim}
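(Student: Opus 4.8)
The plan is to exploit the symmetry between $\mathcal{B}^+$ and $\mathcal{B}^-$ already uncovered, together with the minimality of $(G,G')$, to force $\pi_f(\pi_e(G'))$ to be antipodal. The key observations are: contracting $E_e\in\mathcal{B}^+$ makes $\pi_e(E_c^+)$ peripheral, hence every vertex of $\pi_e(E_c^+)$ is incident to an edge of $E_c$; dually for $E_f\in\mathcal{B}^-$ and $\pi_f(E_c^-)$. The idea is that after contracting both $E_e$ and $E_f$, the class $E_c$ becomes \emph{full} — i.e.\ every vertex of $\pi_f(\pi_e(G'))$ is incident to an edge of $E_c$ — and then, since $\pi_c(\pi_f(\pi_e(G')))=\pi_f(\pi_e(G))$ is antipodal (being a contraction of the antipodal $G$), a full expansion of an antipodal graph is antipodal, giving the first claim. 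The second claim (peripherality of $E_f$ in $\pi_e(G')$) will then follow by applying Claim~\ref{claim:symmetry}'s style of argument, or directly from the definition of $\mathcal{B}^-$ together with the fact that $E_f\in\mathcal{B}$.

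**Key steps in order.** First I would record that, for $E_e\in\mathcal{B}$, the class $E_c$ is peripheral in $\pi_e(G')$, so one of $\pi_e(E_c^+),\pi_e(E_c^-)$ is a peripheral halfspace; the definition of $\mathcal{B}^+$ vs.\ $\mathcal{B}^-$ fixes which one. Second, take $E_e\in\mathcal{B}^+$ and $E_f\in\mathcal{B}^-$ and look at $\pi_e(G')$: here $\pi_e(E_c^+)$ is peripheral, i.e.\ every vertex of the "$+$" side is incident with an edge of $E_c$. Now contract $E_f$ inside $\pi_e(G')$. Since $E_f\in\mathcal{B}^-$, in $\pi_f(\pi_e(G'))$ the halfspace coming from $E_c^-$ becomes peripheral too; combined with the fact that the $E_c^+$-side was \emph{already} peripheral and peripherality is preserved under contraction (this is stated in the excerpt, "peripherality of a $\Theta$-class is preserved under contraction"), both halfspaces of $E_c$ in $\pi_f(\pi_e(G'))$ are peripheral — equivalently, the expansion of $\pi_f(\pi_e(G))$ into $\pi_f(\pi_e(G'))$ along $E_c$ is a full expansion. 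Third, observe $\pi_c(\pi_f(\pi_e(G')))=\pi_f(\pi_e(\pi_c(G')))=\pi_f(\pi_e(G))$ is a contraction of the antipodal graph $G$, hence antipodal by Lemma~\ref{lem:antipodal_minor}. Fourth, invoke Lemma~\ref{lem:antipodalexpansion}: a full expansion $G_1=G_2$ of an antipodal partial cube is an antipodal expansion (since then $-G_1=G_1=G_2$ forces $-G_1=G_2$), so $\pi_f(\pi_e(G'))$ is antipodal. For the "moreover": since $E_f\in\mathcal{B}$, contracting $E_f$ from $G'$ makes $E_c$ peripheral, and the roles force $E_f$ itself to be peripheral in $\pi_e(G')$; alternatively one runs the Claim~\ref{claim:symmetry} argument — if $E_f$ were not peripheral in $\pi_e(G')$, then $\pi_e(G')$ (not antipodal) together with its antipodal contraction $\pi_f(\pi_e(G'))$ would be a smaller counterexample, contradicting minimality.

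**Main obstacle.** The delicate point is the third/fourth step: making precise that once both $\pi_e(E_c^+)$ is peripheral in $\pi_e(G')$ \emph{and} $\pi_f$-contraction turns the $E_c^-$ side peripheral, the resulting expansion along $E_c$ is genuinely \emph{full} (i.e.\ $G_1=G_2$ as isometric subgraphs of $\pi_f(\pi_e(G))$), not merely "both halves peripheral." A priori "both halfspaces peripheral" says every vertex of the contracted graph lies in some $E_c$-edge, which is exactly the statement that the expansion sets $G_1$ and $G_2$ both equal the whole graph $\pi_f(\pi_e(G))$; so fullness is automatic, but one must verify that the "$+$"-side peripherality genuinely survives the $\pi_f$ contraction — that is where "peripherality is preserved under contraction" is used, and one should double-check it applies to the \emph{halfspace} being peripheral and not just the $\Theta$-class. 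The other mild subtlety is bookkeeping with Lemma~\ref{lem:G1G2} to ensure the expansion sets behave as claimed under $\pi_e,\pi_f$; but this is routine given the commuting-diagram machinery already set up.
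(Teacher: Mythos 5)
Your proposal is correct and follows essentially the same route as the paper: both halfspaces of $E_c$ become peripheral in $\pi_f(\pi_e(G'))$ (using that peripherality of a halfspace survives contraction), so the expansion over the antipodal graph $\pi_f(\pi_e(G))$ is full and hence antipodal (the paper phrases this as $\pi_f(\pi_e(G'))\cong K_2\,\square\, A$ with $A$ antipodal, which is equivalent to your use of Lemma~\ref{lem:antipodalexpansion}), and the peripherality of $E_f$ in $\pi_e(G')$ is exactly the minimality argument you give as your ``alternative'' (your first suggestion, deducing it directly from $E_f\in\mathcal{B}$, would not stand alone, but the minimality argument is the one the paper uses).
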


\begin{proof}
Since peripherality is closed under contraction, both $\pi_f(\pi_e(E_c^+))$ and $\pi_f(\pi_e(E_c^-))$ are peripheral in $\pi_f(\pi_e(G'))$. Thus $\pi_f(\pi_e(G'))\cong K_2 \square A$ for some graph $A$. On one hand contracting $E_c$ in $\pi_f(\pi_e(G'))$ gives an antipodal graph by the choice of $E_c$, on the other hand it is isomorphic to $A$. Thus $A$ is an antipodal graph. Then also $\pi_f(\pi_e(G'))\cong K_2 \square A$ is antipodal.

Now consider the pair of graphs $\pi_f(\pi_e(G'))$ and $\pi_e(G')$. The first is antipodal by the above, while the second is its expansion. Since both are pc-minors of $G'$,  $\pi_e(G')$ is in $\AG$ and $\pi_f(\pi_e(G'))$ and all its pc-minors are in $\AG$ as well, by Claim~\ref{claim:AG}. Furthermore, $\pi_e(G')$ is not antipodal since $e\in \mathcal{B}$. By the minimality of $G'$ the expansion $\pi_e(G')$ must be peripheral, proving that $E_f$ is peripheral in $\pi_e(G')$.
\end{proof}

Let $G''$ be obtained from $G'$ by a maximal chain of contractions of $\Theta$-classes $\mathcal{C}=\{ {e_1},\ldots,{e_p}\}$, such that $G''$ is non-antipodal. Note that $\mathcal{C} \subseteq \mathcal{B}$ since every contraction from a $\Theta$-class in $\mathcal{A}$ makes the graph antipodal.
Moreover, by Claim~\ref{claim:beta+-} we have $\mathcal{C} \subseteq \mathcal{B}^+$ or $\mathcal{C} \subseteq \mathcal{B}^-$. Without loss of generality assume that $\mathcal{C} \subseteq \mathcal{B}^+$.

\begin{claim}\label{claim:hypercubebeta-}
The contraction $\pi_{e_1}(G')$ contains a hypercube that is crossed exactly by $\mathcal{B}^+\setminus\{e_1\}$.
\end{claim}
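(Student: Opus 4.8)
The goal is to locate a hypercube inside $\pi_{e_1}(G')$ that is crossed by exactly the $\Theta$-classes $\mathcal{B}^+\setminus\{e_1\}$. The natural strategy is to contract, inside $\pi_{e_1}(G')$, all $\Theta$-classes of $G'$ other than those in $\mathcal{B}^+$; let me call the resulting graph $H$. By Claim~\ref{claim:beta+-} (and the fact that peripherality is preserved under contraction) contracting classes of $\mathcal{B}^-$ makes the resulting graph of the form $K_2 \square A$ with $A$ antipodal; iterating and also contracting all of $\mathcal{A}$ leaves a graph all of whose $\Theta$-classes lie in $\mathcal{B}^+\setminus\{e_1\}$. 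I would first argue that this $H$ is antipodal: every $\Theta$-class of $H$ comes from $\mathcal{B}^+$, and by repeated application of Claim~\ref{claim:beta+-} contracting any one of them yields an antipodal graph, so by Lemma~\ref{lem:allcontractionsantipodal} (the "in particular" clause: if all contractions of a non-antipodal partial cube are antipodal then it is $Q_k^-$, which is itself excluded here since $G'\in\AG\subseteq\mathcal{F}(\mathcal{Q}^-)$ forbids all the relevant minors) $H$ must already be antipodal — in fact a hypercube, since a partial cube all of whose single-class contractions are antipodal and which is itself antipodal, with the peripherality structure forced by $\mathcal{B}^+$, is $Q_k$.

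More carefully, the cleanest route is: let $H=\pi_{\mathcal{E}'\setminus\mathcal{B}^+}(G')$ where $\mathcal{E}'$ denotes all $\Theta$-classes of $G'$. Then $H$ has exactly the $\Theta$-classes $\mathcal{B}^+$, and it contains $e_1$ because $e_1\in\mathcal{B}^+$. I want to show $H\cong Q_{|\mathcal{B}^+|}$. For this, pick any $E_f\in\mathcal{B}^+$; by Claim~\ref{claim:beta+-} applied with some fixed $E_g\in\mathcal{B}^-$ we know $\pi_f(\pi_g(G'))$ is antipodal, and moreover one checks that $\pi_f(H)$ is an antipodal contraction (it is a further contraction of $\pi_f(\pi_g(G'))\cong K_2\square A$, and contracting a class of an antipodal graph stays antipodal by Lemma~\ref{lem:antipodal_minor}). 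So every single-class contraction of $H$ is antipodal. If $H$ itself were not antipodal, Lemma~\ref{lem:allcontractionsantipodal} would force $H\cong Q_k^-$, but $Q_k^-$ has a $\Theta$-class whose contraction is not antipodal (contract a class incident to the removed vertex; alternatively $Q_k^-$ for $k\ge 4$ is handled directly, and for $k\le 3$ it is $P_3$ or $C_6$-type which one checks by hand) — or more simply, $Q_k^-$ contains a non-gated antipodal subgraph for $k$ large, contradicting $G'\in\AG$ via closure under pc-minors (Claim~\ref{claim:AG}). Hence $H$ is antipodal. Finally, an antipodal partial cube all of whose $\Theta$-classes have antipodal contraction, and which arises with the peripheral-halfspace structure coming from each class of $\mathcal{B}^+$ being peripheral after the appropriate contraction, must be a hypercube: indeed, by Lemma~\ref{lem:antipodalexpansion} $H$ is built from a point by antipodal expansions, and the constraint that each further single-class contraction is again antipodal forces each expansion to be full along the whole graph, i.e.\ a hypercube expansion.

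Once $H\cong Q_{|\mathcal{B}^+|}$ is established, pulling back: $H$ was obtained from $\pi_{e_1}(G')$ by a sequence of contractions of $\Theta$-classes outside $\mathcal{B}^+\setminus\{e_1\}$ (note $e_1$ is already contracted, so we are contracting classes of $\mathcal{A}\cup\mathcal{B}^-$ and whatever else remains). Since $H$ is a hypercube and hypercubes lift along contractions to contain a convex hypercube of the same dimension crossed by exactly the same $\Theta$-classes (Lemma~\ref{lem:rankchar}, or directly: a hypercube $Q_r$ appearing as a contraction means the original graph contracts to $Q_r$, so by Lemma~\ref{lem:restriction}/convexity there is a convex sub-hypercube — actually one uses that $\covectors(Q_r)=\{\pm,0\}^r$ is realized, giving a convex $Q_r$ in $\pi_{e_1}(G')$ crossed exactly by $\mathcal{B}^+\setminus\{e_1\}$). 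This yields the claim.

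\textbf{Main obstacle.} The delicate point is showing $H$ is \emph{exactly} a hypercube rather than merely antipodal — i.e.\ ruling out antipodal graphs like $C_6$ or higher "non-cubical" antipodal partial cubes all of whose contractions happen to be antipodal. The resolution must exploit that every class of $\mathcal{B}^+$ is peripheral after a suitable contraction (Claim~\ref{claim:symmetry} and Claim~\ref{claim:beta+-}), together with the fact that $G'\in\AG$ and all pc-minors of $G'$ are in $\AG$ (Claim~\ref{claim:AG}); since non-cubical antipodal partial cubes contain non-gated antipodal subgraphs only after passing to minors, one has to track the peripherality through the contractions carefully, and this bookkeeping — rather than any single hard idea — is where the real work lies.
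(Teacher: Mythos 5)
There is a genuine gap, and it sits exactly at the step your plan leans on hardest: the ``pull back''. From the fact that a contraction $H=\pi_{\mathcal{E}'\setminus\mathcal{B}^+}(G')$ is a hypercube you cannot conclude that $\pi_{e_1}(G')$ contains a (convex) hypercube crossed by exactly the classes $\mathcal{B}^+\setminus\{e_1\}$: hypercubes do not lift along contractions. The $6$-cycle $C_6$ is in $\AG$ (it is the tope graph of a rank-$2$ OM), it contracts to $Q_2$, yet it contains no convex $Q_2$; more generally every $C_{2n}$ contracts to $Q_2$ without containing one. Neither Lemma~\ref{lem:rankchar} nor the observation $\covectors(Q_r)=\{\pm,0\}^r$ says anything about subgraphs of the original graph -- rank is defined via deletions/contractions, not via embedded cubes -- so the final sentence of your argument is simply unsupported. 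The earlier step is also shaky: ``antipodal with all single-class contractions antipodal forces a hypercube'' is false ($C_6$ again: all three of its contractions are $C_4$), and the peripherality input that is supposed to rescue this is never actually used in your expansion argument; you acknowledge this in your ``main obstacle'' paragraph, but that is precisely the content of the claim, so the proof is not complete.

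The paper avoids contractions-and-lifting altogether and works locally inside $G'$. Since $E_c^+$ is not peripheral, there is a vertex $v\in E_c^+$ incident to no $E_c$-edge; since contracting any $E_e\in\mathcal{B}^+$ makes $E_c^+$ peripheral, $v$ must be joined by an $E_e$-edge to a vertex that is incident to $E_c$, giving for each $E_e\in\mathcal{B}^+$ a $2$-path from $v$ into $E_c^-$ crossing $E_e$ then $E_c$. The endpoints together with $v$ are pairwise at distance $2$ and have no common neighbour (such a neighbour would lie in $E_c^-$, but $v$ has none there), so Lemma~\ref{lem:K_n} combined with Claim~\ref{claim:AG} (no pc-minor in $\Q$) forces their convex hull $C\subseteq G'$ to be $Q_n^-$ or $Q_n^{--}$, crossed exactly by $\mathcal{B}^+$ and $E_c$; contracting $e_1$ turns $C$ into the desired hypercube in $\pi_{e_1}(G')$. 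If you want to repair your approach you would need an argument of exactly this kind anyway -- some device (here the subdivided $K_m$ of Lemma~\ref{lem:K_n}) that produces a convex subgraph in the uncontracted graph -- so the local construction is not optional bookkeeping; it is the proof.
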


\begin{proof}
 By assumption $E_c^+$ is not peripheral in $G'$, hence there exists a vertex $v \in E_c^+$ not incident with any edge of $E_c^+$. For every $e \in \mathcal{B}^+$, $E_c^+$ is peripheral in $\pi_e(G')$, thus there is an edge in $E_e$ connecting $v$ and an edge in $E_c$. Thus for every $e \in \mathcal{B}^+$, there is a path from $v$ to $E_c^-$ first crossing an edge in $E_e$ and then an edge in $E_c$. Vertex $v$ together with the end-vertices of these paths form a collection of vertices pairwise at distance 2. They have no common neighbor, since this neighbor would have to be in $E_c^-$, but $v$ has no neighbor in $E_c^-$. Since all pc-minors of $G'$ are in $\AG$, Lemma~\ref{lem:K_n} implies that the convex hull $C$ of these vertices is isomorphic to $Q_n^-$ or $Q_n^{--}$ for some $n>0$. This convex subset is crossed exactly by all the $\Theta$-classes of $\mathcal{B}^+$ and $E_c$.

Contracting to $\pi_{e_1}(G')$, $C$ is is contracted into a hypercube crossed by all $\mathcal{B}^+\setminus\{e_1\}$ and $E_c$. Then the lemma holds.
\end{proof}

The graph $G''$ and all its pc-minors are in $\AG$ by Claim~\ref{claim:AG} and all its contractions are antipodal. By Lemma~\ref{lem:allcontractionsantipodal}, we have $G''\cong Q_n^-$. Now, we will consider the sequence of expansions of $G''$ leading back the first contraction $\pi_{e_1}(G')$. Note that $G''$ is crossed by precisely those $\Theta$-classes that are not in $\mathcal{C}$. These are the $\Theta$-classes $\mathcal{A} \cup \mathcal{B}^-$, and the $\Theta$-classes $\mathcal{B}^+\backslash \mathcal{C}$.  

By Claims~\ref{claim:symmetry} and~\ref{claim:beta+-}, every $\Theta$-class in $\mathcal{A}\cup \mathcal{B}^-$ is peripheral in $\pi_{e_1}(G')$ thus also in every contraction of it. For each $e \in \mathcal{A}\cup \mathcal{B}^- \backslash \{c\}$, without loss of generality, say that $E_e^+$ is the peripheral halfspace of it. For $E_c$ the halfspace $E_c^+$ is peripheral, since ${e_1}\in \mathcal{B}^+$, so we can assume that $E_e^+$ is peripheral for each $e \in \mathcal{A}\cup \mathcal{B}^-$.
Since $G''\cong Q_n^-$ is crossed by each $E_e$ and $E_e^-$ is non-peripheral in it, $E_e^-$ is also non-peripheral in every expansion in the sequence.

Let $|\mathcal{A} \cup \mathcal{B}^-| = k$ and $|\mathcal{B}^+\backslash \mathcal{C}|=\ell$. Then $G''\cong Q_n^-$ with $n=k+\ell$, can be seen as a collection of $2^k$ disjoint subgraphs spanned on the edges of $\mathcal{B}^+\backslash \mathcal{C}$ each isomorphic to $Q_\ell$, except for one isomorphic to $Q_{\ell}^-$, and all connected in a hypercube manner by edges of $\mathcal{A} \cup \mathcal{B}^-$. The subgraph isomorphic to $Q_{\ell}^-$ is precisely the subgraph $\bigcap_{e\in \mathcal{A}\cup \mathcal{B}^-} E_e^+$. 

In other words $\pi_{\mathcal{B}^{+}\backslash\mathcal{C}}(G'')\cong Q_k$ and for $X\in\{+, -\}^{\mathcal{A} \cup \mathcal{B}^-}$ we have $\rho_X(G'')\cong Q_{\ell}$ unless if $X=(+,\ldots, +)$ in which case $\rho_X(G'')\cong Q^-_{\ell}$. Next we prove that this structure is preserved when expanding back towards $\pi_{e_1}(G')$.

\begin{claim}\label{claim:structpiG'}
Let $\bar{\ell}=\ell+p-1$, where $p=|\mathcal{C}|$. We have $\pi_{\mathcal{B}^{+}\backslash\mathcal{C}}(\pi_{e_1}(G'))\cong Q_k$ and for $X\in\{+, -\}^{\mathcal{A} \cup \mathcal{B}^-}$ we have $\rho_X(\pi_{e_1}(G'))\cong Q_{\bar{\ell}}$ if and only if $X\neq(+,\ldots, +)$.

%
%
%
\end{claim}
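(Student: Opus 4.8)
We want to show that after expanding $G''\cong Q_n^-$ back to $\pi_{e_1}(G')$, the same product-like structure persists: contracting the $\Theta$-classes $\mathcal{B}^+\setminus\mathcal{C}$ yields $Q_k$, and every restriction $\rho_X(\pi_{e_1}(G'))$ with $X\in\{\pm\}^{\mathcal{A}\cup\mathcal{B}^-}$ is a full hypercube $Q_{\bar\ell}$ except for $X=(+,\dots,+)$, which gives $Q_{\bar\ell}^-$. The natural approach is induction on the length of the expansion sequence $G''=G_0, G_1,\ldots,G_q=\pi_{e_1}(G')$, where each $G_{i+1}$ is obtained from $G_i$ by expanding along one $\Theta$-class from $\mathcal{C}$ (equivalently, contracting these classes brings $\pi_{e_1}(G')$ down to $G''$; note $\mathcal{C}$ does not contain $e_1$). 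At each step I would maintain the invariant: (i) $\pi_{\mathcal{B}^+\setminus\mathcal{C}}(G_i)\cong Q_k$, and (ii) for $X\in\{\pm\}^{\mathcal{A}\cup\mathcal{B}^-}$, $\rho_X(G_i)\cong Q_{\ell+i}$ unless $X=(+,\dots,+)$, in which case $\rho_X(G_i)\cong Q_{\ell+i}^-$. Since $q=p-1$, the endpoint gives $\bar\ell=\ell+p-1$ as claimed.

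\textbf{Key steps.} First, for the contraction statement (i): contractions commute (Lemma~\ref{lem:commut_contraction}) and contractions/restrictions commute (Lemma~\ref{lem:commut_rest_contaction}), so $\pi_{\mathcal{B}^+\setminus\mathcal{C}}(G_i)$ is itself a contraction of $\pi_{\mathcal{B}^+\setminus\mathcal{C}}(\pi_{e_1}(G'))$; but it also equals an expansion-image of $\pi_{\mathcal{B}^+\setminus\mathcal{C}}(G'')\cong Q_k$ along classes of $\mathcal{C}$ (via Lemma~\ref{lem:G1G2}). The point is that contracting all of $\mathcal{B}^+$ (i.e. $\mathcal{B}^+\setminus\mathcal{C}$ together with $\mathcal{C}$) out of $\pi_{e_1}(G')$ lands in the antipodal world where everything is forced, and since $\mathcal{C}\subseteq\mathcal{B}^+$ the relevant classes being contracted here are precisely those making $E_c$ peripheral on the "$+$"-side; this rigidity pins $\pi_{\mathcal{B}^+\setminus\mathcal{C}}(G_i)$ to $Q_k$ for all $i$. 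Second, for the restriction statement (ii): fix $X\in\{\pm\}^{\mathcal{A}\cup\mathcal{B}^-}$. By Lemma~\ref{lem:commut_rest_contaction}, $\rho_X$ commutes with the expansions along $\mathcal{C}$-classes, so $\rho_X(G_{i+1})$ is an expansion of $\rho_X(G_i)$. If $X\neq(+,\dots,+)$, then $\rho_X(G_i)\cong Q_{\ell+i}$ is a hypercube and the expanding class $E_{e}\in\mathcal{C}\subseteq\mathcal{B}^+$ is not peripheral in $\rho_X(G_{i+1})$ (otherwise one side would be a hypercube minus nothing while the other is forced — here I would invoke that $E_e^-$ is non-peripheral throughout the sequence, as established just before the claim, and that $E_e\in\mathcal{B}^+$ makes $E_c^+$, not $E_c^-$, peripheral); the only expansion of a hypercube $Q_{\ell+i}$ that is again a partial cube and in which the new class is non-peripheral on neither side is the full expansion $Q_{\ell+i+1}$ — so $\rho_X(G_{i+1})\cong Q_{\ell+i+1}$. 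For the exceptional $X=(+,\dots,+)$: $\rho_X(G_i)\cong Q_{\ell+i}^-$, and expanding along $E_e$ — which on this halfspace intersection is peripheral, since $E_e\in\mathcal{B}^+$ pushes $E_c^+$ peripheral and $X$ selects exactly $\bigcap E_f^+$ — a peripheral expansion of $Q_{\ell+i}^-$ along an isometric subgraph; and since $G''$ and everything is in $\AG$, Lemma~\ref{lem:K_n}/Lemma~\ref{lem:allcontractionsantipodal} force this to be $Q_{\ell+i+1}^-$ rather than anything with a non-gated antipodal subgraph.

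\textbf{Main obstacle.} The delicate point is step two in the $X=(+,\dots,+)$ case: I must rule out that the peripheral expansion of $Q_{\bar\ell'}^-$ is $Q_{\bar\ell'+1}^{--}$ or $Q_{\bar\ell'+1}^{-*}$ (which would be an excluded minor) or a non-hypercube-like graph. The argument should be: $\pi_{e_1}(G')\in\AG$ (Claim~\ref{claim:AG}), and if $\rho_X(\pi_{e_1}(G'))$ were such a graph, it would contain a non-gated antipodal subgraph, contradicting $\AG$; more precisely one applies Lemma~\ref{lem:K_n} to the subdivided $K$ formed by the "missing-vertex neighbors" exactly as in Claim~\ref{claim:hypercubebeta-}, which forces the convex hull to be $Q^-$ rather than $Q^{--}$. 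I would also need to carefully track that $\mathcal{C}$ really can be chosen as a chain with $|\mathcal{C}|=p$ giving exactly $\bar\ell=\ell+p-1$ (the $-1$ coming from $e_1$ itself being contracted first, so that only $p-1=|\mathcal C|$... — actually one must reconcile: $G''$ is reached from $\pi_{e_1}(G')$ by contracting $\mathcal{C}$, and $\mathcal{C}\subseteq\mathcal{B}^+\setminus\{e_1\}$ with $|\mathcal C|=p$; then $\mathcal{B}^+\setminus\mathcal{C}$ has $\ell$ classes, and $G''\cong Q_{k+\ell}^-$, while $\pi_{e_1}(G')$ has the classes $\mathcal A\cup\mathcal B^-\cup(\mathcal B^+\setminus\{e_1\})$, so restricting the $k=|\mathcal A\cup\mathcal B^-|$ classes out leaves $|\mathcal B^+|-1=\ell+p-1=\bar\ell$ classes). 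Once the bookkeeping is pinned down, the induction runs through routinely; the real content is the rigidity of hypercube expansions under the non-peripherality constraint plus one application of the $\AG$ hypothesis for the exceptional slice.
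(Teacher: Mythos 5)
Your overall frame (induction along the expansion sequence $G''=G_0,\ldots,G_{p-1}=\pi_{e_1}(G')$ with the invariant that every fiber $\rho_X(G_j)$, $X\in\{\pm\}^{\mathcal{A}\cup\mathcal{B}^-}$, is $Q_{\ell+j}$ except the all-plus fiber) matches the paper, and your bookkeeping $\bar\ell=\ell+p-1$ is right. But the inductive step rests on a rigidity claim that is false: it is not true that the only isometric expansion of a hypercube which is a partial cube and in which the new $\Theta$-class is non-peripheral is the full expansion. For example, $C_6$ is a non-full, non-peripheral expansion of $Q_2$ (split $C_4$ into two paths overlapping in an antipodal pair), and $C_6$ even lies in $\AG$, so neither partial-cube-ness nor the $\AG$ hypothesis rescues the statement. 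Moreover, the non-peripherality fact you invoke ("$E_e^-$ is non-peripheral throughout the sequence, as established just before the claim") concerns the classes of $\mathcal{A}\cup\mathcal{B}^-$, not the expanding class $E_f\in\mathcal{C}\subseteq\mathcal{B}^+$; and even a correct global non-peripherality statement for $E_f$ in $G_{j+1}$ would say nothing about how $E_f$ meets an individual fiber $\rho_X$. So for $X\neq(+,\ldots,+)$ your argument does not rule out that a fiber expands non-fully (e.g.\ $Q_2\to C_6$) or is not crossed by the new class at all.

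What is actually needed, and what the paper's proof supplies, is an argument that each cube fiber of $G_j$ is contained entirely in $H_1\cap H_2$ (so it expands fully). This has two ingredients you do not have: (1) a seed, namely Claim~\ref{claim:hypercubebeta-}, which guarantees that $\pi_{e_1}(G')$ contains a hypercube crossed by exactly the classes of $\mathcal{B}^+\setminus\{e_1\}$, hence at every intermediate step at least one cube fiber lies completely in $H_1\cap H_2$; and (2) a propagation step showing that if a vertex $u$ of a fiber is in $H_1\cap H_2$ and $uv$ is an edge in a class of $\mathcal{A}\cup\mathcal{B}^-$, then $v\in H_1\cap H_2$ as well, which the paper proves by contracting a class of $\mathcal{A}\cup\mathcal{B}^-$, using that the resulting contractions are antipodal, and combining Lemma~\ref{lem:G1G2} with Lemma~\ref{lem:antipodalexpansion} and the peripherality facts. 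Propagation plus the seed force all non-exceptional fibers into $H_1\cap H_2$, and the exceptional fiber cannot become a cube since a contraction of a cube is a cube. Your proposal invokes Lemma~\ref{lem:K_n} and $\AG$ only for the exceptional slice, where the paper needs no such argument, while the genuinely delicate part of the proof -- the seed-and-propagate mechanism for the cube fibers -- is absent; as written, the induction does not go through.
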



\begin{proof}
We will prove the claim by induction on the number $p$, where $G_j$ is the $j$th expansion, starting from $G''=G_0$ and ending in $\pi_{e_1}(G')=G_{p-1}$. By the paragraph before the claim, the claim holds for $G''$.

By induction assumption let the claim hold for $G_j$, $j\geq 0$, and let $G_{j+1}$ be its expansion, in the sequence of the expansions leading to $\pi_{e_1}(G')$. Let $f\in \mathcal{C}$ be such that $\pi_f(G_{j+1})=G_j$ and $H_1,H_2$ be the subgraphs of $G_j$ we expand along.

First, we will prove that each copy of $Q_{\ell+j}$ in $G_j$ not crossed by the $\Theta$-classes $\mathcal{A}\cup \mathcal{B}^-$ is contained in $H_1\cap H_2$.
Denote $\mathcal{A} \cup \mathcal{B}^-=\{{f_1},\ldots,{f_s}\}$. Consider an edge $uv\in E_{f_i}$ in $G_j$ such that $u\in H_1\cap H_2$. If $u\in E_{f_i}^+$ and $v\in E_{f_i}^-$, then $E_{f_i}^+$ is peripheral in the expanded graph $G_{j+1}$. Thus, $v\in H_1\cap H_2$. 

Now assume that $u\in E_{f_i}^-$ and $v\in E_{f_i}^+$. We will prove that if additionally $v\notin\bigcap_{e\in \mathcal{A}\cup \mathcal{B}^-} E_e^+$, then we can also conclude $v\in H_1\cap H_2$. So let $u,v \in E_{f_{i'}}^-$ for some other ${f_{i'}}\in \mathcal{A} \cup \mathcal{B}^-\setminus\{{f_{i}}\}$. For the sake of contradiction assume that $v\notin H_1\cap H_2$. Without loss of generality $v\in H_1$. If $v$ has a neighbor in $E_{f_{i'}}^+$ -- say $v'$, then by the above arguments, if $v'\in H_1\cap H_2$ then also $v\in H_1\cap H_2$. Thus, if $v'$ exists, then it is in $H_1 \backslash H_2$.

Consider the contractions $\pi_{f_{i'}}(G_{j+1})$ and $\pi_{f_{i'}}(G_{j})$. Since ${f_{i'}}\in \mathcal{A} \cup \mathcal{B}^-$ both  graphs are antipodal. By Lemma~\ref{lem:G1G2}, the expansion from $\pi_{f_{i'}}(G_{j})$ to $\pi_{f_{i'}}(G_{j+1})$ corresponds to sets $ \pi_{f_{i'}}(H_1)$ and $\pi_{f_{i'}}(H_2)$. Since $v,v'\in H_1\backslash H_2$, their image $\pi_{f_{i'}}(v)=\pi_{f_{i'}}(v')$ is in $\pi_{f_{i'}}(H_1)\backslash \pi_{f_{i'}}(H_2)$. Since $u\in H_1\cap H_2$,  its image is in $\pi_{f_{i'}}(H_1)\cap \pi_{f_{i'}}(H_2)$. Let $-\pi_{f_{i'}}(v),-\pi_{f_{i'}}(u)$ be the antipodes of $\pi_{f_{i'}}(v),\pi_{f_{i'}}(u)$ in $\pi_{f_{i'}}(G_{j})$, respectively. Since $\pi_{f_{i'}}(G_{j+1})$ is an antipodal expansion of $\pi_{f_{i'}}(G_{j})$ by Lemma~\ref{lem:antipodalexpansion} we have,
 $-\pi_{f_{i'}}(v)\in \pi_{f_{i'}}(H_2)\backslash \pi_{f_{i'}}(H_1)$ and $-\pi_{f_{i'}}(u) \in \pi_{f_{i'}}(H_1)\cap \pi_{f_{i'}}(H_2)$. 
 
 Since $v\in E_{f_i}^+$ also $\pi_{f_{i'}}(v)\in E_{f_i}^+$, thus $-\pi_{f_{i'}}(v)\in E_{f_i}^-$. Similarly, since $u\in E_{f_i}^-$ also $\pi_{f_{i'}}(u)\in E_{f_i}^-$, thus $-\pi_{f_{i'}}(u) \in E_{f_i}^+$.
 
 Consider the contraction from $G_j$ to $\pi_{f_{i'}}(G_{j})$. By induction hypothesis, the structure of $G_j$ is such that $E_{f_i}^-$ is isomorphic to a hypercube crossed by $E_{f_{i'}}$ thus two vertices $x,z$  of $G_j$ are contracted to $-\pi_{f_{i'}}(v)$. Since $-\pi_{f_{i'}}(v)\in \pi_{f_{i'}}(H_2)\backslash \pi_{f_{i'}}(H_1),$ we have $x,z\in H_2\backslash H_1$. 
 
 On the other hand, at least one vertex $w\in H_1\cap H_2$ of $G_j$ is contracted to $-\pi_{f_{i'}}(u)$. 
It also holds that the vertices $x,z\in E_{f_i}^-$ and $w\in E_{f_i}^+$. Since $uv$ is an edge, $-\pi_{f_{i'}}(u)-\pi_{f_{i'}}(v)$ is an edge. Hence, $xw$ or $zw$ has to be an edge - say without loss of generality $zw$ is an edge. Then we have a pair of adjacent vertices $w,z$ such that $w\in H_1 \cap H_2\cap  E_{f_{i}}^+$ and $z\in H_1 \backslash  H_2\cap E_{f_{i}}^-$. We have already shown that this is impossible. Thus, $u\in H_1 \cap H_2$ implies $v \in H_1 \cap H_2$.
 
Now consider the copies of $Q_{\ell+j}$ in $G_{j}$. By Claim~\ref{claim:hypercubebeta-}, $\pi_{e_1}(G')$ contains a hypercube crossed by exactly all the $\Theta$-classes of $\mathcal{B}^+\backslash \{e_1\}$. The latter implies that $G_{j+1}$ contains a hypercube crossed by precisely the $\Theta$-classes crossing copies $Q_{\ell+j}$ in $G_{j}$ plus the $\Theta$-class obtained while expanding to $G_{j+1}$. In particular this implies that one of the copies of $Q_{\ell+j}$ in $G_{j}$ gets completely expanded, i.e.~that copy is completely in $H_1\cap H_2$. Since none of the copies of $Q_{\ell+j}$ in $G_{j}$ is in $\bigcap_{e\in \mathcal{A}\cup \mathcal{B}^-} E_e^+$, by the above paragraph this property propagates to all the copies of $Q_{\ell+j}$, i.e.~all of them are completely in $H_1\cap H_2$. Since the non-cube $\bigcap_{e\in \mathcal{A}\cup \mathcal{B}^-} E_e^+$ cannot expand to a cube, $G_{j+1}$ is as stated in the claim.
\end{proof}

The following claim will suffice to finish the proof of the lemma. 

\begin{claim}
If the expansion $G'$ of $\pi_{e_1}(G')$ is such that $\pi_{c}(G')$ is antipodal and $E_c$ is not peripheral in $G'$, then $G'$ is not in $\AG$.
\end{claim}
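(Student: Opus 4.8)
The plan is to produce an antipodal subgraph of $G'$ that is not gated; since $G'\in\AG$ is assumed for the minimal counterexample of Lemma~\ref{lem:expansion}, this is the desired contradiction, and because the hypotheses of the present claim hold for that counterexample ($G:=\pi_c(G')$ is antipodal and $G'$ is assumed to be a non-peripheral expansion of it, i.e.\ $E_c$ is not peripheral in $G'$), establishing the claim finishes the proof of Lemma~\ref{lem:expansion}. Note also that, by Claim~\ref{claim:AG}, all pc-minors of $G'$ are in $\AG$, so by Lemma~\ref{lem:minfamily} no member of $\mathcal Q^-$ is a pc-minor (hence no convex subgraph) of $G'$; this will be used to steer the Lemma~\ref{lem:K_n}-type dichotomies toward the hypercube outcome.

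First I would fix notation for the two commuting contractions in play. Write $G'$ as the expansion of $G=\pi_c(G')$ along isometric subgraphs $K_1,K_2$ and set $K_0:=K_1\cap K_2$, so that $K_0$ is exactly the set of vertices of $G$ that are incident in $G'$ with an edge of $E_c$. Non-peripherality of $E_c$ provides a vertex $v$ of $G'$ incident with no edge of $E_c$ — without loss of generality $v\in E_c^+$, so $\pi_c(v)\in G\setminus K_0$ — and $K_0\neq\emptyset$ since the expansion is non-trivial; moreover $-_GK_1\neq K_2$ by Lemma~\ref{lem:antipodalexpansion}, since $G$ is antipodal but $G'$ is not. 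In parallel I would invoke Claim~\ref{claim:structpiG'}: $\pi_{e_1}(G')$ is a hypercube with exactly one vertex $w$ deleted, $w$ lies in the corner $\bigcap_{E_e\in\mathcal A\cup\mathcal B^-}E_e^+$, and the halfspace $E_c^+$ containing $w$ is peripheral in $\pi_{e_1}(G')$, so that there $E_c^-$ is a full cube and $E_c^+$ a cube minus one vertex. Since contracting $E_c$ fills the hole, $\pi_{e_1}(G)\cong Q_{m-1}$ is a full cube, and by Lemma~\ref{lem:antipodalexpansion} the antipodal graph $G$ is an antipodal expansion of it; this antipodal-expansion description of $G$ over a cube is the structural input for choosing the right antipodal subgraph.

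The core step is to find an antipodal subgraph $H$ of $G$, a vertex $p$ of $G$, and a vertex $p''\in H$ such that $p,p''\in K_0$ while the gate $p'$ of $p$ in $H$ lies outside $K_0$: then Lemma~\ref{lem:nongatedexpansion}, applied to $G'$ as the expansion of $G$ along $E_c$, shows that the expansion $\widehat H$ of $H$ in $G'$ is not gated, while choosing $H$ compatibly with the antipodal split of $G$ over $Q_{m-1}$ — so that its own split $H\cap K_1,\,H\cap K_2$ is again an antipodal pair, whence $\widehat H$ is antipodal by Lemma~\ref{lem:antipodalexpansion} (using Lemma~\ref{lem:antipodal_minor} along the way) — keeps $\widehat H$ antipodal, which is the contradiction sought. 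To exhibit this configuration I would trace $v$ and the neighbours of the deleted vertex $w$ through the expansion $G'\to\pi_{e_1}(G')$ and then through $\pi_c$: the image of the absent vertex $w$ becomes the forced, hence absent, gate of a doubled vertex lying near $v$ into the antipodal subgraph of $G$ cut out by the halfspaces indexed by $\mathcal A\cup\mathcal B^-$, this subgraph being antipodal precisely because $G$ is an antipodal expansion of $Q_{m-1}$. I expect the main obstacle to be exactly this bookkeeping across the two contractions $\pi_c$ and $\pi_{e_1}$ — tracking which halfspaces stay peripheral, which vertices survive which expansion, and, most delicately, selecting $H$ so that it is antipodal, so that the gate from the chosen vertex is the image of $w$ (hence outside $K_0$), and so that $H$ still meets $K_0$, so that Lemma~\ref{lem:nongatedexpansion} applies; the inequality $-_GK_1\neq K_2$ coming from $G'$ being non-antipodal is what rules out the degenerate case in which $\widehat H$ would be antipodal in a way that leaves it gated.
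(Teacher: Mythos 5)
Your overall strategy (contradict Claim~\ref{claim:AG} by exhibiting an antipodal subgraph of $G'$ that is not gated, using Lemma~\ref{lem:nongatedexpansion}) is the right target, but the decisive step is only announced, never carried out: the choice of an antipodal $H\subseteq\pi_c(G')$ together with $p,p',p''$ such that $p,p''$ lie in the doubled set $K_0$ of the $E_c$-expansion, the gate $p'$ does not, and the expansion $\widehat H$ is still antipodal, is precisely the content of the claim, and you explicitly defer it as ``bookkeeping''. Note that the paper does not manage to make this kind of construction work uniformly, and in fact applies Lemma~\ref{lem:nongatedexpansion} to the \emph{other} expansion, the one along $E_{e_1}$ from $\pi_{e_1}(G')$ to $G'$ (with sets $G_1,G_2$), and only in the case where $\pi_c(G')$ is \emph{not} a full expansion of $\pi_{e_1}(\pi_c(G'))$: there $H$ is an interval $\conv(x,y)$ inside the cube $E_c^-\subseteq\pi_{e_1}(G')$ with $x\in G_1\setminus G_2$, $y\in G_2\setminus G_1$ and all intermediate vertices doubled, so its expansion is antipodal and a preimage of $z'$ has no gate. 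In the complementary (full-expansion) case no non-gated antipodal subgraph is produced at all; instead one assembles vertices pairwise at distance $2$ without a common neighbour whose convex hull would have two adjacent missing vertices, contradicting Lemma~\ref{lem:K_n} combined with Claim~\ref{claim:AG}. Your single construction along $E_c$ would have to cover both situations, and nothing in the proposal argues that it can.

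Two structural assertions you lean on are also unjustified. First, Claim~\ref{claim:structpiG'} does not say that $\pi_{e_1}(G')$ is a hypercube with exactly one vertex deleted; it only gives $\pi_{\mathcal{B}^{+}\setminus\mathcal{C}}(\pi_{e_1}(G'))\cong Q_k$ and that each corner $\rho_X(\pi_{e_1}(G'))$ is $Q_{\bar\ell}$ except the all-plus corner, which is merely \emph{not} a cube and may be missing several vertices --- which is why the paper's argument speaks only of ``a missing vertex'' and never assumes uniqueness, and why your intended ``forced, hence absent, gate equal to the image of $w$'' is not available as stated. Second, the subgraph of $G=\pi_c(G')$ ``cut out by the halfspaces indexed by $\mathcal{A}\cup\mathcal{B}^-$'' is not antipodal merely because $G$ is an antipodal expansion of a hypercube: restrictions do not in general preserve antipodality (Lemma~\ref{lem:antipodal_minor} requires the relevant class to be disjoint from the subgraph), so the antipodality of your $H$, and hence of $\widehat H$ via Lemma~\ref{lem:antipodalexpansion}, is itself something that must be proved; as it stands this is a genuine gap rather than a complete alternative proof.
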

\begin{proof}

Consider the expansion along sets $G_1$ and $G_2$. Since $e_1\in\mathcal{B}^+$, the halfspace $E_c^+$ is peripheral in $\pi_{e_1}(G')$. Moreover, by Claim~\ref{claim:structpiG'} we have $E_c^-\cong Q_{k+\bar{\ell}-1}$ and $E_c^+ \not \cong Q_{k+\bar{\ell}-1}$. On the other hand, since $E_c$ is not peripheral in $G'$, there is an edge $zz'\in \pi_{e_1}(G')\cap E_c$ such that $z\in (G_1 \backslash G_2)\cap E_c^-$ and $z'\in G_1\cap G_2\cap E_c^+$.

Note that $\pi_{c}(G')$ is an antipodal expansion of $\pi_{e_1}(\pi_{c}(G')) \cong Q_{k+\bar{\ell}-1}$. By Lemma~\ref{lem:G1G2}, it is an expansion along sets $\pi_c(G_1)$ and $\pi_c(G_2)$. In the following we will use Lemma~\ref{lem:G1G2} to get properties of $G_1, G_2$ from the expansion $\pi_c(G_1),\pi_c(G_2)$.


We consider two cases, either $\pi_{c}(G')$ is a full expansion of $\pi_{e_1}(\pi_{c}(G')) \cong Q_{k+\bar{\ell}-1}$ or not. In the first case, every vertex of  $ \pi_{e_1}(\pi_{c}(G'))$ is in $\pi_c(G_1)\cap \pi_c(G_2)$, thus for any edge $E_{c}$ in $ \pi_{e_1}(G')$  we have that one of its endpoints is in $G_1\cap G_2$ and any vertex not incident to $E_{c}$ is in $G_1\cap G_2$, as well. 
Since $E_{c}^+ \not \cong Q_{k+\bar{\ell}-1}$ there is a missing vertex in it.

Let $v$ be a vertex in $(G_1\backslash G_2)\cap E_{c}^-$ that is as close as possible to a missing vertex. We have proved above that $v$ exists (one candidate would be $z$). Since the expansion is full, by the above paragraph, $v$ is not adjacent to the missing vertex. Then by the above paragraph its neighbor in $E_{c}^+$ and by the choice of $v$ all its neighbors in $E_{c}^-$ closer to the missing vertex are in $G_1\cap G_2$. In the expansion $G'$, the expansions of these vertices together with $v$ give rise to vertices pairwise at distance 2. Since $v$ is not expanded they have no common neighbor. By Claim~\ref{claim:AG} $G'$ and all its pc-minors are in $\AG$, thus by Lemma~\ref{lem:K_n}, their convex hull $C$ is a cube minus a vertex or a cube minus two antipodes. Moreover, by the choice of $v$ and since $E_c^-\cong Q_{k+\bar{\ell}-1}$ in $\pi_{e_1}(G_1)$, the hull $C$ also has the missing vertex. However, now also its expansion is missing, thus two adjacent vertices are missing from $C$. Thus, $C$ is not isomorphic to a cube minus a vertex or a cube minus two antipodes.

Now consider that $\pi_{c}(G')$ is not a full expansion of $\pi_{e_1}(\pi_{c}(G')) \cong Q_{k+\bar{\ell}-1}$. Then there exists a vertex $v$ of $\pi_{e_1}(\pi_{c}(G'))$ with $v\in\pi_c(G_2)\backslash \pi_c(G_1)$. Then there is a vertex $w$ of $\pi_{e_1}(G')$ in the preimage of $v$ such that $w\in(G_2\backslash G_1)\cap E_c^-$. Let again $zz' \in \pi_{e_1}(G')$ be an edge as in the first paragraph, i.e.~$zz'\in E_c$ such that $z\in (G_1\backslash G_2)\cap E_c^-$ and $z'\in G_1\cap G_2 \cap E_c^+$. Then let $y\in E_c^-$ be the closest vertex to $z$ such that $y\in G_2\backslash G_1$ (one candidate would be $w$). Now, let $x$ be a vertex in the interval from $z$ to $y$ such that $x\in G_1\backslash G_2$ and $x$ is as close as possible to $y$. Then all the vertices in the interval from $x$ to $y$ are in $G_1\cap G_2$ except for $x,y$. Then since $E_c^-\cong Q_{k+\bar{\ell}-1}$ the interval from $x$ to $y$ is isomorphic to a hypercube, i.e.~antipodal. Then, also its expansion is an antipodal graph, but one preimage of $z'$ has no gate in it by Lemma~\ref{lem:nongatedexpansion}.
\end{proof}

By the above claim the sequence of contractions cannot exist which gives a contradiction. Thus there is no $G',G$ and the lemma holds.
%
\end{proof}

\begin{proof}[Proof of Theorem~\ref{thm:AGminors}.]
 Since Lemma~\ref{lem:expansion} holds, by Lemma~\ref{lem:exp->AGminors} we obtain the theorem.
\end{proof}

\section{Further characterizations and recognition of tope graphs}\label{sec:corollaries}
In this section we will describe the important implications of Theorem~\ref{thm:main}. It can be read without having gone through the technicalities of the previous sections. In particular, we give polynomial time recognition algorithm for tope graphs of COMs, specialize our results to tope graphs of OMs, AOMs, and LOPs (of bounded rank), and prove a conjecture of~\cite{Che-16}.
But first of all, Theorem~\ref{thm:main} can be used to obtain a characterization of $\topegraphs$ in terms of zone graphs that is a generalization of a result of Handa~\cite{ha-90}.

\begin{cor}\label{cor:Handa}
 A graph $G$ is the tope graph of a COM, i.e. $G\in\topegraphs$, if and only if $G$ is a partial cube such that all iterated zone graphs are well-embedded partial cubes.
\end{cor}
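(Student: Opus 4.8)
The plan is to establish the equivalence by iterating the dictionary between $\topegraphs$ and zone graphs. For the forward direction, suppose $G \in \topegraphs$. By Lemma~\ref{lem:COMhyperplane}, every zone graph $\zeta_f(G)$ is a well-embedded partial cube and $\topegraphs$ is closed under taking zone graphs, so $\zeta_f(G) \in \topegraphs$ again. Iterating, every iterated zone graph of $G$ stays in $\topegraphs$ and in particular is a well-embedded partial cube; this is an immediate induction on the number of zone-graph operations applied.

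For the converse, I would argue by contraposition using Theorem~\ref{thm:main}: if $G \notin \topegraphs$, then $G \notin \mathcal{F}(\Q)$, so $G$ has a pc-minor $H \in \Q$, say $H = \rho_X(\pi_A(G))$. By Lemma~\ref{lem:hyperplanesofQ}, there is a sequence of $\Theta$-classes $(e_1,\ldots,e_k)$ of $H$ such that $\zeta_{e_1,\ldots,e_k}(H)$ is not a partial cube. The key step is to transport this failure back up to $G$. I would proceed by downward induction on the length of the pc-minor sequence: if $G$ itself already has some zone graph that is not a well-embedded partial cube we are done, so assume all zone graphs of $G$ are well-embedded; then by Lemma~\ref{lem:hyperplanescommute} (which lets zone graphs commute with contractions and restrictions) a zone graph operation on $G$ followed by the appropriate restrictions/contractions yields the same result as first applying the pc-minor operations and then the zone graph. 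Carefully: first note $\zeta_f$ is only defined when $f$ is not among the classes contracted; since $H = \rho_X(\pi_A(G))$ and the $e_i$ are $\Theta$-classes of $H$, none of them lies in $A$, so the relevant zone graphs of $G$ make sense. Applying Lemma~\ref{lem:hyperplanescommute} inductively gives $\zeta_{e_1,\ldots,e_k}(G)$ restricted/contracted down equals $\zeta_{e_1,\ldots,e_k}(H)$, which is not a partial cube; but restrictions and contractions of partial cubes are partial cubes (Lemmas~\ref{lem:restriction} and~\ref{lem:commut_contraction}), so some $\zeta_{e_1,\ldots,e_j}(G)$ along the way already fails to be a partial cube, or some earlier zone graph fails well-embeddedness — in either case $G$ has an iterated zone graph that is not a well-embedded partial cube, as desired.

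The main obstacle I anticipate is the bookkeeping in the converse direction: making sure the zone-graph operations, restrictions, and contractions can be interleaved and that Lemma~\ref{lem:hyperplanescommute} applies at each stage — in particular handling the case where some intermediate zone graph is already not well-embedded (so Lemma~\ref{lem:hyperplanescommute} does not literally apply and one must invoke the well-embeddedness hypothesis or stop the induction there). A clean way around this is to fix at the outset that every iterated zone graph of $G$ is well-embedded (the hypothesis of the statement we want to negate), so Lemma~\ref{lem:hyperplanescommute} is available throughout, and then derive the contradiction that $\zeta_{e_1,\ldots,e_k}(H)$ would be a partial cube after all. The forward direction is essentially a one-line induction from Lemma~\ref{lem:COMhyperplane}.
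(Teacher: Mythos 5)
Your proposal is correct and follows essentially the same route as the paper: the forward direction is Lemma~\ref{lem:COMhyperplane} iterated, and the converse uses Theorem~\ref{thm:main} to extract a pc-minor $H\in\Q$, Lemma~\ref{lem:hyperplanesofQ} to get a bad iterated zone graph of $H$, and Lemma~\ref{lem:hyperplanescommute} to commute zone graphs with restrictions/contractions under the standing assumption that all iterated zone graphs of $G$ are well-embedded, yielding the same contradiction. Your final remark about fixing well-embeddedness of all iterated zone graphs at the outset (so the commutation lemma applies at every level) is exactly the hypothesis the paper's proof invokes, so the bookkeeping concern is resolved the same way.
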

\begin{proof}
 If $G\in\topegraphs$, then by Lemma~\ref{lem:COMhyperplane} all its zone graphs are well-embedded partial cubes and in $\topegraphs$. Hence, the argument can be iterated to prove that $\zeta_{e_1,\ldots,e_k}(G)$ is a partial cube for any sequence of hyperspaces.
 
 If $G\notin\topegraphs$, then by Theorem~\ref{thm:main} there is an $H\in\Q$ such that $H=\pi_A(\rho_X(G))$ for some $\Theta$-classes of $G$. By Lemma~\ref{lem:hyperplanesofQ} we can thus find a sequence $e_1,\ldots,e_k$ such that $\zeta_{e_1,\ldots,e_k}(\pi_A(\rho_X(G)))$ is not a partial cube. If $\zeta_{e_i,\ldots,e_k}(G)$ was a well-embedded partial cube for all $1\leq i\leq k$, then by Lemma~\ref{lem:hyperplanescommute} we could find sets of $\Theta$-classes $A'$ and $X'$ in it such that $\zeta_{e_1,\ldots,e_k}(\pi_A(\rho_X(G)))=\pi_{A'}(\rho_{X'}(\zeta_{e_1,\ldots,e_k}(G)))$. But then the latter would be a partial cube, contradicting the choice of $e_1,\ldots,e_k$. Therefore there is an iterated zone graph of $G$ that is not a partial cube.
\end{proof}

Theorem~\ref{thm:main} and Corollary~\ref{cor:Handa} specialize to other systems of sign-vectors.
Using Proposition~\ref{prop:OMandCOM} we immediately get the following corollary. Recall that for a set ${X}$ of partial cubes we denote by $\mathcal{F}({X})$ the class of partial cubes that do not have any graph from $X$ as a partial cube minor.
\begin{cor}\label{cor:OM}
For a graph $G$ the following conditions are equivalent:
 \begin{itemize}
  \item[(i)] $G$ is the tope graph of an OM, i.e., $G\in\topegraphsOM$,
  \item[(ii)] $G$ is an antipodal partial cube and all its antipodal subgraphs are gated,
  \item[(iii)] $G$ is in $\mathcal{F}(\Q)$ and antipodal,
  \item[(iv)] $G$ is an antipodal partial cube and all its iterated zone graphs are well-embedded partial cubes.
 \end{itemize}
\end{cor}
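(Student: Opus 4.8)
The plan is to deduce everything from Proposition~\ref{prop:OMandCOM}, which says that $G\in\topegraphsOM$ if and only if $G$ is antipodal and $G\in\topegraphs$. Thus the whole corollary should just be the combination of this fact with the three characterizations of $\topegraphs$ already established: Theorem~\ref{thm:main} supplies the metric and forbidden-minor descriptions, and Corollary~\ref{cor:Handa} supplies the zone-graph description. Concretely, I would first restate (i) via Proposition~\ref{prop:OMandCOM} as ``$G$ is a partial cube, $G$ is antipodal, and $G\in\topegraphs$'', and then replace ``$G\in\topegraphs$'' by each equivalent condition in turn.

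For (i)$\Leftrightarrow$(ii): by the equivalence (i)$\Leftrightarrow$(ii) of Theorem~\ref{thm:main}, $G\in\topegraphs$ exactly when $G$ is a partial cube all of whose antipodal subgraphs are gated; conjoining with antipodality of $G$ yields clause (ii) verbatim. For (i)$\Leftrightarrow$(iii): by the equivalence (i)$\Leftrightarrow$(iii) of Theorem~\ref{thm:main}, $G\in\topegraphs$ exactly when $G\in\mathcal{F}(\Q)$ (and every member of $\mathcal{F}(\Q)$ is a partial cube), so conjoining with antipodality gives clause (iii). For (i)$\Leftrightarrow$(iv): by Corollary~\ref{cor:Handa}, $G\in\topegraphs$ exactly when $G$ is a partial cube whose iterated zone graphs are all well-embedded partial cubes; conjoining with antipodality gives clause (iv).

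The only point requiring a moment of care is the bookkeeping of the ``partial cube'' qualifier: in Theorem~\ref{thm:main}(ii),(iii) and in Corollary~\ref{cor:Handa} this hypothesis is already built in, so that ``antipodal partial cube'' in clauses (ii) and (iv) of the corollary, and ``in $\mathcal{F}(\Q)$ and antipodal'' in clause (iii), align precisely with ``$G$ is an antipodal partial cube that lies in $\topegraphs$''. Since each step is a direct substitution into an already-proved equivalence, there is no genuine obstacle here — the corollary is a routine specialization, and the proof can be written in two or three lines once Proposition~\ref{prop:OMandCOM}, Theorem~\ref{thm:main}, and Corollary~\ref{cor:Handa} are invoked.
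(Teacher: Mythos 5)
Your proposal is correct and matches the paper's approach exactly: the paper derives Corollary~\ref{cor:OM} immediately from Proposition~\ref{prop:OMandCOM} combined with Theorem~\ref{thm:main} (for clauses (ii) and (iii)) and Corollary~\ref{cor:Handa} (for clause (iv)), precisely the substitution argument you describe. Your remark about the bookkeeping of the ``partial cube'' qualifier is a fair observation but poses no issue, as you note.
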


Note that the equivalence (i)$\Leftrightarrow$(ii) corresponds to a characterization of tope sets of
OMs due to da Silva~\cite{daS-95} and (i)$\Leftrightarrow$(vi) corresponds to a characterization of tope sets of  Handa~\cite{Han-93}.


Let us call an affine subgraph $G'$ of an affine partial cube $G$ \emph{conformal} if for all $v\in G'$ we have $-_{G'}v\in G'\Leftrightarrow -_Gv\in G$. We give an intrinsic characterization of $\topegraphsAOM$:
\begin{cor}\label{cor:AOM}
For a graph $G$ the following conditions are equivalent:
 \begin{itemize}
  \item[(i)] $G$ is the tope graph of an AOM, i.e., $G\in\topegraphsAOM$,
  \item[(ii)] $G$ is an affine partial cube and all its antipodal and conformal subgraphs are gated,
  \item[(iii)] $G$ is in $\mathcal{F}(\Q)$ (or $G\in\topegraphs$), affine, and all its conformal subgraphs are gated.
 \end{itemize}
\end{cor}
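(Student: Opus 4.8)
The plan is to prove (i)$\Leftrightarrow$(iii); the equivalence (ii)$\Leftrightarrow$(iii) then follows at once, because by Theorem~\ref{thm:main} the requirement that all antipodal subgraphs of $G$ be gated is the same as $G\in\mathcal{F}(\Q)=\topegraphs$. The inputs I would use are Proposition~\ref{prop:affOMandOM} ($\topegraphsAOM$ is exactly the class of halfspaces of members of $\topegraphsOM$), Corollary~\ref{cor:OM} (a graph is in $\topegraphsOM$ iff it is an antipodal partial cube all of whose antipodal subgraphs are gated), Proposition~\ref{prop:affinepc} (the intrinsic description of affine partial cubes), and the dictionary between gates and composition (Lemma~\ref{lem:dictionarygated}). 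The combinatorial heart is the following claim $(\star)$: if $\widetilde G\in\topegraphsOM$ and $G=E_f^+(\widetilde G)$, then a vertex $v$ of $G$ has a $G$-antipode precisely when it is incident in $\widetilde G$ to an edge of $E_f$, and in that case its $G$-antipode is the $E_f$-reflection of its $\widetilde G$-antipode; consequently the conformal affine subgraphs of $G$ are exactly the restrictions $A\cap E_f^+=\rho_{f^+}(A)$, where $A$ ranges over the antipodal subgraphs of $\widetilde G$ crossed by $E_f$. I would prove $(\star)$ by writing an antipodal subgraph $A$ as the convex hull of an isometric cycle (Lemma~\ref{lem:antipodal_cycle}), so that $E_f$ meets such a cycle in two antipodal edges and hence $A$ and $A\cap E_f^+$ are crossed by the same $\Theta$-classes apart from $f$; then $X_{\widetilde G}(A)$ and $X_G(A\cap E_f^+)$ agree on the ground set of $G$, and $(\star)$ drops out of Lemma~\ref{lem:dictionarygated}, with Corollary~\ref{cor:OM} used to keep all the relevant subgraphs gated.

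For (i)$\Rightarrow$(iii): by Proposition~\ref{prop:affOMandOM} write $G=E_f^+(\widetilde G)$ with $\widetilde G\in\topegraphsOM$. Then $G$ is affine by definition, and $G\in\mathcal{F}(\Q)$ since it is a restriction of $\widetilde G$, which by Theorem~\ref{thm:main} lies in the pc-minor-closed class $\mathcal{F}(\Q)=\topegraphs$. If $G'$ is a conformal affine subgraph of $G$, then $(\star)$ gives $G'=\rho_{f^+}(A)$ for an antipodal subgraph $A$ of $\widetilde G$; by Corollary~\ref{cor:OM} the subgraph $A$ is gated in $\widetilde G$, so Lemma~\ref{contraction_gated} shows $G'=\rho_{f^+}(A)$ is gated in $\rho_{f^+}(\widetilde G)=G$.

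For (iii)$\Rightarrow$(i): by Proposition~\ref{prop:affinepc} fix an antipodal partial cube $\widetilde G$ with $G=E_f^+(\widetilde G)$, chosen so that its $E_f$-edges are exactly the edges joining the vertices of $G$ carrying a $G$-antipode to the corresponding reflections (this is the extension built in the proof of Proposition~\ref{prop:affinepc}). It suffices to show $\widetilde G\in\AG$, for then $\widetilde G\in\topegraphsOM$ by Corollary~\ref{cor:OM} and $G\in\topegraphsAOM$ by Proposition~\ref{prop:affOMandOM}. Given an antipodal subgraph $A$ of $\widetilde G$ and a vertex $u$, by Lemma~\ref{lem:dictionarygated} it is enough to show that $X(A)\circ X(u)$ is a vertex; using the antipodal automorphism we may take $u\in E_f^+=G$. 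If $E_f$ does not cross $A$ then $A\subseteq E_f^+=G$, so $A$ is gated in $G$ because $G\in\topegraphs$ (Theorem~\ref{thm:main}), settling $u\in G$; for $u\in E_f^-$ a direct computation with sign-vectors identifies $X(A)\circ X(u)$ with the $A$-antipode of the gate of $-_{\widetilde G}u$ in $A$, which is again a vertex. If $E_f$ crosses $A$, the same $\Theta$-class bookkeeping as in $(\star)$ reduces matters to showing that $A\cap E_f^+$ is gated in $G$; here I would use that, for this particular $\widetilde G$, the vertices of $A\cap E_f^+$ carrying a $G$-antipode are exactly those incident to an $E_f$-edge of $A$, so that $A\cap E_f^+$ is conformal affine in $G$ and the hypothesis applies.

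The step I expect to be hardest is $(\star)$, especially the assertion that \emph{every} conformal affine subgraph of $G$ is of the form $\rho_{f^+}(A)$: the subtle points are the $E_f$-reflection identity for $G$-antipodes, which genuinely uses antipodality of $\widetilde G$ rather than just that it is a partial cube, and --- in the direction (iii)$\Rightarrow$(i) --- the fact that $\widetilde G$ is a priori only known to be an antipodal partial cube, so one must work with the concrete extension of Proposition~\ref{prop:affinepc} and verify that it behaves as in $(\star)$. An alternative route for (iii)$\Rightarrow$(i) that avoids $\widetilde G$ altogether is to check directly that the COM $\covectors(G)$ satisfies axiom (A), reading ``$G$ affine'' and ``all conformal subgraphs of $G$ gated'' as supplying exactly the covectors and separation conditions that (A) demands; I would expect the verification of (A) to be the real bottleneck in that approach.
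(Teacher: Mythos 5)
Your proposal is correct and follows essentially the same route as the paper: both reduce the statement via Proposition~\ref{prop:affOMandOM} and Corollary~\ref{cor:OM} to transferring gatedness between the antipodal subgraphs of the antipodal extension $\widetilde G$ and the antipodal resp.\ conformal subgraphs of its halfspace $G$, with (ii)$\Leftrightarrow$(iii) immediate from Theorem~\ref{thm:main}. Your claim $(\star)$ (including the part you flag as hardest, and the unproved assertion in (iii)$\Rightarrow$(i) that restrictions of antipodal subgraphs crossed by $E_f$ are conformal without yet knowing they are gated) is exactly the correspondence the paper dispatches with ``it is easy to see''; it is true and provable by a short distance argument using the $E_f$-reflection isometry, so nothing in your plan would fail.
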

\begin{proof}
 By Proposition~\ref{prop:affOMandOM}, $G\in\topegraphsAOM$ if and only if $G$ is an affine partial cube such that the antipodal $\widetilde{G}$ containing $G$ as a halfspace is in $\topegraphsOM$. By Corollary~\ref{cor:OM} this is equivalent to all antipodal subgraphs of $\widetilde{G}$ being gated. It is easy to see, that the antipodal subgraphs of $\widetilde{G}$ correspond to the antipodal and the conformal subgraphs of $G$ and they are gated if and only if the corresponding antipodal and conformal subgraphs of $G$ are. This proves (i)$\Leftrightarrow$(ii).
 
\end{proof}

%

For lopsided sets the (i)$\Leftrightarrow$(ii) part of the corollary corresponds to a characterization due to Lawrence~\cite{Law-83}. For the complete statement denote $\mathcal{Q}^{--}:=\{{Q}_n^{--}\mid n\geq 3\}$.
\begin{cor}\label{cor:LOP}
For a graph $G$ the following conditions are equivalent:
 \begin{itemize}
  \item[(i)] $G$ is the tope graph of a LOP, i.e., $G\in\topegraphsLOP$,
  \item[(ii)] $G$ is a partial cube and all its antipodal subgraphs are hypercubes,
  \item[(iii)] $G$ is in $\mathcal{F}(\mathcal{Q}^{--})$.
 \end{itemize}
\end{cor}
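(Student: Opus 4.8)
\textbf{Proof proposal for Corollary~\ref{cor:LOP}.}

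The plan is to mirror the strategy of Corollaries~\ref{cor:OM} and~\ref{cor:AOM}, combining the structural dictionary (Proposition~\ref{prop:LOPandCOM}) with the main theorem (Theorem~\ref{thm:main}). The equivalence (i)$\Leftrightarrow$(ii) is essentially immediate from the combination of Proposition~\ref{prop:LOPandCOM} and Theorem~\ref{thm:main}: Proposition~\ref{prop:LOPandCOM} says $G\in\topegraphsLOP$ iff $G\in\topegraphs$ and all antipodal subgraphs of $G$ are hypercubes. By Theorem~\ref{thm:main}, $G\in\topegraphs$ is equivalent to $G$ being a partial cube with all antipodal subgraphs gated. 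So (i) is equivalent to: $G$ is a partial cube, all antipodal subgraphs are gated, and all antipodal subgraphs are hypercubes. But hypercubes are gated in any partial cube containing them isometrically (this is standard, e.g.\ via Lemma~\ref{lem:dictionarygated}, since for a subhypercube $C=E_{e_1}^{X_1}\cap\dots\cap E_{e_k}^{X_k}$ the composition $X(C)\circ X(v)$ is always realized by a vertex), so the ``gated'' condition is automatically implied by the ``hypercube'' condition. Hence (i)$\Leftrightarrow$(ii).

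For (ii)$\Leftrightarrow$(iii), I would first identify which members of $\mathcal{Q}^-$ have an antipodal subgraph that is \emph{not} a hypercube, versus those that do not. Observe that an antipodal partial cube is a hypercube iff it contains no convex $6$-cycle (more precisely, iff it contains no antipodal subgraph arising as a convex hull of an isometric cycle of length $>4$, using Lemma~\ref{lem:antipodal_cycle}; but a cleaner route is: antipodal and not a hypercube forces, by Lemma~\ref{lem:antipodal_cycle}, a convex $C_{2k}$ with $k\geq 3$ inside, and contracting down gives a convex $C_6$). The graphs $Q_n^{--}$ for $n\geq 3$ are themselves antipodal and not hypercubes, so they are excluded pc-minors for the class in (ii). Conversely I would argue that the graphs $Q_n^{-*}$ and $Q_n^{--}(m)$ with $m\geq 1$ are \emph{not} excluded: each such graph already has a pc-minor (indeed a convex subgraph, or a restriction) isomorphic to some $Q_k^{--}$ with $k\geq 3$, so excluding all of $\mathcal{Q}^{--}$ already excludes all of them, i.e.\ $\mathcal{F}(\mathcal{Q}^{--})\subseteq\mathcal{F}(\mathcal{Q}^-)$. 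Combined with the main theorem this gives $\mathcal{F}(\mathcal{Q}^{--})\subseteq\topegraphs$, and then one checks that within $\topegraphs$ excluding $\mathcal{Q}^{--}$ is exactly the condition ``no non-hypercube antipodal subgraph'': if $G\in\topegraphs$ has an antipodal subgraph $A$ that is not a hypercube, then by Lemma~\ref{lem:antipodal_cycle} and contraction $A$ contracts (inside $G$) to some $Q_k^{--}$, $k\geq 3$ — here one uses that $Q_k^{--}$ is the unique antipodal partial cube with exactly $k$ $\Theta$-classes that is minimal non-hypercube, and that contractions of antipodal graphs stay antipodal (Lemma~\ref{lem:antipodal_minor}) — so $G\notin\mathcal{F}(\mathcal{Q}^{--})$; conversely, if $G$ has all antipodal subgraphs hypercubes then in particular no $Q_k^{--}$ occurs as a pc-minor, since such a minor would itself be an antipodal non-hypercube, and antipodal subgraphs are preserved under the relevant minors (contraction always, restriction when disjoint) — so $G\in\mathcal{F}(\mathcal{Q}^{--})$.

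The main obstacle I anticipate is the precise claim that a pc-minor isomorphic to $Q_k^{--}$ forces an \emph{antipodal} non-hypercube subgraph already present in $G$ (not merely in a minor), since general subgraphs need not lift back nicely under contraction. The clean way around this is to note that $Q_k^{--}$ is antipodal, that $\AG=\topegraphs$ is pc-minor closed (Theorem~\ref{thm:AGminors}) so the non-hypercube obstruction propagates correctly, and to instead run the argument entirely at the level of the COM/sign-vector dictionary: $G\in\topegraphsLOP$ iff $\covectors(G)$ satisfies (IC), and (IC) fails exactly when some antipodal subgraph is not a hypercube, which by the minor-theory of the previous sections is detected by a $\mathcal{Q}^{--}$ pc-minor. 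The remaining bookkeeping — verifying that each $Q_n^{-*}$ and $Q_n^{--}(m)$ with $m\geq1$ contains a convex $Q_k^{--}$ with $k\geq3$ — is routine and can be read off Figure~\ref{fig:Q4s} together with Lemma~\ref{lem:minfamily}'s proof (the subgraph $A\cong Q_{n-1}^{--}$ exhibited there).
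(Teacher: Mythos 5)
Your equivalence (i)$\Leftrightarrow$(ii) is exactly the paper's argument (Proposition~\ref{prop:LOPandCOM} plus Theorem~\ref{thm:main}, noting hypercubes are gated), and your observation that every member of $\mathcal{Q}^-$ contains a convex $Q_{n-1}^{--}$, so that $\mathcal{F}(\mathcal{Q}^{--})\subseteq\mathcal{F}(\Q)\subseteq\topegraphs$, is correct. But both halves of your (ii)$\Leftrightarrow$(iii) have gaps. For (ii)$\Rightarrow$(iii) you invoke Lemma~\ref{lem:antipodal_minor} ("antipodal subgraphs are preserved under the relevant minors"), but that lemma pushes antipodal subgraphs of $G$ \emph{down} to its pc-minors, whereas what you need is the lifting direction: that a pc-minor isomorphic to some $Q_k^{--}$ forces an antipodal non-hypercube subgraph already in $G$, i.e.\ that the class described in (ii) is pc-minor closed. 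This is not immediate from the cited lemma; the paper gets it for free by first establishing (i)$\Leftrightarrow$(ii) and then using that $\topegraphsLOP$ is pc-minor closed (Proposition~\ref{prop:minorsandrank}, coming from the sign-vector side). That fix is easy, but as written the step points the wrong way.

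The more serious gap is in (iii)$\Rightarrow$(ii): your central claim that every antipodal non-hypercube partial cube (even inside $\topegraphs$) admits a pc-minor in $\mathcal{Q}^{--}$ is asserted rather than proved, and both justifications you offer fail. The "observation" that an antipodal partial cube is a hypercube iff it contains no convex $6$-cycle, and the route "contract down to a convex $C_6$", are false: $Q_4^{--}$ is antipodal and not a hypercube, yet it contains no convex $C_6$ and has no $C_6$ pc-minor (its proper pc-minors are $Q_3$, $Q_3^-$ and minors thereof) --- this is precisely why $\mathcal{Q}^{--}$ is an infinite antichain of excluded pc-minors rather than just $\{C_6\}$. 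Lemma~\ref{lem:antipodal_cycle} only yields an \emph{isometric} cycle whose hull is the whole graph, not a convex one, and the statement you lean on ("$Q_k^{--}$ is the unique minimal non-hypercube antipodal partial cube on $k$ $\Theta$-classes") is exactly the nontrivial content that has to be proved; likewise your fallback "(IC) fails exactly when ... detected by a $\mathcal{Q}^{--}$ pc-minor" presupposes the same unproven fact. The paper closes this by a minimal-counterexample argument that works for all partial cubes: a pc-minor-minimal partial cube with a non-hypercube antipodal subgraph is itself antipodal (antipodal subgraphs are convex, hence restrictions) and non-hypercube, so it is $Q_n$ minus a set of antipodal pairs of vertices; taking a pair of missing vertices at minimal distance, distance $1$ contradicts minimality after contracting that $\Theta$-class, and otherwise $\conv(u,v)\cap G$ is a convex subgraph isomorphic to a hypercube minus an antipodal pair, which by minimality is all of $G$, so $G\in\mathcal{Q}^{--}$. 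You would need this (or an equivalent) argument to make your (iii)$\Rightarrow$(ii) step go through.
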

\begin{proof}
 By Proposition~\ref{prop:LOPandCOM} and Theorem~\ref{thm:main} $G\in\topegraphsLOP$ is equivalent to the property that $G$ has all antipodal subgraphs gated and isomorphic to hypercubes. But hypercubes are always gated. This gives (i)$\Leftrightarrow$(ii).
 
The implication (ii)$\Rightarrow$(iii) follows from the fact that $\topegraphsLOP$ is pc-minor closed while the graphs from $\mathcal{Q}^{--}$ are (pc-minor minimal) non-hypercube antipodal graphs.
 
To prove (iii)$\Rightarrow$(ii), let $G$ be a pc-minor-minimal  partial cube with an antipodal subgraph not isomorphic to a hypercube. Then $G$ is antipodal and not isomorphic to a hypercube. Let $G$ be embedded in a $Q_n$ for some minimal $n\in \mathbb{N}$. Now, $G$ can be seen as $Q_n$ minus a set of antipodal pairs of vertices. Take a pair $u,v$ of vertices in $Q_n$ not in $G$ at a minimal distance in $Q_n$. If $d(u,v)=1$, contracting the $\Theta$-class of $uv$ contract $G$ to an antipodal subgraph not isomorphic to a hypercube, contradicting that $G$ is pc-minor-minimal. The interval $\conv(u,v)$ in $Q_n$ intersected with $G$ is a convex subgraph of $G$ isomorphic to a hypercube minus a vertex. Note that $d(u,v)>2$, since otherwise the subgraph is not connected. Since $G$ is pc-minor-minimal, the subgraph is the whole $G$ and thus $G$ is in $\mathcal{Q}^{--}$.
\end{proof}

Recall that by Lemma \ref{lem:rankchar} the rank of a COM is the dimension of a maximal hypercube to which its tope graph can be contracted. 
Considering COMs of bounded rank, we can reduce the set of excluded pc-minors to a finite list. For any $r\geq 3$ define the following finite sets $$\Q_r:=\{Q^{-*}_n,Q^{--}_{n}(m), Q^{--}_{r+2}(r+2), Q_{r+1}\mid 4\leq n\leq r+1; 1\leq m\leq n\}\subset \Q \cup \{Q_{r+1}\}, $$ 
and
$$\mathcal{Q}^{--}_r:=\{Q^{--}_{n}, Q_{r+1} \mid 3\leq n\leq r+1\}\subset \mathcal{Q}^{--} \cup \{Q_{r+1}\}.$$

\begin{cor}\label{cor:boundedrank}
 For a graph $G$ and an integer $r\geq 3$ we have:
 \begin{itemize}
  \item $G\in\topegraphs$ of rank at most $r$ $\Leftrightarrow$ $G\in\mathcal{F}(\Q_r)$.
  \item $G\in\topegraphsOM$ of rank at most $r$ $\Leftrightarrow$ $G\in\mathcal{F}(\Q_r)$ and $G$ is antipodal.
  \item $G\in\topegraphsAOM$ of rank at most $r$ $\Leftrightarrow$ $G\in\mathcal{F}(\Q_r)$, $G$ is affine and all its conformal subgraphs
  are gated.
  \item $G\in\topegraphsLOP$ of rank at most $r$ $\Leftrightarrow$ $G\in\mathcal{F}(\mathcal{Q}^{--}_r)$.
 \end{itemize}
\end{cor}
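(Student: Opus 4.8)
The plan is to derive all four equivalences from the unbounded-rank characterizations already in hand — Theorem~\ref{thm:main} for $\topegraphs$ and Corollaries~\ref{cor:OM}, \ref{cor:AOM}, \ref{cor:LOP} for $\topegraphsOM,\topegraphsAOM,\topegraphsLOP$ — together with a single extra observation: for $G\in\topegraphs$, having rank at most $r$ is the same as excluding $Q_{r+1}$ as a pc-minor. Granting this, the first bullet reads $\{G\in\topegraphs:\mathrm{rank}(G)\le r\}=\mathcal{F}(\Q)\cap\mathcal{F}(\{Q_{r+1}\})=\mathcal{F}(\Q\cup\{Q_{r+1}\})$, and all that remains is to check $\mathcal{F}(\Q\cup\{Q_{r+1}\})=\mathcal{F}(\Q_r)$. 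The OM and AOM bullets then follow verbatim by intersecting with the extra conditions ``antipodal'', respectively ``affine and all conformal subgraphs gated'', supplied by Corollaries~\ref{cor:OM} and~\ref{cor:AOM}, since rank is the same whether computed for $G$ as a COM or as an OM/AOM; and the LOP bullet follows in the same way from Corollary~\ref{cor:LOP}, with $\mathcal{Q}^{--}\cup\{Q_{r+1}\}$ in place of $\Q\cup\{Q_{r+1}\}$ and $\mathcal{Q}^{--}_r$ in place of $\Q_r$.

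For the rank observation, by Lemma~\ref{lem:rankchar} a COM has rank at most $r$ iff its tope graph does not contract to $Q_{r+1}$; since contractions are pc-minors, one direction is immediate. For the converse I would use that the rank, being the VC-dimension of the covector set (Section~\ref{sec:signvectors}), does not increase under pc-minors: a restriction corresponds to passing to a halfspace $\covectors_e^{\pm}\subseteq\covectors\setminus e$ and a contraction to a deletion $\covectors\setminus e$, and neither a coordinate projection nor passing to a subset can raise the VC-dimension. Hence if $G\in\topegraphs$ has $Q_{r+1}$, which has rank $r+1$, as a pc-minor, then $\mathrm{rank}(G)\ge r+1$; so $\mathrm{rank}(G)\le r$ forces $Q_{r+1}$ not to be a pc-minor of $G$. (Alternatively one can argue inside $\topegraphs$ using Lemma~\ref{lem:commut_rest_contaction} to do the restrictions first and then Lemma~\ref{lem:rankchar} together with the minor-closedness from Lemma~\ref{lem:signminors}.)

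The combinatorial core is the identity $\mathcal{F}(\Q\cup\{Q_{r+1}\})=\mathcal{F}(\Q_r)$. Since $\Q_r\subseteq\Q\cup\{Q_{r+1}\}$ — indeed $Q_n^{-*},Q_n^{--}(m)$ with $4\le n\le r+1$, and $Q_{r+2}^{--}(r+2)$, all lie in $\Q$ — the inclusion ``$\subseteq$'' is trivial. For the reverse, take $G\in\mathcal{F}(\Q_r)$ and suppose $G$ had a pc-minor $H$ among the members of $\Q\cup\{Q_{r+1}\}$ not in $\Q_r$, that is $H=Q_n^{-*}$ with $n\ge r+2$, or $H=Q_n^{--}(m)$ with $n\ge r+2$ and $(n,m)\ne(r+2,r+2)$. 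In each case I would exhibit $Q_{r+1}\in\Q_r$ as a pc-minor of $H$, a contradiction: $Q_n^{-*}$ has a halfspace equal to the full $Q_{n-1}$, namely the $v$-side of the $\Theta$-class of the edge of $Q_n$ joining $v$ to the deleted neighbour (neither that neighbour nor $-v$ lies on the $v$-side); $Q_n^{--}(m)$ with $m<n$ contracts, along any coordinate not used by a deleted neighbour of $v$, to $Q_{n-1}$, because then every deleted vertex retains a non-deleted partner in that coordinate; and $Q_n^{--}(n)$ with $n\ge r+3$ contracts along any coordinate to $Q_{n-1}^-$ (the image of $v$ is the only vanishing vertex) and $Q_{n-1}^-$ then restricts to $Q_{n-2}$. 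In all three cases one reaches a hypercube of dimension $\ge r+1$, hence $Q_{r+1}$ as a restriction. This also explains why $Q_{r+2}^{--}(r+2)$ must stay in $\Q_r$: there $n=r+2$, $m=n$, so the ``contract, then restrict'' reduction only reaches $Q_r$, not $Q_{r+1}$. For the LOP version the only graphs to handle are $Q_n^{--}=Q_n^{--}(0)$ with $n\ge r+2$, and such a graph contracts along any coordinate to $Q_{n-1}$ since the two deleted antipodes differ in every coordinate.

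I expect the main obstacle to be the bookkeeping in this last step: one must be precise about which restrictions and contractions of a hypercube with punctures near a vertex $v$ and its antipode yield a full hypercube, a once-punctured hypercube, or again a graph of type $\Q$, so as to see exactly which members of $\Q$ reduce to $Q_{r+1}$ and which do not, and hence that $\Q_r$ is sufficient (handled above) and, apart from $Q_{r+1}$ itself, irredundant. None of these computations is deep; the care lies in the case distinctions $m<n$ versus $m=n$ and $n=r+2$ versus $n\ge r+3$. Once they are carried out, combining with Theorem~\ref{thm:main} and Corollaries~\ref{cor:OM}--\ref{cor:LOP} yields all four equivalences at once.
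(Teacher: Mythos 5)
Your proposal is correct and takes essentially the same route as the paper: combine the unbounded-rank characterizations (Theorem~\ref{thm:main} and Corollaries~\ref{cor:OM}--\ref{cor:LOP}) with Lemma~\ref{lem:rankchar} and the observation that every member of $\Q\cup\{Q_{r+1}\}$ outside $\Q_r$ reduces to $Q_{r+1}$, which is exactly the paper's remark that the largest hypercube minor of $Q_n^{-*}$ and $Q_n^{--}(m)$ with $m\le n-1$ has dimension $n-1$, while for $Q_n^{--}(n)$ it has dimension $n-2$. Your explicit VC-dimension monotonicity argument (rank does not increase under pc-minors) merely spells out a step the paper leaves implicit.
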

\begin{proof}
 The respective proofs follow immediately from Theorem~\ref{thm:main} and Corollaries~\ref{cor:OM}, ~\ref{cor:AOM}, and~\ref{cor:LOP} together with the observation that the largest hypercube that is a contraction minor of $Q^{-*}_n$ and $Q^{--}_n(m)$ is of dimension $n-1$ for $0\leq m\leq n-1$ and of $Q^{--}_{n}(n)\in\Q$ is of dimension $n-2$. Now, the graphical interpretation of the rank as given in Lemma~\ref{lem:rankchar} gives the result.
\end{proof}

Note that using Proposition~\ref{prop:polyminor} can easily be seen to yield a polynomial time recognition algorithm for the recognition of the bounded rank classes above. However, Theorem~\ref{thm:main} also yields polynomial time recognition algorithms for the unrestricted classes.

\begin{cor}\label{cor:recognition}
 The recognition of graphs from any of the classes $\topegraphs, \topegraphsAOM, \topegraphsOM, \topegraphsLOP$ can be done in polynomial time.
\end{cor}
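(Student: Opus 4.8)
The plan is to read off algorithms directly from the characterizations in Theorem~\ref{thm:main} and Corollaries~\ref{cor:OM},~\ref{cor:AOM},~\ref{cor:LOP}. The only point requiring an idea is that, although a partial cube can have exponentially many convex subgraphs, all of its \emph{antipodal} subgraphs can be listed in polynomial time. Recall first that deciding whether a given graph $G$ is a partial cube, and, if so, computing its $\Theta$-classes and its essentially unique embedding into $Q_{\mathcal E}$ (hence the sign-vectors $X(v)$ of vertices, the sign-vectors $X(G')$ of convex subgraphs, the intervals $\conv(u,v)$, and all distances) are classical polynomial-time tasks. The key observation is that a subgraph $H$ of a partial cube $G$ is an antipodal subgraph if and only if $H=\conv(u,v)$ for two vertices $u,v$ such that the (convex, hence isometric) subgraph $\conv(u,v)$ is, with its induced metric, an antipodal partial cube: if $H$ is antipodal, then for any $x\in H$ we have $H=\conv(x,-_Hx)$, and conversely convexity makes the induced metric of $\conv(u,v)$ agree with that of $G$, so being an antipodal partial cube is the same as being an antipodal subgraph. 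Since for a given convex $H$ one can check antipodality in polynomial time --- for each vertex $x\in H$ the only possible antipode is the vertex of $H$ differing from $x$ in exactly the $\Theta$-classes crossing $H$, and one then tests whether $\conv(x,-_Hx)=H$ --- we obtain all antipodal subgraphs of $G$ by going over the at most $|V(G)|^2$ intervals.

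For $\topegraphs$ we then proceed as follows: verify that $G$ is a partial cube; list all its antipodal subgraphs $H$ as above; and for each such $H$ test whether it is gated, which by Lemma~\ref{lem:dictionarygated} amounts to checking, for every vertex $v\notin H$, whether the sign-vector $X(H)\circ X(v)$ is the sign-vector of a vertex of $G$ (which then necessarily lies in $H$). By Theorem~\ref{thm:main}, $G\in\topegraphs$ exactly when all these tests pass, and the whole procedure is polynomial. For $\topegraphsOM$ one additionally tests whether $G$ itself is antipodal (Corollary~\ref{cor:OM}); for $\topegraphsLOP$ one instead tests, for each antipodal subgraph $H$, whether $H$ is a hypercube (Corollary~\ref{cor:LOP}); both extra tests are trivially polynomial.

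For $\topegraphsAOM$ we first decide whether $G$ is affine by the intrinsic criterion of Proposition~\ref{prop:affinepc}: for each of the $\mathcal{O}(|V(G)|^2)$ pairs $u,v$ we search among all pairs $w,z$ of vertices whether $\conv(u,w)$ and $\conv(v,z)$ are crossed by disjoint sets of $\Theta$-classes, which is clearly polynomial. If $G$ is affine, we build the antipodal partial cube $\widetilde G$ exactly as in the proof of Proposition~\ref{prop:affinepc} (take two copies of $G$ and join, over all boundary pairs $w,-w$ of $G$, the vertex $w$ in the first copy to $-w$ in the second; the boundary pairs are recognizable in polynomial time via the disjointness-and-covering condition used in that proof) and then run the $\topegraphsOM$-test on $\widetilde G$. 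By Proposition~\ref{prop:affOMandOM}, if $\widetilde G\in\topegraphsOM$ then its halfspace $G$ lies in $\topegraphsAOM$; conversely, if $G\in\topegraphsAOM$ then $G$ is affine and is a halfspace of some antipodal $\bar G\in\topegraphsOM$, and since an antipodal partial cube is determined up to isomorphism by any one of its halfspaces (so $\bar G\cong\widetilde G$, using the uniqueness of the simple OM/COM underlying a tope graph provided by Theorem~\ref{thm:dictionaryOMC}), also $\widetilde G\in\topegraphsOM$. Hence this test is correct and polynomial. (Alternatively one may verify Corollary~\ref{cor:AOM}(iii) by checking that all antipodal subgraphs of $\widetilde G$ that cross the distinguished $\Theta$-class are gated.)

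The only genuinely non-routine part is the $\topegraphsAOM$ case: one must ensure that affineness is tested efficiently and, above all, that the antipodal extension $\widetilde G$ produced by the construction of Proposition~\ref{prop:affinepc} is, up to isomorphism, the \emph{unique} antipodal partial cube having $G$ as a halfspace, so that applying the oriented-matroid test to it is both sound and complete. Everything else reduces to the single polynomial-time primitive of enumerating the antipodal subgraphs as intervals and testing gatedness through Lemma~\ref{lem:dictionarygated}.
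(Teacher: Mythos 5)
Your algorithms for $\topegraphs$, $\topegraphsOM$ and $\topegraphsLOP$ are essentially the paper's: enumerate the antipodal subgraphs as the antipodal intervals $\conv(u,v)$ and test gatedness via Lemma~\ref{lem:dictionarygated}, adding the antipodality test for $\topegraphsOM$ and the hypercube test for $\topegraphsLOP$. Where you genuinely diverge is $\topegraphsAOM$: the paper stays inside $G$ and verifies Corollary~\ref{cor:AOM}(ii) directly, using Proposition~\ref{prop:affinepc} to test affineness and then identifying the conformal subgraphs (via the stored non-antipodal pairs) and checking that they are gated; you instead build the antipodal extension $\widetilde G$ from the proof of Proposition~\ref{prop:affinepc} and reduce to the $\topegraphsOM$-test. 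Your reduction is correct, and as you say it hinges on the uniqueness of the antipodal partial cube having $G$ as a halfspace. That claim is true, but Theorem~\ref{thm:dictionaryOMC} does not deliver it --- it determines the COM from a given tope graph, not $\widetilde G$ from $G$. The right argument is short and direct: if $\bar G$ is antipodal with halfspace $E_f^+=G$, then for $w,z\in G$ the vertices $w$ and $-_{\bar G}z$ differ exactly in the coordinate $f$ precisely when $w,z$ differ in all $\Theta$-classes of $G$, and since an isometric subgraph of a hypercube is induced, all such pairs are edges of $E_f$ and there are no others; hence $\bar G\cong\widetilde G$, and $G$ determines $\widetilde G$ because a partial cube has an essentially unique embedding. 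Granting this, your reduction of AOM-recognition to OM-recognition is arguably cleaner than the paper's conformal-subgraph bookkeeping, at the cost of working in the auxiliary graph $\widetilde G$ and of having to prove the uniqueness statement, which the paper's route avoids entirely.

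Two smaller slips in the same part: in your affineness test you must quantify exactly as in Proposition~\ref{prop:affinepc}, i.e.\ over antipodal pairs $w,-_Gw$ of $G$ (equivalently, pairs differing in all $\Theta$-classes of $G$); searching over arbitrary pairs $w,z$ is vacuous, since $w=u$, $z=v$ always gives disjoint crossing sets. Likewise the ``boundary pairs'' used to build $\widetilde G$ are simply these antipodal pairs of $G$, recognizable by $\conv(w,z)=G$, so both points are easily repaired and the overall procedure remains polynomial.
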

\begin{proof}
By~\cite{Epp-08b}, partial cubes can be recognized and embedded in a hypercube in quadratic time. For a partial cube embedded in a hypercube checking if it is antipodal can be done in linear time by checking if every vertex has its antipode. Note that the convex hull of any subset can be computed in linear time in the number of edges (for instance by using Lemma~\ref{lem:restriction} for a graph embedded in a hypercube) and checking if a convex subgraph is gated is linear by Lemma \ref{lem:dictionarygated}.
 
 We proceed by designing recognition algorithms using the (i)$\Leftrightarrow$(ii) part of the respective characterizations. We start by   computing  $\conv(u,v)$ for all pairs of vertices $u,v$ and storing them. We check if $\conv(u,v)$ is antipodal, and if so we check if it is gated and if it is isomorphic to a hypercube, i.e.~$|\conv(u,v)|=2^{d(u,v)}$. If all the antipodal graphs obtained in this way are gated, then $G\in\topegraphs$, otherwise we do not proceed. If $G$ is among the antipodal subgraphs, then $\topegraphsOM$. Moreover, if all the antipodal subgraphs are isomorphic to hypercubes,  then $G\in\topegraphsLOP$.
 
We continue by checking for each $\conv(u,v)$ if it is an affine subgraphs. 
 For each pair $u',v'\in \conv(u,v)$ such that $|\conv(u',v')|<|\conv(u,v)|$ we store the pair in $NA(u,v)$, and we search for a pair $w,-_{\conv(u,v)}w\in\conv(u,v)$ such that the set of $\Theta$-classes on a shortest $(u',w)$-path and on a shortest $(v',-_{\conv(u,v)}w)$-path are disjoint. Note that the convex hulls are already computed. If this is the case for all such $u',v'$, we store $\conv(u,v)$ as an affine subgraph. The correctness follows from Proposition \ref{prop:affinepc}.
 
Now we check if the whole graph is affine, in this case say $G=\conv(u,v)$.
Then for every affine subgraph $\conv(u',v')$ and vertex $w \in \conv(u',v')$, we check if the pair $w, -_{\conv(u',v')}w$ is a pair in $NA(u',v')$ if and only if $w, -_Gw$ is a pair in $NA(u,v)$. If this is the case, $\conv(u',v')$ is a conformal subgraph and we check if it is gated. Finally, if all conformal subgraphs are gated, then $G\in\topegraphsAOM$.
\end{proof}

A partial cube is called \emph{Pasch}, their class is denoted by $\mathcal{S}_4$, if any two disjoint convex subgraphs lie in two disjoint halfspaces. We confirm  a conjecture from~\cite{Che-16}:
\begin{cor}\label{cor:S4}
 The class $\mathcal{S}_4$ of Pasch graphs is a subclass of $\topegraphs$.
\end{cor}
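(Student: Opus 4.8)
The plan is to show that the class $\mathcal{S}_4$ of Pasch graphs avoids every pc-minor in $\mathcal{Q}^-$, so that $\mathcal{S}_4\subseteq\mathcal{F}(\mathcal{Q}^-)=\topegraphs$ by Theorem~\ref{thm:main}. Two things need to be checked: that $\mathcal{S}_4$ is pc-minor closed, and that no graph in $\mathcal{Q}^-$ is Pasch. The first is known (it is the reason $\mathcal{S}_4$ was asked about in~\cite{Che-16}, and it follows from the fact that the Pasch property is expressed purely in terms of convex subgraphs and halfspaces, which behave well under restrictions and contractions via Lemmas~\ref{contraction_convex}, \ref{convex_hull}, and~\ref{lem:restriction}); if it is not available as a citation, one gives a short direct argument: a restriction of a partial cube carries convex subgraphs to convex subgraphs and disjoint halfspaces to disjoint halfspaces, and for a contraction $\pi_f$ one uses that the $\pi_f$-images of two disjoint convex subgraphs separated by a halfspace $E_g^+$ with $g\ne f$ are again separated by $\pi_f(E_g^+)$, while if they were only separated by $E_f$ itself their images could not be made disjoint — so in that degenerate case the two convex subgraphs already shared a vertex before contracting, contradiction.

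The heart of the argument is the second point: for each member $G$ of $\mathcal{Q}^-$ I would exhibit two disjoint convex subgraphs of $G$ that cannot be placed in complementary halfspaces. Recall from the proof of Lemma~\ref{lem:minfamily} that each $G\in\mathcal{Q}^-$ arises from $Q_n^-=Q_n\setminus\{-v\}$ (with $n\ge4$) by possibly deleting $v$ and some neighbors of $v$, and that it contains an antipodal subgraph $A\cong Q_{n-1}^{--}$ together with a vertex $u$ (the neighbor of $-v$ not in $A$) whose hypothetical gate in $A$ would be the missing vertex $X(A)\circ X(u)$ of $G$. I would use $A$ and the single vertex $\{u\}$: both are convex, and they are disjoint (since $u\notin A$). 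If they lay in two disjoint halfspaces $E_g^-$, $E_g^+$ of $G$, then $g\in A^0$ (as $A$ lies on one side of $E_g$), and since $u$ lies strictly on the other side there is no shortest $(u,w)$-path inside one halfspace for $w$ the antipode-side vertices of $A$ — more precisely the halfspace containing $A$ cannot be reached from $u$ without crossing $E_g$, which forces the gate of $u$ in $A$ to exist and lie on $E_g$-boundary, contradicting that this gate is the vertex missing from $G$. Making this last implication precise is the main obstacle, and I would route it through Lemma~\ref{lem:dictionarygated}: if $A\subseteq E_g^-$ and $u\in E_g^+$ then $g\in\underline{X(A)}^{\,c}$... rather, $g\in X(A)^0$ is false; instead $A\subseteq E_g^-$ means $X(A)_g=-$, so $g\notin S(X(u),X(A))$ would be needed for a gate, i.e. $X(u)_g$ should equal $-$, contradicting $u\in E_g^+$; hence $g\in S(X(u),X(A))$, and then $X(A)\circ X(u)$ already differs from a genuine tope of $G$ in coordinate $g$, which is exactly the statement that the putative gate $X(A)\circ X(u)$ is not in $G$ — but a disjoint-halfspace separation would force a gate to exist. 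I expect this bookkeeping to be the only delicate part; it is essentially a restatement of the non-gatedness argument already carried out in Lemma~\ref{lem:minfamily}, now read as a failure of the Pasch condition.

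Putting the pieces together: since $\mathcal{S}_4$ is pc-minor closed and each graph in $\mathcal{Q}^-$ fails to be Pasch (having two disjoint convex subgraphs, namely an antipodal $Q_{n-1}^{--}$ and a vertex, that cannot be separated into disjoint halfspaces), no Pasch graph has a pc-minor in $\mathcal{Q}^-$; hence $\mathcal{S}_4\subseteq\mathcal{F}(\mathcal{Q}^-)$, which by Theorem~\ref{thm:main} equals $\topegraphs$. One might alternatively prove it via the characterization (ii) of Theorem~\ref{thm:main}, showing directly that in a Pasch graph every antipodal subgraph is gated — but this seems to require reproving a separation lemma for antipodal subgraphs, so the pc-minor route above is cleaner.
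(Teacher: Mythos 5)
Your overall plan (pc-minor closedness of $\mathcal{S}_4$ plus non-Paschness of every member of $\Q$) is viable in principle and is close in spirit to the paper's first argument, which simply invokes the known excluded pc-minor list of Pasch graphs from~\cite{Che-16} to check $\mathcal{S}_4\subset\mathcal{F}(\Q)$. However, the heart of your argument fails: the pair you propose as a Pasch violation in $G\in\Q$ is in fact separable. The antipodal subgraph $A\cong Q_{n-1}^{--}$ from the proof of Lemma~\ref{lem:minfamily} is by construction the restriction of $G$ to the halfspace of $E_e$ (the class of the deleted edge $vw$) not containing $v$; that is, $A$ \emph{is} a halfspace of $G$. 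The vertex $u$ lies in the complementary halfspace $E_e^{v}$, so $A\subseteq E_e^{w}$ and $\{u\}\subseteq E_e^{v}$ are contained in two disjoint (indeed complementary) halfspaces, and the Pasch condition is satisfied for this pair. Your own attempt to derive a contradiction rests on the claim that ``a disjoint-halfspace separation would force a gate to exist,'' which is false: separation of two convex sets by complementary halfspaces says nothing about gatedness, and Lemma~\ref{lem:dictionarygated} only translates gatedness into the composition condition, it does not produce a gate from a separation. So non-Paschness of the graphs in $\Q$ is not established, and without it the pc-minor route does not close. (Your fallback sketch that $\mathcal{S}_4$ is pc-minor closed also has a soft spot in the contraction case---the degenerate situation where the only separating halfspace in $G$ is $E_f$ itself is not handled by your parenthetical remark---but since that fact can be cited, this is secondary.)

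Ironically, the alternative you dismissed is exactly the paper's clean proof, and it needs no new separation lemma: Theorem~A of~\cite{Che-16} states that in a Pasch graph the convex hull of every isometric cycle is gated, and by Lemma~\ref{lem:antipodal_cycle} every antipodal subgraph is the convex hull of an isometric cycle; hence in a Pasch graph all antipodal subgraphs are gated, and condition~(ii) of Theorem~\ref{thm:main} gives $\mathcal{S}_4\subseteq\topegraphs$ directly. If you prefer the excluded-minor route, you would instead have to exhibit, for each $G\in\Q$, a genuine Pasch violation (or check that $G$ contains one of the excluded pc-minors of $\mathcal{S}_4$ listed in~\cite{Che-16}); the pair $(A,\{u\})$ does not do this job.
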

\begin{proof}
 Correcting the list given in~\cite{Che-86,Che-94}, in~\cite{Che-16} the list of excluded pc-minors of Pasch graphs has been given, see Figure~\ref{fig:forbidden3}. With this it is easy to verify that $\mathcal{S}_4\subset\mathcal{F}(\Q)$. Another way of seeing the inclusion is using~\cite[Theorem A]{Che-16}, which confirms that the convex closure of any isometric cycle in a Pasch graph is gated. By Lemma~\ref{lem:antipodal_cycle} in particular we have that all antipodal subgraphs are gated and the claim follows from Theorem~\ref{thm:main}.
\end{proof}

\begin{figure}[ht]
\begin{center}
\includegraphics[width = .8\textwidth]{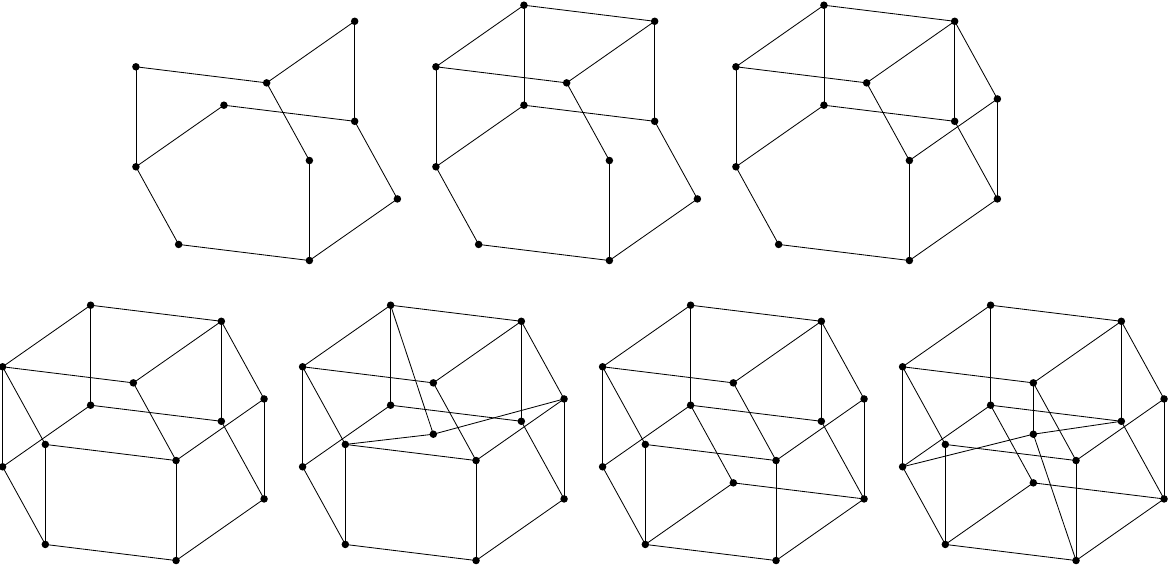}
\caption{The set of minimal forbidden pc-minors of ${\mathcal S}_4$.}
\label{fig:forbidden3}
\end{center}
\end{figure}

Note that together with a recent paper~\cite{Pol-17}, Corollary~\ref{cor:S4} implies that \emph{netlike} partial cubes are tope graphs of COMs. Moreover, it provides an alternative proof for the fact that hypercellular graphs are tope graphs of COMs, see~\cite{Che-16}, and therefore also median graphs, and bipartite cellular graphs.

%


\section{Problems}\label{sec:conclusions}
This final section resumes the central open problems that either result from our paper or might be attacked from a new perspective using our results.

Let us start with the following:
\begin{rem}
Any partial cube is a convex subgraph of an antipodal partial cube.
\end{rem}
To see this, consider an arbitrary partial cube $G$ embedded in a hypercube $Q_n$. Construct an induced subgraph $A_G$ of the hypercube $Q_{n+3}$ where the convex subgraph with the last three coordinates equal to 1 is $G$, the convex subgraph with the last three coordinates equal to $-1$ is $-G$ and the other convex subgraphs given by fixing the last three coordinates are isomorphic to $Q_n$. It is easily checked that $A_G$ is an antipodal partial cube with $G$ a convex subgraph of it. Nevertheless even if $G\in\topegraphs$, $A_G$ is not necessary in $\topegraphs$, e.g., it is easy to check that $A_{C_6}$ has a $Q^{*-}_{4}$-minor. Thus, let us restate a conjecture from~\cite[Conjecture 1]{Ban-15} in terms of tope graphs.
\begin{conj}
Every tope graph of a COM is a convex subgraph of a tope graph of an OM. 
\end{conj}
Note that an affirmative answer to this question would immediately give a topological representation theorem for COMs.

A question arising from Corollaries~\ref{cor:OM} and~\ref{cor:AOM} is based on the observation that $\topegraphsAOM$ and $\topegraphsOM$ are closed under contraction.
\begin{prob}
 Find the list of minimal excluded affine and antipodal contraction-minors for the classes $\topegraphsAOM$ and $\topegraphsOM$, respectively.
\end{prob}
Using Proposition~\ref{prop:affinepc} it is easy to see, that the affine partial cubes in $\Q$ are exactly the graphs of the form $Q^{--}_{n}(m)$. So this gives a family of excluded contraction minors for $\topegraphsAOM$. However, it is not complete since some graphs of this family, e.g. the one arising from $Q^{--}_{4}(1)$, have halfspaces that are COMs which therefore have to be excluded as well. Moreover, the antipodal subgraphs obtained as in the proof of Proposition~\ref{prop:affinepc} from the affine graphs $Q^{--}_{n}(m)$ give a family of excluded contraction minors for $\topegraphsOM$. In particular, this implies that the only non-matroidal antipodal graphs with at most five $\Theta$-classes are the ones coming from the four graphs $Q^{--}_{4}(1), \ldots, Q^{--}_{4}(4)$ -- a result originally due to Handa~\cite{Han-93}. However, also this family is incomplete, since the graph $A_{C_6}$ constructed above has no halfspace in $\Q$.

\bigskip

Several characterizations of planar partial cubes are known~\cite{Alb-16,Des-16}, where the latter corrects a characterization from~\cite{Pet-08}. 
A particular consequence of Corollary~\ref{cor:boundedrank} is that tope graphs of OMs of rank at most three are characterized by the finite list of excluded pc-minors $\Q_3$. By a result of Fukuda and Handa~\cite{Fuk-93} this graph class coincides with the class of antipodal planar partial cubes. Since planar partial cubes are closed under pc-minors, we wonder about an extension of this result to general planar partial cubes:

\begin{prob}
 Find the list of minimal excluded pc-minors for the class of planar partial cubes.
\end{prob}

It is easy to see, that any answer here will be an infinite list, since a (strict) subfamily is given by the set $\{G_n\square K_2\mid n\geq 3\}$, where $G_n$ denotes the \emph{gear graph} (also known as \emph{cogwheel}) on $2n+1$ vertices, see Figure~\ref{fig:obstr}. 

 \begin{figure}[htb]
  \centering
  \includegraphics{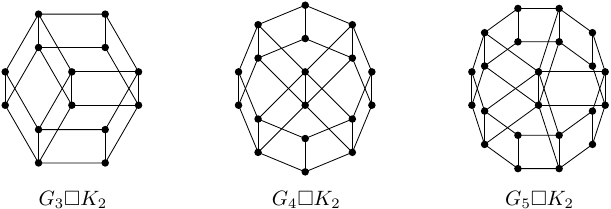}
  \caption{The first three members of an infinite family of minimal obstructions for planar partial cubes.}
  \label{fig:obstr}
 \end{figure}

This in particular shows, that if a pc-minor closed class is contained in an (ordinary) minor closed graph class it can still have an infinite list of excluded minors. A tempting probably quite hard problem is the following:

\begin{prob}
 Give a criterion for when a pc-minor closed class has a finite list of excluded pc-minors.
\end{prob}

%
%
%
%

%

\subsubsection*{Acknowledgments} We wish to thank Emanuele Delucchi and Yida Zhu for discussions on AOMs, Ilda da Silva for insights on OMs, Hans-J\"urgen Bandelt and Victor Chepoi for several fruitful discussions on COMs and their tope graphs, and Matja\v{z} Kov\v{s}e for being part of the very first sessions on tope graphs. The first author was supported by grants ANR-16-CE40-0009-01, ANR-17-CE40-0015, and ANR-17-CE40-0018, the second author by grants P1-0297 and J1-9109. Finally, we wish to thank the referees for comments that clearly improved the quality of this paper.

\bibliographystyle{my-siam}

\end{document}